\renewcommand{\labelenumi}{{\rm \theenumi}}
\renewcommand{\theenumi}{{\rm(\arabic{enumi})}}
\renewcommand\epsilon{\varepsilon}
\renewcommand\emptyset{\varnothing}
\def\lm{\lambda}
\def\<{\langle}
\def\>{\rangle}
\renewcommand\d{{\rm d}}
\def\rk{\mathop{\rm rk}}
\def\grk{\rk\nolimits'}
\def\B{\mathscr{B}}
\def\W{\mathcal W}
\def\S{\mathcal S}
\def\T{\mathcal T}
\def\M{\mathscr{M}}
\def\N{\mathscr{N}}
\def\Z{\mathbb Z}
\def\C{\mathcal C}
\def\G{\mathcal G}
\def\P{\mathcal P}
\def\PP{\mathscr P}
\def\D{\mathcal D}
\def\Q{\mathcal Q}
\def\I{\mathcal I}
\def\m{\mathfrak m}
\def\V{\mathcal V}
\def\E{\mathcal E}
\def\F{\mathbb F}
\def\b{\mathbf b}
\def\VF{\mathsf V}
\newtheorem{theorem}{Theorem}[subsection]
\newtheorem{proposition}[theorem]{Proposition}
\newtheorem{lemma}[theorem]{Lemma}
\newtheorem{corollary}[theorem]{Corollary}
\newtheorem{definition}[theorem]{Definition}
\newtheorem{example}[theorem]{Example}
\newtheorem{conjecture}[theorem]{Conjecture}
\def\le{\leqslant}
\def\ge{\geqslant}
\def\supp{\mathop{\rm supp}}
\def\rad{\mathop{\rm rad}}
\def\ev{\mathop{\rm ev}\nolimits}
\def\suchthat{\mathbin{\rm |}}
\def\and{\,\mathbin{\&}\,}
\def\X{\mathcal X}
\def\Y{\mathcal Y}
\def\s{{\mathbf s}}
\def\sp{{\ast}}
\def\c{\mathop{\mathbf c}}
\def\r{\mathop{\mathbf r}}
\renewcommand\phi{\varphi}
\def\={\equiv}
\renewcommand{\(}{\left(}
\renewcommand{\)}{\right)}
\def\vertexwithonearrow[#1#2]{
\put(0,0){\circle*{4}}
\put(0,-30){\vector(0,1){28}}
\put(-6,-18.5){$\scriptstyle#1$}
\put(4,-18.5){$\scriptstyle#2$}
}
\def\vertexwithtwoarrows[#1#2][#3#4]{
\put(0,0){\circle*{4}}
\put(-30,-30){\vector(1,1){28.6}}
\put(30,-30){\vector(-1,1){28.6}}
\put(-22,-15){$\scriptstyle#1$}
\put(-14.5,-21.5){$\scriptstyle#2$}
\put(17,-15){$\scriptstyle#3$}
\put(8,-21.5){$\scriptstyle#4$}
}
\def\linefill@#1{\m@th\setboxz@h{$#1-$}\ht\z@\z@
    $#1\copy\z@\mkern-6mu\cleaders
    \hbox{$#1\mkern-2mu\box\z@\mkern-2mu$}\hfill
    \mkern-2mu\vphantom{\rightarrow}$}
  \def\rightarrowfill@#1{\m@th\setboxz@h{$#1-$}\ht\z@\z@
    $#1\copy\z@\mkern-6mu\cleaders
    \hbox{$#1\mkern-2mu\box\z@\mkern-2mu$}\hfill
    \mkern-6mu\mathord\rightarrow$}
\def\edge#1#2#3{
\global\bigaw@1.8pc
\setboxz@h{$\m@th\scriptstyle\;{#2}\;\;$}%
  \ifdim\wdz@>\bigaw@\global\bigaw@\wdz@\fi
#1\stackrel{#2}{\mathop{\hbox to \bigaw@{\linefill@\displaystyle}}}#3
}
\def\edgeright#1#2#3{
\global\bigaw@1.8pc
\setboxz@h{$\m@th\scriptstyle\;{#2}\;\;$}%
  \ifdim\wdz@>\bigaw@\global\bigaw@\wdz@\fi
#1\stackrel{#2}{\mathop{\hbox to \bigaw@{\rightarrowfill@\displaystyle}}}#3
}
\def\dotcup{\mathbin{\dot\cup}}
\title{On decomposition of Bott-Samelson sheaves}
\author{Vladimir Shchigolev}
\email{shchigolev\_vladimir@yahoo.com}
\begin{document}

\maketitle

\begin{abstract}
We give an exact algorithm to calculate (under some GKM-restriction) the matrix describing the embedding
$\B(\s)_x\subset\B(\s)^x$, where the first module is the costalk and the second one is the stalk at $x$
of a Bott-Samelson module (sheaf) $\B(\s)$. This allows us to calculate the first few
terms of the decomposition of $\B(\s)$ into a sum of indecomposable modules (sheaves) and to calculate the characters
of Braden--MacPherson sheaves in some previously unknown cases.
\end{abstract}


\section{Introduction}

In~\cite{Fiebig_sheaves_on_affine_Schubert_varieties} and~\cite{Fiebig_Lusztig’s_conjecture},
Peter Fiebig developed a connection between Lusztig's conjecture on the characters of
irreducible rational representations of reductive algebraic groups over a field~$\F$
of positive characteristic and the theory of $\F$-sheaves on moment graphs.
He showed that Lusztig's conjecture follows from the conjecture on the characters
of the Braden--MacPherson sheaves (with coefficients in $\F$) on an affine
moment graph (cf.~\cite[Conjecture 4.4]{Fiebig_Lusztig’s_conjecture}).

Fiebig showed in~\cite{Fiebig_An_upper_bound} that for every element $w$ of the affine Weyl group,
there exists some explicitly defined number $U(w)$ such that for all $\mathop{{\rm char}}\F>U(w)$,
the character of the Braden--MacPherson sheaf $\B(w)$ with coefficients in $\F$
is given by the corresponding Kazhdan-Lusztig element of the Hecke algebra.
This result is actually obtained by considering decompositions of Bott--Samelson modules into direct
sums of indecomposables. The parameter that governs this decomposition in~\cite{Fiebig_An_upper_bound}
is the Lefschetz datum. This approach however works only if $\mathop{{\rm char}}\F$ is bigger than
some number depending on the heights of roots and not only on the GKM-property.

In view of this, we introduce here a different parameter called {\it defect}
(Definition~\ref{definition:6}). Unlike Lefschetz datum, this parameter applies not to modules
but to certain sheaves on moment graphs that we call {\it projective} (Definition~\ref{definition:5}).
This notion is motivated by Jantzen's lectures~\cite{Jantzen_moment_graphs}, where he defines F-projective
sheaves\footnote{F stands for Fiebig} (\cite[Section~3.8]{Jantzen_moment_graphs}).
Whereas Jantzen considers only finite moment graphs in his lectures, all his definitions and
results apply to affine moment graphs as well if we additionally require the supports of sheaves to be finite.

One thing that we are very interested in is the possibility to apply translation functors
directly to moment graphs. The corresponding construction is given by
Fiebig~\cite[Section~2.9]{Fiebig_sheaves_on_affine_Schubert_varieties}.
As predicted by results from~\cite{Fiebig_The_combinatorics_of_Coxeter_categories},
translation functors should take projective sheaves to projective sheaves.
This turns out to be true (Theorem~\ref{theorem:8}) if we slightly modify the definition of $\vartheta^s_{on}$
(Section~\ref{functor_vartheta_on}) and impose some GKM-restriction.
Thus we can apply translation functor to the trivial sheaf repeatedly and
get the sheaf $\B(\s)$ ($\s$ is a sequence of simple reflections), which we call
the {\it Bott--Samelson sheaf} (Definition~\ref{definition:BS_sheaf}).
This sheaf is well-defined and projective under some GKM-restriction.
By~\cite[Proposition 3.12]{Jantzen_moment_graphs}, $\B(\s)$
decomposes into a direct sum of indecomposable projective sheaves
(Braden--MacPherson sheaves) and the defect of $\B(\s)$ tells us how exactly.
So we may consider the calculation of this defect (or proving its independence of
$\mathop{{\rm char}}\F$) our main problem.

To this end, we use the modules $\X(\s)$ constructed by Fiebig in~\cite[Section~6]{Fiebig_An_upper_bound},
which are isomorphic to Bott--Samelson modules (cf.~\cite[Proposition 6.14(2)]{Fiebig_An_upper_bound}).
We prove in this paper a different version of this isomorphism: the isomorphism $\Gamma(\B(\s))\stackrel{\sim}\to\X(\s)$
that induces the isomorphisms of stalks in a way compatible with restrictions (Theorem~\ref{theorem:3}).
The advantage is two-fold: we get the isomorphisms of costalks (Corollary~\ref{corollary:2})
and the result saying how elements of $\X(\s)$ behave with respect to edges of the underlying moment graph
(Corollary~\ref{corollary:3}).

The last result is very important to construct a basis of the stalk of a Bott-Samelson sheaf (module) at a fixed point $x$
(cf. Corollary~\ref{corollary:5}). This basis is constructed in terms of the tree $T(\s,x)$ that tells us how
the element $x$ is represented with respect to the Bruhat order as products of entries of subsequences of $\s$
(Section~\ref{trees}). Hence, we can get the matrix describing the inclusion of the costalk in the stalk
$\B(\s)_x\subset\B(\s)^x$ by the same method as~\cite{Fiebig_An_upper_bound} and previously in~\cite{Haerterich}.

Along with the costalk $\B(\s)_x$, which is the intersection of the kernels of the projections $\rho_{x,E}$
for all edges $E$ incident with $x$, one can consider the submodule $\B(\s)_{[x]}$,
which is the intersection of the same kernels but only for edges starting at $x$.
The defect at $x$ is defined by the zero degree part of the matrix $\Phi(\s,x)$ describing the inclusion
$\B(\s)_{[x]}\subset\B(\s)^x$ (Corollary~\ref{corollary:1}), which is given by an exact combinatorial algorithm
(Theorem~\ref{theorem:4}).

Although the matrix $\Phi(\s,x)$ is quite complicated, we still can make use of it
when the ungraded rank of the stalk $\B(\s)^x$ does not exceed $3$ (Section~\ref{low_rank_cases}).
Remarkably, we don't use any restrictions on the characteristic in our calculations other than the GKM-property.
In particular, we can write down the first terms of the decomposition of a Bott-Samelson module (sheaf) into a
direct sum of indecomposables as in Corollary~\ref{corollary:6}
(here $|I(\s)_x|$ is the number of subsequences of $\s$ giving $x$). This result and the known zero characteristic case
allow us to prove Fiebig's conjecture~\cite[Conjecture 4.4]{Fiebig_Lusztig’s_conjecture} on the characters of
the Braden--MacPherson sheaves $\B(w)$ for 3-reachable elements $w$ (Definition~\ref{definition:14})
of the affine Weyl group. I~believe that similar calculations are possible for other low ranks.
However, already the case where the ungraded rank $\B(\s)^x$ is 4 contains a large number of subcases.

\section{Notation and definitions}

\subsection{General} If we consider a field $\F$ of characteristic $p>0$, then we identify the residue
field $\Z/p\Z$ with the minimal subfield of $\F$. When there is now confusion about which field we take,
we write $\overline n$ for the residue class $n+p\Z\in\Z/p\Z\subset\F$.

We shall consider products $\prod_{i\in\mathcal I} M_i$ of sets, which consist of all functions $f$ on $\I$ such that
$f_i\in\M_i$ for any $i\in\mathcal I$. For any $\mathcal J\subset\mathcal I$, we consider the restriction
$f|_{\mathcal J}\in\prod_{i\in\mathcal J} M_i$. If $\mathcal I$ is finite, then we often write
$\bigoplus_{i\in\mathcal I} M_i$ instead of $\prod_{i\in\mathcal I} M_i$.

We write $A=B\dotcup C$ to say that set $A$ is the disjoint (i.e. $A\cap B=\emptyset$) union of sets $B$ and $C$.

\subsection{Poset topology}\label{poset_topology} Let $\V$ be a poset (partially ordered set). A subset $U\subset\V$ is called {\it open}
if $x\in U$ and $y\ge x$ imply $y\in U$. Obviously, the subsets
$$
\V_{\ge x}:=\{y\in\V\suchthat y\ge x\},\quad \V_{>x}:=\{y\in\V\suchthat y>x\}
$$
are open and $U=\bigcup_{x\in U}\V_{\ge x}$ for any open $U\subset\V$. Unions and intersections of open subsets
are also open, so we get a topology on $\V$. We write [X] for the closure of $X\subset\V$. Obviously, the subsets
$$
\V_{\le x}:=\{y\in\V\suchthat y\le x\},\quad \V_{<x}:=\{y\in\V\suchthat y<x\}
$$
are closed and $[X]=\bigcup_{x\in X}\V_{\le x}$. Note that any unions including infinite ones of closed subsets are closed
in our topology!

In the sequel, we always suppose that $\V_{\le x}$ is finite for any $x\in\V$.
Then the closure of any finite subset is also finite.

\subsection{Moment graphs}\label{moment_graphs} Throughout this paper $\F$ denotes a field of characteristic
distinct from 2. Let $\VF$ be a finite dimensional vector space over $\F$. Consider the symmetric algebra $S:=S(\VF)$ with $\Z$-grading such that elements of $\VF$
have degree $2$. For brevity, we shall always say in this paper ``graded'' meaning ``$\Z$-graded''.

\begin{definition}\label{definition:1}
A moment graph $\G$ is given by the following data:
\begin{enumerate}
\item An oriented graph $(\V,\E)$ with set of vertices $\V$ and set of edges $\E$.
\item A map $l:\E\to \VF\setminus\{0\}$ called the labelling.
\item A partial order $\le$ on $\V$ such that the following holds: if there is an edge from $x$ to $y$ then $x<y$.
\end{enumerate}
\end{definition}
We write $\edge x{}y$ if we want to say that $x$ and $y$ are connected by an edge (in any direction)
and $\edgeright x{}y$ if we want to say that there is an edge from $x$ to $y$.
If we want to specify the label of this edge we write $\edge x\alpha y$ and $\edgeright x\alpha y$ and
if we want to call this edge we add its name followed by $:$ on the left, e.g. $E:\edgeright x\alpha y$.

We say that $\widehat\G$ satisfies the {\it GKM-property} if the the labels $\alpha$ and $\beta$ of any two
edges having a common vertex are not proportional, that is, $\beta\notin\F\alpha$.

For any subset $\I\subset\V$, we define the {\it full moment subgraph $\G_\I$} of $\G$ by taking $\I$ for the set of vertices
and considering only those edges of $\G$ that connect elements of $\I$.
We often abbreviate
$\G_{\le x}:=\G_{\V_{\le x}}$.

We keep this notation throughout the paper --- $\V$ usually denotes the set of vertices,\linebreak $\E$ the set of edges
and $l$ the labelling.

\subsection{Finite root system} Let $V$ be a finite dimensional $\mathbb Q$-vector space and
let $R\subset V$ be an irreducible, reduced, finite root system.
We denote by $V^*$ the $\mathbb Q$-space of all $\mathbb Q$-linear functions $f:V\to\mathbb Q$ and write
$\<\cdot,\cdot\>:V\times V^*\to\mathbb Q$ for the natural pairing: $\<v,f\>:=f(v)$.
For $\alpha\in R$, we denote by $\alpha^\vee\in V^*$
the corresponding coroot. Let $R^\vee := \{\alpha^\vee\,\suchthat\,\alpha\in R\}$ be the dual root system and
$$
X:=\{\lm\in V\suchthat\<\lm,\alpha^\vee\>\in\Z\text{ for all }\alpha\in R\}
$$
be the weight lattice. We fix a decomposition into positive and negative roots $R=R^+\dotcup R^-$ and the corresponding
set $\Pi$ of simple roots.

\subsection{Affine Weyl group}\label{Affine_Weyl_group} For $\alpha\in R$ and $n\in\Z$, we define the affine reflection \linebreak$s_{\alpha,n}:V^*\to V^*$ by
$$
s_{\alpha,n}(v)=v-(\<\alpha,v\>-n)\,\alpha^\vee.
$$
Note that $s_\alpha:=s_{\alpha,0}$ is the usual reflection and
$s_{\alpha,n}=s_{\beta,m}$ if and only if either $\alpha=\beta$ and $n=m$ or $\alpha=-\beta$ and $n=-m$.

The affine Weyl group $\widehat\W$ is the subgroup of transformations of $V^*$ that is generated by all $s_{\alpha,n}$.
We can linearize the above affine action by adding an additional dimension. Set $\widehat V:=V\oplus\mathbb Q$ and
$\widehat V^*:=V^*\oplus\mathbb Q$. This pair of spaces have the pairing $\<\cdot,\cdot\>:\widehat V\times \widehat V^*\to\mathbb Q$
given by $\<(\lm,m),(v,n)\>=\<\lm,v\>+mn$. We let $s_{\alpha,n}$ act linearly on $\widehat V^*$ by
$$
s_{\alpha,n}(v,m)=(v-(\<\alpha,v\>-mn)\,\alpha^\vee,m).
$$
This action extends to the linear action of the whole $\widehat\W$.
The first level space $\widehat V^*_1:=\{(v,1)\suchthat v\in V^*\}$ is stabilized by this action of $\widehat\W$ and
we have
$$
w(v,1)=(wv,1)
$$
for any $w\in\widehat W$ and $v\in V^*$.

We also have the dual action of $\widehat\W$ on $\widehat V$ that is characterized by the duality rule
$$
\<w(\alpha),v\>=\<\alpha,w^{-1}(v)\>
$$
for any $\alpha\in\widehat V$ and $v\in\widehat V^*$. Explicitly it is given by
$$
w(0,\nu)=(0,\nu),\quad s_{\alpha,n}(\lm,0)=(s_\alpha(\lm),n\<\lm,\alpha^\vee\>).
$$
In the next section, we shall give a friendlier interpretation of this action.

\subsection{Affine root system}\label{Affine_root_system} The hyperplane of fixed points of $s_{\alpha,n}$ in $\widehat V^*$ is
$$
\widehat H_{\alpha,n}:=\{(v,m)\suchthat\<\alpha,v\>=mn\}.
$$
We can rewrite the defining equation of this hyperplane as $\<(\alpha,-n),\hat v\>=0$.
We define $s_{(\alpha,-n)}:=s_{\alpha,n}$ and call the pair $(\alpha,-n)$ an {\it affine root}.
The set of all affine roots is denoted~$\widehat R$.
Clearly, $s_\gamma=s_\tau$ for two affine roots $\gamma$ and $\tau$ if and only if $\gamma=\pm\,\tau$
(cf. Section~\ref{Affine_Weyl_group}).

Now let us choose the system $\widehat R^+\subset\widehat R$ of positive roots as follows
$$
\widehat R^+:=\{(\alpha,n)\suchthat\alpha\in R,n>0\}\dotcup\{(\alpha,0)\suchthat\alpha\in R^+\}.
$$
We set $\widehat R^-:=-\widehat R^+$ and call it the set of {\it negative roots}.
Clearly $\widehat R=\widehat R^-\dotcup \widehat R^+$.
We write $\gamma>0$ (resp. $\gamma<0$) to say that $\gamma\in R^+$ or $\gamma\in\widehat R^+$
(resp. $\gamma\in R^-$ or $\gamma\in\widehat R^-$).
The set of {\it simple affine roots} is
$$
\widehat\Pi:=\{(\alpha,0)\suchthat\alpha\in\Pi\}\dotcup\{(-\tilde\alpha,1)\},
$$
where $\tilde\alpha$ is the highest root of $R$. Then any root of $\widehat R^+$ is a sum (possibly with repetitions)
of roots of $\widehat\Pi$. The corresponding set of {\it simple reflections} is
$$
\widehat\S:=\{s_\alpha\suchthat\alpha\in\Pi\}\dotcup\{s_{\tilde\alpha,1}\}.
$$
Then $\widehat\S$ is a set of generators of $\widehat\W$ and $(\widehat\W,\widehat\S)$ is a Coxeter system.
Therefore we have the length function $\ell$ and the Bruhat order $\le$ on $\widehat\W$
(cf.~\cite[Chapter~2]{Bjorner_Brenti}) .



\begin{proposition}[\text{\cite[Proposition 4.4(c)]{Humphreys_reflection_groups}}]\label{proposition:0}
For any $w\in\widehat\W$ and $\gamma\in\widehat\Pi$, we have $\ell(ws_\gamma)=\ell(w)+1$ if and only if $w(\gamma)>0$.
\end{proposition}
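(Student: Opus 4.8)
The plan is to deduce the statement from the standard description of the length of an element of the Coxeter system $(\widehat\W,\widehat\S)$ via its set of inversions. For $w\in\widehat\W$ set
$$
N(w):=\{\gamma\in\widehat R^+\suchthat w(\gamma)\in\widehat R^-\},
$$
and take for granted two facts. First, for a simple affine root $\gamma\in\widehat\Pi$ the reflection $s_\gamma$ sends $\gamma$ to $-\gamma$ and permutes $\widehat R^+\setminus\{\gamma\}$: by the description of $\widehat R$ in Section~\ref{Affine_root_system} every affine root is either a non-negative or a non-positive integral combination of $\widehat\Pi$, and applying $s_\gamma$ changes only the coefficient on $\gamma$, so a positive root $\alpha\ne\gamma$ retains a strictly positive coefficient on some simple root different from $\gamma$ and hence stays positive. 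Second, $\ell(w)=|N(w)|$; in particular $N(w)$ is finite. The latter is the substantial input: one shows that a reduced expression $w=s_{\gamma_1}\cdots s_{\gamma_q}$ (with $\gamma_i\in\widehat\Pi$) exhibits $N(w)$ as the set $\{\,s_{\gamma_q}\cdots s_{\gamma_{i+1}}(\gamma_i)\suchthat 1\le i\le q\,\}$ of $q$ pairwise distinct positive roots, the distinctness being exactly where the exchange condition for $(\widehat\W,\widehat\S)$ enters, and then $|N(w)|=q=\ell(w)$.

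Granting these, the proposition follows by an elementary count. Writing $\widehat R^+=\{\gamma\}\dotcup(\widehat R^+\setminus\{\gamma\})$, we have on the one hand $ws_\gamma(\gamma)=-w(\gamma)$, so $\gamma$ lies in $N(ws_\gamma)$ precisely when $w(\gamma)>0$; on the other hand $\alpha\mapsto s_\gamma(\alpha)$ is an involution of $\widehat R^+\setminus\{\gamma\}$ and $ws_\gamma(\alpha)=w(s_\gamma(\alpha))$, so $\{\alpha\in\widehat R^+\setminus\{\gamma\}\suchthat ws_\gamma(\alpha)<0\}=s_\gamma\bigl(N(w)\setminus\{\gamma\}\bigr)$, a set of the same cardinality as $N(w)\setminus\{\gamma\}$. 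Consequently $|N(ws_\gamma)|=|N(w)|+1$ when $w(\gamma)>0$ (then $\gamma\notin N(w)$ but $\gamma\in N(ws_\gamma)$) and $|N(ws_\gamma)|=|N(w)|-1$ when $w(\gamma)<0$ (the reverse holds). By the identity $\ell=|N|$ this reads $\ell(ws_\gamma)=\ell(w)+1$ in the first case and $\ell(ws_\gamma)=\ell(w)-1$ in the second; since $s_\gamma$ is a reflection there is no third possibility, so $\ell(ws_\gamma)=\ell(w)+1$ if and only if $w(\gamma)>0$.

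The one genuinely delicate point is the identity $\ell(w)=|N(w)|$: the induction on $\ell(w)$ must be organised so that it rests only on the exchange/deletion condition for $(\widehat\W,\widehat\S)$ rather than on the conclusion being proved, and one must observe that $N(w)$ stays finite even though $\widehat R$ is infinite, which is automatic once the reduced-word description of $N(w)$ is in place. Since this is entirely standard Coxeter-group material, available in the stated generality in~\cite{Humphreys_reflection_groups}, we content ourselves with the reference.
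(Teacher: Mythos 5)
Your argument is correct and is the standard inversion-set proof of this Coxeter-theoretic fact: split $\widehat R^+$ into $\{\gamma\}$ and its complement, note $s_\gamma$ permutes the complement and flips $\gamma$, and compare $|N(ws_\gamma)|$ with $|N(w)|$ using $\ell=|N|$. The paper itself supplies no proof here — it simply cites Humphreys \cite[Prop.~4.4(c)]{Humphreys_reflection_groups} — and your write-up reconstructs precisely the argument given there, so there is nothing to compare beyond agreeing that the identity $\ell(w)=|N(w)|$ is the genuine content and is correctly delegated to the reference.
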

\noindent
Along with the affine roots $\widehat R$, we can consider the affine weights $\widehat X:=X\oplus\Z$.

Finally, note that the above action of $\widehat\W$ on $\widehat V$
satisfies
\begin{equation}\label{eq:0}
ws_\alpha w^{-1}=s_{w(\alpha)}
\end{equation}
for any $\alpha\in\widehat R$ and $w\in\widehat\W$ and is given by
$$
s_\alpha(\beta)=\beta-\<\{\beta\},\{\alpha\}^\vee\>\alpha
$$
for any $\alpha\in\widehat R$ and $\beta\in\widehat V$,
where $\{\cdot\}:\widehat V\to V$ is the map $(\lm,n)\mapsto\lm$.
An easy observation is that $\{s_\alpha(\beta)\}=s_{\{\alpha\}}\{\beta\}$ for any $\alpha,\beta\in R$.
Hence $\<\{w(\alpha)\},\{w(\beta)\}^\vee\>=\<\{\alpha\},\{\beta\}^\vee\>$
for any $\alpha,\beta\in R$ and $w\in\widehat\W$.

Using the abbreviation $\<\alpha,\beta\>':=\<\{\alpha\},\{\beta\}^\vee\>$, we
shall rewrite the above formulas as
\begin{equation}\label{eq:1}
s_\alpha(\beta)=\beta-\<\beta,\alpha\>'\alpha
\end{equation}
and
\begin{equation}\label{eq:0.25}
\<w(\alpha),w(\beta)\>'=\<\alpha,\beta\>'.
\end{equation}
\subsection{Properties of the Bruhat order} We outline the main properties of this order (valid for any Coxeter group)
that we shall use in the sequel.
Concise and self-contained proofs can be found in~\cite{Bjorner_Brenti}.
Consider the set of reflections $\widehat\T:=\{wsw^{-1}\suchthat w\in\widehat\W\}$.

\begin{proposition}[Subword Property]\label{proposition:subword_property}
Let $w=s_1\cdots s_q$ be a reduced expression. We have $u\le w$ in the Bruhat order if and only if there exists a reduced
expression $u=s_{i_1}\cdots s_{i_k}$ for some $1\le i_1<\cdots<i_k\le q$.
\end{proposition}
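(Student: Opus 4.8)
This is the classical Subword Property, and the plan is to run the standard argument, invoking the combinatorics of Coxeter systems from \cite[Chapters~1--2]{Bjorner_Brenti}; nothing about the affine or GKM setting of the paper enters. Recall that $u\le w$ means there is a chain $u=x_0,x_1,\dots,x_m=w$ with $x_i=x_{i-1}t_i$ for reflections $t_i\in\widehat\T$ and $\ell(x_{i-1})<\ell(x_i)$. I would prove the two implications separately.

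\textbf{From the Bruhat order to a subword.} It suffices to handle a single step of the chain and then iterate: if $x_{i-1}$ has a reduced expression which is a subword of a given reduced expression of $x_i$, and $x_i$ one which is a subword of a given reduced expression of $x_{i+1}$, then composing the two subword inclusions produces a reduced expression of $x_{i-1}$ that is a subword of the chosen one for $x_{i+1}$. So I would fix a reduced expression $w=s_1\cdots s_q$ and a reflection $t$ with $u:=wt$ and $\ell(u)<\ell(w)$. By the Strong Exchange Property, $wt=s_1\cdots\widehat{s_j}\cdots s_q$ for some $j$, which exhibits $u$ as a length-$(q-1)$ word that is a subword of $s_1\cdots s_q$. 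If this word is not reduced, I would repeatedly apply the Deletion Condition to remove pairs of letters until it is; the outcome is still a subword of $s_1\cdots s_q$. Iterating down the chain then shows that $u\le w$ forces the existence of the required subword --- in fact for every reduced expression of $w$.

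\textbf{From a subword to the Bruhat order.} Here I would induct on $q$. Let $u=s_{i_1}\cdots s_{i_k}$ be reduced with $1\le i_1<\cdots<i_k\le q$, and set $w'=s_1\cdots s_{q-1}$, which is reduced of length $q-1$ and is covered by $w=w's_q$, so $w'\lessdot w$. If $i_k<q$ then $u$ is already a subword of $w'$, hence $u\le w'<w$ by induction. If $i_k=q$, write $u=u's_q$ with $u'=s_{i_1}\cdots s_{i_{k-1}}$ reduced; then $u'$ is a subword of $w'$, so $u'\le w'$ by induction. Since $\ell(u's_q)=\ell(u)>\ell(u')$ and $\ell(w's_q)=\ell(w)>\ell(w')$, the Lifting Property (in the form: $a\le b$ with $\ell(as)>\ell(a)$ and $\ell(bs)>\ell(b)$ imply $as\le bs$) yields $u=u's_q\le w's_q=w$.

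\textbf{The main obstacle.} All the bookkeeping above is routine; the substance is entirely contained in the Strong Exchange Property, the Deletion Condition and the Lifting Property, which for a general Coxeter system are all equivalent to the Exchange Condition. Establishing the Exchange Condition is the one genuinely non-formal ingredient --- it rests either on Tits' word argument or on the geometry of the reflection representation and the (affine) root system recorded in Sections~\ref{Affine_Weyl_group}--\ref{Affine_root_system}. Since $(\widehat\W,\widehat\S)$ has already been noted to be a Coxeter system, I would simply cite these facts from \cite{Bjorner_Brenti} rather than reprove them here.
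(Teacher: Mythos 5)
Your proposal is correct and is the standard argument; note, however, that the paper itself does not prove the Subword Property at all --- it states it (together with Exchange, Lifting and Deletion) without proof and simply cites \cite{Bjorner_Brenti} for ``concise and self-contained proofs,'' so there is no in-paper argument to compare against. For completeness, the form of the Lifting Property you invoke ($a\le b$, $as>a$, $bs>b$ imply $as\le bs$) does follow from the paper's Proposition~\ref{proposition:lifting_property} by taking $w=bs$ there (and treating $a=b$ as trivial), and the paper's Exchange Property is already stated in the strong form (the expression need not be reduced), so every ingredient you use is available within the paper's toolkit and the proof supplies exactly what the paper elides.
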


\begin{proposition}[Exchange Property]\label{proposition:exchange_property}
Suppose $w=s_1\cdots s_k$, where $s_i\in\widehat\S$ and $t\in\widehat\T$.
If $\ell(wt)<\ell(w)$, then $wt=s_1\cdots\widehat s_i\cdots s_k$ for some $i=1,\ldots,k$.
\end{proposition}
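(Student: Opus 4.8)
\noindent\emph{Proof proposal.} This is a form of the Strong Exchange Condition, and the plan is to deduce it from the interplay between the length function and the geometric action of $\widehat\W$ on $\widehat V$ recorded in~\eqref{eq:0}, building on Proposition~\ref{proposition:0}. Write $t=s_\gamma$ for a positive affine root $\gamma\in\widehat R^+$ with $s_\gamma=t$ (one exists and is unique up to sign by Section~\ref{Affine_root_system}). For $j=1,\dots,k$ let $\alpha_j\in\widehat\Pi$ be the simple root with $s_j=s_{\alpha_j}$, and put
\[
t_j:=(s_ks_{k-1}\cdots s_{j+1})\,s_j\,(s_{j+1}\cdots s_{k-1}s_k),\qquad
\beta_j:=(s_ks_{k-1}\cdots s_{j+1})(\alpha_j),
\]
so that $t_j=s_{\beta_j}$ by~\eqref{eq:0}. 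Juxtaposing the displayed expression for $t_j$ with $w=s_1\cdots s_k$ and cancelling the resulting central blocks $s_ms_m$ one after another yields $wt_j=s_1\cdots\widehat s_j\cdots s_k$ for each $j$. Therefore it suffices to show that $\ell(wt)<\ell(w)$ forces $t=t_i$ for some $i\in\{1,\dots,k\}$.

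The key step, and the one I expect to be the main obstacle, is to upgrade Proposition~\ref{proposition:0} to the length criterion for an \emph{arbitrary} positive affine root:
\[
\ell(us_\gamma)<\ell(u)\iff u(\gamma)\in\widehat R^-\qquad\text{for all }u\in\widehat\W,\ \gamma\in\widehat R^+.
\]
When $\gamma\in\widehat\Pi$ this is just Proposition~\ref{proposition:0} together with the fact that $s_\gamma\in\widehat\S$ changes length by exactly $1$; the work is in treating non-simple~$\gamma$. The standard route is to show $\#\{\gamma\in\widehat R^+\suchthat u(\gamma)\in\widehat R^-\}=\ell(u)$ by induction on $\ell(u)$, using that a simple reflection $s_\alpha$ permutes $\widehat R^+\setminus\{\alpha\}$ while sending $\alpha\mapsto-\alpha$ (immediate from~\eqref{eq:1}: a positive root $\beta\ne\alpha$ keeps a strictly positive coefficient on some simple root other than $\alpha$, hence remains positive), and then to read off the displayed equivalence; alternatively one may invoke the self-contained treatment in~\cite{Bjorner_Brenti}.

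Granting the criterion, assume $\ell(wt)<\ell(w)$, so that $w(\gamma)\in\widehat R^-$. Set $\gamma^{(k)}:=\gamma$ and $\gamma^{(j-1)}:=s_j\bigl(\gamma^{(j)}\bigr)$ for $j=k,k-1,\dots,1$; then $\gamma^{(j)}=(s_{j+1}s_{j+2}\cdots s_k)(\gamma)$ and $\gamma^{(0)}=w(\gamma)$. Since $\gamma^{(k)}=\gamma\in\widehat R^+$ whereas $\gamma^{(0)}\in\widehat R^-$, there is an index $i\in\{1,\dots,k\}$ with $\gamma^{(i)}\in\widehat R^+$ and $\gamma^{(i-1)}=s_i(\gamma^{(i)})\in\widehat R^-$. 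As $s_i=s_{\alpha_i}$ is a simple reflection whose only positive root mapped to a negative one is $\alpha_i$, we must have $\gamma^{(i)}=\alpha_i$; hence $\gamma=(s_ks_{k-1}\cdots s_{i+1})(\alpha_i)=\beta_i$ and $t=s_\gamma=s_{\beta_i}=t_i$, so that $wt=wt_i=s_1\cdots\widehat s_i\cdots s_k$, as required. I remark that reducedness of $s_1\cdots s_k$ is never used; in the reduced case the index $i$ is moreover unique, since $\beta_1,\dots,\beta_k$ are then pairwise distinct (a coincidence among them would force $\ell(w)<k$).
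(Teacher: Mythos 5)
The paper gives no proof of this proposition: it is one of several standard Coxeter-group facts recalled in Section~1 with a blanket pointer to~\cite{Bjorner_Brenti} for self-contained proofs. Your argument is correct and is precisely the textbook proof of the Strong Exchange Condition supplied there (and in Humphreys): track the images $\gamma^{(j)}$ of $\gamma$ under the partial products of $w^{-1}$, locate the sign change, conclude $\gamma^{(i)}=\alpha_i$ and hence $t=t_i$. You are also right to flag that the real content is the criterion $\ell(us_\gamma)<\ell(u)\iff u(\gamma)\in\widehat R^-$ for an arbitrary positive root $\gamma$, which genuinely goes beyond Proposition~\ref{proposition:0} (the simple-root case) and requires either the counting argument you sketch or an appeal to the cited reference; and your closing remark that reducedness of $s_1\cdots s_k$ is unused, and only guarantees uniqueness of $i$, is accurate and consistent with the statement as given.
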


\begin{proposition}[Lifting property]\label{proposition:lifting_property}
Suppose $u,v\in\widehat\W$ and $s\in\widehat S$ be such that $u<w$
and $ws<w$ and $us>u$. Then $us\le w$ and $u\le ws$.
\end{proposition}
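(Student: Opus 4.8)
The plan is to prove the two conclusions $us\le w$ and $u\le ws$ by exploiting the Subword Property (Proposition~\ref{proposition:subword_property}) together with a careful bookkeeping of reduced expressions. First I would fix a reduced expression $w=s_1\cdots s_q$. Since $ws<w$, the right-multiplication by $s\in\widehat\S\subset\widehat\T$ lowers the length, so by the Exchange Property (Proposition~\ref{proposition:exchange_property}) we may delete one letter to get $ws=s_1\cdots\widehat s_j\cdots s_q$ for some index $j$; in particular this is a reduced expression for $ws$, and also $w=(ws)s$ shows $s_q$ can in fact be taken to be $s$ itself after a harmless re-choice, i.e.\ we may assume $w=s_1\cdots s_{q-1}s$ with $ws=s_1\cdots s_{q-1}$, both reduced.

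Next I would bring in the hypothesis $u<w$. By the Subword Property, $u$ has a reduced expression that is a subword of $s_1\cdots s_{q-1}s$. The key dichotomy is whether such a subword uses the final letter $s$ or not. If some reduced subword of $u$ does \emph{not} use the last $s$, then $u\le s_1\cdots s_{q-1}=ws$ directly, and then $us\le (ws)s=w$ by the Subword Property applied to $w=s_1\cdots s_{q-1}s$ — but here I must also use $us>u$ to guarantee that appending $s$ genuinely produces a reduced expression for $us$, hence that $us\le w$; this is where the third hypothesis enters. If instead \emph{every} reduced subword of $u$ inside $s_1\cdots s_{q-1}s$ must use the terminal $s$, I would argue that then $u$ cannot satisfy $us>u$: writing $u=s_{i_1}\cdots s_{i_{k-1}}s$ reduced with $i_1<\cdots<i_{k-1}\le q-1$, we get $us=s_{i_1}\cdots s_{i_{k-1}}$, which has length $k-1<k=\ell(u)$, contradicting $us>u$. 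So this second case is vacuous, and the first case gives both conclusions simultaneously.

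The main obstacle I anticipate is the passage "if no reduced subword of $u$ uses the last letter $s$, then $u\le ws$": a priori the Subword Property only tells me that \emph{some} reduced expression of $u$ is \emph{a} subword of the chosen reduced expression of $w$, not that I get to control which letters it uses. The clean way around this is to avoid that phrasing entirely and instead run the argument through the standard length computations $\ell(us)=\ell(u)\pm1$ and $\ell(ws)=\ell(w)-1$: combine $u<w$, $\ell(u)<\ell(w)$, and $\ell(us)=\ell(u)+1$, and apply the Subword Property to the \emph{single} reduced expression $w=(ws)s$ to place a reduced subword of $u$ inside it, then split on whether that subword ends in $s$; the case that ends in $s$ forces $us<u$, contradiction, and the case that does not immediately yields $u\le ws$, after which $us\le w$ follows because $us$ has the reduced expression (reduced subword of $ws$)$\cdot s$ which is a subword of $(ws)s=w$. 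I would also remark that the statement is symmetric in a suitable sense, so once $u\le ws$ is established, deriving $us\le w$ is the short step above and needs no separate induction.
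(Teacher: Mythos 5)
The paper itself does not prove this proposition; it is listed among the standard properties of the Bruhat order with a pointer to Bj\"orner--Brenti for ``concise and self-contained proofs,'' so there is no in-paper argument to compare against. Your cleaned-up version in the final paragraph is exactly the textbook proof: replace the chosen reduced expression by one of the form $w=(ws)s$ (using $\ell(ws)=\ell(w)-1$ to see $(ws)s$ is reduced); apply the Subword Property to obtain a reduced subword $u=s_{i_1}\cdots s_{i_k}$ of it; and split on whether $i_k=q$. If $i_k=q$ then $us=s_{i_1}\cdots s_{i_{k-1}}$ has fewer than $k=\ell(u)$ letters, forcing $us<u$, contradicting the hypothesis; otherwise the subword lies inside $s_1\cdots s_{q-1}=ws$, giving $u\le ws$, and appending $s$ gives a reduced word for $us$ (as $\ell(us)=\ell(u)+1$) that is a subword of $(ws)s=w$, hence $us\le w$.

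Your first paragraph takes a needlessly indirect route to the reduced expression ending in $s$: invoking the Exchange Property is not wrong, but the direct observation that any reduced word for $ws$ followed by $s$ is automatically a reduced word for $w$ (by the length count) is both shorter and avoids having to argue that the deletion is length-preserving. You also correctly identify the hazard in the loose phrasing ``if no reduced subword uses the last $s$''; the dichotomy should be run on one fixed reduced subword furnished by the Subword Property, which is precisely what your final paragraph does. The argument is correct and complete.
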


\begin{corollary}\label{corollary:0}
Suppose $u,v\in\widehat\W$ and $s\in\widehat S$ be such that $u<w$.
\begin{enumerate}
\itemsep=2pt
\item\label{corollary:0:part:1} If $ws<w$ then $us\le w$.
\item\label{corollary:0:part:2} If $us>u$ then $u\le ws$.
\end{enumerate}
\end{corollary}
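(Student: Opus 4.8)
The plan is to derive both parts directly from the Lifting property (Proposition~\ref{proposition:lifting_property}) by a short case split according to whether a certain Bruhat length goes up or down. The one elementary fact I will invoke repeatedly is that for any $w'\in\widehat\W$ and any simple reflection $s\in\widehat S$ one has $\ell(w's)\neq\ell(w')$, so that exactly one of $w's<w'$ and $w's>w'$ holds; this is the standard fact that multiplication by a simple reflection changes the length by exactly one.

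For part~\ref{corollary:0:part:1}, I would start from the hypotheses $u<w$ and $ws<w$ and distinguish the sign of $\ell(us)-\ell(u)$. If $us<u$, then $us<u<w$ and hence $us\le w$, so we are done. If instead $us>u$, then $u<w$, $ws<w$ and $us>u$ are precisely the hypotheses of Proposition~\ref{proposition:lifting_property}, whose conclusion includes $us\le w$; this finishes the case.

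For part~\ref{corollary:0:part:2}, starting from $u<w$ and $us>u$, I would instead distinguish the sign of $\ell(ws)-\ell(w)$. If $ws>w$, then $u<w<ws$, so $u\le ws$. If $ws<w$, then again $u<w$, $ws<w$ and $us>u$ are the hypotheses of Proposition~\ref{proposition:lifting_property}, whose conclusion includes $u\le ws$, as desired.

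I do not anticipate any genuine obstacle here: the corollary is a purely formal consequence of the Lifting property, and each part reduces to a two-line case analysis. The only point worth stating carefully is the exhaustiveness of the dichotomy $w's<w'$ versus $w's>w'$, i.e.\ that multiplying by a simple reflection always strictly changes the length; everything else is immediate from transitivity of the Bruhat order.
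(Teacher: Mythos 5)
Your proof is correct. The paper states this corollary without proof, but your derivation is the natural (and essentially unique) one: both parts reduce to the Lifting property (Proposition~\ref{proposition:lifting_property}) after a dichotomy on whether multiplying by $s$ raises or lowers the length of the other element, with the non-Lifting case handled by transitivity of the Bruhat order.
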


\begin{proposition}[Deletion Property]\label{proposition:deletion_property}
If $w=s_1\cdots s_k$, where $s_i\in\widehat\S$ and  $\ell(w)<k$, then $w = s_1\cdots\widehat s_i\cdots\widehat s_j\cdots s_k$
for some $1\le i<j\le k$.
\end{proposition}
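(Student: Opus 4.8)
The plan is to reduce everything to a single application of the Exchange Property (Proposition~\ref{proposition:exchange_property}), applied to the first ``bad'' prefix of the given expression.

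First I would locate the earliest place at which the word $s_1\cdots s_k$ fails to be reduced. Since $\ell(w)<k$, the set of indices $m$ with $\ell(s_1\cdots s_m)<m$ is nonempty (it contains $k$); let $j$ be its smallest element. For every $m<j$ we then have $\ell(s_1\cdots s_m)=m$, because a product of $m$ elements of $\widehat\S$ has length at most $m$ and, by minimality of $j$, this length is not strictly smaller than $m$. In particular the prefix $u:=s_1\cdots s_{j-1}$ is reduced of length $j-1$; note also that $j\ge 2$, since $\ell(s_1)=1>0$ as a simple reflection is never the identity.

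Next, by the choice of $j$ we have $\ell(us_j)=\ell(s_1\cdots s_j)<j=\ell(u)+1$, hence $\ell(us_j)<\ell(u)$. Since $\widehat\S\subset\widehat\T$ (take $w$ to be the identity in the definition $wsw^{-1}$), the element $s_j$ is admissible in the Exchange Property, which, applied to the reduced word $u=s_1\cdots s_{j-1}$ and $t=s_j$, yields an index $1\le i\le j-1$ with
$$
s_1\cdots s_{j-1}s_j=us_j=s_1\cdots\widehat s_i\cdots s_{j-1}.
$$

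Finally I would substitute this identity back into $w$. Multiplying both sides on the right by $s_{j+1}\cdots s_k$ gives
$$
w=s_1\cdots s_{j-1}s_js_{j+1}\cdots s_k=s_1\cdots\widehat s_i\cdots s_{j-1}s_{j+1}\cdots s_k=s_1\cdots\widehat s_i\cdots\widehat s_j\cdots s_k,
$$
with $1\le i<j\le k$, which is exactly the asserted expression. I do not expect a genuine obstacle: the entire content is packaged in the Exchange Property, and the only point that needs a moment's care is the choice of the minimal failing prefix, which is what guarantees that the word to which we apply the Exchange Property is itself reduced.
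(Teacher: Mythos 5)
The paper does not prove this proposition; it lists it among the standard properties of the Bruhat order and refers the reader to Bj\"orner--Brenti for proofs, so there is no in-paper argument to compare against. Your derivation from the Exchange Property (Proposition~\ref{proposition:exchange_property}) via the minimal non-reduced prefix is the standard textbook proof, and it is correct. One small point is elided: from $\ell(us_j)<\ell(u)+1$ you get only $\ell(us_j)\le\ell(u)$, and to obtain the strict inequality $\ell(us_j)<\ell(u)$ needed to invoke the Exchange Property you should note that multiplication by a simple reflection changes length by exactly $\pm1$, so equality is impossible; with that remark the argument is complete.
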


\subsection{Associated moment graph}\label{associated_moment_graph} 
We define the moment graph $\widehat\G$ with set of vertices $\widehat\W$ and the Bruhat order on it.
There is an edge from $x$ to $y$ if and only if $x<y$ and $y=s_{\alpha}x$ for some $\alpha\in\widehat R^+$.
We endow this edge with the (nonzero) label $\bar\alpha:=\alpha\otimes1\in\widehat X\otimes_\Z\F=:\VF$.
We define the graded symmetric algebra $S:=S(\VF)$ as in Section~\ref{moment_graphs}.

We can shift edges as follows. If we have an edge $E:\edge x\alpha y$ and $s\in\widehat\W$,
then there is also an edge $Es:\edge{xs}\alpha{ys}$. Note that this operation does not preserve the direction of edges.

For $I\subset\widehat\S$, we can consider the parabolic subgroup $\widehat\W_I$, which is the subgroup of $\widehat\W$
generated by $I$. It is known that $(\widehat\W_I,I)$ is a Coxeter system with the same length function
(cf.~\cite[Proposition 2.4.1]{Bjorner_Brenti}). We introduce the quotient moment graph $\widehat\G^I$ as follows.
We take $\widehat\W^I:=\widehat\W/\widehat\W_I$ (left cosets) for the set of vertices with the induced order.
This means the following: $x<y$, where $x,y\in\widehat\W^I$, if and only if $x\ne y$ and
there are some coset representatives $u\in x$ and $v\in y$ such that $u<v$.
By~\cite[Proposition~2.5.1]{Bjorner_Brenti} this order is well defined and we can take
elements of minimal length in $x$ and $y$ for $u$ and $v$ respectively.

To get the edges of $\widehat\G^I$, we virtually repeat the definition of the order on $\widehat\W^I$:
there is an edge $\edgeright x{}y$, where $x,y\in\widehat\W^I$, if and only if $x\ne y$ and
there are some coset representatives $u\in x$ and $v\in y$ such that $\edgeright u{}v$.
This edge receives the same labelling as $\edgeright u{}v$.
In other words, there is an edge $\edgeright x{\bar\alpha}y$ if and only if $x<y$,
$\alpha\in\widehat R^+$ and $y=s_\alpha x$.

We are especially interested in the case $I=\{e,s\}$ for some $s\in\widehat\S$.
We abbreviate $\widehat\W^s:=\widehat\W^{\{e,s\}}$ and $\widehat\G^s:=\widehat\G^{\{e,s\}}$.
We usually write $\bar x=\{x,xs\}$ for the image of $x\in\widehat\W$ in $\widehat\W^s$,
when it is clear which $s$ we mean.
We also set $\overline\Omega:=\{\bar x\,|\,x\in \Omega\}$ for any subset $\Omega\subset\widehat\W$.
We denote by $\pi_s$ the natural projection $\widehat\W\to\widehat\W^s$ (i.e. $\pi_s(x)=\{x,xs\}=\bar x$).

We also denote by $\bar E:\edgeright{\bar x}{}{\bar y}$ the edge of $\widehat\G^s$
corresponding to an edge $E:\edgeright x{}y$.

\section{Projective sheaves}

\subsection{Sheaves} A {\it homomorphism} $f:M\to N$ of graded spaces,
is a homogeneous linear map of degree zero, that is a linear map satisfying $f(M_i)\subset N_i$ for any $i\in\Z$.
Similarly, a {\it homomorphism of graded $S$-modules} is a usual (ungraded) homomorphism of $S$-modules
that is a homomorphism of graded spaces.
We define now the main object of our study.

\begin{definition}\label{definition:2}
A sheaf $\M$ on the moment graph $\G$ is given by the following data:
\begin{enumerate}
\itemsep=2pt
\item\label{definition:2:part:1} a graded $S$-module $\M^x$ for each vertex $x$;
\item\label{definition:2:part:2} a graded $S$-module $\M^E$ for each vertex $E$ such that $l(E)\,\M^E=0$;
\item\label{definition:2:part:3} a homomorphism $\rho_{x,E}:\M^x\to\M^E$ {\rm(}restriction{\rm)} of graded $S$-modules for each edge~$E$ and vertex $x$ lying on $E$.
\end{enumerate}
\end{definition}
\noindent
We sometimes write $\rho_{x,E}^\M$ to underline that this restriction map is for $\M$.

The {\it support} of $\M$ is $\supp\M:=\{x\in\V\suchthat\M^x\ne0\}$. We call $\M^x$ the {\it stalk} of $\M$ at $x$.
Let us define $\E^x\subset\E$ to be the set of edges that contain the vertex $x$
and $\E^{\delta x}\subset\E$ to be the set of edges that start at $x$.
We denote by $\V^{\delta x}$ the set of ends of all edges of $\E^{\delta x}$.
We define the following graded submodules of $\M^x$:
$$
\arraycolsep=2pt
\begin{array}{rcl}
\M_x&:=&\{m\in\M^x\suchthat\rho_{x,E}(m)=0\text{ for all }E\in\E^x\},\\[8pt]
\M_{[x]}&:=&\{m\in\M^x\suchthat\rho_{x,E}(m)=0\text{ for all }E\in\E^{\delta x}\}.
\end{array}
$$
Clearly, $\M_x\subset\M_{[x]}\subset\M^x$.

If $M$ is a graded module and $n\in\Z$, then we denote by $M\<n\>$ the graded module with shifted grading
$M\<n\>_i=M_{i+n}$. The similar notation applies to maps: if $\pi:M\to N$ is a map from one graded module
to another, then $\pi\<n\>$ denotes the map from $M\<n\>$ to $N\<n\>$ acting as $\pi$ elementwise.
Shifts are applied to sheaves $\M$ as well by applying $\cdot\<n\>$ to all modules $\M^x$ and $\M^E$
and restriction maps $\rho_{x,E}$. We denote this new sheaf by $\M\<n\>$.

We can also form the direct sum $\M\oplus\N$ of two sheaves $\M$ and $\N$ on $\G$ by
$$
(\M\oplus\N)^x:=\M^x\oplus\N^x,\quad(\M\oplus\N)^E:=\M^E\oplus\N^E,\quad\rho_{x,E}^{\M\oplus\N}:=\rho_{x,E}^\M\oplus\rho_{x,E}^\N.
$$
This definition obviously implies $(\M\oplus\N)_x=\M_x\oplus\N_x$ and $(\M\oplus\N)_{[x]}=\M_{[x]}\oplus\N_{[x]}$.
The zero element with respect to this summation is the {\it zero sheaf} --- the sheaf $\M$ such that
$\M^x=\M^E=0$ for all vertices $x$ and edges $E$.

The sheaves on $\G$ form a category. A morphism $f:\M\to\N$ between two sheaves
is a pair of families $\left(\{f_x\}_{x\in\V},\{f_E\}_{E\in\E}\right)$, where $f_x:\M^x\to\N^x$
and $f_E:\M^E\to\N^E$ are homomorphisms of graded $S$-modules such that the diagram
$$
\begin{CD}
\M^x @>f_x>> \N^x\\
@V\rho^\M_{x,E}VV                   @VV\rho^\N_{x,E}V   \\
\M^E @>f_E>>            \N^E
\end{CD}
$$
whenever $x$ lies on $E$. If all homomorphisms $f_x$ and $f_E$ are isomorphisms, then we call $f:\M\to\N$ an isomorphism of sheaves.

For a subset $\mathcal I\subset\V$ and a sheaf $\M$ on $\G$, we can consider the restriction $\M_\I$
of $\G$ to $\G_\I$.

\subsection{Category $\C$} We outline here the main points of~\cite[Section 2.21]{Jantzen_moment_graphs}.
The objects of category $\C$ are finitely generated graded $S$-modules and the morphisms are
homomorphism of graded $S$-modules.
We denote $\m:=\bigoplus_{i>0}S_i$ (the maximal graded ideal of $S$).
Note that $S/\m S\cong S_0=\F$ and so any module $M/\m M$ is an $\F$-linear space.
For any $M$ in $\C$, we denote by $\rad_\C M$ the intersection of all maximal graded submodules of $M$.

A {\it free object} in $\C$ is a finite sum $S\<r_1\>\oplus\cdots\oplus S\<r_n\>$.
A {\it projective cover} of an object $M$ in $\C$ is a pair $(P,\pi)$,
where $P$ is a free object in $\C$ and $\pi:P\to M$ a surjective homomorphism in $\C$
such that $\pi(N)\ne M$ for any graded submodule $N$ of $P$ with $N\ne P$.

Note that a projective cover exists for any module $M$ in $\C$ and is defined uniquely up to an isomorphism.
In this way, one proves that any projective object in $\C$ is free.

\begin{proposition}[\text{\cite[Lemma 2.21]{Jantzen_moment_graphs}}]\label{proposition:1}
{\rm(a)} We have $\rad_\C=\m M$ for any $M$ in $\C$.\\[-9pt]

\noindent
{\rm(b)} Let $\pi:P\to M$ be a homomorphism in $\C$ such that $P$ is free.
Then $(P,\pi)$ is a projective cover of $M$ in $\C$ if and only if the induced map
$\bar\pi: P/\m P\to M/\m M$ is an isomorphism.
\end{proposition}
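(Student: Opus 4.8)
The plan is to prove the two parts of Proposition~\ref{proposition:1} more or less in the classical way one establishes Nakayama-type statements for graded modules, using only that $S$ is a $\Z$-graded connected (in the sense that $S_0=\F$ and $S_i=0$ for $i<0$, apart from the grading convention here being in even degrees) Noetherian ring and that all modules in $\C$ are finitely generated.

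\smallskip
\noindent\textbf{Part (a).} First I would observe that $\m M$ is a graded submodule of $M$ with $M/\m M\cong S/\m\otimes_S M$ an $\F$-vector space, hence a (graded) semisimple $S$-module; therefore every maximal graded submodule containing $\m M$ corresponds to a codimension-one $\F$-subspace of $M/\m M$, and the intersection of all of these is $\m M$ itself (this uses that $M/\m M$ is finite-dimensional over $\F$, which follows from finite generation). So $\rad_\C M\subseteq\m M$ once I know every maximal graded submodule contains $\m M$. For the reverse inclusion, I would show that any proper graded submodule $N\subsetneq M$ satisfies $N+\m M\ne M$: if $N+\m M=M$, then since $M$ is finitely generated and the grading is bounded below, a graded version of Nakayama's lemma gives $N=M$, a contradiction. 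Concretely, pick homogeneous generators $m_1,\dots,m_k$ of $M/N$ of minimal total degree; the equation $M=N+\m M$ lets us write each $m_i$ as an element of $N$ plus an $\m$-combination of the $m_j$'s, and comparing degrees (every element of $\m$ raises degree by at least $2$) forces the lowest-degree generator to already lie in $N$ --- contradiction. Hence every proper graded submodule is contained in a maximal one containing $\m M$, and $\m M\subseteq\rad_\C M$.

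\smallskip
\noindent\textbf{Part (b).} Given $\pi:P\to M$ with $P$ free, I would first note that $\bar\pi:P/\m P\to M/\m M$ is always well defined. For the ``if'' direction, suppose $\bar\pi$ is an isomorphism. Then $\pi$ is surjective: $\im\pi+\m M=M$ since $\bar\pi$ is onto, and $\im\pi$ is a graded submodule, so the Nakayama argument from part (a) gives $\im\pi=M$. Moreover, if $N\subseteq P$ is a graded submodule with $\pi(N)=M$, then the image of $N$ in $P/\m P$ maps onto $M/\m M$ under $\bar\pi$, hence (as $\bar\pi$ is injective) surjects onto $P/\m P$, so $N+\m P=P$ and again Nakayama forces $N=P$. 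Thus $(P,\pi)$ is a projective cover. For the ``only if'' direction, suppose $(P,\pi)$ is a projective cover. Surjectivity of $\pi$ gives surjectivity of $\bar\pi$, so it remains to show $\bar\pi$ is injective, i.e.\ $\ker\pi\subseteq\m P$. If not, choose a homogeneous $q\in\ker\pi\setminus\m P$; extend $\bar q\in P/\m P$ to a basis and let $N$ be the graded submodule of $P$ generated by the preimages of the other basis vectors, so $N+Sq=P$ but $N\ne P$. Since $q\in\ker\pi$ we get $\pi(N)=\pi(P)=M$ with $N\ne P$, contradicting the projective-cover property. Hence $\ker\pi\subseteq\m P$ and $\bar\pi$ is an isomorphism.

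\smallskip
\noindent The main obstacle, such as it is, is purely bookkeeping: making the graded Nakayama lemma precise in this setting (degrees live in $\Z$ but are bounded below on any finitely generated module, and $\m$ shifts degrees up by at least $2$) and being careful that ``maximal graded submodule'' behaves as expected --- in particular that maximal graded submodules exist, which follows from finite generation together with the bounded-below grading via a standard Zorn/Noetherian argument. Once that lemma is in hand, both parts are short. I expect the proof to essentially reproduce \cite[Lemma 2.21]{Jantzen_moment_graphs} since the statement is quoted from there.
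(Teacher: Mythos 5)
The paper does not prove this statement; it simply cites \cite[Lemma 2.21]{Jantzen_moment_graphs}, so there is no in-paper argument to compare against. Your proof is the standard graded-Nakayama argument and is correct in substance; it almost certainly reproduces the cited proof, as you yourself anticipate.

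One small logical tidying in part (a): the $\subseteq$ inclusion $\rad_\C M\subseteq\m M$ does not in fact require knowing that \emph{every} maximal graded submodule contains $\m M$ --- it already follows from the observation that the maximal graded submodules \emph{containing} $\m M$ intersect to $\m M$ (being preimages of hyperplanes in the finite-dimensional $\F$-space $M/\m M$). Conversely, for $\m M\subseteq\rad_\C M$ what you need is precisely that every maximal graded submodule contains $\m M$, and your Nakayama argument (``$N+\m M=M\Rightarrow N=M$'', applied to a maximal $N$, forces $N+\m M=N$) delivers exactly that; the phrase ``every proper graded submodule is contained in a maximal one containing $\m M$'' is not quite the conclusion you want and could be replaced by the cleaner statement just given. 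Part (b) is fine: both directions are correct applications of graded Nakayama, and the construction of the proper submodule $N$ with $\pi(N)=M$ from a homogeneous $q\in\ker\pi\setminus\m P$ is valid (taking homogeneous lifts of a complementary basis guarantees $N$ is graded, and the image of $N$ in $P/\m P$ misses $\bar q$, so $N\neq P$).
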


\subsection{Defect} We consider here the situation when we have a surjective homomorphism $\pi:P\to M$ in $\C$
that is not necessarily a projective cover. Obviously, the induced homomorphism $\bar\pi: P/\m P\to M/\m M$
is also surjective but not an isomorphism in general. Therefore it is worth considering its kernel $\ker\bar\pi$.
If it is zero and $P$ is free, then $(P,\pi)$ is a projective cover by Proposition~\ref{proposition:1}(b).

\begin{lemma}\label{lemma:1} Let $\pi:P\to M$ be a surjective homomorphism in $\C$.
Consider the induced map $\bar\pi:P/\m P\to M/\m M$. Then $(\ker\bar\pi)_i=(\ker\pi)_i+\m P$ for all $i\in\Z$.
\end{lemma}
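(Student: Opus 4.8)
The plan is to compute $\ker\bar\pi$ directly from the definition of the induced map and unwind what it means for an element of $P/\m P$ to lie in the kernel. First I would fix $i\in\Z$ and take a homogeneous element $\bar p\in (P/\m P)_i$ represented by some $p\in P_i$. By construction $\bar\pi(\bar p)$ is the class of $\pi(p)$ in $M/\m M$, so $\bar p\in(\ker\bar\pi)_i$ if and only if $\pi(p)\in\m M$. The content of the lemma is then the claim that this happens precisely when $p$ can be adjusted by an element of $\m P$ so that the result lies in $\ker\pi$; that is, $\pi(p)\in\m M$ iff $p\in\ker\pi+\m P$.

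The inclusion $(\ker\pi)_i+\m P\subseteq(\ker\bar\pi)_i$ (where I read the right-hand side as its preimage, so really $(\ker\bar\pi)_i$ pulled back to $P_i$, modulo $\m P$) is the easy direction: if $p=k+q$ with $k\in\ker\pi$ and $q\in\m P$, then $\pi(p)=\pi(q)\in\pi(\m P)\subseteq\m M$ since $\pi$ is $S$-linear, so $\bar p\in\ker\bar\pi$. The reverse inclusion is where the real work is. Suppose $p\in P_i$ with $\pi(p)\in\m M$. Since $\pi$ is surjective we have $\m M=\pi(\m P)$: indeed $\m M=\m\pi(P)=\pi(\m P)$ by $S$-linearity and surjectivity. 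Hence there is $q\in\m P$ with $\pi(q)=\pi(p)$, and we may take $q$ homogeneous of degree $i$ by picking out the degree-$i$ component (using that $\pi$ and $\m P$ are graded). Then $p-q\in(\ker\pi)_i$ and $p=(p-q)+q\in(\ker\pi)_i+\m P$, as desired. Passing back to $P/\m P$ gives $(\ker\bar\pi)_i=\big((\ker\pi)_i+\m P\big)/\m P$, which is the stated identity once one reads the formula in the lemma as taking place inside $P/\m P$.

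The main obstacle — really the only subtlety — is the identity $\m M=\pi(\m P)$ and the compatibility of everything with the grading: one must use that $\pi$ is a homomorphism in $\C$, hence homogeneous of degree zero and $S$-linear, so that it carries $\m P$ onto $\m M$ and so that a preimage of a homogeneous element can itself be chosen homogeneous of the same degree. Once that is in place the proof is a short diagram chase. I would also remark that the statement is exactly the graded refinement of the elementary fact that for a surjection the kernel of the induced map on the quotient by $\m$ is the image of the kernel, and that it is this graded form that will be needed when reading off the degree-zero part of $\ker\bar\pi$ in the applications to the defect.
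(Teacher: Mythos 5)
Your argument is correct and is essentially the paper's proof: the paper lifts $\pi(x)=\sum r_j y_j$ termwise to produce $q=\sum r_j z_j\in\m P$, which is exactly your observation that $\m M=\pi(\m P)$ (applied with the degree-$i$ component taken). The remark that the formula should be read inside $P/\m P$ is a fair clarification of the same slight abuse the paper makes.
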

\begin{proof} Any element of $(\ker\bar\pi)_i$ can be written as $x+\m P$, where $x\in P_i$.
Then $\pi(x)\in\m M$. Let us write $\pi(x)=r_1y_1+\cdots+r_ny_n$ for some homogeneous $r_j\in\m$ and $y_j\in M$.
Since $\pi$ is surjective, there is for each $y_j$ an element $z_j\in P$ of the same degree such that $\pi(z_j)=y_j$.
Hence $x-r_1z_1-\cdots-r_nz_n\in(\ker\pi)_i$ and $x+\m P\in(\ker\pi)_i+\m P$.
The inverse inclusion is obvious.
\end{proof}

We fix a variable $v$ and consider the ring $\Z[v,v^{-1}]$.
\begin{definition}\label{definition:3}
Let $\pi:P\to M$ be a surjective homomorphism in $\C$. The defect of $\pi$ is the following element of $\Z[v,v^{-1}]${\rm:}
$$
\d(\pi):=\sum_{i\in\Z}\dim_\F(\ker\bar\pi)_i\,v^{-i}.
$$
\end{definition}
We have $\d(\pi\<n\>)=v^n\d(\pi)$. Lemma~\ref{lemma:1} prompts the following way to calculate defects.

\begin{corollary}\label{corollary:1}
Let $\pi:P\to M$ be a surjective homomorphism in $\C$ such that $P$ and $\ker\pi$ are free.
Suppose that $\{v^{(n)}_j\}_{n\in\Z,1\le j\le k_n}$ and $\{u^{(n)}_i\}_{n\in\Z,1\le i\le l_n}$
are bases of $P$ and $\ker\pi$ respectively labelled in such a way that $v^{(n)}_j$ and $u^{(n)}_i$ have degree $n$.
Let
$$
u^{(n)}_i=\sum\nolimits_{m\in\Z, 1\le j\le k_m}a_{j,i}^{(m,n)}v^{(m)}_j
$$
for corresponding homogeneous $a_{j,i}^{(m,n)}\in S$. For each $n\in\Z$, we denote by $A^{(n)}$
the $k_n\times l_n$-ma\-trix whose $ji^{\text{th}}$ entry is $a_{j,i}^{(n,n)}\in\F$.
Then $\d(\pi)=\sum_{n\in\Z}\rk_\F A^{(n)}v^{-n}$.
\end{corollary}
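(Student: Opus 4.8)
The plan is to read the defect off Definition~\ref{definition:3} with the help of Lemma~\ref{lemma:1}: it is enough to show that $\dim_\F(\ker\bar\pi)_n=\rk_\F A^{(n)}$ for every $n\in\Z$, since then $\d(\pi)=\sum_n\dim_\F(\ker\bar\pi)_n\,v^{-n}=\sum_n\rk_\F A^{(n)}\,v^{-n}$ by definition.

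Fix $n$. Since $P$ is free with homogeneous basis $\{v^{(m)}_j\}$, the quotient $P/\m P$ is an $\F$-vector space with basis the images $\bar v^{(m)}_j$, and its degree-$n$ component $(P/\m P)_n$ has basis $\{\bar v^{(n)}_j\suchthat 1\le j\le k_n\}$. By Lemma~\ref{lemma:1}, $(\ker\bar\pi)_n$ is precisely the image of $(\ker\pi)_n$ under the projection $P\to P/\m P$, so I would compute the dimension of that image. As $\ker\pi$ is free with homogeneous basis $\{u^{(m)}_i\}$, the space $(\ker\pi)_n$ is $\F$-spanned by the homogeneous elements $s\,u^{(m)}_i$ with $s\in S_{n-m}$; when $m<n$ one has $s\in\m$, so $s\,u^{(m)}_i\in\m P$ maps to $0$, while for $m=n$ one has $s\in S_0=\F$. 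Hence the image of $(\ker\pi)_n$ in $(P/\m P)_n$ is spanned exactly by $\bar u^{(n)}_1,\dots,\bar u^{(n)}_{l_n}$.

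It then remains to express these vectors in the basis $\{\bar v^{(n)}_j\}_j$. In the expansion $u^{(n)}_i=\sum_{m,j}a^{(m,n)}_{j,i}v^{(m)}_j$ the coefficient $a^{(m,n)}_{j,i}$ is homogeneous of degree $n-m$, hence lies in $\m$ unless $m=n$; reducing modulo $\m P$ therefore annihilates every term with $m\ne n$ and leaves $\bar u^{(n)}_i=\sum_j a^{(n,n)}_{j,i}\,\bar v^{(n)}_j$, i.e. the coordinate column of $\bar u^{(n)}_i$ is the $i$-th column of $A^{(n)}$. Consequently the span of $\bar u^{(n)}_1,\dots,\bar u^{(n)}_{l_n}$ has dimension $\rk_\F A^{(n)}$, which yields $\dim_\F(\ker\bar\pi)_n=\rk_\F A^{(n)}$ and completes the argument. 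There is no serious obstacle here; the only point requiring care is the degree bookkeeping — using consistently that a homogeneous element of $S$ of degree $d$ reduces to a nonzero scalar modulo $\m$ if and only if $d=0$, both for the spanning set of $(\ker\pi)_n$ and for the coefficients $a^{(m,n)}_{j,i}$ — together with keeping the shift convention $M\<r\>_i=M_{i+r}$ straight so that the $k_n\times l_n$ shape of $A^{(n)}$ comes out correctly.
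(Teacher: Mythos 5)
Your proof is correct and is essentially the argument the paper leaves implicit: the paper states Corollary~\ref{corollary:1} with no proof, treating it as an immediate consequence of Lemma~\ref{lemma:1}, and your degree bookkeeping (killing all terms with $m<n$ modulo $\m$, identifying the columns of $A^{(n)}$ as the coordinate vectors of $\bar u^{(n)}_i$ in $\{\bar v^{(n)}_j\}$, and reading off $\rk_\F A^{(n)}$ as the column-space dimension) is exactly what is needed to fill that gap.
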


\subsection{Sections of sheaves}\label{Sections_of_sheaves} The {\it space of global sections} of a sheaf $\M$ is
$$
\Gamma(\M):=\Biggl\{m\in\prod_{x\in\V}\M^x\;\biggl|\;\rho_{x,E}(m_x)=\rho_{y,E}(m_y)\text{ for all edges }E:\edge x{}y\Biggl\}.
$$
For a subspace $\mathcal I\subset\V$ we define the {\it space of sections over $\I$} by
$$
\Gamma(\mathcal I,\M):=\Biggl\{m\in\prod_{x\in\mathcal I}\M^x\;\biggl|\;\rho_{x,E}(m_x)=\rho_{y,E}(m_y)\text{ for all edges }E:\edge x{}y\text{ with }x,y\in\mathcal I\Biggl\}.
$$
If ${\mathcal I\,}'\subset\mathcal I$, the we have the natural restriction map $\Gamma(\mathcal I,\M)\to\Gamma({\mathcal I\,}',\M)$
We have the following sheaf property of sections.

\begin{lemma}\label{lemma:1.5}
Let $\mathcal I\subset\V$ be an open subset and $\mathcal I=\bigcup_{j\in J}{\mathcal I}_i$ be an open covering.
Then $m\in\prod_{x\in\mathcal I}\M^x$ is a section of $\M$ if and only if $m|_{{\mathcal I}_j}$ is a section of $\M$ for every $j\in J$.
\end{lemma}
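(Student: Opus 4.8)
The plan is to reduce the whole statement to one structural remark about how edges interact with open subsets. By part~(3) of Definition~\ref{definition:1}, every edge of $\G$ joins a vertex $x$ to a vertex $y$ with $x<y$; hence, if the \emph{lower} endpoint of an edge lies in an open subset $U\subset\V$, then by openness the \emph{upper} endpoint lies in $U$ too, and that edge is an edge of the full subgraph $\G_U$. In particular, for the covering $\mathcal I=\bigcup_{j\in J}\mathcal I_j$, every edge of $\G_{\mathcal I}$ is already an edge of $\G_{\mathcal I_j}$ for a suitable $j$. Once this is isolated, both implications are a direct unwinding of the definition of $\Gamma(-,\M)$, so I do not anticipate a genuine obstacle.

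For the ``only if'' direction I would argue straight from the definitions: if $m\in\Gamma(\mathcal I,\M)$ and $E:\edge x{}y$ is an edge with $x,y\in\mathcal I_j$, then $x,y\in\mathcal I$ as well, so the relation $\rho_{x,E}(m_x)=\rho_{y,E}(m_y)$ is part of the hypothesis that $m$ is a section over $\mathcal I$. Hence $m|_{\mathcal I_j}\in\Gamma(\mathcal I_j,\M)$ for each $j\in J$. (This half does not even use openness of the $\mathcal I_j$.)

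For the ``if'' direction, assume each $m|_{\mathcal I_j}$ is a section and let $E$ be an arbitrary edge with both endpoints in $\mathcal I$. Orient it, say $E:\edgeright x{}y$, so that $x<y$. I would pick $j\in J$ with $x\in\mathcal I_j$ --- possible since the $\mathcal I_j$ cover $\mathcal I$ --- and then use openness of $\mathcal I_j$ together with $y>x$ to get $y\in\mathcal I_j$. Thus both endpoints of $E$ lie in $\mathcal I_j$, and the section property of $m|_{\mathcal I_j}$ gives $\rho_{x,E}(m_x)=\rho_{y,E}(m_y)$. Since $E$ ranged over all edges with endpoints in $\mathcal I$, this shows $m\in\Gamma(\mathcal I,\M)$.

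The only step that needs the slightest attention --- and what I would flag as the ``hard part'', although it is really just the single observation above --- is the use of the orientation convention of Definition~\ref{definition:1}: it is exactly what prevents the two endpoints of an edge of $\G_{\mathcal I}$ from being split between different members of the covering without the lower vertex dragging the upper one into a common member. Everything else is formal bookkeeping. I would also remark in passing that openness of $\mathcal I$ itself is never used in the argument; it is assumed only to keep $\Gamma(\mathcal I,\M)$ inside the framework in which it is later applied.
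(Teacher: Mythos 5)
Your proof is correct and takes essentially the same route as the paper: orient the edge so its lower endpoint $x$ lies in some $\mathcal I_j$, use openness of $\mathcal I_j$ to force the upper endpoint $y$ into the same $\mathcal I_j$, and read off $\rho_{x,E}(m_x)=\rho_{y,E}(m_y)$ from the section condition on $m|_{\mathcal I_j}$. The paper's proof is terser (it only writes out the nontrivial ``if'' direction), but the key observation about openness and edge orientation is identical to yours.
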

\begin{proof}
Let $E:\edgeright x{}y$ be an edge such that $x,y\in\mathcal I$. Then there is an index $j\in J$
such that $x\in{\mathcal I}_j$. As ${\mathcal I}_j$ is open,
we get $y\in{\mathcal I}_j$, whence $\rho_{x,E}(m_x)=\rho_{y,E}(m_y)$.
\end{proof}

We often consider the case where $\mathcal I=\{x,y\}$ is a two-element set.
Then we write $(a,b)$ for the element of $\M^x\oplus\M^y$
whose value at $x$ is $a$ and at $y$ is $b$.
To stipulate that the first position corresponds to $x$ and the second to $y$,
we write the set of sections as $\Gamma(\{x,y\},\M)$.

We consider also the following important special case. The {\it structure sheaf} $\mathscr Z$ is the sheaf
defined by
\begin{itemize}
\item $\mathscr Z^x:=S$ for any vertex $x$;
\item $\mathscr Z^E:=S/l(E)S$ for any edge $E$;
\item every $\rho_{x,E}$ is the natural projection $S\to S/l(E)$.
\end{itemize}
We set $\mathcal Z:=\Gamma(\mathscr Z)$. One checks elementarily that $\mathcal Z$ is a
commutative, associative, unital $S$-algebras with respect to coordinatewise addition and multiplication.
We call $\mathcal Z$ the {\it struc\-ture algebra} of $\widehat\G$.
Clearly, $\Gamma(\M)$ is a $\mathcal Z$-module under componentwise action.

We rewrite the definition of $\mathcal Z$ in the case of the moment graph $\widehat\G$ defined in Section~\ref{associated_moment_graph}
as follows:
$$
\mathcal Z=\Biggl\{z\in\prod_{x\in\widehat\W}S\;\biggl|\; z_x\=z_{s_\alpha x}\!\!\!\!\!\pmod{\bar\alpha}\text{ for all }x\in\widehat\W\text{ and }\alpha\in\widehat R\Biggl\}.
$$
The most prominent elements of $\mathcal Z$ are elements $c^\lambda$, where $\lambda\in\widehat X$, defined by
$c^\lambda_x:=\overline{x(\lambda)}$ for any $x\in\widehat W$. We have $c^\alpha\in\mathcal Z$ by~(\ref{eq:1}).

We shall also use the {\it local structure algebra} $\mathcal Z(\I):=\Gamma(\mathscr Z_\I)$ or in a different form
$$
\mathcal Z(\I)=\Biggl\{z\in\prod_{x\in\I}S\;\biggl|\; z_x\=z_{s_\alpha x}\!\!\!\!\!\pmod{\bar\alpha}\text{ for all }x\in\I\text{ and }\alpha\in\widehat R\text{ with }s_\alpha x\in\I\Biggl\}.
$$
For any sheaf $\M$ on $\G$ such that $\supp\M\subset\I$,
the space of global sections $\Gamma(\M)$ is a $\mathcal Z(\I)$-module.
If we make this $\mathcal Z(\I)$-module into a $\mathcal Z(\mathcal I')$-module for some $\mathcal I'\supset\mathcal I$,
using the natural homomorphism $\mathcal Z(\mathcal I')\to\mathcal Z(\I)$,
then we obviously obtain the usual action of $\mathcal Z(\mathcal I')$ on $\Gamma(\M)$.


\subsection{Conditions on sheaves} We consider here sheaves satisfying various properties.

\begin{definition}\label{definition:4}

\begin{enumerate}
\itemsep=3pt
\item\label{gbgs} A sheaf $\M$ is called generated by global sections if for any $x\in\V$ and $a\in\M^x$,
      there exists a section $m\in\Gamma(\M)$ with $m_x=a$.
\item\label{flabby} A sheaf $\M$ is called flabby if the natural restriction $\Gamma(\mathcal I,\M)\to\Gamma({\mathcal I\,}',\M)$
      is surjective for any open subsets ${\mathcal I\,}'\subset\mathcal I\subset\V$.
\end{enumerate}
\end{definition}

Suppose that $\supp\M$ is finite. Then $\M$ is flabby if and only if the natural restriction
$\Gamma(\V_{\ge x},\M)\to\Gamma(\V_{>x},\M)$ is surjective for any $x\in\V$.
Indeed, suppose the last statement is true and ${\mathcal I\,}'\subset\mathcal I$ are open subsets of $\V$.
Take a section $m\in\Gamma({\mathcal I\,}',\M)$. By Lemma~\ref{lemma:1.5}, we can extend it to
${\mathcal I\,}'':={\mathcal I\,}'\cup(\mathcal I\setminus[\supp\M])$ by setting $u_y:=0$
for all $y\in\mathcal I\setminus[\supp\M]$.

Note that ${\mathcal I\,}'\subset{\mathcal I\,}''\subset\mathcal I$ and $\mathcal I\setminus{\mathcal I\,}''\subset[\supp\M]$.
Since $\supp\M$ is finite, its closure $[\supp\M]$ is finite as we stipulated in Section~\ref{poset_topology}.
So $\mathcal I\setminus{\mathcal I\,}''$ is also finite.
Choose a maximal (with respect to inclusion) open subset $\mathcal U\subset\V$
such that ${\mathcal I\,}''\subset\mathcal U\subset\mathcal I$ and $m$ is extendable to $\mathcal U$.
If $\mathcal U\ne\mathcal I$, then we take for $x$ the maximal element of $\mathcal I\setminus\mathcal U$.
We have $\V_{>x}\subset\mathcal U$ and we can extend $m$ over $x$, which contradicts the maximality
of $\mathcal U$. This argument is given in~\cite[Lemma~3.2]{Jantzen_moment_graphs}, where the set $\V$
is however supposed to be finite.

Now we define the class of sheaves most important to us. This definition is taken from~\cite[Section~3.8]{Jantzen_moment_graphs}
to which we added a finiteness condition.

\begin{definition}\label{definition:5}
A sheaf $\PP$ is called projective if the following conditions are satisfied:
{
\renewcommand{\labelenumi}{{\rm \theenumi}}
\renewcommand{\theenumi}{{\rm(P\arabic{enumi})}}
\begin{enumerate}
\itemsep=2pt
\item\label{proj:1} $\supp\PP$ is finite;
\item\label{proj:2} each $\PP^x$ is a free module in $\C$;
\item\label{proj:3} for any edge $E:\edgeright x\alpha y$, the restriction $\rho_{y,E}$ is surjective with kernel $\alpha\PP^y$.
\item\label{proj:4} $\PP$ is generated by global sections and flabby.
\end{enumerate}}

\end{definition}

\begin{lemma}\label{lemma:2}
Any restriction $\rho_{x,E}:\PP^x\to\PP^E$ for a projective sheaf $\PP$ is surjective.
\end{lemma}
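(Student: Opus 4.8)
The plan is to reduce the statement for an arbitrary edge $E:\edgeright x\alpha y$ to the already-granted surjectivity of $\rho_{y,E}$ from axiom (P\ref{proj:3}). There are two vertices lying on $E$, namely $x$ and $y$, and the orientation tells us $x<y$. For the restriction $\rho_{y,E}:\PP^y\to\PP^E$ at the head of the edge, there is nothing to prove: it is surjective by (P\ref{proj:3}). So the whole content of the lemma is the surjectivity of $\rho_{x,E}:\PP^x\to\PP^E$ at the tail of the edge, and this is where the argument has to do some work.

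First I would exploit that $\PP$ is generated by global sections and flabby, i.e. axiom (P\ref{proj:4}). Fix an arbitrary $\bar b\in\PP^E$. Using surjectivity of $\rho_{y,E}$, choose $b\in\PP^y$ with $\rho_{y,E}(b)=\bar b$. Now I want a global section $m\in\Gamma(\PP)$ with $m_y=b$; this exists precisely because $\PP$ is generated by global sections. For such a section, the compatibility condition at the edge $E:\edge x{}y$ reads $\rho_{x,E}(m_x)=\rho_{y,E}(m_y)=\rho_{y,E}(b)=\bar b$. Hence $\bar b$ lies in the image of $\rho_{x,E}$, and since $\bar b$ was arbitrary, $\rho_{x,E}$ is surjective. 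This completes the argument.

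The step I expect to be the only real subtlety is making sure that the section-compatibility condition is being applied in the right direction: the definition of $\Gamma(\PP)$ imposes $\rho_{x,E}(m_x)=\rho_{y,E}(m_y)$ for every edge $E:\edge x{}y$ regardless of orientation, so it applies to our oriented edge $\edgeright x\alpha y$ just as well. One should also note that generation by global sections is stated for an arbitrary vertex and arbitrary stalk element, so it does apply at the vertex $y$ with the chosen element $b$; no finiteness or flabbiness beyond what (P\ref{proj:4}) already packages is needed. In fact flabbiness is not even used here — generation by global sections alone suffices — but it is harmless to have it available. I would therefore present the proof in essentially the three lines above: handle the head vertex trivially, and for the tail vertex lift through $\rho_{y,E}$ and then through a global section.
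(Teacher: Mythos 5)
Your proposal is correct and is essentially identical to the paper's proof: both reduce to the tail vertex $x$ via (P3), lift an arbitrary element of $\PP^E$ through the surjection $\rho_{y,E}$, extend the lift to a global section using generation by global sections, and then invoke the section-compatibility condition at $E$. Your remark that flabbiness is not actually used is also accurate.
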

\begin{proof}
Let $\PP$ be a projective sheaf. By~\ref{proj:3}, it suffices to consider the case of an edge
$E:x\stackrel{\alpha}\longrightarrow y$. Take an arbitrary $b\in\PP^E$. By~\ref{proj:3},
there is some $a\in\PP^y$ such that $\rho_{y,E}(a)=b$.
Since $\PP$ is generated by global section, there is some $m\in\Gamma(\PP)$ with $m_y=a$.
By definition of a global section, we have $\rho_{x,E}(m_x)=\rho_{y,E}(m_y)=\rho_{y,E}(a)=b$.
\end{proof}

\begin{proposition}[\text{\cite[Lemma 2.7]{Fiebig_An_upper_bound}}]\label{proposition:7}
Let $\PP$ be a projective sheave on $\G$ and $x\in\W$.
Suppose that $\G_{\supp\PP}$ satisfies the GKM-property. Denote by $\alpha_1,\ldots,\alpha_l$
the labels of all edges that end at $x$. Then $\PP_x=\alpha_1\ldots\alpha_l\PP_{[x]}$.
\end{proposition}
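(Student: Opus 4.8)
The plan is to work locally around $x$ and compare the two submodules $\PP_x$ and $\PP_{[x]}$ of the stalk $\PP^x$ through the restriction maps. Recall that $\PP_x$ is cut out by the conditions $\rho_{x,E}(m)=0$ for \emph{all} edges $E\in\E^x$, whereas $\PP_{[x]}$ is cut out by the same conditions only for the edges in $\E^{\delta x}$, i.e.\ those starting at $x$. Since $\E^x=\E^{\delta x}\dotcup(\E^x\setminus\E^{\delta x})$, the edges contributing to $\PP_x$ but not to $\PP_{[x]}$ are precisely the edges \emph{ending} at $x$, whose labels are by hypothesis $\alpha_1,\dots,\alpha_l$. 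Thus the statement $\PP_x=\alpha_1\cdots\alpha_l\,\PP_{[x]}$ amounts to saying that imposing the extra vanishing conditions along the in-edges of $x$ has exactly the effect of multiplying $\PP_{[x]}$ by the product of their labels.

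First I would establish the inclusion $\alpha_1\cdots\alpha_l\,\PP_{[x]}\subset\PP_x$. Take $m\in\PP_{[x]}$; we must check $\rho_{x,E}(\alpha_1\cdots\alpha_l\,m)=0$ for each edge $E\in\E^x$. If $E$ starts at $x$ this is immediate since $m\in\PP_{[x]}$. If $E:\edgeright{y}{\alpha_j}{x}$ ends at $x$, then by (P2) $\PP^E$ is a module over $S/\alpha_j S$ (Definition~\ref{definition:2}\eqref{definition:2:part:2}), so $\alpha_j\PP^E=0$; hence $\rho_{x,E}(\alpha_1\cdots\alpha_l\,m)=\alpha_1\cdots\alpha_l\,\rho_{x,E}(m)=0$ because the factor $\alpha_j$ annihilates $\PP^E$. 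This direction does not even need the GKM-property.

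For the reverse inclusion $\PP_x\subset\alpha_1\cdots\alpha_l\,\PP_{[x]}$, I would argue by induction on $l$, peeling off one in-edge at a time, and here the GKM-property is essential. Consider an edge $E_0:\edgeright{y}{\alpha_1}{x}$ ending at $x$. By (P4), $\PP$ is generated by global sections and flabby; using flabbiness together with Lemma~\ref{lemma:1.5} one produces, for any $m\in\PP^x$, a section over $\V_{\ge y}$ extending $m$, and in particular the restriction map along $E_0$ behaves as in Lemma~\ref{lemma:2}. The key point is that if $m\in\PP^x$ satisfies $\rho_{x,E_0}(m)=0$, then I claim $m\in\alpha_1\PP^x$: indeed, choosing a homogeneous basis of the free module $\PP^x$, the condition $\rho_{x,E_0}(m)=0$ together with (P3) (which identifies $\ker\rho_{y,E_0}$ with $\alpha_1\PP^y$ and gives surjectivity of both restrictions onto $\PP^{E_0}$, hence $\PP^{E_0}\cong\PP^y/\alpha_1\PP^y\cong\PP^x/\alpha_1\PP^x$ as the relevant pieces match up) forces $m$ to lie in $\alpha_1\PP^x$. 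Write $m=\alpha_1 m'$. One then checks that $m'$ again kills the restrictions along the remaining in-edges $E_2,\dots,E_l$: for such $E_j:\edgeright{\,\cdot\,}{\alpha_j}{x}$ we have $\alpha_1\rho_{x,E_j}(m')=\rho_{x,E_j}(m)=0$ in $\PP^{E_j}$, and since $\PP^{E_j}$ is a free $S/\alpha_j S$-module, multiplication by $\alpha_1$ on it is injective precisely because $\alpha_1\notin\F\alpha_j$ by the GKM-property; hence $\rho_{x,E_j}(m')=0$. Likewise $m'$ kills the restrictions along all out-edges, since $\alpha_1\rho_{x,E}(m')=0$ in the free $S$-module $\PP^E$ modulo its label and $\alpha_1$ is a nonzerodivisor there by GKM. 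Thus $m'\in\PP_x$ with one fewer in-edge constraint to handle, and by induction $m'\in\alpha_2\cdots\alpha_l\,\PP_{[x]}$, giving $m=\alpha_1 m'\in\alpha_1\cdots\alpha_l\,\PP_{[x]}$.

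The main obstacle is the inductive step of the reverse inclusion: precisely the passage from $\rho_{x,E_0}(m)=0$ to $m\in\alpha_1\PP^x$, and then verifying that dividing by $\alpha_1$ preserves the vanishing along all other edges. Both rely crucially on the GKM-hypothesis to guarantee that $\alpha_1$ acts as a nonzerodivisor on each $\PP^{E}$ for edges $E\ne E_0$ incident to $x$ (since their labels are not proportional to $\alpha_1$), and on the identification of $\PP^{E_0}$ with $\PP^x/\alpha_1\PP^x$ coming from (P3) and the freeness in (P2). Care is needed with gradings throughout, but the maps are degree-zero and the shifts bookkeep automatically. This is exactly the content of~\cite[Lemma~2.7]{Fiebig_An_upper_bound}, whose argument we follow.
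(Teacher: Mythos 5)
Your proof follows essentially the same route as the paper's: easy inclusion by \ref{definition:2}\eqref{definition:2:part:2}, hard inclusion by factoring $\alpha_1,\dots,\alpha_l$ out of $m\in\PP_x$ using (P3), then using freeness and the GKM-property to divide out. The only structural difference is that you peel off one $\alpha_j$ at a time by induction, whereas the paper factors out $\alpha_1\cdots\alpha_l$ in one step (from $m\in\bigcap_j\ker\rho_{x,E_j}=\bigcap_j\alpha_j\PP^x$ and pairwise coprimality). That is a harmless variation.

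There is, however, a real gap in how you invoke the GKM-hypothesis. The GKM-property is assumed only for $\G_{\supp\PP}$, not for $\G$ itself, and the edges $E_1,\dots,E_l$ ending at $x$ (and the out-edges $E$) are a priori edges of $\G$, not of $\G_{\supp\PP}$. Your argument uses at several points that $\alpha_1$ is not proportional to $\alpha_j$ (for $j\ge 2$) or to $l(E)$, ``by GKM,'' but this is only available once you know the relevant edges lie in $\G_{\supp\PP}$. The paper does this carefully: it first reduces to $\PP^x\ne0$, then deduces from (P3) that $\PP^{E_j}\cong\PP^x/\alpha_j\PP^x\ne0$, and then invokes Lemma~\ref{lemma:2} (surjectivity of $\rho_{y_j,E_j}$) to conclude $\PP^{y_j}\ne0$, i.e.\ the in-edges really are edges of $\G_{\supp\PP}$. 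Your reference to Lemma~\ref{lemma:2} is for a different and unnecessary purpose (re-deriving that restrictions along $E_0$ are well-behaved via flabbiness), and you never establish that the labels you are comparing come from edges of $\G_{\supp\PP}$. Relatedly, for the out-edges the paper splits into the cases $\PP^y=0$ (where $\PP^E=0$ by (P3) and the conclusion is trivial) and $\PP^y\ne0$ (where the out-edge lies in $\G_{\supp\PP}$ and GKM applies); your argument silently treats every out-edge as if it were in $\G_{\supp\PP}$. Both points are easily patched, but they are exactly where the GKM-hypothesis on $\G_{\supp\PP}$ enters, and glossing over them leaves the proof incomplete.

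Two small inaccuracies worth fixing: you write ``$\ker\rho_{y,E_0}$'' where (P3) (applied to an edge ending at $x$) actually gives $\ker\rho_{x,E_0}=\alpha_1\PP^x$ directly; and the passage from $\rho_{x,E_0}(m)=0$ to $m\in\alpha_1\PP^x$ is immediate from that single equality, so the digression about ``surjectivity of both restrictions onto $\PP^{E_0}$'' and matching up $\PP^y/\alpha_1\PP^y$ with $\PP^x/\alpha_1\PP^x$ is unnecessary.
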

\begin{proof} The inclusion $\alpha_1\ldots\alpha_l\PP_{[x]}\subset\PP_x$ is obvious and holds without any GKM-restriction.
To prove the inverse inclusion, it suffices to consider the case $\PP^x\ne0$.
Take any $m\in\PP^x$.
By Lemma~\ref{lemma:2}, all edges ending at $x$ are edges of $\G_{\supp\PP}$.
Therefore their labels $\alpha_1,\ldots,\alpha_l$ are pairwise not proportional and by property~\ref{proj:3} of projective sheaves,
there exists some $m'\in\PP^x$ such that $m=\alpha_1\cdots\alpha_lm'$.
Now take an edge $E:\edgeright x\beta y$. If $\PP^y=0$ then again by~\ref{proj:3}, we get $\PP^E=0$ and $\rho_{x,E}(m')=0$.
If $\PP^y\ne0$ then $E$ is an edge of $\G_{\supp\PP}$ and $\alpha_1,\ldots,\alpha_l,\beta$ are all pairwise
not proportional. Thus $0=\rho_{x,E}(m)=\alpha_1\cdots\alpha_l\rho_{x,E}(m')$ and we can cancel out
$\alpha_1\cdots\alpha_l$ in $\PP^E\cong\PP^y/\beta\PP^y$. Hence $\rho_{x,E}(m')=0$.
\end{proof}

Consider the module $\M^{\delta x}\subset\prod_{E\in\E^{\delta x}}\M^E$ consisting
of all $r\in\prod_{E\in\E^{\delta x}}\M^E$ having the form
$r_E=\rho_{y,E}(m_y)$ for all edges $E:\edgeright x{}y$, where $m\in\Gamma(\V_{>x},\M)$.
Moreover, consider the projection $\rho_{x,\delta x}:\M^x\to\prod_{E\in\E^{\delta x}}\M^E$, taking
an element $a\in\M^x$ to $r\in\prod_{E\in\E^{\delta x}}\M^E$, where
$r_E=\rho_{x,E}(a)$ for all edges $E:\edgeright x{}y$, .
Obviously, $\ker\rho_{x,\delta x}=\M_{[x]}$.

Directly from our definitions and the remark above we get the following simple fact.
\begin{lemma}[\text{\cite[Lemma~3.3]{Jantzen_moment_graphs}}]\label{lemma:3}
Let $\M$ be a sheaf on $\G$ with finite support. The sheaf $\M$ is flabby and generated by global sections if and only if
$\M^{\delta x}=\rho_{x,\delta x}(\M^x)$ for any $x\in\V$.

\end{lemma}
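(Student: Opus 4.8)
The plan is to prove the two implications separately, reducing flabbiness to the single-step criterion established above (valid because $\supp\M$ is finite): $\M$ is flabby iff $\Gamma(\V_{\ge x},\M)\to\Gamma(\V_{>x},\M)$ is surjective for every $x\in\V$. The workhorse will be one elementary gluing observation: since every edge of $\G$ runs from a smaller to a larger vertex (Definition~\ref{definition:1}) and $x$ is the minimum of the open set $\V_{\ge x}$, a family $m\in\prod_{y\in\V_{>x}}\M^y$ together with a value $a\in\M^x$ defines a section over $\V_{\ge x}$ if and only if $m$ is a section over $\V_{>x}$ and $\rho_{x,E}(a)=\rho_{y,E}(m_y)$ for every edge $E\colon\edgeright x{}y$ — that is, for every $E\in\E^{\delta x}$. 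Everything else amounts to comparing this condition with the definitions of $\M^{\delta x}$ and $\rho_{x,\delta x}$.

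For the direction ``$\Leftarrow$'', I would assume $\M^{\delta x}=\rho_{x,\delta x}(\M^x)$ for all $x$. To get flabbiness, take $m\in\Gamma(\V_{>x},\M)$; the tuple $r:=\bigl(\rho_{y,E}(m_y)\bigr)_{E\in\E^{\delta x}}$ lies in $\M^{\delta x}$ directly from the definition of $\M^{\delta x}$, hence in $\rho_{x,\delta x}(\M^x)$, so some $a\in\M^x$ has $\rho_{x,E}(a)=r_E$ for all $E\in\E^{\delta x}$; setting $m_x:=a$ extends $m$ to $\V_{\ge x}$ by the gluing observation, and flabbiness follows from the criterion. For generation by global sections, given $a\in\M^x$ I would note $\rho_{x,\delta x}(a)\in\rho_{x,\delta x}(\M^x)=\M^{\delta x}$, so by the definition of $\M^{\delta x}$ there is $m\in\Gamma(\V_{>x},\M)$ with $\rho_{y,E}(m_y)=\rho_{x,E}(a)$ for all $E\in\E^{\delta x}$; gluing $a$ at $x$ gives a section over $\V_{\ge x}$ with value $a$ at $x$, which flabbiness (surjectivity of $\Gamma(\V,\M)\to\Gamma(\V_{\ge x},\M)$) extends to a global section.

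For the direction ``$\Rightarrow$'', assume $\M$ is flabby and generated by global sections and fix $x$. The inclusion $\rho_{x,\delta x}(\M^x)\subset\M^{\delta x}$ comes from generation: pick a global section $n$ with $n_x=a$ and restrict it to $\V_{>x}$; the section property yields $\rho_{x,E}(a)=\rho_{y,E}(n_y)$ for all $E\in\E^{\delta x}$, i.e. $\rho_{x,\delta x}(a)\in\M^{\delta x}$. The reverse inclusion comes from flabbiness for the open pair $\V_{>x}\subset\V_{\ge x}$: an element $r\in\M^{\delta x}$ is witnessed by some $m\in\Gamma(\V_{>x},\M)$, which extends to $\tilde m\in\Gamma(\V_{\ge x},\M)$, and then $a:=\tilde m_x$ satisfies $\rho_{x,E}(a)=\rho_{y,E}(m_y)=r_E$, so $r=\rho_{x,\delta x}(a)$.

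Since every step is short, I do not expect a serious obstacle; the only point that needs care is the gluing observation, specifically the verification that extending a section over $\V_{>x}$ by a single value at $x$ imposes compatibility conditions \emph{only} along the edges in $\E^{\delta x}$. This is exactly where the orientation axiom of a moment graph and the openness of $\V_{\ge x}$ are used, and it is what makes $\rho_{x,\delta x}$ the correct map to compare with $\M^{\delta x}$.
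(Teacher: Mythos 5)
Your argument is correct and is precisely the unwinding of the definitions that the paper alludes to with ``Directly from our definitions and the remark above we get the following simple fact,'' where ``the remark above'' is the single-step flabbiness criterion. The key gluing observation — that extending a section from $\V_{>x}$ to $\V_{\ge x}$ only imposes compatibility along edges starting at $x$, because all edges run from smaller to larger — is the crux, and you have identified and justified it correctly.
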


\begin{definition}\label{definition:6}
Let $\PP$ be a projective sheaf. Consider the $\Z[v,v^{-1}]$-module $\Z[v,v^{-1}]\V$
of all formal linear combinations $f_1x_1+\cdots+f_nx_n$, where $f_i\in\Z[v,v^{-1}]$ and $x_i\in\V$.
The defect of $\PP$ is the following element of $\Z[v,v^{-1}]\V$:
$$
\d(\PP):=\sum_{x\in\V}\d(\rho_{x,\delta x})x.
$$
\end{definition}
\vspace{-5pt}
\noindent
Here $\rho_{x,\delta x}$ is considered as surjective homomorphism $\PP^x\to\PP^{\delta x}$
and $\d$ in the right-hand side denotes the defect of this map (cf. Definition~\ref{definition:3}).

We obviously have $\d(\PP\<r\>)=v^r\d(\PP)$ and $\d(\PP\oplus\PP')=\d(\PP)+\d(\PP')$.

%

\subsection{Braden–-MacPherson sheaves} The following proposition is actually a definition
(cf.~\cite{BM} and also~\cite[Theorem 4.2]{Fiebig_Lusztig’s_conjecture}
and~\cite[Section 3.5]{Jantzen_moment_graphs}).

\begin{proposition}\label{proposition:2}
For each $w\in\V$, there is a unique up to isomorphism sheaf $\B(w)$ on~$\G$, called the Braden–-MacPherson sheaf, with the following properties:
\begin{enumerate}
\itemsep=2pt
\item\label{proposition:2:part:1} $\supp\B(w)\subset\V_{\le w}$ and $\B(w)^w\cong S$.
\item\label{proposition:2:part:2} For any edge $E:\edgeright x\alpha y$, the restriction $\rho_{y,E}$ is surjective with kernel $\alpha\B(w)^y$.
\item\label{proposition:2:part:3} For any $x\in\V$, the image of $\rho_{x,\delta x}$ is $\B(w)^{\delta x}$. For any $x\in\V\setminus\{w\}$,
      the homomorphism $\rho_{x,\delta x}:\B(w)^x\to\B(w)^{\delta x}$ is a projective cover in $\C$.
\end{enumerate}
\end{proposition}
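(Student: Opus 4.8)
The plan is to construct $\B(w)$ explicitly by the Braden--MacPherson algorithm --- that is, by descending recursion along the partial order --- and then to prove uniqueness by a parallel descending recursion. Since any sheaf satisfying~\ref{proposition:2:part:1} must vanish outside $\V_{\le w}$, and $\V_{\le w}$ is finite by our standing assumption, it suffices to build the sheaf on the finite full subgraph $\G_{\V_{\le w}}$ and to set all stalks and edge modules equal to $0$ at the remaining vertices and edges. Fix an enumeration $w=x_1,x_2,\dots,x_N$ of $\V_{\le w}$ with the property that $x_i<x_j$ implies $i>j$; such an enumeration exists precisely because $\V_{\le w}$ is finite.

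For existence I would process $x_1,\dots,x_N$ in this order, maintaining the sheaf data on the vertices and edges treated so far. Put $\B(w)^{x_1}:=S$. Assuming $\B(w)^y$ has been defined for every $y\in\V_{>x}$ --- all such $y$ precede $x$ in the list --- extend the construction to $x=x_n$ in three steps. First, for each edge $E:x\to y$ put $\B(w)^E:=\B(w)^y/l(E)\B(w)^y$ and let $\rho_{y,E}$ be the canonical projection, which forces~\ref{proposition:2:part:2} for every edge leaving $x$. Second, form the module of sections $\Gamma(\V_{>x},\B(w))$ --- legitimate because all restriction maps internal to the open set $\V_{>x}$ are by now defined --- and let $\B(w)^{\delta x}\subset\prod_{E\in\E^{\delta x}}\B(w)^E$ be the image of $\Gamma(\V_{>x},\B(w))$ under $m\mapsto(\rho_{y,E}(m_y))_{E:x\to y}$; one checks that this is an object of $\C$, since only finitely many of the $\B(w)^E$ are nonzero (namely those whose head lies in $\V_{\le w}$), each such module is finitely generated over the Noetherian ring $S$, and $\B(w)^{\delta x}$ is a graded submodule of their sum. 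Third, choose a projective cover $\pi:\B(w)^x\to\B(w)^{\delta x}$ in $\C$; then $\B(w)^x$ is free, and I would set $\rho_{x,E}$ equal to the $E$-component of $\pi$ for $E\in\E^{\delta x}$, postponing the restriction $\rho_{x,E}$ for an edge $E$ with head $x$ until the tail of $E$ is processed. By construction $\rho_{x,\delta x}=\pi$ is surjective onto $\B(w)^{\delta x}$ and, for $x\ne w$, a projective cover; at $x=w$ both $\B(w)^{\delta w}$ and the map onto it are $0$. Together with $\B(w)^w=S$ and vanishing outside $\V_{\le w}$, this establishes~\ref{proposition:2:part:1}--\ref{proposition:2:part:3}. (One also obtains~\ref{proj:1}--\ref{proj:4}, so that $\B(w)$ is in fact projective, via Lemma~\ref{lemma:3} and the freeness of the stalks.)

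For uniqueness, let $\M$ and $\N$ be two sheaves satisfying~\ref{proposition:2:part:1}--\ref{proposition:2:part:3}; I would build an isomorphism $\M\to\N$ by the same descending recursion, constructing componentwise isomorphisms $f_y:\M^y\to\N^y$ and $f_E:\M^E\to\N^E$ that commute with all restrictions. Start from any isomorphism $f_{x_1}:\M^{x_1}=S\to S=\N^{x_1}$. To treat $x=x_n$: each $f_y$ with $y\in\V_{>x}$ is $S$-linear, hence carries $l(E)\M^y$ onto $l(E)\N^y$ and, by~\ref{proposition:2:part:2}, descends to an isomorphism $f_E:\M^E\to\N^E$ for every edge $E:x\to y$, compatibly with the restrictions from $y$. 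Applying the $f_y$ componentwise yields an isomorphism $\Gamma(\V_{>x},\M)\to\Gamma(\V_{>x},\N)$, which restricts to an isomorphism $g:\M^{\delta x}\to\N^{\delta x}$. Now $g\circ\rho_{x,\delta x}^{\M}:\M^x\to\N^{\delta x}$ is a projective cover (a projective cover followed by an isomorphism), as is $\rho_{x,\delta x}^{\N}:\N^x\to\N^{\delta x}$ by~\ref{proposition:2:part:3}; by uniqueness of projective covers in $\C$ up to isomorphism there is an isomorphism $f_x:\M^x\to\N^x$ with $\rho_{x,\delta x}^{\N}\circ f_x=g\circ\rho_{x,\delta x}^{\M}$. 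Reading off $E$-components shows the squares for edges leaving $x$ commute; for an edge $E$ with head $x$ the square commutes because $\rho_{x,E}^{\M}$ and $\rho_{x,E}^{\N}$ are the projections modulo $l(E)$ and $f_E$ is the induced map. This completes the recursion and produces a sheaf isomorphism $\M\to\N$.

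The diagram chases are routine. The point needing care is the verification that $\B(w)^{\delta x}$ truly lies in $\C$, so that the existence of projective covers applies; more structurally, the whole scheme rests on the fact that conditions~\ref{proposition:2:part:2}--\ref{proposition:2:part:3} at $x$ constrain the sheaf only through data on the open set $\V_{>x}$, which is what makes the descending recursion well-founded and depends essentially on the finiteness of $\V_{\le w}$. I expect the main obstacle to be organizational: tracking exactly which edge modules and restriction maps have been defined at each stage, so that the section modules $\Gamma(\V_{>x},-)$ and the modules $\B(w)^{\delta x}$ are available precisely when invoked, and so that the two choices involved --- the enumeration and the projective covers --- provably do not affect the outcome, which is what the uniqueness argument certifies.
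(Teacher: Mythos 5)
The paper does not prove this proposition itself: it is stated as a citation (to Braden--MacPherson, to Fiebig, and to Jantzen's notes) with the remark that ``the following proposition is actually a definition.'' Your proposal supplies a complete and correct proof, and it is precisely the standard Braden--MacPherson algorithm used in those references: pick a linear extension of $\le$ on the finite set $\V_{\le w}$ and run a descending recursion, taking a projective cover $\B(w)^x\to\B(w)^{\delta x}$ in $\C$ at each step; uniqueness then follows by propagating isomorphisms downward via the uniqueness of projective covers. The bookkeeping about when each stalk, edge module and restriction becomes available, the verification that $\B(w)^{\delta x}$ lies in $\C$, and the two uses of uniqueness of projective covers are all handled correctly. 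For polish only: since property~\ref{proposition:2:part:2} is imposed on \emph{every} edge of $\G$, not just those inside $\G_{\V_{\le w}}$, it is worth noting explicitly that for an edge $E:\edgeright{x}{}{y}$ with $y\not\le w$ one has $\B(w)^y=0$ and hence $\B(w)^E=0$, and that no edge has tail outside $\V_{\le w}$ with head inside it (if $x<y\le w$ then $x\le w$), so the case analysis for~\ref{proposition:2:part:2} is complete on all of $\G$ once the construction on $\G_{\V_{\le w}}$ is done.
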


By Lemma~\ref{lemma:3}, Braden-MacPherson sheaves are projective and by~\cite[Proposition~3.5(c)]{Jantzen_moment_graphs}
indecomposable. We clearly have $\d(\B(w))=w$ (the last statement of part~\ref{proposition:2:part:3}
in the above proposition is wrong for $x=w$).


\begin{proposition}[\text{\cite[Proposition~3.12]{Jantzen_moment_graphs}}]\label{proposition:3}
For any be a projective sheaf $\PP$, there is an isomorphism of sheaves
$$
\PP\cong\B(z_1)\<r_1\>\oplus\B(z_2)\<r_2\>\oplus\cdots\oplus\B(z_s)\<r_s\>
$$
with suitable $z_1,z_2,\ldots,z_s\in\V$ and $r_1,r_2,\ldots, r_s\in\Z$.
The pairs $(z_i, r_i)$ are determined uniquely up to order by $\PP$.
\end{proposition}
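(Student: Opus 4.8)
I would argue by induction on the (finite) cardinality of $\supp\PP$ for the existence, and read off uniqueness from the defect. If $\supp\PP=\emptyset$ then $\PP=0$ and there is nothing to prove, so assume $\PP\neq 0$ and pick a maximal element $w$ of $\supp\PP$ in the poset order. Since $\PP^w$ is free in $\C$ by~\ref{proj:2}, write $\PP^w=\bigoplus_{i=1}^m S\,b_i$ with homogeneous free generators $b_i$, say $\deg b_i=-r_i$, and set $\Q:=\bigoplus_{i=1}^m\B(w)\<r_i\>$. Then $\Q$ is again a projective sheaf (a finite direct sum of shifts of Braden--MacPherson sheaves, and (P1)--(P4) are visibly stable under shifts and finite direct sums), $\supp\Q\subset\V_{\le w}$ with $w$ its unique maximal vertex, and $\Q^w\cong\bigoplus_i S\<r_i\>$ has the same graded rank as $\PP^w$. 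Since $w$ is maximal, $\Q^{\delta w}=0=\PP^{\delta w}$.

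The first main step is to construct a morphism $\phi\colon\Q\to\PP$ with $\phi_w\colon\Q^w\xrightarrow{\sim}\PP^w$. I would build the family $(\phi_x)_{x},(\phi_E)_E$ by downward induction over the finite poset $\supp\Q$. At $x=w$ take $\phi_w$ to be the isomorphism sending the $i$-th generator of $\Q^w$ to $b_i$; no edge condition is imposed since $\Q^{\delta w}=0$. For $x\in\supp\Q$ with $x\neq w$, suppose $\phi_y$ is defined for all $y>x$ in $\supp\Q$. For every edge $E$ with larger endpoint $y$, property~\ref{proj:3} identifies $\Q^E=\Q^y/\alpha\Q^y$ and $\PP^E=\PP^y/\alpha\PP^y$, so $\phi_y$ (being $S$-linear) induces $\phi_E\colon\Q^E\to\PP^E$, and the square at the $y$-end of $E$ commutes automatically. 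The family $(\phi_y)_{y>x}$ is then a morphism of sheaves over $\V_{>x}$, hence carries $\Gamma(\V_{>x},\Q)$ into $\Gamma(\V_{>x},\PP)$ and induces an $S$-homomorphism $\bar\phi^{\delta x}\colon\Q^{\delta x}\to\PP^{\delta x}$. By Lemma~\ref{lemma:3} and~\ref{proj:4}, $\rho^{\PP}_{x,\delta x}\colon\PP^x\to\PP^{\delta x}$ is surjective; and $\Q^x$ is free in $\C$ by~\ref{proj:2}, hence projective. So I may lift $\bar\phi^{\delta x}\circ\rho^{\Q}_{x,\delta x}$ through $\rho^{\PP}_{x,\delta x}$ to obtain $\phi_x\colon\Q^x\to\PP^x$, and the lifting relation is precisely the commutativity of the squares at the lower endpoint of every edge in $\E^{\delta x}$. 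This finishes the construction of $\phi$. Interchanging the roles of $\Q$ and $\PP$ (both projective, hence both subject to Lemma~\ref{lemma:3}), the identical downward-induction-plus-lifting yields a morphism $\psi\colon\PP\to\Q$ with $\psi_w=\phi_w^{-1}$.

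Now consider $g:=\psi\circ\phi\colon\Q\to\Q$, so $g_w=\id_{\Q^w}$; I claim $g$ is an isomorphism, again by downward induction on $x\in\supp\Q$. For $x\neq w$, the $g_y$ ($y>x$) being isomorphisms make the induced map $\bar g^{\delta x}$ on $\Q^{\delta x}$ an isomorphism, and the square relating $g_x$ and $\bar g^{\delta x}$ through $\rho^{\Q}_{x,\delta x}$ may be reduced modulo $\m$; since $\rho^{\Q}_{x,\delta x}$ is a projective cover by part~\ref{proposition:2:part:3} of Proposition~\ref{proposition:2}, Proposition~\ref{proposition:1}(b) gives that $\Q^x/\m\Q^x\to\Q^{\delta x}/\m\Q^{\delta x}$ is an isomorphism, whence $g_x$ is an isomorphism modulo $\m$. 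By Proposition~\ref{proposition:1}(a) and graded Nakayama $g_x$ is surjective, and a surjective graded endomorphism of the Noetherian module $\Q^x$ is bijective; so $g$ is an isomorphism. Replacing $\psi$ by $g^{-1}\circ\psi$, I may assume $\psi\circ\phi=\id_{\Q}$. Then $e:=\phi\circ\psi\colon\PP\to\PP$ is an idempotent morphism, so $\PP^x=\im e_x\oplus\ker e_x$ stalkwise and likewise on edges, and since $e$ commutes with all restrictions, $\PP':=\ker e$ is a subsheaf with $\PP\cong\im e\oplus\PP'$ and $\im e\cong\Q$. The sheaf $\PP'$ is again projective: its stalks are direct summands of free $\C$-modules, hence free (a projective object of $\C$ is free); the edge condition~\ref{proj:3} restricts to the summand; and flabbiness together with generation by global sections pass to $\PP'$ by Lemma~\ref{lemma:3} because $\Gamma(\V_{>x},-)$ and $(-)^{\delta x}$ are additive in the sheaf. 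Since $e_w=\phi_w\psi_w=\id_{\PP^w}$ we get $\PP'^{\,w}=0$, so $\supp\PP'\subsetneq\supp\PP$, and the induction hypothesis applied to $\PP'$ combined with $\Q=\bigoplus_i\B(w)\<r_i\>$ furnishes the decomposition of $\PP$. For uniqueness, if $\PP\cong\bigoplus_{i=1}^s\B(z_i)\<r_i\>$, then using $\d(\B(z))=z$ and the additivity and shift rules of $\d$ recorded after Definition~\ref{definition:6}, $\d(\PP)=\sum_{i=1}^s v^{r_i}z_i\in\Z[v,v^{-1}]\V$; since $\d(\PP)$ is an invariant of $\PP$, the coefficient of each $z\in\V$, namely $\sum_{i\,:\,z_i=z}v^{r_i}$, determines the multiset of shifts at $z$, so the pairs $(z_i,r_i)$ are determined up to order.

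I expect the main obstacle to be the middle portion: verifying that the downward-induction constructions of $\phi$ and $\psi$ are genuinely well defined (which edge-compatibility is imposed at which step, why the relevant restriction is surjective, and why the lift exists in $\C$) and then proving that $\psi\circ\phi$ is an isomorphism, which is where the projective-cover property of the $\rho_{x,\delta x}$ for Braden--MacPherson sheaves and graded Nakayama are really used. Once that is in place, extracting the idempotent $e$, checking that $\PP'=\ker e$ is projective, and the uniqueness via defects are essentially formal.
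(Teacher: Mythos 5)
Your proof is correct, but it takes a genuinely different route from what the paper records: the paper's proof of this proposition is essentially a citation. It observes that $[\supp\PP]$ is finite, invokes Jantzen's Proposition~3.12 to decompose the restriction $\PP_{[\supp\PP]}$ on the finite moment graph $\G_{[\supp\PP]}$, extends to all of $\V$ using that $[\supp\PP]$ is closed together with~\ref{proj:3}, and then establishes uniqueness via the defect exactly as you do. You instead reprove the existence statement from scratch by induction on $|\supp\PP|$: pick a maximal $w\in\supp\PP$, build $\Q=\bigoplus_i\B(w)\<r_i\>$ with the same graded rank at $w$, construct comparison morphisms $\phi\colon\Q\to\PP$ and $\psi\colon\PP\to\Q$ by downward induction over the poset using the surjectivity of $\rho_{x,\delta x}$ (Lemma~\ref{lemma:3}) and projectivity of the stalks of the source to lift, show $\psi\circ\phi$ is an automorphism via the projective-cover property of $\rho^\Q_{x,\delta x}$ (Proposition~\ref{proposition:2}\ref{proposition:2:part:3}), Proposition~\ref{proposition:1}, and graded Nakayama, and then split off $\Q$ as a direct summand by the idempotent $e=\phi\psi$. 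This is in effect a reproof of the Krull--Schmidt-type ingredient that the paper outsources to Jantzen (adapted so that no separate restriction/extension step is needed), and it buys self-containedness at the cost of length; the paper's version is shorter but depends on the reference for the essential existence argument. Both use the same defect calculation for uniqueness. The one point worth making explicit in your writeup (you gesture at it) is that for a direct sum of shifts of Braden--MacPherson sheaves, $\rho^\Q_{x,\delta x}$ is still a projective cover for $x\ne w$: this follows because $(\cdot)^{\delta x}$ and $\rho_{x,\delta x}$ are additive in the sheaf and a finite direct sum of projective covers in $\C$ is again a projective cover by Proposition~\ref{proposition:1}(b).
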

\begin{proof} As $\V_{\le x}$ is finite for any $x\in\V$, the set $[\supp\PP]$ is finite.
Then the existence of such a decomposition for $\PP_{[\supp\PP]}$ follows
from~\cite[Proposition~3.12]{Jantzen_moment_graphs}. We extend it to all of $\V$,
using the fact that $[\supp\PP]$ is closed and applying property~\ref{proj:3} of projective sheaves.

To prove uniqueness, we can apply $\d$ to both sides of our decomposition. We get
\begin{equation}\label{eq:def}
\d(\PP)=\d\bigl(\B(z_1)\<r_1\>\bigr)+\d\bigl(\B(z_2)\<r_2\>\bigr)+\cdots+\d\bigl(\B(z_s)\<r_s\>\bigr)=v^{r_1}z_1+v^{r_2}z_2+\cdots+v^{r_s}z_s.
\end{equation}
Hence we can reconstruct up to order the pairs $(z_i, r_i)$ using only the defect $\d(\PP)$.
\end{proof}

The above formula shows that the defect is useful for decomposing a projective sheaf into a direct sum of
indecomposable sheaves. We shall use Corollary~\ref{corollary:1} for calculating the required defects.

\begin{example}\label{example:1}\rm Let us explain why we consider only sheaves with finite support.
Consider the moment graph $\G$ with set of vertices $\Z$ with the usual order and edges of the form
$\edgeright i{}{i+1}$ labelled in an arbitrary way. Take a sheaf $\M$ on $\G$ such that $\M^i=S$
for any $i\in\Z$, $\M^E=S/l(E)S$ for any edge $E:\edgeright i{}{i+1}$ and $\rho_{i,E}$ is the natural projection.
This sheaf obviously satisfies properties~\ref{proj:2}--\ref{proj:4} of Definition~\ref{definition:5}.
However, the projection $\rho_{i,\delta i}:\M^i\to\M^{\delta i}$ is a projective cover for any $i\in\Z$.
So the defect (should we define it for sheaves with infinite support) of $\M$ is zero, whereas $\M\ne0$.
\end{example}

\subsection{Functor $\vartheta^s_{on}$}\label{functor_vartheta_on} Let us focus on sheaves on the moment graph $\widehat\G$
defined by an irreducible root system $R$ and a field $\F$ (cf. Section~\ref{associated_moment_graph}).
In this section, we define the functor $\vartheta^s_{on}$ from the category of projective sheaves on $\widehat\G$
to the category of projective sheaves on $\widehat\G^s$. This definition is borrowed
from~\cite[Section~2.9]{Fiebig_sheaves_on_affine_Schubert_varieties} with one small change that
we need to ensure that this functor takes projective sheaves to projective sheaves.

Let $\PP$ be a projective sheaf on $\widehat\G$ and $s\in\widehat\S$.
We define the sheaf $\N=\vartheta^s_{on}\PP$ on $\widehat\G$ as follows.
For a vertex $\bar x=\{x,xs\}\in\widehat\W^s$, we set $\N^{\bar x}=\Gamma(\{x,xs\},\PP)$.

Now consider an edge $\bar E:\edgeright{\bar x}{}{\bar y}$ of $\widehat\G^s$ corresponding to an edge
$E\edgeright x{}y$ in $\widehat\G$.
Denote by $\N^{\bar E}$ the image of $\N^{\bar y}=\Gamma(\{y,ys\},\PP)$
under the projection $\rho_{y,E}\oplus\rho_{ys,Es}:\PP^y\oplus\PP^{ys}\to\PP^E\oplus\PP^{Es}$.
The restriction of this projection on $\N^{\bar y}$ is denoted by $\rho_{\bar y,\bar E}$.
This is a natural definition in view of property~\ref{proj:3} of projective sheaves.
However, we still need to define~$\rho_{\bar x,\bar E}$.

\begin{lemma}\label{lemma:6} Let $\PP$ be a projective sheaf on $\widehat\G$.
\begin{enumerate}
\itemsep=2pt
\item\label{lemma:6:part:1} The restriction $\Gamma(\PP)\to\Gamma(\{x,xs\},\PP)$ is surjective.
\item\label{lemma:6:part:2} 
In the above notation,
$\rho_{x,E}\oplus\rho_{xs,Es}\bigl(\N^{\bar x}\bigr)=\N^{\bar E}$.
\end{enumerate}
\end{lemma}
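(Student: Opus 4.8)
The plan is to prove the two parts in order, since part~\ref{lemma:6:part:2} will use part~\ref{lemma:6:part:1}. For part~\ref{lemma:6:part:1}, I would observe that $\{x,xs\}$ is not an open set in general, so Definition~\ref{definition:4}(\ref{flabby}) does not directly apply; the trick is to relate $\Gamma(\{x,xs\},\PP)$ to a section space over an open set. Since $xs\in\{x,xs\}$, either $x<xs$ or $xs<x$; without loss of generality assume $xs<x$, so that $x$ is the unique maximal element of the pair and $\V_{>x}\cap\{x,xs\}=\emptyset$ while $x$ lies on the edge $E:\edgeright{xs}{\alpha}{x}$ (labelled by $\alpha=\overline{\alpha_s}$ say). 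I would take an arbitrary $(a,b)\in\Gamma(\{x,xs\},\PP)$ (with $a\in\PP^x$, $b\in\PP^{xs}$, compatible along $E$) and build a global section extending it. First extend over $\V_{\ge x}$: because $\PP$ is generated by global sections there is some $m\in\Gamma(\PP)$ with $m_x=a$, and restricting gives a section over $\V_{\ge x}$ with value $a$ at $x$. The remaining issue is to adjust $m$ on $\V_{<x}$ so that its value at $xs$ becomes exactly $b$; here I would use flabbiness applied to the open sets $\V_{\ge xs}\supset\V_{>xs}$ together with property~\ref{proj:3} (surjectivity of $\rho_{xs,E}$ onto $\PP^E$ with kernel $\alpha\PP^{xs}$) to correct the value at $xs$ by an element of $\ker\rho_{xs,E}$ without disturbing the value at $x$, and then extend the corrected section over the rest of $\V$ again by flabbiness (this is exactly the kind of downward induction on the finite set $[\supp\PP]$ recalled just before Definition~\ref{definition:5}). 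The main obstacle is precisely this compatibility bookkeeping: one must check that the correction needed at $xs$ lies in the kernel of the relevant restriction maps so that it can be realized by a section over $\V_{>xs}$ that vanishes near $x$, and that re-extending does not reintroduce a discrepancy.

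For part~\ref{lemma:6:part:2}, recall $\N^{\bar E}$ was defined as the image of $\N^{\bar y}=\Gamma(\{y,ys\},\PP)$ under $\rho_{y,E}\oplus\rho_{ys,Es}$, while $\rho_{\bar x,\bar E}$ is the restriction of $\rho_{x,E}\oplus\rho_{xs,Es}$ to $\N^{\bar x}=\Gamma(\{x,xs\},\PP)$. So the claim is that these two subspaces of $\PP^E\oplus\PP^{Es}$ coincide. The inclusion $\rho_{x,E}\oplus\rho_{xs,Es}(\N^{\bar x})\subset\N^{\bar E}$ I would get directly from the section condition: given $(a,b)\in\N^{\bar x}$, by part~\ref{lemma:6:part:1} extend it to a global section $m$; then $(m_y,m_{ys})\in\N^{\bar y}$, and the compatibility of $m$ along the edges $E$ and $Es$ forces $\rho_{x,E}(a)=\rho_{y,E}(m_y)$ and $\rho_{xs,Es}(b)=\rho_{ys,Es}(m_{ys})$, which says exactly that $\rho_{x,E}\oplus\rho_{xs,Es}(a,b)$ equals the image of $(m_y,m_{ys})$ under $\rho_{y,E}\oplus\rho_{ys,Es}$, hence lies in $\N^{\bar E}$. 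For the reverse inclusion, start with $(b_1,b_2)\in\N^{\bar E}$, so $(b_1,b_2)=(\rho_{y,E}(c),\rho_{ys,Es}(d))$ for some $(c,d)\in\Gamma(\{y,ys\},\PP)$; by part~\ref{lemma:6:part:1} again extend $(c,d)$ to a global section $m$, and then $(m_x,m_{xs})\in\Gamma(\{x,xs\},\PP)=\N^{\bar x}$ maps under $\rho_{x,E}\oplus\rho_{xs,Es}$ to $(\rho_{y,E}(m_y),\rho_{ys,Es}(m_{ys}))=(\rho_{y,E}(c),\rho_{ys,Es}(d))=(b_1,b_2)$, again using the edge-compatibility of the global section $m$. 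Thus both inclusions follow formally once part~\ref{lemma:6:part:1} is in hand.

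In short, the whole lemma reduces to part~\ref{lemma:6:part:1}, whose proof is the genuine content; part~\ref{lemma:6:part:2} is then a diagram chase through the section condition and the surjection $\Gamma(\PP)\twoheadrightarrow\Gamma(\{x,xs\},\PP)$. I would present part~\ref{lemma:6:part:1} carefully, singling out the two cases $xs<x$ and $x<xs$ (the second being symmetric, so it suffices to remark on it), and I expect the only subtlety is confirming that the adjustment making the value at the smaller of $x,xs$ correct can be carried out within $\ker\rho_{\cdot,E}=\alpha\PP^{\cdot}$ so as to be supported away from the larger vertex, which is guaranteed by property~\ref{proj:3} of projective sheaves and by flabbiness over the open sets $\V_{\ge xs}$, $\V_{>xs}$.
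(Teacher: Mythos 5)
Your treatment of part~\ref{lemma:6:part:2} is correct and matches the paper's (the paper simply says it follows from part~\ref{lemma:6:part:1}). The problem is in part~\ref{lemma:6:part:1}, where your plan has a real gap.

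After extending $a$ to a global section $m$ with $m_x=a$, the discrepancy $d:=b-m_{xs}$ lies in $\ker\rho_{xs,E}=\alpha\PP^{xs}$, which is correct. But to realize a correcting section $n$ with $n_x=0$ and $n_{xs}=d$ by your ``extend from $\V_{>xs}$ to $\V_{\ge xs}$'' argument, you would set $n|_{\V_{>xs}}=0$ and extend; the flabbiness/Lemma~\ref{lemma:3} mechanism then forces $n_{xs}\in\PP_{[xs]}$, i.e.\ $n_{xs}$ must lie in the kernel of \emph{every} restriction from $xs$, not merely $\rho_{xs,E}$. In general $\alpha\PP^{xs}\not\subset\PP_{[xs]}$ (there are other edges out of $xs$), so the correction $d$ typically cannot be realized by a section vanishing on $\V_{>xs}$. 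You flag this as ``the only subtlety'' but do not resolve it, and it does not resolve itself.

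The paper's proof supplies the missing ingredient: the structure algebra. Writing $d=\overline{x(\alpha)}v'$ (possible by~\ref{proj:3}), one extends $v'$ to a global section $r$ (any such $r$; its value at the other vertex is irrelevant) and then multiplies by the structure algebra element $\frac{c^{\alpha}-\overline{x(\alpha)}}{2}\in\mathcal Z$. Since $\Gamma(\PP)$ is a $\mathcal Z$-module, $\frac{c^{\alpha}-\overline{x(\alpha)}}{2}\,r$ is again a global section; its value at $x$ is $\frac{\overline{x(\alpha)}-\overline{x(\alpha)}}{2}\,r_x=0$ and at $xs$ it is $\frac{\overline{xs(\alpha)}-\overline{x(\alpha)}}{2}\,r_{xs}=-\overline{x(\alpha)}v'=-d$. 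Subtracting this from $m$ gives the desired extension. This $\mathcal Z$-multiplication step is precisely what replaces the (unavailable) purely sheaf-theoretic correction in your sketch, and it is where the content of the lemma lies.
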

\begin{proof}\ref{lemma:6:part:1}
Let $s=s_\alpha$ for the corresponding $\alpha\in\widehat\Pi$. Without loss of generality we can assume that $x<xs$.
Then $x(\alpha)>0$ by Proposition~\ref{proposition:0}. Consider the edge $F:\edgeright x{\overline{x(\alpha)}} xs$.
Let $(u,v)\in\Gamma(\{x,xs\},\PP)$.
Let us extend $u$ to a global section $q\in\Gamma(\PP)$ (that is $q_x=u$).
Then $(u-q_x,v-q_{xs})=(0,v-q_{xs})\in\Gamma(\{x,xs\},\PP)$.
Hence $\rho_{xs,F}(v-q_{xs})=\rho_{x,F}(0)=0$.
Therefore by property~\ref{proj:3} of the projective sheaves, we get $v-q_{xs}\in\overline{x(\alpha)}\PP^{xs}$.
So we can write $v-q_{xs}=\overline{x(\alpha)}v'$ for a suitable $v'\in\PP^{xs}$.
Continue $v'$ to a global section $r\in\Gamma(\PP)$
and set
$$
q':=q-\frac{c^{\alpha}-\overline{x(\alpha)}}2\,r.
$$
This is a global section of $\PP$. We have
$$
q'_x=q_x-\frac{\overline{x(\alpha)}-\overline{x(\alpha)}}2\,r_x=q_x=u
$$
and
$$
q'_{xs}=q_{xs}-\frac{\overline{xs(\alpha)}-\overline{x(\alpha)}}2\,r_{xs}=q_{xs}+\overline{x(\alpha)}v'=q_{xs}+v-q_{xs}=v.
$$

\ref{lemma:6:part:2} Follows from~\ref{lemma:6:part:1}.
\end{proof}
\noindent
Now we can take the restriction $\rho_{x,E}\oplus\rho_{xs,Es}$ to $\N^{\bar x}$ for $\rho_{\bar x,\bar E}$.

\begin{lemma}\label{lemma:2.25} Let $\PP$ be a projective sheaf on $\widehat\G$.
Suppose that $\widehat\G_{\supp\PP}$ satisfies the GKM-property.
Then $\vartheta^s_{on}\PP$ is projective.
\end{lemma}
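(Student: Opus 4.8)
The plan is to verify the four defining conditions \ref{proj:1}--\ref{proj:4} of Definition~\ref{definition:5} for $\N:=\vartheta^s_{on}\PP$ on the quotient graph $\widehat\G^s$. Condition \ref{proj:1} is immediate: since $\N^{\bar x}=\Gamma(\{x,xs\},\PP)$ injects into $\PP^x\oplus\PP^{xs}$, we have $\supp\N\subset\overline{\supp\PP}$, which is finite because $\supp\PP$ is. For \ref{proj:2}, I would argue that $\N^{\bar x}=\Gamma(\{x,xs\},\PP)$ is a free $S$-module: writing $x<xs$ without loss of generality and letting $F:\edgeright{x}{\overline{x(\alpha)}}{xs}$ be the edge joining them (where $s=s_\alpha$), the section space $\Gamma(\{x,xs\},\PP)$ is the fiber product of $\PP^x$ and $\PP^{xs}$ over $\PP^F=\PP^{xs}/\overline{x(\alpha)}\PP^{xs}$; because $\rho_{xs,F}$ is surjective with kernel $\overline{x(\alpha)}\PP^{xs}$ by \ref{proj:3}, this fiber product sits in a short exact sequence $0\to\overline{x(\alpha)}\PP^{xs}\to\N^{\bar x}\to\PP^x\to 0$, which splits over the free (hence projective) module $\PP^x$, so $\N^{\bar x}\cong\PP^x\oplus\overline{x(\alpha)}\PP^{xs}$ is free in $\C$.

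For \ref{proj:3}, I must show that for every edge $\bar E:\edgeright{\bar x}{\bar\beta}{\bar y}$ of $\widehat\G^s$ the restriction $\rho_{\bar y,\bar E}$ is surjective with kernel $\bar\beta\,\N^{\bar y}$. Surjectivity is built into the construction ($\N^{\bar E}$ was \emph{defined} as the image of $\rho_{y,E}\oplus\rho_{ys,Es}$ restricted to $\N^{\bar y}$). The kernel computation is the crux. Here the GKM-hypothesis on $\widehat\G_{\supp\PP}$ enters decisively: an element $(m_y,m_{ys})\in\N^{\bar y}=\Gamma(\{y,ys\},\PP)$ lies in the kernel iff $\rho_{y,E}(m_y)=0$ and $\rho_{ys,Es}(m_{ys})=0$, i.e. $m_y\in\beta\PP^y$ and $m_{ys}\in\beta\PP^{ys}$ (using \ref{proj:3} for $\PP$ and that $E$, $Es$ carry the same label $\beta$). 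Write $m_y=\beta m_y'$, $m_{ys}=\beta m_{ys}'$; one must check $(m_y',m_{ys}')\in\Gamma(\{y,ys\},\PP)$, i.e. that it is compatible along the edge $F':\edgeright{y}{\overline{y(\alpha)}}{ys}$ joining $y$ and $ys$. The compatibility relation for $(m_y,m_{ys})$ reads $\rho_{y,F'}(\beta m_y')=\rho_{ys,F'}(\beta m_{ys}')$ in $\PP^{ys}/\overline{y(\alpha)}\PP^{ys}$, and since $\beta$ (the label of $E$) and $\overline{y(\alpha)}$ (the label of $F'$) are the labels of two edges sharing the vertex $y$ in $\widehat\G_{\supp\PP}$, the GKM-property makes them coprime, so $\beta$ is invertible modulo $\overline{y(\alpha)}$ and we may cancel it to get $\rho_{y,F'}(m_y')=\rho_{ys,F'}(m_{ys}')$. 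Hence $(m_y',m_{ys}')\in\N^{\bar y}$ and $(m_y,m_{ys})=\bar\beta(m_y',m_{ys}')$; the reverse inclusion $\bar\beta\,\N^{\bar y}\subset\ker\rho_{\bar y,\bar E}$ is trivial since $\bar\beta\,\N^{\bar E}=0$.

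Finally, for \ref{proj:4} I would invoke Lemma~\ref{lemma:3}: since $\supp\N$ is finite, it is enough to show $\N^{\delta\bar x}=\rho_{\bar x,\delta\bar x}(\N^{\bar x})$ for every $\bar x$. An element of $\N^{\delta\bar x}$ has components $\rho_{\bar y,\bar E}(n_{\bar y})$ over edges $\bar E:\edgeright{\bar x}{}{\bar y}$ coming from a section $n\in\Gamma(\widehat\W^s_{>\bar x},\N)$. Unwinding the definition of $\N$, such an $n$ lifts (on the preimage in $\widehat\W$) to compatible data in $\PP$ over $\{z,zs : \bar z>\bar x\}$, and using that $\PP$ itself is flabby and generated by global sections (Lemma~\ref{lemma:3} for $\PP$) together with part~\ref{lemma:6:part:2} of Lemma~\ref{lemma:6}, one extends this to a section of $\PP$ over a neighbourhood of $\{x,xs\}$, whose value at $\{x,xs\}$ gives the required element of $\N^{\bar x}$ mapping onto the prescribed component; I would phrase this extension carefully, distinguishing the cases $x<xs$ and $xs<x$ and handling the edge between $x$ and $xs$ via the coroot-scaling trick already used in the proof of Lemma~\ref{lemma:6}\ref{lemma:6:part:1}. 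The main obstacle is precisely this last step: matching the poset topology on $\widehat\W^s$ with that on $\widehat\W$ and checking that flabbiness of $\PP$ descends to $\N$ through the quotient, including the subtlety that an open set upstairs containing $x$ need not contain $xs$, so one genuinely needs Lemma~\ref{lemma:6}\ref{lemma:6:part:1} to repair the section at the fiber $\{x,xs\}$.
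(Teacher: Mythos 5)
Your overall plan matches the paper's: verify \ref{proj:1}--\ref{proj:4} of Definition~\ref{definition:5} for $\N:=\vartheta^s_{on}\PP$, with your (P2) splitting argument being a repackaging of the paper's explicit isomorphism $\N^{\bar x}\cong\PP^x\oplus\PP^{xs}\<-2\>$, and the GKM-cancellation argument for (P3). But (P3) as written has a real gap. The GKM-hypothesis is placed only on $\widehat\G_{\supp\PP}$, so in order to conclude that $\beta$ and $\overline{y(\alpha)}$ are not proportional you must first establish that the edges $E$ and $F'$ are edges \emph{of} $\widehat\G_{\supp\PP}$, not merely of $\widehat\G$. This requires a reduction you skip: restrict to the case $y,ys\in\supp\PP$ (otherwise $\PP^{F'}=0$ and compatibility along $F'$ is vacuous), which makes $F'$ an edge of $\widehat\G_{\supp\PP}$; then observe that $\PP^E\cong\PP^y/\gamma\PP^y\ne0$, so surjectivity of $\rho_{x,E}$ (Lemma~\ref{lemma:2}) forces $\PP^x\ne0$, hence $x\in\supp\PP$ and $E$ is also an edge of $\widehat\G_{\supp\PP}$. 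Without that step you have no license to cancel $\beta$ modulo $\overline{y(\alpha)}$, and this is precisely where the GKM-hypothesis in the statement earns its keep.

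On (P4) you overcomplicate and misdiagnose where the difficulty lies. The ``subtlety that an open set upstairs containing $x$ need not contain $xs$'' never arises: every open set you need in $\widehat\W$ is of the form $\pi_s^{-1}(\mathcal I)$ for $\mathcal I\subset\widehat\W^s$ open, hence automatically $s$-stable; and since $\Gamma(\mathcal I,\N)$ identifies canonically with $\Gamma(\pi_s^{-1}(\mathcal I),\PP)$, flabbiness of $\N$ descends immediately from flabbiness of $\PP$ with nothing to repair. Lemma~\ref{lemma:6}\ref{lemma:6:part:1} \emph{is} genuinely needed, but for a different reason: the hypothesis that $\PP$ is generated by global sections only gives surjectivity of $\Gamma(\PP)\to\PP^x$, and one needs the $c^\alpha$-correction argument in the proof of that lemma to upgrade this to surjectivity onto $\Gamma(\{x,xs\},\PP)=\N^{\bar x}$, which is exactly the statement that $\N$ is generated by global sections. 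The route through Lemma~\ref{lemma:3} would work, but it is more indirect than the paper's separate verification of the two halves of \ref{proj:4}, and the obstacle you flagged is not the real one.
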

\begin{proof}
Let us take $\alpha\in\widehat\Pi$ such that $s=s_\alpha$.
Set $\N:=\vartheta^s_{on}\PP$.
We are going to check for $\N$ properties~\ref{proj:1}--\ref{proj:4} from the definition of a projective sheaf.

\ref{proj:1} It follows from $\supp\vartheta^s_{on}\PP\subset\overline{\supp\PP}=\pi_s(\supp\PP)$.

\ref{proj:2} We claim that for any $x\in\widehat\W$ such that $x<xs$, the following isomorphism holds:
\begin{equation}\label{eq:1.5}
\N^{\bar x}=\Gamma(\{x,xs\},\PP)\cong\PP^x\oplus\PP^{xs}\<-2\>.
\end{equation}
Let $F:\edgeright x{\pm\overline{x(\alpha)}}xs$ be the corresponding edge. Consider the diagram

\begin{center}
\begin{center}
\begin{picture}(0,0)
\put(6,-2){\vector(0,-1){40}}
\put(0,0){$\PP^x$}
\put(0,-53){$\PP^F$}
\put(11,-22){$\scriptstyle\rho_{x,F}$}
\put(20,-49){\vector(1,0){30}}
\put(54,-53){$0$}
\put(-36,-49){\vector(1,0){36}}
\put(-59,-53){$\PP^{xs}$}
\put(-28,-60){$\scriptstyle\rho_{xs,F}$}
\end{picture}
\end{center}
\end{center}
\vspace{60pt}
where the bottom line is exact. Since $\PP^x$ is projective, there exists a homomorphism (homogeneous of degree $0$)
$f_{x,xs}:\PP^x\to\PP^{xs}$ such that the diagram

\begin{center}
\begin{picture}(0,0)
\put(6,-2){\vector(0,-1){40}}
\put(0,0){$\PP^x$}
\put(0,-53){$\PP^F$}
\put(11,-22){$\scriptstyle\rho_{x,F}$}
\put(20,-49){\vector(1,0){30}}
\put(54,-53){$0$}
\put(-36,-49){\vector(1,0){36}}
\put(-59,-53){$\PP^{xs}$}
\put(-28,-60){$\scriptstyle\rho_{xs,F}$}
\put(-1,-2){\vector(-1,-1){40}}
\put(-39,-17){$\scriptstyle f_{x,xs}$}
\end{picture}
\end{center}
\vspace{60pt}
is commutative. We define the map $\phi:\PP^x\oplus\PP^{xs}\<-2\>\to\Gamma(\{x,xs\},\PP)$ by
$$
\phi(u,v):=\bigl(u,f_{x,xs}(u)+\overline{x(\alpha)}\,v\bigr).
$$
From the above commutative diagram and property~\ref{definition:2:part:2} from Definition~\ref{definition:2}, we get
$$
\rho_{xs,F}\bigl(f_{x,xs}(u)+\overline{x(\alpha)}\,v\bigr)=\rho_{xs,F}\circ f_{x,xs}(u)+\overline{x(\alpha)}\,\rho_{xs,F}(v)=\rho_{x,F}(u),
$$
which proves that the image of $\phi$ is indeed contained in $\Gamma(\{x,xs\},\PP)$.
It is clear that $\phi$ is a homogeneous of degree $0$ homomorphism of $S$-modules.

Conversely, take an arbitrary $(a,b)\in\Gamma(\{x,xs\},\PP)$. We have
$$
\rho_{xs,F}(b-f_{x,xs}(a))=\rho_{xs,F}(b)-\rho_{xs,F}\circ f_{x,xs}(a)=\rho_{xs,F}(b)-\rho_{x,F}(a)=0.
$$
Hence $b-f_{x,xs}(a)\in\ker\rho_{xs,F}=\overline{x(\alpha)}\,\PP^{xs}$ and we can write $b-f_{x,xs}(a)=\overline{x(\alpha)}\,v$
for some $v\in\PP^{xs}$. Thus $\phi(a,v)=(a,b)$.

Finally, suppose that $\phi(u,v)=0$. Then $u=0$ and $\overline{x(\alpha)}\,v=u+\overline{x(\alpha)}\,v=0$.
As $\PP^{xs}$ is free and $\overline{x(\alpha)}\ne0$, we get $v=0$.

\ref{proj:3} Consider an edge $\bar E:\edgeright{\bar x}\gamma{\bar y}$ of $\widehat\G^s$ corresponding to an edge
$E:\edgeright x\gamma y$ in $\widehat\G$. The restriction $\rho_{\bar y,\bar E}$ is surjective by construction.
We need to prove that its kernel is $\gamma\N^{\bar y}$.

We denote by $F$ the edge $\edge y{\pm\overline{y(\alpha)}}ys$.
Let $(u,v)\in\N^{\bar y}=\Gamma(\{y,ys\},\PP)$ be a section such that $\rho_{\bar y,\bar E}(u,v)=0$.
This means that $\rho_{y,E}(u)=0$ и $\rho_{ys,Es}(v)=0$.
By property~\ref{proj:3} of projective sheaves, we get $u=\gamma u'$ and $v=\gamma v'$ for some
$u'\in\PP^y$ and $v'\in\PP^{ys}$.

Since $(u,v)=\gamma(u',v')$, it suffices to prove $(u',v')\in\Gamma(\{y,ys\},\PP)$.
It is enough to consider the case $y,ys\in\supp\PP$, since otherwise we would get $\PP^F=0$ by Lemma~\ref{lemma:2}.
Hence $\PP^E\cong\PP^y/\gamma\PP^y\ne0$. Since $\rho_{x,E}$ is surjective by Lemma~\ref{lemma:2},
we get $\PP^x\ne0$ and $x\in\supp\PP$. Therefore $E$ and $F$ are edges of $\widehat\G_{\supp\PP}$
and their labels $\gamma$ and $\pm\overline{y(\alpha)}$ are not proportional by hypothesis.
Therefore we can cancel out $\gamma$ in $\PP^F$.
We have
$$
\gamma\rho_{y,F}(u')=\rho_{y,F}(u)=\rho_{ys,F}(v)=\gamma\rho_{ys,F}(v').
$$
Cancelling out $\gamma$, we get $\rho_{y,F}(u')=\rho_{ys,F}(v')$, whence $(u',v')\in\Gamma(\{y,ys\},\PP)=\N^{\bar y}$.

\ref{proj:4} The sheaf $\N$ is generated by global sections by Lemma~\ref{lemma:6} and is flabby,
since $\PP$ is flabby and the full preimage of an open subset of $\widehat\W^s$
under the natural projection $\widehat\W\to\widehat\W^s$ is open.
\end{proof}

\subsection{Functor $\vartheta^s_{out}$}\label{functorvarthetasout} Let us remind the definition of this functor given
in~\cite[Section~2.9]{Fiebig_sheaves_on_affine_Schubert_varieties}. Let $s=s_\alpha\in\widehat\S$, where $\alpha\in\widehat\Pi$.
Recall that $\bar x=\{x,xs\}$ and $\overline\Omega$ denotes the image of $\Omega\subset\widehat\W$
under the natural projection $\pi_s:\widehat\W\to\widehat\W^s$.

Let $\PP$ be a projective sheaf on $\widehat\G^s$.
We define the sheaf $\M:=\vartheta^s_{out}\PP$ as follows.
For a vertex $x\in\widehat\W$, we set $\M^x:=\PP^{\bar x}$.
Let $E$ be an edge of $\widehat\G$.
If $E$ connects vertices $x$ and $xs$ for some $x\in\widehat\W$, then we set $\M^E:=\PP^{\bar x}/l(E)\PP^{\bar x}$ and take
for $\rho_{x,E}$ and $\rho_{xs,E}$ the natural projections. Suppose now that $E$ connects vertices $x$ and $y$
such that $x\ne ys$. In this case $\bar x$ and $\bar y$ are connected by the edge $\bar E$.
We set $\M^E:=\PP^{\overline E}$, $\rho_{x,E}:=\rho_{\bar x,\overline E}$ and $\rho_{y,E}:=\rho_{\bar y,\overline E}$.

From this definition, we get the following obvious formula
\begin{equation}\label{eq:2}
\supp\vartheta^s_{out}\PP=\pi_s^{-1}(\supp\PP).
\end{equation}

In this section, we shall use the following notation. Let $\Omega\subset\widehat\W$ be such that $\Omega=\Omega s$ and
$\PP$ be a sheaf on $\widehat\G^s$. For any section $a\in\Gamma(\overline\Omega,\PP)$,
we denote by $a_{out}$ the section of $\Gamma(\Omega,\vartheta^s_{out}\PP)$ defined by $(a_{out})_x=a_{\bar x}$
for any $x\in\Omega$.

\begin{proposition}[\text{\cite[Lemma 2.6]{Fiebig_sheaves_on_affine_Schubert_varieties}}]\label{proposition:4}
Let $\PP$ be a projective sheaf on $\widehat\G^s$ and $\Omega\subset\widehat\W$ be such that $\Omega=\Omega s$.
Suppose that $\widehat\G_{\Omega\cap\pi_s^{-1}(\supp\PP)}$
satisfies the GKM-property. Then any section $u\in\Gamma(\Omega,\vartheta^s_{out}\PP)$
has a unique representation $u=a_{out}+c^\alpha b_{out}$, where $a,b\in\Gamma(\overline\Omega,\PP)$.
\end{proposition}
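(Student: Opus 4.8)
The plan is to establish the existence and uniqueness of the decomposition $u = a_{out} + c^\alpha b_{out}$ separately, working one $\widehat\W_s$-orbit $\{x,xs\}$ at a time and then patching via the sheaf property of sections (Lemma~\ref{lemma:1.5}). Fix $\alpha\in\widehat\Pi$ with $s=s_\alpha$. For each $x\in\Omega$ with $x<xs$, note $\M^x=\M^{xs}=\PP^{\bar x}$ and the edge $F:\edgeright{x}{\pm\overline{x(\alpha)}}{xs}$ has $\M^F=\PP^{\bar x}/l(F)\PP^{\bar x}$ with both $\rho_{x,F},\rho_{xs,F}$ the natural projection; also $c^\alpha_x=\overline{x(\alpha)}$ and $c^\alpha_{xs}=\overline{xs(\alpha)}=-\overline{x(\alpha)}$. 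The key local observation is that for any $p,q\in\PP^{\bar x}$, the pair $(p,q)\in\M^x\oplus\M^{xs}$ is automatically compatible along $F$ iff $p\equiv q\pmod{\overline{x(\alpha)}}$, i.e. iff $q-p\in\overline{x(\alpha)}\PP^{\bar x}$.

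First I would prove \emph{uniqueness}. Suppose $a_{out}+c^\alpha b_{out}=0$; then for every $x\in\Omega$ we get $a_{\bar x}+\overline{x(\alpha)}\,b_{\bar x}=0$ and $a_{\overline{xs}}-\overline{x(\alpha)}\,b_{\overline{xs}}=0$ (using $\overline{xs} = \bar x$). Subtracting and adding gives $2\,\overline{x(\alpha)}\,b_{\bar x}=0$, hence $b_{\bar x}=0$ since $\PP^{\bar x}$ is free, $\overline{x(\alpha)}\ne0$, and $\mathop{\rm char}\F\ne2$; then $a_{\bar x}=0$. As this holds at every vertex of $\overline\Omega$, we conclude $a=b=0$.

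Next, \emph{existence}. Given $u\in\Gamma(\Omega,\vartheta^s_{out}\PP)$, I define candidate sections $a,b\in\prod_{\bar x\in\overline\Omega}\PP^{\bar x}$ by solving the local $2\times2$ system at each orbit: from $u_x=a_{\bar x}+\overline{x(\alpha)}\,b_{\bar x}$ and $u_{xs}=a_{\bar x}-\overline{x(\alpha)}\,b_{\bar x}$ set $a_{\bar x}:=\tfrac12(u_x+u_{xs})$ and $b_{\bar x}:=\tfrac{1}{2\,\overline{x(\alpha)}}(u_x-u_{xs})$. The second formula makes sense precisely because $u$ being a section forces $u_x\equiv u_{xs}\pmod{\overline{x(\alpha)}}$ along the edge $F$, so $u_x-u_{xs}\in\overline{x(\alpha)}\PP^{\bar x}$ and the division is legitimate inside the free module $\PP^{\bar x}$; the resulting element is unique since $\overline{x(\alpha)}$ is a nonzerodivisor. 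It then remains to check that $a$ and $b$ are genuine sections of $\PP$ over $\overline\Omega$ — that is, they respect every edge $\bar E:\edgeright{\bar x}{\gamma}{\bar y}$ of $\widehat\G^s$ with $\bar x,\bar y\in\overline\Omega$. Here the GKM-hypothesis enters: the edge $E:\edgeright{x}{\gamma}{y}$ of $\widehat\G$ has $x\ne ys$, so in $\widehat\G_{\Omega\cap\pi_s^{-1}(\supp\PP)}$ the labels $\gamma$ and $\pm\overline{x(\alpha)}$ (and likewise $\pm\overline{y(\alpha)}$) are pairwise non-proportional, hence coprime in the UFD $S$. Since $u$ is a section, $\rho_{\bar x,\bar E}(u_x)=\rho_{\bar y,\bar E}(u_y)$ and $\rho_{\bar x,\bar E}(u_{xs})=\rho_{\bar y,\bar E}(u_{ys})$ (the second using that $Es$ and $\overline{Es}=\overline E$ carry label $\gamma$ too); averaging gives $\rho_{\bar x,\bar E}(a_{\bar x})=\rho_{\bar y,\bar E}(a_{\bar y})$, so $a$ respects $\bar E$. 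For $b$ one gets $\overline{x(\alpha)}\,\rho_{\bar x,\bar E}(b_{\bar x}) = \overline{y(\alpha)}\,\rho_{\bar y,\bar E}(b_{\bar y})$ after reducing mod $\gamma$; I would then exploit coprimality of $\gamma$ with the $\overline{x(\alpha)},\overline{y(\alpha)}$ (and that $\PP$ being projective makes $\PP^{\bar E}\cong\PP^{\bar y}/\gamma\PP^{\bar y}$, a module over the domain $S/\gamma S$ in which $\overline{x(\alpha)}$ is invertible) to cancel and deduce $\rho_{\bar x,\bar E}(b_{\bar x})=\rho_{\bar y,\bar E}(b_{\bar y})$.

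\textbf{The main obstacle} is the last verification for $b$: pushing the relation $\overline{x(\alpha)}\,b_{\bar x}\equiv\pm\overline{y(\alpha)}\,b_{\bar y}$ through the restriction $\rho_{\bar x,\bar E}$ and cancelling $\overline{x(\alpha)}$ requires that $\overline{x(\alpha)}$ act injectively on $\PP^{\bar E}$, which is exactly what the GKM-property on $\widehat\G_{\Omega\cap\pi_s^{-1}(\supp\PP)}$ buys us (via property~\ref{proj:3}, $\PP^{\bar E}\cong\PP^{\bar y}/\gamma\PP^{\bar y}$, and coprimality of $\gamma$ with $\overline{y(\alpha)}$); getting the signs and the identification $\overline{x(\alpha)}\equiv\pm\overline{y(\alpha)}\pmod\gamma$ straight along the edge is the delicate bookkeeping. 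Everything else is the routine $2\times2$ linear algebra over $S$ together with the patching Lemma~\ref{lemma:1.5}, which lets us assemble the orbit-wise data into sections on all of $\overline\Omega$.
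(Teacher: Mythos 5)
Your argument is correct and follows the same route as the proof in Fiebig's paper that the author is glossing: set up the $2\times2$ linear system orbit-wise, solve for $a_{\bar x}$ and $b_{\bar x}$, then verify the section property edge by edge, with the GKM-hypothesis entering only to cancel the factor $\overline{x(\alpha)}$ in $\PP^{\bar E}$. In fact you give considerably more detail than the paper does, since the paper's own ``proof'' is really just a pointer to~\cite[Lemma~2.6]{Fiebig_sheaves_on_affine_Schubert_varieties} together with a remark isolating exactly where the GKM-restriction is used.

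The one point you should make explicit, because it is precisely the content of the paper's remark: when you verify that $b$ respects an edge $\bar E:\edgeright{\bar x}\gamma{\bar y}$, the GKM-property of $\widehat\G_{\Omega\cap\pi_s^{-1}(\supp\PP)}$ only tells you that $\gamma$ and $\pm\overline{x(\alpha)}$ are non-proportional \emph{provided} the relevant edges lie in that subgraph, i.e.\ provided $x,xs,y,ys\in\pi_s^{-1}(\supp\PP)$. You should therefore reduce to the case $\PP^{\bar E}\ne0$ (if $\PP^{\bar E}=0$ the compatibility is vacuous); then properties~\ref{proj:3} of projective sheaves and Lemma~\ref{lemma:2} give $\PP^{\bar x}\ne0$ and $\PP^{\bar y}\ne0$, so $\bar x,\bar y\in\supp\PP$ and the GKM-hypothesis really does apply to the edges you invoke. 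You gesture at this (``via property~(P3), $\PP^{\bar E}\cong\PP^{\bar y}/\gamma\PP^{\bar y}$\dots''), but the logical reduction is worth spelling out since without it the application of the GKM-hypothesis is not yet justified.
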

\begin{proof}
As noted in the proof of Lemma 2.6 from~\cite{Fiebig_sheaves_on_affine_Schubert_varieties},
some GKM-restriction is needed in the case our ground field has positive characteristic. We only
need to specify which exactly (assuming that $\PP$ is projective). In order to make our notation compatible with Fiebig's notation,
we assume $\mathscr G=\PP$ and $\mathscr F=\vartheta_{out}^s\PP$.

The only place where this equality emerges is the prove of the equality $\rho_{x,E}(m'_x)=\rho_{y,E}(m'_y)$.
Here $E$ is the edge connecting points $x,y\in\Omega$.
We may obviously assume that $\mathscr F^E\ne0$. Then by definition, $\mathscr G^{\overline E}\ne0$.
Since $\mathscr G$ is projective, we have by Lemma~\ref{lemma:2} that $\mathscr G^{\bar x}\ne0$ and
$\mathscr G^{\bar y}\ne0$. Hence $\bar x,\bar y\in\supp\mathscr G$. Therefore all elements $x,xs,y,ys$ belong to
$\pi_s^{-1}(\supp\mathscr G)$ and of course to $\Omega$. Thus we can apply the GKM-property of the hypothesis of the
current lemma, say, to edge connecting $x$ and $y$ to the edge connecting $x$ and $xs$.
\end{proof}

\begin{lemma}\label{lemma:7.5}
Let $\PP$ be a projective sheaf on $\widehat{\G}^s$.
Suppose that $\widehat\G_{\pi_s^{-1}(\supp\PP)}$ satisfies the GKM-pro\-perty. Then $\vartheta^s_{out}\PP$
is a projective sheaf on $\widehat{\G}$. If $\PP$ is indecomposable, then so is $\vartheta^s_{out}\PP$.
\end{lemma}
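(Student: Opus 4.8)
The plan is to verify the four defining properties \ref{proj:1}--\ref{proj:4} of a projective sheaf for $\M:=\vartheta^s_{out}\PP$ one at a time, and then to deal with indecomposability separately. Property \ref{proj:1} is immediate from formula~(\ref{eq:2}): since $\supp\PP$ is finite and each fibre of $\pi_s$ has at most two elements, $\supp\M=\pi_s^{-1}(\supp\PP)$ is finite. Property \ref{proj:2} is almost as easy: $\M^x=\PP^{\bar x}$ by definition, and $\PP^{\bar x}$ is a free module in $\C$ because $\PP$ is projective; note $\PP^{\bar x}$ is free so of course $\M^x$ is free. Property \ref{proj:3} splits into two cases according to the description of the edges of $\widehat\G$ under $\pi_s$: for an edge $E$ joining $x$ and $xs$, the map $\rho_{xs,E}$ is by construction the natural projection $\PP^{\bar x}\to\PP^{\bar x}/l(E)\PP^{\bar x}$, which is surjective with kernel $l(E)\PP^{\bar x}=l(E)\M^{xs}$; for an edge $E$ joining $x$ and $y$ with $x\ne ys$, we have $\M^E=\PP^{\overline E}$ and $\rho_{y,E}=\rho_{\bar y,\overline E}$, so surjectivity and the kernel statement follow directly from property~\ref{proj:3} of $\PP$ applied to the edge $\overline E$ of $\widehat\G^s$.

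The real content is property \ref{proj:4}, that $\M$ is generated by global sections and flabby. By Lemma~\ref{lemma:3} it is equivalent to show $\M^{\delta x}=\rho_{x,\delta x}(\M^x)$ for every $x\in\widehat\W$. Here Proposition~\ref{proposition:4} is the main tool: applied to a suitable $\Omega$ with $\Omega=\Omega s$ (for instance $\Omega=\widehat\W_{\ge x}\cup\widehat\W_{\ge xs}$, which is open and $s$-stable, or better an $s$-stabilization of $\widehat\W_{>x}$), it tells us that every section $u\in\Gamma(\Omega,\M)$ has a unique decomposition $u=a_{out}+c^\alpha b_{out}$ with $a,b\in\Gamma(\overline\Omega,\PP)$. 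The strategy is: given $r\in\M^{\delta x}$, represented as $r_E=\rho_{y,E}(m_y)$ for a section $m\in\Gamma(\widehat\W_{>x},\M)$, first extend $m$ over the $s$-stable set $\Omega$ (possible since $\M$ is flabby on the relevant open sets, or by padding with zeros outside $[\supp\M]$ as in the discussion after Definition~\ref{definition:4}), write $m=a_{out}+c^\alpha b_{out}$, push $a$ and $b$ down to sections of $\PP$ over $\overline{\widehat\W_{>x}}$ or its closure, use that $\PP$ is generated by global sections to extend $a_{\bar x}$ and $b_{\bar x}$ consistently over $\{\bar x\}\cup\overline{\widehat\W_{>x}}$ (the point $\bar x$ being added), and then reassemble $\widehat a_{out}+c^\alpha\widehat b_{out}$ to get an element of $\M^x$ restricting to the prescribed $r$. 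One must check, using that $c^\alpha_x=\overline{x(\alpha)}$ and $c^\alpha_{xs}=\overline{xs(\alpha)}$ differ, that the value at $x$ of the reassembled section maps to $r$ under $\rho_{x,\delta x}$; this is where the GKM-hypothesis $\widehat\G_{\pi_s^{-1}(\supp\PP)}$ enters, precisely so that Proposition~\ref{proposition:4} is applicable on $\Omega$.

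For indecomposability, the plan is to compare with Proposition~\ref{proposition:3}: a projective sheaf is indecomposable if and only if its defect is of the form $v^r z$ for a single vertex $z$. So I would show $\d(\vartheta^s_{out}\PP)$ has a single term. Using the isomorphism~(\ref{eq:1.5})-type splitting $\M^x\cong\PP^{\bar x}\oplus\PP^{\bar x}\<-2\>$ induced by the decomposition $u=a_{out}+c^\alpha b_{out}$, together with the analogous description of $\M^{\delta x}$, one identifies the map $\rho_{x,\delta x}^{\M}$ block-wise with two copies of $\rho_{\bar x,\delta\bar x}^{\PP}$ (up to a grading shift on the second block and a correction coming from the $c^\alpha$-factor), so that $\d(\rho_{x,\delta x}^{\M})=(1+v^{-2}\text{-type shift})\cdot\d(\rho_{\bar x,\delta\bar x}^{\PP})$ for $x\ne$ the maximal vertex and vanishes otherwise exactly as it does downstairs; summing over $x$ and using $\pi_s^{-1}$ being two-to-one gives $\d(\M)$ concentrated at the single vertex lying over the one supporting $\d(\PP)$. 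Alternatively, and perhaps more cleanly, I would argue directly that $\vartheta^s_{out}$ followed by $\vartheta^s_{on}$ is (up to shift and direct summands) the identity on $\PP$, or invoke the adjunction/endomorphism-ring argument from~\cite{Fiebig_sheaves_on_affine_Schubert_varieties}: a decomposition of $\vartheta^s_{out}\PP$ would push down under $\vartheta^s_{on}$ to a decomposition of something containing $\PP$ as a summand, contradicting indecomposability of $\PP$. The main obstacle I anticipate is the bookkeeping in property \ref{proj:4} --- making the extension-and-reassembly argument precise while keeping track of which open or locally closed subsets are $s$-stable and where exactly the GKM-property is needed for Proposition~\ref{proposition:4}; the indecomposability part should then be comparatively formal.
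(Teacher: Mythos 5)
Your treatment of \ref{proj:1}--\ref{proj:3} matches the paper's and is fine. The two remaining parts have genuine problems.

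\textbf{Flabbiness.} You correctly identify Proposition~\ref{proposition:4} as the key tool, but your choice of $\Omega$ does not work. If $x<xs$, then $\widehat\W_{\ge x}\cup\widehat\W_{\ge xs}=\widehat\W_{\ge x}$, which contains $x$, so ``extending $m$ over $\Omega$'' is exactly the problem you are trying to solve, not a preliminary step. Padding with zeros outside $[\supp\M]$ does not help if $x\in\supp\M$. Similarly, the $s$-stabilization $\widehat\W_{>x}\cup(\widehat\W_{>x})s$ again contains $x$ (since $xs\in\widehat\W_{>x}$). What you actually need is the largest $s$-stable subset \emph{of} $\widehat\W_{>x}$, which is $\Omega=\widehat\W_{>x}\setminus\{xs\}$; it is on this $\Omega$ that Proposition~\ref{proposition:4} should be applied, and the extension over $x$ is then produced by the explicit formula $u_x:=u_{xs}+2\overline{x(\alpha)}b_{\bar x}$ after extending $b$ from $\widehat\W^s_{>\bar x}$ to $\widehat\W^s_{\ge\bar x}$. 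You also do not address the case $x>xs$, which is the harder one: there $xs\notin\widehat\W_{>x}$, and one must first extend $m$ from $\widehat\W_{>x}$ to the $s$-stable set $\widehat\W_{>xs}\setminus\{x\}$ via a maximality/induction argument that repeatedly uses the already-established case $y<ys$; only then does the Proposition~\ref{proposition:4} decomposition give $u_x:=a_{\bar x}+\overline{x(\alpha)}b_{\bar x}$. Without this case split and these explicit formulas, the argument does not close.

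\textbf{Indecomposability.} Your first proposed approach rests on a false premise: you invoke an ``isomorphism~(\ref{eq:1.5})-type splitting $\M^x\cong\PP^{\bar x}\oplus\PP^{\bar x}\<-2\>$''. That splitting is the stalk formula for $\vartheta^s_{on}$, where the stalk is a space of sections over a two-element fibre; for $\vartheta^s_{out}$ the stalk is simply $\M^x=\PP^{\bar x}$, with no doubling and no grading shift. Consequently the asserted formula $\d(\rho^{\M}_{x,\delta x})=(1+v^{-2}\text{-shift})\,\d(\rho^{\PP}_{\bar x,\delta\bar x})$ cannot be right --- it would make $\vartheta^s_{out}$ of an indecomposable sheaf have defect with two terms, hence decomposable. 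The paper's argument is different and does not compute defects at all: it takes $\PP\cong\B^s(\bar w)\<r\>$ with $ws<w$, and verifies directly that $\M=\vartheta^s_{out}\PP$ satisfies the characterizing properties of $\B(w)\<r\>$ in Proposition~\ref{proposition:2}, the crucial point being the identification of $\ker\rho^{\M}_{x,\delta x}$ with $\ker\rho^{\PP}_{\bar x,\delta\bar x}$ (if $xs<x$) or with $\ker\rho^{\PP}_{\bar x,\delta\bar x}\cap\overline{x(\alpha)}\PP^{\bar x}$ (if $x<xs$), each of which lands inside $\m\M^x$ for $x\ne w$, so $\rho_{x,\delta x}$ is a projective cover there. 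Your alternative suggestion (adjunction/endomorphism-ring argument \`a la Fiebig) is a legitimate different route, but you do not actually carry it out, and it requires more infrastructure than what the paper sets up.
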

\begin{proof}
Let us take $\alpha\in\widehat\Pi$ such that $s=s_\alpha$.
Set $\M:=\vartheta^s_{out}\PP$.
We are going to check for $\M$ properties~\ref{proj:1}--\ref{proj:4} from the definition of a projective sheaf.

\ref{proj:1} It follows from~(\ref{eq:2}).

\ref{proj:2} Since $\M^x=\PP^{\bar x}$, then any $\M^x$ is a free module.

\ref{proj:3} It is satisfied by construction.

\ref{proj:4} First, we prove that $\M$ is generated by global sections.
Take any $m\in\M^x$. Then $m\in\PP^{\bar x}$. Since $\PP$ is generated by global sections (as a projective sheaf)
there is some global section $u\in\Gamma(\PP)$ such that $u_{\bar x}=m$.
Consider the global section $u_{out}\in\Gamma(\M)$. We have $(u_{out})_x=u_{\bar x}=m$.

It remains to prove that $\M$ is flabby. As we noted after Definition~\ref{definition:4},
it is enough to prove that any section $u\in\Gamma(\widehat\W_{>x},\M)$ can be extended to a section of
$\Gamma(\widehat\W_{\ge x},\M)$.

{\it Case~1: $x<xs$}. Consider the set $\Omega:=\widehat\W_{>x}\setminus\{xs\}$.
We claim $\Omega s=\Omega$. Indeed, take any $y\in\Omega$.
Then by Corollary~\ref{corollary:0}\ref{corollary:0:part:2}, we get $x\le ys$.
Moreover, we have $ys\ne x$ and $ys\ne xs$. Thus we have proved $\Omega\subset\Omega s$.
The inverse inclusion is obtained from this one by multiplying it by $s$ on the right.

By proposition~\ref{proposition:4}, we get $u|_\Omega=a_{out}+c^\alpha b_{out}$ for some
$a,b\in\Gamma(\overline\Omega,\PP)$.
Note that $\overline\Omega=\widehat\W^s_{>\bar x}$. Indeed, take any $y\in\Omega$. Then $y>x$ and $y\ne xs$.
The first inequality implies $\bar y\ge\bar x$. The equality is impossible, as otherwise we would get $y=x$ or $y=xs$.
Thus $\bar y\in \widehat\W^s_{>\bar x}$. Let on the contrary $\bar y>\bar x$.
Then $y\ne xs$ and $\min\{y,ys\}>x$. In any case $y>x$.
Therefore $y\in\Omega$ and $\bar y\in\overline\Omega$.

Since $\PP$ is flabby, we can extend $b$ from $\overline\Omega=\widehat\W^s_{>\bar x}$ to $\widehat\W^s_{\ge\bar x}$
We define the function $u\in\prod_{y\in\widehat\W_{\ge x}}\M^y$ by letting $u$ take the old values on $\widehat\W_{>x}$
and setting $u_x:=u_{xs}+2\overline{x(\alpha)}b_{\bar x}$.
We claim $u\in\Gamma(\widehat\W_{\ge x},\M)$.
Since the edge $F:\edgeright x{}xs$ has label $\pm\overline{x(\alpha)}$, we have $\rho_{x,F}(u_x)=\rho_{xs,F}(u_{xs})$.
Now take an edge $E:\edgeright x{}y$ distinct from $F$. We have then $y,ys\in\Omega$.
Note that $\overline{x(\alpha)}\equiv\overline{y(\alpha)}\pmod{l(E)\M^E}$ by~(\ref{eq:1}).
We get
\begin{multline*}
 \rho_{x,E}(u_x)=\rho_{x,E}(u_{xs}+2\overline{x(\alpha)}b_{\bar x})=\rho_{xs,Es}(u_{xs})+2\overline{x(\alpha)}\rho_{\bar x,\bar E}(b_{\bar x})\\[6pt]
=\rho_{ys,Es}(u_{ys})+2\overline{y(\alpha)}\rho_{\bar y,\bar E}(b_{\bar y})
=\rho_{ys,Es}\Bigl((a_{out})_{ys}+\overline{ys(\alpha)}(b_{out})_{ys}\Bigr)+2\overline{y(\alpha)}\rho_{\bar y,\bar E}(b_{\bar y})\\
\shoveright{=\rho_{ys,Es}\Bigl((a_{out})_{ys}\Bigr)-\overline{y(\alpha)}\rho_{ys,Es}\Bigl((b_{out})_{ys}\Bigr)+2\overline{y(\alpha)}\rho_{\bar y,\bar E}(b_{\bar y})\!\!\!\!\!\!}\\[6pt]
\shoveleft{=\rho_{\bar y,\bar E}(a_{\bar y})-\overline{y(\alpha)}\rho_{\bar y,\bar E}(b_{\bar y})+2\overline{y(\alpha)}\rho_{\bar y,\bar E}(b_{\bar y})
=\rho_{\bar y,\bar E}(a_{\bar y})+\overline{y(\alpha)}\rho_{\bar y,\bar E}(b_{\bar y})}\\[6pt]
=\rho_{y,E}\Bigl((a_{out})_y\Bigr)+\overline{y(\alpha)}\rho_{y,E}\Bigl((b_{out})_y\Bigr)
=\rho_{y,E}\Bigl((a_{out})_y+\overline{y(\alpha)}(b_{out})_y\Bigr)=\rho_{y,E}(u_y).
\end{multline*}

For the remaining two cases, we have to prove that $\Omega$ is open.
Indeed, suppose that $y\in\Omega$ and $z>y$.
Then $z>y>x$. We can not have $z=xs$, since otherwise we would get $xs>y>x$, which is impossible
in view of $\ell(xs)=\ell(x)+1$.

{\it Case~2: $x>xs$}. Set $\Omega:=\widehat\W_{>xs}\setminus\{x\}$.
Note that $\widehat\W_{>x}\subset\Omega$ and $\Omega=\Omega s$ as is shown in case~1.
First, we extend  $u$ to a section of $\Gamma(\Omega,\M)$.
Consider the open subset $\Omega':=\widehat\W_{>x}\cup(\Omega\setminus[\supp\M])$.
By Lemma~\ref{lemma:1.5}, we can extend $u$ to $\Omega'$ by setting $u_y:=0$ for all $y\in\Omega\setminus[\supp\M]$.
Obviously $\widehat\W_{>x}\subset\Omega'\subset\Omega$ and $\Omega\setminus\Omega'\subset[\supp\M]$.
Since $\supp\M$ is finite, $\Omega\setminus\Omega'$ is also finite.
Choose a maximal (with respect to inclusion) open subset $\mathcal U\subset\widehat\W$
such that $\Omega'\subset\mathcal U\subset\Omega$ and $m$ is extendable to $\mathcal U$.
It is enough to consider the case $\mathcal U\ne\Omega$.

Choose a maximal element $y\in\Omega\setminus\mathcal U$. Then $\widehat\W_{>y}\subset\mathcal U$.
We claim that $y<ys$. Indeed, suppose on the contrary that $y>ys$. Then by the lifting property,
we derive from $xs<y$ that $x\le y$. By the definition of $\Omega$, we get $x<y$ and $y\in\Omega'\subset\mathcal U$,
which is a contradiction.

Now that we know $y<ys$, we can extend $u|_{\widehat\W_{>y}}$ to $\widehat\W_{\ge y}$ by case~1.
Hence we have extended $u$ to the open subset $\mathcal U\dotcup\{y\}$, which contradicts the choice of $\mathcal U$.
Therefore, we have proved $\mathcal U=\Omega$.

We have obtained an extension of $u$ to $\Omega$, which we denote also by $u$.
By Proposition~\ref{proposition:4}, we have the representation
$u|_\Omega=a_{out}+c^\alpha b_{out}$ for some $a,b\in\Gamma(\overline\Omega,\PP)$.
By case~1, we have $\overline\Omega=\widehat W^s_{>\bar x}$. As $\PP$ is flabby, we can extend $a$ and $b$ to
$\widehat W^s_{\ge\bar x}$ and set $u_x:=a_{\bar x}+\overline{x(\alpha)}b_{\bar x}$.
We claim that $u$ so defined is a section of $\Gamma(\widehat\W_{\ge x},\M)$.
Indeed, take an edge $E:\edgeright x{}y$. We have
\begin{multline*}
 \rho_{x,E}(u_x)
=\rho_{\bar x,\bar E}(a_{\bar x})+\overline{x(\alpha)}\rho_{\bar x,\bar E}(b_{\bar x})
=\rho_{\bar y,\bar E}(a_{\bar y})+\overline{y(\alpha)}\rho_{\bar y,\bar E}(b_{\bar y})\\[6pt]
=\rho_{y,E}\Bigl((a_{out})_y\Bigr)+\overline{y(\alpha)}\rho_{y,E}\Bigl((b_{out})_y\Bigr)
=\rho_{y,E}\Bigl((a_{out})_y+\overline{y(\alpha)}(b_{out})_y\Bigr)=\rho_{y,E}(u_y).
\end{multline*}

Now suppose that $\PP$ is indecomposable. By Proposition~\ref{proposition:3},
we get $\PP\cong\B^s(\bar w)\<r\>$ for some $\bar w\in\widehat\W^s$.
For definiteness, we assume that $w\in\widehat\W$ is chosen so that $ws<w$.
We claim that $\M=\vartheta^s_{out}\PP\cong\B(w)\<r\>$.

Suppose that $\M^x\ne0$. Then $\PP^{\bar x}\ne0$ and $\bar x\le\bar w$.
If $\bar x=\bar w$, then either $x=w$ or $x=ws<w$. If $\bar x<\bar w$, then $\min\{x,xs\}<ws$.
Thus either $x<ws<w$ or $xs<ws<w$ and $xs<x$. Applying the lifting property in the second case to the inequality $xs<w$,
we get $x\le w$. Thus we have proved $\supp\M\subset\widehat\G_{\le w}$.
Moreover, we have $\M^w=\PP^{\bar w}\cong\B(\bar w)\<r\>^{\bar w}\cong S\<r\>$.

It remains to prove that $\rho_{x,\delta x}:\M^x\to\M^{\delta x}$ is a projective cover for any $x\ne w$.
By our construction, we get
$$
\ker\rho_{x,\delta x}=
\left\{
\arraycolsep=1pt
\begin{array}{ll}
\ker\rho_{\bar x,\delta\bar x}&\text{ if }xs<x;\\
\ker\rho_{\bar x,\delta\bar x}\cap\overline{x(\alpha)}\PP^{\bar x}&\text{ if }x<xs.
\end{array}
\right.
$$
In the second case, we get $\ker\rho_{x,\delta x}\subset\overline{x(\alpha)}\PP^{\bar x}\subset\m\M^x$,
whence $\rho_{x,\delta x}$ is a projective cover. In the first case, we have $\bar x\ne\bar w$.
Hence $\ker\rho_{x,\delta x}=\ker\rho_{\bar x,\delta\bar x}\subset\m\PP^{\bar x}=\m\M^x$
and $\rho_{x,\delta x}$ is again a projective cover.
\end{proof}


\subsection{Functor $\vartheta^s$} We define $\vartheta^s:=\vartheta^s_{out}\circ\vartheta^s_{on}$.
This functor is applicable to projective sheaves $\PP$ such that $\vartheta^s_{on}\PP$ is also projective.
The combination of Lemmas~\ref{lemma:2.25} and~\ref{lemma:7.5} yields the following result.

\begin{theorem}\label{theorem:8}
Let $\PP$ be a projective sheaf such that $\widehat\G_{\supp\PP\cup(\supp\PP)s}$ satisfies the GKM-property.
Then the sheaf $\vartheta^s\PP$ is well defined and projective with support contained in $\supp\PP\cup(\supp\PP)s$.
\end{theorem}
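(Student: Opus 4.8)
The plan is simply to compose Lemmas~\ref{lemma:2.25} and~\ref{lemma:7.5}, the only real content being to verify that the single GKM-hypothesis on $\widehat\G_{\supp\PP\cup(\supp\PP)s}$ supplies the (a priori different) GKM-hypotheses those two lemmas require. The key elementary observation, which I would record first, is that the GKM-property is inherited by full moment subgraphs: if $\mathcal J\subset\I$ and $\widehat\G_\I$ satisfies the GKM-property, then every edge of $\widehat\G_{\mathcal J}$ is also an edge of $\widehat\G_\I$, so any two edges of $\widehat\G_{\mathcal J}$ sharing a vertex carry non-proportional labels; hence $\widehat\G_{\mathcal J}$ satisfies the GKM-property. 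I would also use the evident identity $\pi_s^{-1}(\overline\Omega)=\Omega\cup\Omega s$ for $\Omega\subset\widehat\W$, and in particular $\supp\PP\cup(\supp\PP)s=\pi_s^{-1}(\pi_s(\supp\PP))$.

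With this in hand, first I would apply Lemma~\ref{lemma:2.25} to $\PP$. Since $\supp\PP\subset\supp\PP\cup(\supp\PP)s$, the observation above shows that $\widehat\G_{\supp\PP}$ satisfies the GKM-property, so $\N:=\vartheta^s_{on}\PP$ is a projective sheaf on $\widehat\G^s$; moreover the proof of property~\ref{proj:1} in that lemma gives $\supp\N\subset\pi_s(\supp\PP)$. Next I would apply Lemma~\ref{lemma:7.5} to $\N$: its hypothesis is that $\widehat\G_{\pi_s^{-1}(\supp\N)}$ satisfy the GKM-property, and since $\pi_s^{-1}(\supp\N)\subset\pi_s^{-1}(\pi_s(\supp\PP))=\supp\PP\cup(\supp\PP)s$, the observation applies again. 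Hence $\vartheta^s\PP=\vartheta^s_{out}\N$ is well defined and projective on $\widehat\G$.

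For the support bound I would invoke~(\ref{eq:2}), which gives $\supp\vartheta^s\PP=\supp\vartheta^s_{out}\N=\pi_s^{-1}(\supp\N)$, and this is contained in $\pi_s^{-1}(\pi_s(\supp\PP))=\supp\PP\cup(\supp\PP)s$ as already noted. There is no genuine obstacle here beyond this bookkeeping: the substantive work has been done in Lemmas~\ref{lemma:2.25} and~\ref{lemma:7.5}, and the only point requiring care is tracking supports accurately, so that the GKM-hypothesis imposed on the composite is precisely the one that feeds both lemmas in turn.
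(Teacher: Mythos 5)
Your proof is correct and follows essentially the same route as the paper's: apply Lemma~\ref{lemma:2.25}, track the support of $\vartheta^s_{on}\PP$ as lying inside $\pi_s(\supp\PP)$, then apply Lemma~\ref{lemma:7.5}, using that $\pi_s^{-1}(\pi_s(\supp\PP))=\supp\PP\cup(\supp\PP)s$. The only difference is that you spell out the (trivial but worth noting) inheritance of the GKM-property by full subgraphs, which the paper leaves implicit.
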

\begin{proof} By Lemma~\ref{lemma:2.25}, the sheaf $\vartheta^s_{on}\PP$ is projective.
As is noted in part~\ref{proj:1} of the proof of this lemma,
$\supp\vartheta^s_{on}\PP\subset\overline{\supp\PP}$.
Since $\pi_s^{-1}\Bigl(\overline{\supp\PP}\Bigr)=\supp\PP\cup(\supp\PP)s$, Lemma~\ref{lemma:7.5} implies that
$\vartheta^s\PP=\vartheta^s_{out}(\vartheta^s_{on}\PP)$ is also projective.
\end{proof}

\subsection{Characters of sheaves} We define the {\it graded character} of free modules in $\mathcal C$ by
$$
\grk S\<l_1\>\oplus\cdots\oplus S\<l_k\>=v^{l_1}+\cdots+v^{l_k}.
$$
Obviously, $\grk0=0$ and $\grk M\<r\>=v^r\grk M$ for any free $M$ in $\mathcal C$.

Let us recall the definition of the {\it Hecke algebra associated to the Coxeter system} $(\widehat\W,\widehat\S)$
(cf.~\cite{Kazhdan_Lusztig} and~\cite{Soergel}). Consider the free $\Z[v,v^{-1}]$-module
$$
\mathcal H:=\bigoplus_{x\in\widehat\W}\Z[v,v^{-1}]T_x.
$$
The structure of an associative $\Z[v,v^{-1}]$-algebra on $\mathcal H$ is given by the multiplication
$$
\arraycolsep=2pt
\begin{array}{rcl}
T_xT_y&=&T_{xy}\;\text{ if }\ell(xy)=\ell(x)+\ell(y);\\[5pt]
 T_s^2&=&v^{-2}T_e+(v^{-2}-1)T_s\;\text{ for any }s\in\widehat\S.
\end{array}
$$
The unit of this algebra is $T_e$.

It is more convenient to work with elements $H_x:=v^{\ell(x)}T_x$.
There is exactly one ring homomorphism $\overline{\,\cdot}:\mathcal H\to\mathcal H$ given by
$\bar v=v^{-1}$ and $\overline{H_x}=(H_{x^{-1}})^{-1}$.
Elements $H\in\mathcal H$ such that $\overline H=H$ are called {\it self-dual}.

\begin{proposition}[\text{\cite{Kazhdan_Lusztig}, \cite{Soergel}}]
For any $w\in\widehat\W$, there exists a unique element $\underline H_w=\sum_{x\in\widehat\W}h_{x,w} H_x\in\mathcal H$
with the following properties:
\begin{enumerate}
\itemsep=2pt
\item $\underline H_w$ is self-dual;
\item $h_{x,w}=0$ if $x\not\le w$ and $h_{w,w}=1$;
\item $h_{x,w}\in v\Z[v]$ for $x<w$.
\end{enumerate}
\end{proposition}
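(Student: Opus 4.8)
The plan is to prove uniqueness and existence separately, by induction on $\ell(w)$, following Kazhdan--Lusztig and Soergel. The structural inputs are that $\overline{\,\cdot}$ is a \textit{ring} homomorphism (so a product of self-dual elements is self-dual), the rank-one identity $H_s^2=H_e+(v^{-1}-v)H_s$ (which follows from $T_s^2=v^{-2}T_e+(v^{-2}-1)T_s$ after substituting $H_s=vT_s$), and the resulting multiplication rule $H_x\underline H_s=H_{xs}+vH_x$ when $\ell(xs)>\ell(x)$ and $H_x\underline H_s=H_{xs}+v^{-1}H_x$ when $\ell(xs)<\ell(x)$. As a first step I would check directly that $\underline H_s:=H_s+vH_e$ is self-dual (using $\overline{H_s}=H_s^{-1}=H_s+(v-v^{-1})H_e$) and satisfies (1)--(3); this settles $\ell(w)\le 1$ and supplies the multiplicand $\underline H_s$ used below.

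For \textbf{uniqueness}, if $H$ and $H'$ both satisfy (1)--(3), put $D:=H-H'=\sum_x d_xH_x$. Then $D$ is self-dual, $d_x=0$ for $x\not\le w$, $d_w=0$, and $d_x\in v\Z[v]$ for $x<w$. If $D\ne 0$, choose $x_0$ maximal with $d_{x_0}\ne 0$. Since $\overline{H_{x_0}}=H_{x_0}+\sum_{y<x_0}r_yH_y$ for suitable $r_y\in\Z[v,v^{-1}]$ (an easy induction on $\ell(x_0)$ from $\overline{H_s}=H_s+(v-v^{-1})H_e$), and $d_x=0$ for every $x>x_0$, comparing the $H_{x_0}$-coefficients in $\overline D=D$ forces $\overline{d_{x_0}}=d_{x_0}$; but no nonzero element of $v\Z[v]$ is fixed by $v\mapsto v^{-1}$, a contradiction. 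Hence $D=0$.

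For \textbf{existence} with $\ell(w)\ge 2$, choose $s\in\widehat\S$ with $\ell(ws)<\ell(w)$, set $v:=ws$ (so $w=vs$, $\ell(w)=\ell(v)+1$, and $\underline H_v$ exists by induction), and form $B:=\underline H_v\,\underline H_s=\sum_x p_xH_x$. Then $B$ is self-dual because $\overline{\,\cdot}$ is a ring homomorphism, and using the multiplication rule together with $\supp\underline H_v\subset\widehat\W_{\le v}$ and Corollary~\ref{corollary:0}\ref{corollary:0:part:1} one gets $\supp B\subset\widehat\W_{\le w}$ and $p_w=1$. For $x<w$ one computes $p_x=h_{xs,v}+v\,h_{x,v}$ if $\ell(xs)>\ell(x)$ and $p_x=h_{xs,v}+v^{-1}h_{x,v}$ if $\ell(xs)<\ell(x)$; using $xs\ne v$ in both cases and $x\ne v$ in the second (because $\ell(xs)<\ell(x)$ while $vs=w>v$), one gets $p_x\in v\Z[v]$ in the first case, while in the second $p_x\equiv\mu_x\pmod{v\Z[v]}$, where $\mu_x\in\Z$ is the coefficient of $v$ in $h_{x,v}$. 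One then sets
\[
\underline H_w:=B-\sum_{x<w,\ \ell(xs)<\ell(x)}\mu_x\,\underline H_x,
\]
which is self-dual (an integer combination of self-dual elements, available by induction), supported in $\widehat\W_{\le w}$, with $H_w$-coefficient $1$. It remains to check that the $H_z$-coefficient of $\underline H_w$ lies in $v\Z[v]$ for every $z<w$: when $\ell(zs)>\ell(z)$ this holds since $p_z\in v\Z[v]$ and every $h_{z,x}$ occurring lies in $v\Z[v]$; when $\ell(zs)<\ell(z)$, the constant term $\mu_z$ of $p_z$ is cancelled exactly by the $x=z$ term $\mu_z\,h_{z,z}=\mu_z$ of the sum, all other terms again contributing elements of $v\Z[v]$. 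This completes the induction.

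The argument is essentially bookkeeping; \emph{the main obstacle} is the coefficient computation for $B$ and the verification that the correction $\sum\mu_x\underline H_x$ removes exactly the offending constant terms without introducing new ones — this is where the Bruhat-order facts (the subword and lifting properties, Corollary~\ref{corollary:0}) and the induction hypothesis on the polynomials $h_{x,v}$ enter.
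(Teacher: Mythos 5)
The paper does not prove this proposition — it simply cites Kazhdan--Lusztig and Soergel — and your argument is essentially Soergel's standard existence/uniqueness proof, which is correct. Two small remarks. First, you have an unfortunate notational collision: you set $v:=ws$ while $v$ is already the Laurent indeterminate, so expressions such as $p_x=h_{xs,v}+v\,h_{x,v}$ use $v$ with two different meanings in the same line; rename the group element (say $u:=ws$) to keep the bookkeeping readable. Second, in the uniqueness step, the claim $\overline{H_{x_0}}=H_{x_0}+\sum_{y<x_0}r_yH_y$ does follow by induction on $\ell(x_0)$, but the inductive step requires exactly the Bruhat-order bound that the support of $H_yH_s$ for $y<x_0s$ stays in $\widehat\W_{<x_0}$, which uses the lifting property (Corollary~\ref{corollary:0}); it is worth stating this explicitly, since it is the same point where your existence argument invokes Corollary~\ref{corollary:0}\ref{corollary:0:part:1} for $\supp B\subset\widehat\W_{\le w}$.
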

\noindent
In particular, the element $\underline H_e=H_e$ is the unit of $\mathcal H$ and $\underline H_s=H_s+v$ for any $s\in\widehat\S$.

For any sheaf $\M$ on $\widehat\G$ with finite support whose all stalks $\M^x$ are free modules in $\mathcal C$ and $w\in\widehat\W$,
we define
$$
h(\M):=\sum_{x\in\widehat\W}v^{-l(x)}\cdot\grk\!\M^x\cdot H_x
$$

P. Fiebig formulated the following conjecture on the characters of Braden--MacPherson sheaves,
which as he showed in~\cite{Fiebig_sheaves_on_affine_Schubert_varieties} and~\cite{Fiebig_Lusztig’s_conjecture}
is equivalent to Lusztig's conjecture.

\begin{conjecture}[\text{\cite[Conjecture 2.10]{Fiebig_An_upper_bound}}] If $w\in\widehat\W$ is such that
$\widehat\G_{\le w}$ satisfies the GKM-property, then $v^{\ell(w)}h(\B(w))=\underline H_w$.
\end{conjecture}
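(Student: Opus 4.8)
\emph{The plan is to} compute the graded character $h$ of a Bott--Samelson sheaf through the translation functor, and then to compare the resulting direct sum decomposition with the one valid in characteristic $0$.

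\emph{Step 1 (multiplicativity of $h$).} For a projective sheaf $\PP$ on $\widehat\G$ and $x<xs$, formula~(\ref{eq:1.5}) identifies both stalks of $\vartheta^s\PP$, at $x$ and at $xs$, with $\PP^x\oplus\PP^{xs}\<-2\>$. Feeding this into the definition of $h$ and using $H_xH_s=H_{xs}$ together with $H_{xs}H_s=H_x+(v^{-1}-v)H_{xs}$, a short bookkeeping over each pair $\{x,xs\}$ gives
$$
h(\vartheta^s\PP)=v^{-1}\,h(\PP)\,\underline H_s .
$$
Since $\B(\s)$ for $\s=(s_1,\dots,s_q)$ is obtained from $\B(e)$ by applying $\vartheta^{s_1},\dots,\vartheta^{s_q}$ in turn (Definition~\ref{definition:BS_sheaf}) and $h(\B(e))=H_e$, iteration yields $h(\B(\s))=v^{-q}\,\underline H_{s_1}\cdots\underline H_{s_q}$. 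In particular, when $\s$ is a reduced expression for $w$,
$$
v^{\ell(w)}h(\B(\s))=\underline H_{s_1}\cdots\underline H_{s_q},
$$
an identity which holds over \emph{every} field.

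\emph{Step 2 (induction on $\ell(w)$).} The case $w=e$ is $v^0h(\B(e))=H_e=\underline H_e$. For the step fix a reduced expression $\s$ for $w$ and decompose $\B(\s)\cong\bigoplus_i\B(z_i)\<r_i\>$ by Proposition~\ref{proposition:3}. Iterating Theorem~\ref{theorem:8} gives $\supp\B(\s)\subset\widehat\W_{\le w}$ (every product of a subsequence of $\s$ is $\le w$ by the Subword Property), and Step~1 gives $\grk\B(\s)^w=1$; as $\B(z_i)^w\neq0$ forces $w\le z_i\le w$, exactly one summand equals $\B(w)$ and the remaining $z_i$ satisfy $z_i<w$. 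Applying the inductive hypothesis $v^{\ell(z_i)}h(\B(z_i))=\underline H_{z_i}$ to those summands yields
$$
\underline H_{s_1}\cdots\underline H_{s_q}=v^{\ell(w)}h(\B(w))+\sum_{z_i<w}v^{\ell(w)+r_i-\ell(z_i)}\,\underline H_{z_i}.
$$
In characteristic $0$ the GKM-property holds automatically, the analogous decomposition of $\B(\s)$ holds with some pairs $(z_j^0,r_j^0)$, and $v^{\ell(w)}h(\B(w))=\underline H_w$ is known (the classical characteristic-$0$ computation of the Braden--MacPherson sheaf, via Kazhdan--Lusztig theory / Soergel bimodules). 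This expresses the same left-hand side as $\underline H_w+\sum_j v^{\ell(w)+r_j^0-\ell(z_j^0)}\underline H_{z_j^0}$; being an identity in $\mathcal H$ it holds over every field. Subtracting the two expansions, we conclude $v^{\ell(w)}h(\B(w))=\underline H_w$ \emph{provided} the decomposition of $\B(\s)$ over $\F$ has the same pairs $\{(z_i,r_i)\}$ as over $\mathbb Q$, i.e.\ provided $\d(\B(\s))$ is independent of $\mathop{{\rm char}}\F$.

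\emph{The main obstacle is this last point.} By Corollary~\ref{corollary:1}, $\d(\B(\s))=\sum_n(\rk_\F A^{(n)})v^{-n}$, where the $A^{(n)}$ are the $\F$-reductions of integer matrices read off --- via the tree $T(\s,x)$ and the matrix $\Phi(\s,x)$ of Theorem~\ref{theorem:4} --- from the inclusion $\B(\s)_{[x]}\subset\B(\s)^x$, and the $\F$-rank of a fixed integer matrix may drop precisely at primes dividing an elementary divisor. One must therefore show that for \emph{every} prime compatible with the GKM-property of $\widehat\G_{\le w}$ these matrices keep their rational rank; this cannot hold once the GKM-hypothesis is dropped (Williamson's torsion phenomena), so that hypothesis must enter decisively here. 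I would attack it by analysing the matrices produced by the algorithm of Theorem~\ref{theorem:4}: this is tractable when the ungraded rank of every stalk $\B(\s)^x$ is at most $3$ (the ``$3$-reachable'' range of Definition~\ref{definition:14}, which gives the conjecture for such $w$ as in Corollary~\ref{corollary:6}), whereas already rank $4$ breaks into many subcases and the statement for general $w$ remains open.
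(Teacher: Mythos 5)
The statement you are addressing is labelled \emph{conjecture} in the paper, and the paper in fact does not prove it in general; it is Fiebig's open conjecture, equivalent to Lusztig's character formula. Your write-up correctly treats it as such: you lay out the reduction strategy rather than claiming a proof, and that strategy is exactly the one the paper develops. Your Step~1 is Lemma~\ref{htransl}; your Step~2 is the combination of Proposition~\ref{proposition:3}, the defect formula~(\ref{eq:def}), and the subword/support argument that isolates $\B(w)$ as the unique top summand; and your ``main obstacle'' is precisely the point on which everything hinges, namely the characteristic-independence of $\d(\B(\s))$, which in the paper is only established when all stalks have ungraded rank at most $3$ (Lemmas~\ref{lemma:2by2} and~\ref{lemma:3by3}, Corollary~\ref{corollary:6}), giving the conjecture for $3$-reachable $w$ (Corollary~\ref{corollary:7}).

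So there is no genuine gap in your analysis relative to the paper; the gap is in the mathematics itself and you have named it accurately. Two small points worth flagging. First, your inductive step silently assumes $v^{\ell(w)}h(\B(w))=\underline H_w$ holds over $\mathbb Q$; this is indeed known (Fiebig's Theorem~4.7 in \cite{Fiebig_An_upper_bound}, which the paper quotes), but since it is not proved here it should be cited rather than taken for granted. Second, when you compare the two expansions of $\underline H_{s_1}\cdots\underline H_{s_q}$ you need that each $z_i<w$ arising from the positive-characteristic decomposition is again $3$-reachable before you can invoke the inductive hypothesis; this is built into Definition~\ref{definition:14}, but your Step~2 as written does not mention where this enters, and it is essential — without it the induction does not close. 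With those two points made explicit, your account matches the paper's own scope: a correct reduction plus a partial result, not a full proof.
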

\noindent
Theorem 4.7 from~\cite{Fiebig_An_upper_bound} shows that this conjecture is true if  the base field
$\F$ has characteristic~0.
From the proof of Lemma~\ref{lemma:2.25}, we easily derive the following fact
(cf.~\cite[Lemma 4.5]{Fiebig_sheaves_on_affine_Schubert_varieties})

\begin{lemma}\label{htransl}
For any $\PP$ projective sheaf on $\widehat\G$ and $s\in\widehat\S$, we have
$$
h(\vartheta^s\PP)=v^{-1}h(\PP)\underline H_s.
$$
\end{lemma}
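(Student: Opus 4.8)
The plan is to compute the effect of $\vartheta^s$ on the character by working through the two constituent functors $\vartheta^s_{on}$ and $\vartheta^s_{out}$, tracking only how the graded ranks of stalks change. Since $h(\M)$ depends solely on the $\grk\M^x$ (the restriction maps play no role), the whole computation reduces to bookkeeping with the isomorphism~\eqref{eq:1.5} established inside the proof of Lemma~\ref{lemma:2.25} and with the definition of $\vartheta^s_{out}$.

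First I would recall the stalk computation. For $x\in\widehat\W$ with $x<xs$, formula~\eqref{eq:1.5} gives $(\vartheta^s_{on}\PP)^{\bar x}\cong\PP^x\oplus\PP^{xs}\<-2\>$, hence
$$
\grk(\vartheta^s_{on}\PP)^{\bar x}=\grk\PP^x+v^{-2}\grk\PP^{xs}.
$$
Then, by the definition of $\vartheta^s_{out}$, we have $(\vartheta^s\PP)^y=(\vartheta^s_{on}\PP)^{\bar y}$ for every $y\in\widehat\W$, so the stalk of $\vartheta^s\PP$ at both $x$ and $xs$ equals this same module. Grouping the sum defining $h(\vartheta^s\PP)$ into pairs $\{x,xs\}$ with $x<xs$, the contribution of such a pair is
$$
\bigl(v^{-\ell(x)}H_x+v^{-\ell(xs)}H_{xs}\bigr)\cdot\bigl(\grk\PP^x+v^{-2}\grk\PP^{xs}\bigr).
$$
Next I would simplify using $\ell(xs)=\ell(x)+1$ and the Hecke-algebra identities: $H_x H_s = H_{xs}$ when $\ell(xs)>\ell(x)$, and $H_{xs}H_s = H_x H_s^2 = H_x(v^{-2}H_e+(v^{-2}-1)H_s)$ (rewriting the relation $T_s^2=v^{-2}T_e+(v^{-2}-1)T_s$ in terms of $H_s=vT_s$ gives $H_s^2 = H_e + (v^{-1}-v)H_s$, so actually $H_{xs}H_s = H_x + (v^{-1}-v)H_{xs}$). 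So $\underline H_s=H_s+v$ acts on $H_x$ by $H_x\underline H_s = H_{xs}+vH_x$ and on $H_{xs}$ by $H_{xs}\underline H_s = H_x + (v^{-1}-v)H_{xs} + vH_{xs} = H_x + v^{-1}H_{xs}$. The claim $h(\vartheta^s\PP)=v^{-1}h(\PP)\underline H_s$ then amounts, pair by pair, to checking the identity
$$
\bigl(v^{-\ell(x)}H_x + v^{-\ell(x)-1}H_{xs}\bigr)\bigl(\grk\PP^x + v^{-2}\grk\PP^{xs}\bigr)
= v^{-1}\bigl(v^{-\ell(x)}\grk\PP^x\cdot H_x + v^{-\ell(x)-1}\grk\PP^{xs}\cdot H_{xs}\bigr)\underline H_s,
$$
which I would verify by expanding the right-hand side with the two displayed actions of $\underline H_s$ and matching coefficients of $H_x$ and $H_{xs}$.

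The routine but slightly delicate point — and the only real obstacle — is making sure the pairing of $\widehat\W$ into orbits $\{x,xs\}$ is consistent with the grading shifts and that the coefficient bookkeeping is exact; one must be careful that the $\<-2\>$ shift in~\eqref{eq:1.5} is attached to the stalk at the \emph{longer} element $xs$, and that this matches the asymmetry of the $\underline H_s$-action computed above. A secondary point worth a sentence is well-definedness: $\vartheta^s\PP$ is projective (hence has free stalks and finite support, so $h$ is defined) by Theorem~\ref{theorem:8}, once we note the GKM-hypothesis is inherited — but here the statement as given does not carry an explicit GKM-restriction, so I would either add the hypothesis that $\widehat\G_{\supp\PP\cup(\supp\PP)s}$ is GKM or remark that~\eqref{eq:1.5} is used only formally at the level of graded ranks and the identity is purely algebraic. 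Once the pairwise identity is confirmed, summing over all orbits $\{x,xs\}$ gives $h(\vartheta^s\PP)=v^{-1}h(\PP)\underline H_s$, completing the proof.
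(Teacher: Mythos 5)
Your proposal is correct and takes essentially the same approach as the paper: both use the stalk isomorphism~(\ref{eq:1.5}) to compute $\grk(\vartheta^s\PP)^y$, group the sum defining $h$ over orbits $\{x,xs\}$ (the paper does this via the partition $\widehat\W=\widehat\W'\dotcup\widehat\W's$, which is the same thing), and match terms using the two cases of formula~(\ref{eq:3}) for $H_x\underline H_s$. Your remark about the GKM-hypothesis implicit in the statement is a fair observation about precision but does not affect the correctness of the argument.
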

\begin{proof} Recall the following formula from~\cite{Soergel}: for any $x\in\widehat\W$ and $s\in\widehat\S$, we have
\begin{equation}\label{eq:3}
H_x\underline H_s=
\left\{
\arraycolsep=2pt
\begin{array}{ll}
H_{xs}+vH_x&\text{ if }xs>x;\\[3pt]
H_{xs}+v^{-1}H_x&\text{  if }xs < x.
\end{array}
\right.
\end{equation}
We set ${\widehat\W\,}':=\{x\in\widehat\W\suchthat x<xs\}$. Clearly, $\widehat\W={\widehat\W\,}'\dotcup{\widehat\W\,}'s$.
By~(\ref{eq:1.5}), we get
\begin{multline*}
h(\vartheta^s\PP)=\sum_{x\in\widehat\W}v^{-\ell(x)}\grk(\vartheta^s_{on}\PP)^{\bar x}H_x
=\sum_{x\in\widehat\W}v^{-\ell(x)}\grk\Gamma(\{x,xs\},\PP)H_x\\
=\sum_{x\in{\widehat\W\,}'}v^{-\ell(x)}\grk\Gamma(\{x,xs\},\PP)H_x+\sum_{x\in{\widehat\W\,}'}v^{-\ell(xs)}\grk\Gamma(\{xs,x\},\PP)H_{xs}\\
=\sum_{x\in{\widehat\W\,}'}v^{-\ell(x)}(\grk\PP^x+v^{-2}\grk\PP^{xs})(H_x+v^{-1}H_{xs}).
\end{multline*}
On the other hand, we by~(\ref{eq:3}), we get
\begin{multline*}
h(\PP)\underline H_s=\sum_{x\in\widehat\W}v^{-\ell(x)}\grk\PP^xH_x\underline H_s\\
=\sum_{x\in{\widehat\W\,}'}v^{-\ell(x)}\grk\PP^x(H_{xs}+vH_x)+\sum_{x\in{\widehat\W\,}'}v^{-\ell(xs)}\grk\PP^{xs}(H_x+v^{-1}H_{xs})\\
=v\sum_{x\in{\widehat\W\,}'}v^{-\ell(x)}(H_x+v^{-1}H_{xs})(\grk\PP^x+v^{-2}\grk\PP^{xs}).
\end{multline*}
The comparison of the above formulas gives the required result.
\end{proof}

\section{Bases for Bott-–Samelson sheaves}\label{Bases_for_Bott-–Samelson_sheaves}

In this section, we are going to look closer at Fiebig's realization (\cite[Section~6]{Fiebig_An_upper_bound})
of Bott-–Samelson modules.

\subsection{Sequences} We reserve the symbol $\sp$ to denote the blank space in sequences.
Let $\s=(s_1,\ldots,s_l)$ be a sequence in $\widehat\S$.
Any sequence obtained from~$\s$ by replacing some of its entries with $\sp$ is called a {\it subsequence} of $\s$.
The set of all subsequences $\sigma$ of $\s$ is denoted by $I(\s)$.
If we multiply the entries of $\sigma$ respecting their order and ignoring symbols $\sp$,
then we get the element of $\widehat\W$ denoted by $\ev(\sigma)$.
For any $\I\subset\widehat\W$, we set
$$
I(\s)_\I:=\{\sigma\in I(\s)\suchthat\ev(\sigma)\in\I\}.
$$
Of particular interest is the set $I(\s)_x:=I(\s)_{\{x\}}=\{\sigma\in I(\s)\suchthat\ev(\sigma)=x\}$.


We multiply sequences by concatenation and truncate by ${}'$: if $\sigma=(\sigma_1,\ldots,\sigma_l)$ and
$\tau=(\tau_1,\ldots,\tau_m)$, then $\sigma\tau=(\sigma_1,\ldots,\sigma_l,\tau_1,\ldots,\tau_m)$
and $\sigma'=(\sigma_1,\ldots,\sigma_{l-1})$. In the notation of this multiplication, we prefer to write $\sp$ instead
of $(\sp)$ and $s$ instead of $(s)$. We also apply these operations to subsets elementwise.

{\bf Example.} Let $\s=(s_1,s_2,s_1)$, where $s_1$ and $s_2$ are distinct simple reflections. Then
$$
I(\s)=\{(\sp,\sp,\sp),(s_1,\sp,\sp),(\sp,s_2,\sp),(s_1,s_2,\sp),(\sp,\sp,s_1),(s_1,\sp,s_1),(\sp,s_2,s_1),(s_1,s_2,s_1)\}
$$
and $I(\s)_e=\{(\sp,\sp,\sp),(s_1,\sp,s_1)\}$.
Below are some examples how we extend and truncate elements and sets:
\begin{multline*}
(\sp,\sp,\sp)'=(\sp,\sp),\quad (\sp,s_2,s_1)'=(\sp,s_2),\quad (s_1,\sp)\sp(\s_1,\s_2)=(s_1,\sp,\sp,s_1,\s_2),\\[6pt]
\shoveleft{(\sp,s_1)s_2(s_2,\sp,s_1)=(\sp,s_1,s_2,s_2,\sp,s_1),\quad
\{(s_1,s_2,\sp),(\sp,\sp,s_1)\}'=\{(s_1,s_2),(\sp,\sp)\},}\\[6pt]
\shoveleft{(s_1,\sp,\sp)s_1=(s_1,\sp,\sp,s_1),\quad\{(s_1,\sp,s_1),(s_1,s_2,\sp)\}\sp=\{(s_1,\sp,s_1,\sp),(s_1,s_2,\sp,\sp)\}.}\\[-12pt]
\end{multline*}

Note that $\s\in I(\s)$ and $|I(\s)|=2^{|\s|}$. We denote the empty sequence by $\emptyset$ (just like the empty set).
For example, $I(\emptyset)=\{\emptyset\}$ and
\begin{equation}\label{eq:0.5}
I(\emptyset)_x=\left\{
\begin{array}{cl}
\{\emptyset\}&\text{ if }x=e;\\[3pt]
\emptyset &\text{ otherwise}.
\end{array}
\right.
\end{equation}
Another useful formula, which we shall often use, is
\begin{equation}\label{eq:20}
I(\s)_z=I(\s')_z\sp\,\dotcup\,I(\s')_{zs}s,
\end{equation}
where $\s\ne\emptyset$ and $s$ is the last (rightmost) element of $\s$.
We also have
\begin{equation}\label{eq:1.25}
I(\s)=I(\s')\sp\,\dotcup\,I(\s')s.
\end{equation}

\subsection{Bott-–Samelson sheaves} Let $\s=(s_1,\ldots,s_l)$ be a sequence in $\widehat\S$.
We denote $J(\s)=\{\ev(\sigma)\suchthat\sigma\in I(\s)\}$. In particular, $J(\emptyset)=\{e\}$.
The deletion and subword properties (Propositions~\ref{proposition:deletion_property}
and~\ref{proposition:subword_property}) imply the following property of $J(\s)$: if $x\in J(\s)$ and $y<x$,
then $y\in J(\s)$.

\begin{definition}\label{definition:BS_sheaf}
Let $\s=(s_1,\ldots,s_l)$ be a sequence in $\widehat\S$ such that $\widehat\G_{J(\s)}$ satisfies the GKM-property.
The Bott-–Samelson sheaf corresponding to this sequence is $\B(\s):=\vartheta^{s_l}\circ\cdots\circ\vartheta^{s_1}\B(e)$.
\end{definition}
In this definition, $\B(e)$ is the Braden-MacPherson sheaf with defect $e$. It is a very simple sheaf defined by
$\B(e)^e=S$, $\B(e)^x=0$ for $x\ne e$, $\B(e)^E=0$ and $\rho_{x,E}=0$ for all edges $E$.
Thus $\B(\emptyset)=\B(e)$.


\begin{lemma}\label{lemma:9}
Let $\s=(s_1,\ldots,s_l)$ be a sequence in $\widehat\S$ such that $\widehat\G_{J(\s)}$ satisfies the GKM-pro\-perty.
Then $\B(\s)$ is well-defined and projective
with support contained in $J(\s)$ and  $v^lh(\B(\s))=(H_{s_1}+v)\cdots(H_{s_l}+v)$.
\end{lemma}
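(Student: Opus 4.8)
The plan is to induct on $l=|\s|$, using Theorem~\ref{theorem:8} for the projectivity and support statements and Lemma~\ref{htransl} for the character. The base case $l=0$ is immediate: $\B(\emptyset)=\B(e)$ is a well-defined projective sheaf (all conditions of Definition~\ref{definition:5} hold trivially) with $\supp\B(e)=\{e\}=J(\emptyset)$, its unique nonzero stalk $\B(e)^e=S$ is free, and $h(\B(e))=H_e$ is the empty product, so $v^{0}h(\B(e))=H_e$, as required.

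For the inductive step write $\s=\s's_l$, so that $\B(\s)=\vartheta^{s_l}\B(\s')$ by Definition~\ref{definition:BS_sheaf}. Applying $\ev$ to~(\ref{eq:1.25}) and using $\ev(\sigma s_l)=\ev(\sigma)s_l$ yields the combinatorial identity $J(\s)=J(\s')\cup J(\s')s_l$. In particular $J(\s')\subset J(\s)$, and since the GKM-property clearly passes to full moment subgraphs, $\widehat\G_{J(\s')}$ satisfies it; hence the induction hypothesis gives that $\B(\s')$ is well defined and projective with $\supp\B(\s')\subset J(\s')$, all stalks free, and $v^{l-1}h(\B(\s'))=(H_{s_1}+v)\cdots(H_{s_{l-1}}+v)$.

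Next, $\supp\B(\s')\cup(\supp\B(\s'))s_l\subset J(\s')\cup J(\s')s_l=J(\s)$, so $\widehat\G_{\supp\B(\s')\cup(\supp\B(\s'))s_l}$ is a full subgraph of $\widehat\G_{J(\s)}$ and therefore also satisfies the GKM-property. Theorem~\ref{theorem:8}, applied to the projective sheaf $\PP=\B(\s')$ and $s=s_l$, then shows that $\B(\s)=\vartheta^{s_l}\B(\s')$ is well defined (this subsumes the applicability of $\vartheta^{s_l}_{on}$ and $\vartheta^{s_l}_{out}$ in turn) and projective, with support contained in $\supp\B(\s')\cup(\supp\B(\s'))s_l\subset J(\s)$. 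This establishes every assertion except the character formula.

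For the character, note that $\B(\s')$ and $\B(\s)$ have finite support and free stalks by projectivity, so $h$ is defined on them. Lemma~\ref{htransl} gives $h(\B(\s))=h(\vartheta^{s_l}\B(\s'))=v^{-1}h(\B(\s'))\underline H_{s_l}$, and since $\underline H_{s_l}=H_{s_l}+v$ we obtain $v^{l}h(\B(\s))=v^{l-1}h(\B(\s'))(H_{s_l}+v)=(H_{s_1}+v)\cdots(H_{s_l}+v)$ by the induction hypothesis, closing the induction. The whole argument is a routine chaining of Theorem~\ref{theorem:8} and Lemma~\ref{htransl}; the only mildly delicate point is the identity $J(\s)=J(\s')\cup J(\s')s_l$ together with the remark that the GKM-property restricts to full subgraphs, which is what allows the hypothesis on $\widehat\G_{J(\s)}$ to be propagated through the induction.
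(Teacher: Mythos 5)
Your proof is correct and follows the same route as the paper: induction on $l$, applying Theorem~\ref{theorem:8} to propagate well-definedness, projectivity, and the support bound via the inclusion $J(\s')\cup J(\s')s_l\subset J(\s)$, and invoking Lemma~\ref{htransl} for the character formula. You supply slightly more detail (the equality $J(\s)=J(\s')\cup J(\s')s_l$ and the remark that the GKM-property restricts to full subgraphs), but the argument is the paper's.
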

\begin{proof}
We prove by induction on $i=0,\ldots,l$ that $\vartheta^{s_i}\circ\cdots\circ\vartheta^{s_1}\B(e)$
is a well-defined projective sheaf with support contained in $J((s_1,\ldots,s_i))$.
The induction starts trivially with the case $i=0$, so suppose that $i<l$ and the claim for $i$ is true.
Then our claim is also true for $i+1$ by Theorem~\ref{theorem:8},
since $J((s_1,\ldots,s_i))\cup J((s_1,\ldots,s_i))s_{i+1}\subset J((s_1,\ldots,s_{i+1}))$.

The second statement follows from Lemma~\ref{htransl}.
\end{proof}

\subsection{Module $\bigoplus_{\sigma\in I(\s)}Q$} As we noted in Section~\ref{associated_moment_graph}, $\bar\alpha\ne 0$ for any $\alpha\in\widehat R$.
Therefore we can consider the localization of $S$ with respect to all these elements:
$$
Q:=S[\bar\alpha^{-1}\suchthat\alpha\in\widehat R].
$$
We consider the direct sum $\bigoplus_{\sigma\in I(s)}Q$ as an $S$-module with the componentwise action of $S$
as well as a $\mathcal Z$-module with the following action
\begin{equation}\label{eq:1.125}
(zf)_\sigma:=z_{\ev(\sigma)}f_\sigma
\end{equation}
for any $z\in\mathcal Z$ and $f\in\bigoplus_{\sigma\in I(s)}Q$.
For example,
\begin{equation}\label{eq:21}
(c^\lm f)_\sigma=\overline{\ev(\sigma)(\lambda)}\,f_\sigma.
\end{equation}
or in a different form $(c^\lm f)_\sigma=\overline{x(\lm)}f_\sigma$ for $\sigma\in I(\s)_x$.

For any nonempty sequence $\s=(s_1,\ldots,s_l)$ in $\widehat\S$, we have the {\it diagonal embedding}\linebreak
$\Delta:\bigoplus_{\sigma\in I(\s')}Q\to\bigoplus_{\sigma\in I(\s)}Q$ defined by
$$
\Delta(f)_{\sigma\sp}=f_\sigma\quad\text{ and }\quad\Delta(f)_{\sigma s_l}=f_\sigma\quad\text{ for any}\quad\sigma\in I(\s')
$$
and the {\it antidiagonal embedding} $\Delta^-:\bigoplus_{\sigma\in I(\s')}Q\to \bigoplus_{\sigma\in I(\s)}Q$ defined by
$$
\Delta^-(f)_{\sigma\sp}=f_\sigma\quad\text{ and }\quad\Delta^-(f)_{\sigma s_l}=-f_\sigma\quad\text{ for any}\quad\sigma\in I(\s').
$$

Let $\iota:\bigoplus_{\sigma\in I(\s)}Q\to\bigoplus_{\sigma\in I(\s)}Q$ be the map
given by $\iota(f)_{\sigma\sp}=f_{\sigma s_l}$, $\iota(f)_{\sigma s_l}=f_{\sigma\sp}$.
As $2$ is invertible in $\F$, we have the decomposition
\begin{equation}\label{eq:19}
\bigoplus_{\sigma\in I(\s)}Q=\(\bigoplus_{\sigma\in I(\s)}Q\)^{\!\iota}\oplus\(\bigoplus_{\sigma\in I(\s)}Q\)^{\!-\iota},
\end{equation}
where $(\cdot)^\iota$ denotes $\iota$-invariant elements and $(\cdot)^{-\iota}$ denotes the $\iota$-antiinvariant elements.
The first summand is the image of $\Delta$ and the second summand is the image of $\Delta^-$.

We can translate any $S$-module endomorphism $\phi$ of $\bigoplus_{\sigma\in I(\s')}Q$ to the $S$-module endomorphism
$\Delta^\phi$ of $\bigoplus_{\sigma\in I(\s)}Q$ by
\begin{equation}\label{eq:18}
\Delta^\phi\bigl(\Delta(f)\bigr)=\Delta\bigl(\phi(f)\bigr),\quad \Delta^\phi\bigl(\Delta^-(f)\bigr)=\Delta^-\bigl(\phi(f)\bigr)
\end{equation}
for any $f\in\bigoplus_{\sigma\in I(\s')}Q$.

We shall actually need the submodule $\bigoplus_{\sigma\in I(\s)}S\subset\bigoplus_{\sigma\in I(\s)}Q$.
It is an $S$-submodule as well as a $\mathcal Z$-submodule. The embeddings $\Delta$ and $\Delta^-$ restrict
well to these submodules.

We also introduce the structure of a $\mathcal Z(\I)$-module on $\bigoplus_{\sigma\in I(s)}Q$ for any
$\I\subset\widehat\W$ containing $J(\s)$
by the same formula~(\ref{eq:1.125}). If we apply the natural homomorphism $\mathcal Z(\mathcal I')\to\mathcal Z(\I)$
for any $\mathcal I'\supset\mathcal I$, then we get the usual action of $\mathcal Z(\mathcal I')$ on this space.

Directly from the definitions, we get the following simple result.
\begin{proposition}\label{proposition:5}
Let $\s=(s_1,\ldots,s_l)$ be a sequence in $\widehat\S$ and $\I$ be a subset of $\widehat\W$ containing $J(\s)$ such that $\I s_l=\I$.
We have $\Delta(zf)=z\Delta(f)$ for any $f\in\bigoplus_{\sigma\in I(\s')}Q$ and $z\in\mathcal Z(\I)$ such that
$z_x=z_{xs_l}$ for any $x\in\I$.
\end{proposition}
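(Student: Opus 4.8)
The plan is to check the identity componentwise, reducing to two cases via the disjoint decomposition~(\ref{eq:1.25}). Fix $f\in\bigoplus_{\sigma\in I(\s')}Q$ and $z\in\mathcal Z(\I)$ with $z_x=z_{xs_l}$ for all $x\in\I$ --- a requirement that is meaningful precisely because $\I s_l=\I$, so that $z_{xs_l}$ is defined for $x\in\I$ --- and write $s:=s_l$ for brevity. By~(\ref{eq:1.25}), every $\sigma\in I(\s)$ is uniquely either $\tau\sp$ or $\tau s$ for some $\tau\in I(\s')$, so it suffices to compare the two sides of $\Delta(zf)=z\Delta(f)$ on components of these two forms.

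For $\sigma=\tau\sp$, unwinding the defining formula for $\Delta$, the $\mathcal Z(\I)$-action~(\ref{eq:1.125}), and $\ev(\tau\sp)=\ev(\tau)$ gives
$$
\Delta(zf)_{\tau\sp}=(zf)_\tau=z_{\ev(\tau)}f_\tau=z_{\ev(\tau\sp)}\Delta(f)_{\tau\sp}=\bigl(z\Delta(f)\bigr)_{\tau\sp},
$$
and here no hypothesis on $z$ is used. For $\sigma=\tau s$, the same unwinding, together with $\ev(\tau s)=\ev(\tau)s$ (immediate from the definition of $\ev$), yields $\Delta(zf)_{\tau s}=(zf)_\tau=z_{\ev(\tau)}f_\tau$ on the left and $\bigl(z\Delta(f)\bigr)_{\tau s}=z_{\ev(\tau s)}\Delta(f)_{\tau s}=z_{\ev(\tau)s}\,f_\tau$ on the right. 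These agree exactly when $z_{\ev(\tau)}=z_{\ev(\tau)s}$, and this is the one place the hypothesis enters: since $\tau\in I(\s')$ we have $\ev(\tau)\in J(\s')\subset J(\s)\subset\I$ --- the first inclusion because appending $\sp$ embeds $I(\s')$ into $I(\s)$ without changing the evaluation, the second by assumption --- so the relation $z_x=z_{xs}$ may be applied with $x=\ev(\tau)$.

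As the two cases exhaust $I(\s)$, this establishes $\Delta(zf)=z\Delta(f)$. I do not expect any genuine obstacle: the statement is a direct consequence of the definitions, and the only points that deserve a line of care are bookkeeping ones --- that $\ev(\tau)$ really lies in $\I$, so that the assumed symmetry of $z$ is applicable (hence the explicit chain $J(\s')\subset J(\s)\subset\I$), and, as a preliminary remark, that both $\bigoplus_{\sigma\in I(\s)}Q$ and $\bigoplus_{\sigma\in I(\s')}Q$ carry a well-defined $\mathcal Z(\I)$-module structure through~(\ref{eq:1.125}), which again rests on $J(\s'),J(\s)\subset\I$.
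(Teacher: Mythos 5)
Your proof is correct and is exactly the direct-from-definitions verification that the paper alludes to (no proof is given in the source, only the remark that the result follows directly from the definitions). Splitting $I(\s)$ via~(\ref{eq:1.25}) into $\tau\sp$ and $\tau s_l$ components, unwinding $\Delta$ and~(\ref{eq:1.125}), and invoking $z_{\ev(\tau)}=z_{\ev(\tau)s_l}$ only in the second case is precisely the intended argument, and your bookkeeping that $\ev(\tau)\in J(\s')\subset J(\s)\subset\I$ is the right point to make explicit.
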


\subsection{Module $\X(\s)$} The following definition is due to P. Fiebig.

\begin{definition}[\text{\cite[Definition 6.1.]{Fiebig_An_upper_bound}}]\label{definition:7}
We define for all sequences $\s=(s_1,\ldots,s_l)$ in $\widehat\S$ the $S$-submodule $\X(\s)\subset\bigoplus_{\sigma\in I(\s)}S$
by the following inductive rule:
\begin{enumerate}
\itemsep=2pt
\item\label{definition:7:part:1} $\X(\emptyset):=\bigoplus_{\sigma\in I(\emptyset)}S\cong S$;
\item\label{definition:7:part:2} if $\s=(s_1,\ldots,s_l)$ is not empty, then
$$
\X(\s):=\Delta(\X(\s'))+c^{\alpha_l}\Delta(\X(\s')),
$$
where $\alpha_l\in\widehat\Pi$ is such that $s_l=s_{\alpha_l}$.
\end{enumerate}

\begin{proposition}[\text{\cite[Proposition 6.14(1)]{Fiebig_An_upper_bound}}]
$\X(\s)$ is stable under the action of $\mathcal Z$.
\end{proposition}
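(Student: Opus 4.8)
The natural approach is to induct on the length $l$ of $\s=(s_1,\dots,s_l)$, using the recursive description of $\X(\s)$ in Definition~\ref{definition:7}. For $l=0$ there is nothing to prove: $\X(\emptyset)\cong S$, and by~(\ref{eq:1.125}) an element $z\in\mathcal Z$ acts on it as multiplication by the scalar $z_e\in S$. So assume $l\ge 1$, write $s:=s_l$ and $\alpha:=\alpha_l$ (so $s=s_\alpha$), and assume inductively that $\X(\s')$ is stable under $\mathcal Z$. Since $\X(\s)=\Delta(\X(\s'))+c^{\alpha}\Delta(\X(\s'))$ and $c^{\alpha}\in\mathcal Z$, every element of $\X(\s)$ has the form $\Delta(g_1)+c^{\alpha}\Delta(g_2)$ with $g_1,g_2\in\X(\s')$, and for $z\in\mathcal Z$ we have $z\bigl(\Delta(g_1)+c^{\alpha}\Delta(g_2)\bigr)=z\,\Delta(g_1)+(zc^{\alpha})\,\Delta(g_2)$ with both $z$ and $zc^{\alpha}$ in $\mathcal Z$. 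Hence it suffices to prove that $z\,\Delta(g)\in\X(\s)$ for every $z\in\mathcal Z$ and every $g\in\X(\s')$.

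The key step is an explicit decomposition of $z\,\Delta(g)$. Given $z\in\mathcal Z$, define two sections $z^{+},z^{-}\in\prod_{x\in\widehat\W}S$ by
$$z^{+}_x:=\tfrac12\bigl(z_x+z_{xs}\bigr),\qquad z^{-}_x:=\frac{z_x-z_{xs}}{2\,\overline{x(\alpha)}}.$$
These are well defined because $2$ is invertible in $\F$ and because $\overline{x(\alpha)}$ divides $z_x-z_{xs}$ in the domain $S$: indeed $xs=s_{x(\alpha)}x$ by~(\ref{eq:0}), so $z_x\equiv z_{xs}\pmod{\overline{x(\alpha)}}$ by the definition of $\mathcal Z$. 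Checking the two generators $\sigma\sp$ and $\sigma s$ of $I(\s)$ over $\sigma\in I(\s')$ — using $\ev(\sigma\sp)=\ev(\sigma)$, $\ev(\sigma s)=\ev(\sigma)s$, $s(\alpha)=-\alpha$, and formula~(\ref{eq:21}) for the action of $c^{\alpha}$ — one obtains
$$z\,\Delta(g)=\Delta\bigl(z^{+}g\bigr)+c^{\alpha}\,\Delta\bigl(z^{-}g\bigr).$$
Consequently $z\,\Delta(g)$ lies in $\Delta(\X(\s'))+c^{\alpha}\Delta(\X(\s'))=\X(\s)$ as soon as $z^{+}g$ and $z^{-}g$ belong to $\X(\s')$; by the inductive hypothesis this will follow once we know that both $z^{+}$ and $z^{-}$ lie in $\mathcal Z$.

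That $z^{+}\in\mathcal Z$ is routine: for $\beta\in\widehat R$ and $x\in\widehat\W$ one has $z^{+}_x-z^{+}_{s_\beta x}=\tfrac12(z_x-z_{s_\beta x})+\tfrac12(z_{xs}-z_{s_\beta(xs)})$, and both summands lie in $\bar\beta S$ because $z\in\mathcal Z$ and $(s_\beta x)s=s_\beta(xs)$. The real content of the proof — and the step I expect to be the main obstacle — is showing $z^{-}\in\mathcal Z$. The starting point is the componentwise identity $z^{-}c^{\alpha}=z-z^{+}$, whose right-hand side is in $\mathcal Z$. Combining this with $\overline{s_\beta x(\alpha)}\equiv\overline{x(\alpha)}\pmod{\bar\beta}$ — which is~(\ref{eq:1}) applied to $s_\beta$ and $x(\alpha)$, the integer $\langle x(\alpha),\beta\rangle'$ occurring as the coefficient — one gets $(z^{-}_x-z^{-}_{s_\beta x})\,\overline{x(\alpha)}\in\bar\beta S$ for all $\beta$ and $x$. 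When $\bar\beta$ and $\overline{x(\alpha)}$ are non-proportional, $\overline{x(\alpha)}$ is a nonzerodivisor modulo the prime element $\bar\beta$ of $S$, and the required congruence $z^{-}_x\equiv z^{-}_{s_\beta x}\pmod{\bar\beta}$ follows; when $\beta=\pm x(\alpha)$ one has $s_\beta x=xs$ and $z^{-}_{xs}=z^{-}_x$ by a direct computation (using $xs(\alpha)=-x(\alpha)$ and $xss=x$), so the congruence is trivial. The only remaining situation — two edges at $x$ with proportional labels — is precisely a violation of the GKM property at $x$ and does not occur under the GKM hypotheses relevant here. Thus the whole argument reduces to the closedness of $\mathcal Z$ under the two operations $z\mapsto z^{+}$ (averaging along $s$) and $z\mapsto z^{-}$ (difference quotient along $s$), and this closedness is where all the root-system combinatorics enters.
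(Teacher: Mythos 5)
The paper does not prove this proposition — it cites it directly from Fiebig's \emph{An upper bound} paper — so there is no in-paper proof to compare against line by line. What \emph{can} be compared is the mechanism you use, because the paper deploys an essentially identical construction in its own proof of Theorem~\ref{theorem:3}: there one also sets $z' = z\circ(\cdot s_l)$ and divides $(z-z')$ by $2\,\mathfrak c^{\alpha_l}$, and the paper explicitly says it needs the GKM-property of $\widehat\G_{J(\s)}$ precisely to conclude that this quotient lies back in the structure algebra. Your $z^-$ is this quotient, and you hit exactly the same wall.

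That is where the gap is. The proposition as stated is unconditional: $\X(\s)$ is $\mathcal Z$-stable for \emph{all} sequences $\s$, with no GKM hypothesis anywhere in Definition~\ref{definition:7} or in the enclosing discussion. Your argument, however, only establishes $z^-\in\mathcal Z$ when, for every $x$ and every $\beta\in\widehat R$, either $\beta=\pm x(\alpha)$ or $\bar\beta\notin\F\,\overline{x(\alpha)}$. You acknowledge the remaining case and wave it away as ``a violation of the GKM property … under the GKM hypotheses relevant here,'' but there are no such hypotheses in the statement. In positive characteristic this failure mode genuinely occurs on $\widehat\G$ (e.g.\ in type $A_1$ with $p=3$, the affine roots $(\alpha,0)$ and $(\alpha,3)$ are distinct and non-opposite yet have equal images in $\VF$), and in that case the cancellation step $(z^-_x-z^-_{s_\beta x})\,\overline{x(\alpha)}\in\bar\beta S \Rightarrow z^-_x\equiv z^-_{s_\beta x}\pmod{\bar\beta}$ yields nothing, since $\overline{x(\alpha)}$ is already a unit multiple of $\bar\beta$. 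So as written the argument only proves the proposition under a GKM assumption on the relevant moment subgraph, which is a strictly weaker statement than the one being claimed; to prove the result in the generality stated you would either need a different route to $z^- g\in\X(\s')$ (not via $z^-\in\mathcal Z$) or a direct argument as in Fiebig's original proof.

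For the record, the rest of the proof is sound: the base case, the reduction to $z\,\Delta(g)$, the identity $z\,\Delta(g)=\Delta(z^+g)+c^{\alpha}\Delta(z^-g)$, the verification that $z^+\in\mathcal Z$, and the two treated subcases for $z^-$ are all correct.
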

\end{definition}

The second part of Proposition 6.14 from~\cite{Fiebig_An_upper_bound} says that $\X(\s)$ is isomorphic as a $\mathcal Z$-module
to the Bott-Samelson module corresponding to $\s$. We want to prove a similar result stating that
under some GKM-restriction there is an isomorphism from $\Gamma(\B(\s))$ to $\X(\s)$ that agrees well with restrictions
(Theorem~\ref{theorem:3}).

\subsection{Comparison theorem} Following Fiebig's paper~\cite{Fiebig_An_upper_bound}, we define for any
$\I\subset\widehat\W$ the $S$-modules $M^\I$ and $M_\I$ for any $S$-submodule
$M\subset\bigoplus_{\sigma\in I(\s)}Q$ as follows:
$$
M^\I:=\Bigl\{f|_{I(\s)_\I}\,\Big|\,f\in M\Bigr\},\quad M_\I:=\Bigl\{f|_{I(\s)_\I}\,\Big|\,f\in M\text{ and }f|_{I(\s)\setminus I(\s)_\I}=0\Bigr\}.
$$
By definition $M_\I\subset M^\I\subset\bigoplus_{\sigma\in I(\s)_\I}Q$.
Note that $M_\I=M^\I=0$ if $I(\s)_\I=\emptyset$.
We also abbreviate $M^x:=M^{\{x\}}$ and $M_x:=M_{\{x\}}$.

For an element $x\in\widehat\W$, a sheaf $\M$ on $\widehat\G$ and
an $S$-submodule $M\subset\bigoplus_{\sigma\in I(\s)}Q$, we define
the natural restrictions $r^\M_x:\Gamma(\M)\to\M^x$ and $r^M_x:M\to M^x$ by
$$
r^\M_x(m):=m_x,\quad r^M_x(f):=f|_{I(\s)_x}.
$$

\begin{theorem}\label{theorem:3}
Let $\s$ be a sequence in $\widehat\S$ such that $\widehat\G_{J(\s)}$ satisfies the GKM-property.
Then there exist a $\mathcal Z(J(\s))$-module isomorphism $\phi(\s):\Gamma(\B(\s))\to\X(\s)$ and
$S$-module isomorphisms $\phi_x(\s):\B(\s)^x\to\X(\s)^x$ for each $x\in\widehat\W$ such that
the following diagrams are commutative:
$$
\begin{CD}
\Gamma(\B(\s)) @>\phi(\s)>> \X(\s)\\
@Vr^{\B(\s)}_xVV                   @VVr^{\X(\s)}_xV   \\
\B(\s)^x @>\phi_x(\s)>>            \X(\s)^x
\end{CD}
$$
\end{theorem}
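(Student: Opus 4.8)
The plan is to argue by induction on the length $l$ of $\s=(s_1,\dots,s_l)$. For $\s=\emptyset$ one has $\Gamma(\B(\emptyset))=\B(\emptyset)^e=S=\X(\emptyset)=\X(\emptyset)^e$ and all other stalks vanish, so $\phi(\emptyset)$ and $\phi_e(\emptyset)$ are the evident identifications, $\phi_x(\emptyset)=0$ for $x\ne e$, and the square commutes trivially. So assume $l\ge1$, write $\s=\s'\,s$ with $s=s_l=s_{\alpha}$ and $\alpha=\alpha_l\in\widehat\Pi$, and assume the theorem for $\s'$; its hypothesis holds because $J(\s')\subseteq J(\s)$ by~(\ref{eq:1.25}) and a full subgraph of a GKM graph is again GKM. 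Throughout, this inclusion together with Lemmas~\ref{lemma:9} and~\ref{lemma:2.25} guarantees that $\B(\s')$ and $\vartheta^{s}_{on}\B(\s')$ are projective and that $\pi_s^{-1}\!\bigl(\supp(\vartheta^s_{on}\B(\s'))\bigr)\subseteq J(\s')\cup J(\s')s=J(\s)$.

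First I would set up the ``folding'' of the left-hand side. By definition $\B(\s)=\vartheta^s_{out}(\vartheta^s_{on}\B(\s'))$, so the stalk is $\B(\s)^x=(\vartheta^s_{on}\B(\s'))^{\bar x}=\Gamma(\{x,xs\},\B(\s'))$, an $S$-module under coordinatewise multiplication. A direct check identifies $\Gamma(\vartheta^s_{on}\B(\s'))$ canonically with $\Gamma(\B(\s'))$ via $m\mapsto\bigl(\bar x\mapsto(m_x,m_{xs})\bigr)$. Under this identification Proposition~\ref{proposition:4}, applied with $\Omega=\widehat\W$ (the needed GKM-property being the one noted above), says that every $u\in\Gamma(\B(\s))$ has a unique, $S$-linearly varying expression $u=a_{out}+c^{\alpha}b_{out}$ with $a,b\in\Gamma(\B(\s'))$, and that $(a_{out})_x=(a_x,a_{xs})$, $(c^{\alpha}b_{out})_x=\overline{x(\alpha)}\,(b_x,b_{xs})$, so $u_x=(a_x+\overline{x(\alpha)}b_x,\;a_{xs}+\overline{x(\alpha)}b_{xs})$. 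Thus $u\mapsto(a,b)$ is an $S$-module isomorphism $\Gamma(\B(\s))\stackrel{\sim}{\to}\Gamma(\B(\s'))\oplus\Gamma(\B(\s'))$.

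Next I would fold the right-hand side. By Definition~\ref{definition:7}, $\X(\s)=\Delta(\X(\s'))+c^{\alpha}\Delta(\X(\s'))$; since $s(\alpha)=-\alpha$ one checks $c^{\alpha}\Delta(g)=\Delta^-(c^{\alpha}g)$, so the first summand lies in $\im\Delta$ and the second in $\im\Delta^-$, which are complementary by~(\ref{eq:19}). Hence the sum is direct, and as $\Delta$ and $g\mapsto c^{\alpha}\Delta(g)$ are injective $S$-linear maps, every element of $\X(\s)$ has a unique expression $\Delta(f)+c^{\alpha}\Delta(g)$ with $f,g\in\X(\s')$; this yields an $S$-module isomorphism $\X(\s')\oplus\X(\s')\stackrel{\sim}{\to}\X(\s)$. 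I would then \emph{define} $\phi(\s)(u):=\Delta\bigl(\phi(\s')(a)\bigr)+c^{\alpha}\Delta\bigl(\phi(\s')(b)\bigr)$ for $u=a_{out}+c^{\alpha}b_{out}$, i.e.\ $\phi(\s)$ is the composite of the left isomorphism, $\phi(\s')\oplus\phi(\s')$, and the right isomorphism; it is an $S$-module isomorphism $\Gamma(\B(\s))\to\X(\s)$.

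Finally I would produce the $\phi_x(\s)$ and upgrade $\phi(\s)$ to a $\mathcal Z(J(\s))$-isomorphism. Using~(\ref{eq:20}) one has $\bigoplus_{\sigma\in I(\s)_x}Q=\bigoplus_{\sigma\in I(\s')_x}Q\oplus\bigoplus_{\sigma\in I(\s')_{xs}}Q$, under which $\Delta(F)|_{I(\s)_x}=(F|_{I(\s')_x},F|_{I(\s')_{xs}})$ and $c^{\alpha}\Delta(G)|_{I(\s)_x}=\overline{x(\alpha)}\,(G|_{I(\s')_x},G|_{I(\s')_{xs}})$. Writing $A=\phi(\s')(a)$, $B=\phi(\s')(b)$ and invoking the inductive diagrams $\phi_x(\s')(a_x)=A|_{I(\s')_x}$, $\phi_{xs}(\s')(a_{xs})=A|_{I(\s')_{xs}}$ (and likewise for $b$) together with injectivity of $\phi_x(\s')$ and $\phi_{xs}(\s')$, one computes that $\phi(\s)(u)|_{I(\s)_x}=0$ iff $a_x=-\overline{x(\alpha)}b_x$ and $a_{xs}=-\overline{x(\alpha)}b_{xs}$, i.e.\ iff $u_x=0$. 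Hence $\phi(\s)$ carries $\ker r^{\B(\s)}_x$ onto $\ker r^{\X(\s)}_x$; since $\B(\s)$ is generated by global sections, $r^{\B(\s)}_x$ is onto $\B(\s)^x$, and $r^{\X(\s)}_x$ is onto $\X(\s)^x$ by definition, so $\phi(\s)$ descends to the desired $S$-module isomorphism $\phi_x(\s):\B(\s)^x\to\X(\s)^x$ with $\phi_x(\s)\circ r^{\B(\s)}_x=r^{\X(\s)}_x\circ\phi(\s)$. Both $\mathcal Z(J(\s))$-actions are coordinatewise and $\prod_{x\in J(\s)}r^{\X(\s)}_x$ is injective on $\X(\s)\subseteq\bigoplus_{\sigma\in I(\s)}S$, so the commuting squares and $S$-linearity of the $\phi_x(\s)$ force $\phi(\s)(zu)=z\,\phi(\s)(u)$ for all $z\in\mathcal Z(J(\s))$. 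I expect the main obstacle to be precisely the matching of the two foldings: making rigorous that $\Gamma(\vartheta^s_{on}\B(\s'))\cong\Gamma(\B(\s'))$ canonically and that the decomposition $a_{out}+c^{\alpha}b_{out}$ of Proposition~\ref{proposition:4} corresponds, under $\phi(\s')\oplus\phi(\s')$, to the $\Delta/c^{\alpha}\Delta$-decomposition of $\X(\s)$ --- in particular keeping exact track of the signs coming from $s(\alpha)=-\alpha$ and $\overline{xs(\alpha)}=-\overline{x(\alpha)}$, which are what make the ``$0$ iff $0$'' equivalence above work; once these identifications are pinned down, everything else is formal.
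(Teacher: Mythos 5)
Your proof is correct and follows the same inductive skeleton as the paper's: decompose $\Gamma(\B(\s))$ via Proposition~\ref{proposition:4} with $\Omega=\widehat\W$, decompose $\X(\s)$ via Definition~\ref{definition:7} and~(\ref{eq:19}), define $\phi(\s)$ on each summand by $a_{out}\mapsto\Delta(\phi(\s')(a))$ and $\mathfrak c^{\alpha_l}a_{out}\mapsto\mathfrak c^{\alpha_l}\Delta(\phi(\s')(a))$, and obtain $\phi_x(\s)$ by descent from the observation that $\phi(\s)(u)|_{I(\s)_x}=0$ iff $u_x=0$.

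The one genuine methodological difference is how $\mathcal Z(J(\s))$-linearity is established. The paper verifies it directly: for $z\in\mathcal Z(J(\s))$ it introduces the ``$s_l$-reflected'' element $z'$, splits $z$ into a symmetric part $(z+z')/2$ and an antisymmetric part $(z-z')/2$, divides the latter by $2\mathfrak c^{\alpha_l}$ (this is where the GKM-property is invoked inside the proof of Theorem~\ref{theorem:3} itself), and then checks the resulting identities~(\ref{eq:5})--(\ref{eq:6}) coordinatewise. You instead prove only $S$-linearity and bijectivity up front, construct the $\phi_x(\s)$ and the commuting squares, and then deduce $\mathcal Z(J(\s))$-linearity formally from the facts that both module structures are ``coordinatewise at each $x$'' and that $\prod_{x\in J(\s)}r^{\X(\s)}_x$ is injective on $\X(\s)\subset\bigoplus_{\sigma\in I(\s)}S$. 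This pushes all GKM-dependence into the appeal to Proposition~\ref{proposition:4} (and Lemmas~\ref{lemma:9},~\ref{lemma:2.25}), and avoids the explicit division $(z-z')/(2\mathfrak c^{\alpha_l})$ --- a genuine streamlining. One small caveat: the sign considerations you flag as the ``main obstacle'' are actually absent from the critical computation, since $\mathfrak c^{\alpha_l}$ is evaluated at $x$ (not at $xs_l$) on both the $\Gamma(\B(\s))$-side and the $\X(\s)$-side (compare~(\ref{eq:21})), so both coordinates of $u_x$ and of $\phi(\s)(u)|_{I(\s)_x}$ carry the same scalar $\overline{x(\alpha_l)}$; the equivalence $u_x=0\Leftrightarrow\phi(\s)(u)|_{I(\s)_x}=0$ then reduces immediately to injectivity of $\phi_x(\s')$ and $\phi_{xs_l}(\s')$.
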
{
\begin{proof}
We apply induction on the length of $\s$. First, consider the case $\s=\emptyset$.
The isomorphism $\phi(\emptyset)$ is defined by
$(\phi(\s)(m))_{\emptyset}=m_e$. One checks easily that this is indeed a $\mathcal Z$-module isomorphism.
We also define $\phi_e(\emptyset)$ by $(\phi_e(\emptyset)(a))_\emptyset=a$ for any $a\in\B(\emptyset)^e=S$
and set $\phi_x(\emptyset)=0$ for any $x\ne e$. Obviously, the above diagram is commutative
for the isomorphisms so defined.

Now suppose that $\s=(s_1,\ldots,s_l)$ is a nonempty sequence in $\widehat\S$ and that the
required isomorphisms are constructed for $\s'=(s_1,\ldots,s_{l-1})$. Let us choose $\alpha_l\in\widehat\Pi$
so that $s=s_{\alpha_l}$. We set $\mathfrak c^{\alpha_l}:=c^{\alpha_l}|_{J(\s)}\in\mathcal Z(J(\s))$.

We can identify $\Gamma(\vartheta^s_{on}\B(\s'))$ and $\Gamma(\B(\s'))$ in the obvious way.
So in the notation of Section~\ref{functorvarthetasout}, we have for any section $a\in\Gamma(\B(\s'))$
the section $a_{out}\in\Gamma(\B(\s))$ defined by $(a_{out})_x=(a_x,a_{x{s_l}})\in\Gamma(\{x,xs_l\},\B(\s'))$.
Now Proposition~\ref{proposition:4} for $\Omega=\widehat\W$ states that
$$
\Gamma(\B(\s))=\Gamma(\B(\s'))_{out}\oplus\mathfrak c^{\alpha_l}\Gamma(\B(\s'))_{out}.
$$
We define $\phi(\s)$ separately on each summand as follows:
\begin{equation}\label{eq:4}
\phi(\s)(a_{out}):=\Delta(\phi(\s')(a)),\quad \phi(\s)(\mathfrak c^{\alpha_l}a_{out}):=\mathfrak c^{\alpha_l}\Delta(\phi(\s')(a))
\end{equation}
for any $a\in\Gamma(\B(\s'))$. Since every module $\B(\s')^x$ is free, any $a\in\Gamma(\B(\s'))$ is uniquely determined
by $a_{out}$ as well as by $\mathfrak c^{\alpha_l}a_{out}$. So $\phi(\s)$ is well-defined as a map.

We claim that $\phi(\s)$ is a $\mathcal Z(J(\s))$-isomorphism. Indeed, take any $z\in\mathcal Z(J(\s))$.
Consider $z'\in\mathcal Z(J(\s))$ defined by $(z')_x:=z_{xs_l}$ for any $x\in J(\s)$.
Then we have
$$
(z-z')_x=z_x-z_{xs_l}\equiv0\!\!\!\pmod{\overline{x(\alpha_l)}}
$$
for any $x\in J(\s)$.
Therefore, we can consider the quotient $(z-z')/(2\,\mathfrak c^{\alpha_l})\in\prod_{x\in J(\s)}S$.
Since $\widehat\G_{J(\s)}$ satisfies the GKM-property, we obtain $(z-z')/(2\,\mathfrak c^{\alpha_l})\in\mathcal Z(J(\s))$.
Evaluation at an arbitrary $x\in\widehat\W$, proves the following formulas:
\begin{equation}\label{eq:5}
za_{out}=\(\frac{z+z'}2\,a\)_{\!\!out}+\mathfrak c^{\alpha_l}\!\(\frac{z-z'}{\;2\,\mathfrak c^{\alpha_l}}\,a\)_{\!\!out}\\[6pt]
\end{equation}
\begin{equation}\label{eq:6}
z\mathfrak c^{\alpha_l}a_{out}=\(\mathfrak c^{\alpha_l}\,\frac{z-z'}2\,a\)_{\!\!out}+\mathfrak c^{\alpha_l}\!\(\frac{z+z'}2\,a\)_{\!\!out}
\end{equation}
for any $a\in\Gamma(\B(\s'))$.

We actually need to prove these formulas for $x\in J(\s)$ as otherwise both sides of each formula equals $0$.
The right-hand side of~(\ref{eq:5}) evaluated at $x$ equals the following element of $\Gamma(\{x,xs_l\},\B(\s'))$:
\begin{multline*}
\(\frac{z_x+z'_x}2\,a_x,\frac{z_{xs_l}+z'_{xs_l}}2\,a_{xs_l}\)+\overline{x(\alpha_l)}\(\frac{z_x-z'_x}{\;2\,\overline{x(\alpha_l)}\;}\,a_x,\frac{z_{xs_l}-z'_{xs_l}}{\;2\,\overline{xs_l(\alpha_l)}\;}\,a_{xs_l}\)\\[2pt]
=\(\frac{z_x+z_{xs_l}}2\,a_x,\frac{z_{xs_l}+z_x}2\,a_{xs_l}\)+\overline{x(\alpha_l)}\(\frac{z_x-z_{xs_l}}{\;2\,\overline{x(\alpha_l)}\;}\,a_x,\frac{z_{xs_l}-z_x}{\;-2\,\overline{x(\alpha_l)}\;}\,a_{xs_l}\)\\[8pt]
=(z_xa_x,z_xa_{xs_l})=(za_{out})_x.
\end{multline*}
The right-hand side of~(\ref{eq:6}) evaluated at $x$ equals the following element of $\Gamma(\{x,xs_l\},\B(\s'))$:
\begin{multline*}
\(\overline{x(\alpha_l)}\,\frac{z_x-z'_x}2\,a_x,\overline{xs_l(\alpha_l)}\,\frac{z_{xs_l}-z'_{xs_l}}2\,a_{xs_l}\)+\overline{x(\alpha_l)}\!\(\frac{z_x+z'_x}2\,a_x,\frac{z_{xs_l}+z'_{xs_l}}2\,a_{xs_l}\)=\\[2pt]
=\(\overline{x(\alpha_l)}\,\frac{z_x-z_{xs_l}}2\,a_x,-\overline{x(\alpha_l)}\,\frac{z_{xs_l}-z_x}2\,a_{xs_l}\)+\(\overline{x(\alpha_l)}\,\frac{z_x+z_{xs_l}}2\,a_x,\overline{x(\alpha_l)}\,\frac{z_{xs_l}+z_x}2\,a_{xs_l}\)\\[6pt]
=\(\overline{x(\alpha_l)}z_xa_x,\overline{x(\alpha_l)}z_xa_{xs_l}\)=(z\mathfrak c^{\alpha_l}a_{out})_x.
\end{multline*}

Let us take arbitrary $a\in\Gamma(\B(\s'))$ and $z\in\mathcal Z(J(\s))$.
By~(\ref{eq:4}),~(\ref{eq:5}), the inductive hypothesis and Proposition~\ref{proposition:5}, we get
\begin{multline*}
\phi(\s)(za_{out})=\Delta\!\(\phi(\s')\(\frac{z+z'}2\,a\)\)+\mathfrak c^{\alpha_l}\Delta\!\(\phi(\s')\(\frac{z-z'}{\;2\,\mathfrak c^{\alpha_l}}\,a\)\)\\[8pt]
\hspace{-130pt}=\Delta\!\(\frac{z+z'}2\,\phi(\s')(a)\)+\mathfrak c^{\alpha_l}\Delta\!\(\frac{z-z'}{\;2\,\mathfrak c^{\alpha_l}}\,\phi(\s')(a)\)\\[8pt]
=\frac{z+z'}2\,\Delta\!\(\phi(\s')(a)\)+\frac{z-z'}2\Delta\!\(\phi(\s')(a)\)
=z\Delta\!\(\phi(\s')(a)\)=z\phi(\s)(a_{out}).
\end{multline*}
By~(\ref{eq:4}),~(\ref{eq:6}), the inductive hypothesis and Proposition~\ref{proposition:5}, we get
\begin{multline*}
\phi(\s)(z\mathfrak c^{\alpha_l}a_{out})=\Delta\!\(\phi(\s')\(\mathfrak c^{\alpha_l}\,\frac{z-z'}2\,a\)\)+\mathfrak c^{\alpha_l}\Delta\!\(\phi(\s')\(\frac{z+z'}2\,a\)\)\\[8pt]
\hspace{-170pt}=\Delta\!\(\mathfrak c^{\alpha_l}\,\frac{z-z'}2\,\phi(\s')(a)\)+\mathfrak c^{\alpha_l}\Delta\!\(\frac{z+z'}2\,\phi(\s')(a)\)\\[8pt]
=\mathfrak c^{\alpha_l}\,\frac{z-z'}2\,\Delta\!\(\phi(\s')(a)\)+\mathfrak c^{\alpha_l}\frac{z+z'}2\,\Delta\!\(\phi(\s')(a)\)
=z\mathfrak c^{\alpha_l}\Delta\!\(\phi(\s')(a)\)=z\phi(\s)(\mathfrak c^{\alpha_l}a_{out}).
\end{multline*}
These calculations prove that $\phi(\s)$ is a $\mathcal Z(J(\s))$-homomorphism.
Definition~\ref{definition:7}\ref{definition:7:part:2} implies that $\phi(\s)$ is surjective.
Finally, $\phi(\s)$ is injective in view of the inductive hypothesis and decomposition~(\ref{eq:19}),
since the first formula of~(\ref{eq:4}) defines an $\iota$-invariant element,
the second formula of~(\ref{eq:4}) defines an $\iota$-antiinvariant element, $\Delta$ is injective
and $\mathfrak c^{\alpha_l}$ takes only nonzero values.

It remains to define isomorphisms $\phi_x(\s)$ for any $x\in\widehat\W$. There is actually no choice
as to how we cant do it. Take any $u\in\M^x$ and extend it to a global section $m\in\Gamma(\B(\s))$.
Then we set $\phi_x(\s)(u):=r^{\X(\s)}_x\circ\phi(\s)(m)=\phi(\s)(m)|_{I(\s)_x}$.

First, we have to prove the correctness of this definition, that is independence of an extension $m$ of $u$.
Since $\phi(\s)$ is linear it suffice to prove that $\phi(\s)(m)|_{I(\s)_x}=0$ if $m_x=0$.
Indeed, let $m=a_{out}+\mathfrak c^{\alpha_l}b_{out}$ for some $a,b\in\Gamma(\B(\s'))$.
Then $0=m_x=(a_x,a_{xs_l})+\overline{x(\alpha_l)}(b_x,b_{xs_l})$.
In other words,
$$
(a+\overline{x(\alpha_l)}b)_x=0,\qquad (a+\overline{x(\alpha_l)}b)_{xs_l}=0.
$$
By these formulas, the formula $(I(\s)_x)'=I(\s')_x\dotcup I(\s')_{xs_l}$ (following from~(\ref{eq:1.25})), formula~(\ref{eq:4})
and the inductive hypothesis , we have
$$
\phi(\s)(m)_\sigma=\phi(\s')(a)_{\sigma'}+\overline{x(\alpha_l)}\phi(\s')(b)_{\sigma'}=\phi(\s')\Bigl(a+\overline{x(\alpha_l)}b\Bigr)_{\!\sigma'}=0.
$$
for any $\sigma\in I(\s)_x$.

It is obvious that $\phi_x(\s)$ is an $S$-module homomorphism and surjective. It only remains to prove that $\phi_x(\s)$
is injective. Take any $u\in\ker\phi_x(\s)$. Let us extend it to a section $m\in\Gamma(\B(\s))$.
As usual, we write $m=a_{out}+\mathfrak c^{\alpha_l}b_{out}$ for some $a,b\in\Gamma(\B(\s'))$.
According to our definitions $\phi(\s)(m)|_{I(\s)_x}=0$. By~(\ref{eq:4}), we get
$$
0=\phi(\s)(m)_{\sigma\sp}=\phi(\s')(a)_\sigma+\overline{x(\alpha_l)}\phi(\s')(b)_\sigma=\phi(\s')\Bigl(a+\overline{x(\alpha_l)}b\Bigr)_{\!\sigma}
$$
for any $\sigma\in I(\s')_x$. Similarly,
$$
0=\phi(\s)(m)_{\sigma s_l}=\phi(\s')(a)_\sigma+\overline{x(\alpha_l)}\phi(\s')(b)_\sigma=\phi(\s')\Bigl(a+\overline{x(\alpha_l)}b\Bigr)_{\!\sigma}
$$
for any $\sigma\in I(\s')_{xs_l}$. These formulas imply
$$
\begin{array}{l}
0=r_x^{\X(\s')}\circ \phi(\s')\Bigl(a+\overline{x(\alpha_l)}b\Bigr)=\phi_x(\s')\circ r^{\B(\s')}_x\Bigl(a+\overline{x(\alpha_l)}b\Bigr)=\phi_x(\s')\Bigl(a_x+\overline{x(\alpha_l)}b_x\Bigr)\\[12pt]
0=r_{xs_l}^{\X(\s')}\circ \phi(\s')\Bigl(a+\overline{x(\alpha_l)}b\Bigr)=\phi_{xs_l}(\s')\circ r^{\B(\s')}_{xs_l}\Bigl(a+\overline{x(\alpha_l)}b\Bigr)=\phi_{xs_l}(\s')\Bigl(a_{xs_l}+\overline{x(\alpha_l)}b_{xs_l}\Bigr).
\end{array}
$$
By induction, we already know that $\phi_x(\s')$ and $\phi_{xs_l}(\s')$ are isomorphisms, so $a_x+\overline{x(\alpha_l)}b_x=0$ and $a_{xs_l}+\overline{x(\alpha_l)}b_{xs_l}=0$
Now we can write
$$
u=m_x=(a_x,a_{xs_l})+\overline{x(\alpha_l)}(b_x,b_{xs_l})=(a_x+\overline{x(\alpha_l)}b_x,a_{xs_l}+\overline{x(\alpha_l)}b_{xs_l})=0.
$$
\end{proof}

We needed the condition that $\phi(\s)$ is a $\mathcal Z(J(\s))$-module isomorphism only for the sake of induction.
In what follows, we shall only use the fact that $\phi(\s)$ is a $\mathcal Z$-module isomorphism.

\begin{corollary}\label{corollary:2}
Let $\s$ be a sequence in $\widehat\S$ such that $\widehat\G_{J(\s)}$ satisfies the GKM-property.
For any element $x\in\widehat\W$, the isomorphism $\phi(\s)_x$ in Theorem~\ref{theorem:3} restricts to the isomorphism
$\B(\s)_x\stackrel{\sim}\to\X(\s)_x$.
\end{corollary}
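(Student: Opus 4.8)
The plan is to prove the corresponding statement one level up, at the level of global sections, and then transport it through the restriction maps $r^{\B(\s)}_x$ and $r^{\X(\s)}_x$ using the comparison isomorphism $\phi(\s)$ of Theorem~\ref{theorem:3}. Accordingly, I would write $\Gamma_x$ for the $S$-submodule of sections $m\in\Gamma(\B(\s))$ with $m_y=0$ for every $y\ne x$, and $N_x$ for the $S$-submodule of those $f\in\X(\s)$ with $f_\sigma=0$ for every $\sigma\in I(\s)$ with $\ev(\sigma)\ne x$. Since $I(\s)\setminus I(\s)_x=\{\sigma:\ev(\sigma)\ne x\}$, the very definition of $\X(\s)_x$ reads $\X(\s)_x=r^{\X(\s)}_x(N_x)$. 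So the plan reduces to two claims, $\B(\s)_x=r^{\B(\s)}_x(\Gamma_x)$ and $\phi(\s)(\Gamma_x)=N_x$; once both hold, the commuting square of Theorem~\ref{theorem:3} gives $\phi_x(\s)(\B(\s)_x)=\phi_x(\s)\bigl(r^{\B(\s)}_x(\Gamma_x)\bigr)=r^{\X(\s)}_x\bigl(\phi(\s)(\Gamma_x)\bigr)=r^{\X(\s)}_x(N_x)=\X(\s)_x$, and injectivity of $\phi_x(\s)$ then makes its restriction to $\B(\s)_x$ an isomorphism onto $\X(\s)_x$.

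For the first claim I would argue directly. If $m\in\Gamma_x$ and $E\in\E^x$, then $E$ joins $x$ to a vertex $y\ne x$ (edges of a moment graph connect distinct vertices), so $\rho_{x,E}(m_x)=\rho_{y,E}(m_y)=0$ and hence $m_x\in\B(\s)_x$. Conversely, given $a\in\B(\s)_x$, the tuple in $\prod_{y\in\widehat\W}\B(\s)^y$ equal to $a$ at $x$ and to $0$ elsewhere is a global section: the section condition is trivial on edges not incident to $x$, and on an edge $E\in\E^x$ it holds because $\rho_{x,E}(a)=0$ by the definition of $\B(\s)_x$. This section lies in $\Gamma_x$ and restricts to $a$, giving $\B(\s)_x=r^{\B(\s)}_x(\Gamma_x)$. (No property of $\B(\s)$ beyond being a sheaf enters here.)

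For the second claim I would use the commuting square of Theorem~\ref{theorem:3} once per vertex: for every $y\in\widehat\W$ and $m\in\Gamma(\B(\s))$ one has $\phi(\s)(m)|_{I(\s)_y}=r^{\X(\s)}_y(\phi(\s)(m))=\phi_y(\s)(m_y)$, and since $\phi_y(\s)$ is injective, $m_y=0$ if and only if $\phi(\s)(m)$ vanishes on $I(\s)_y$. As $I(\s)$ is the disjoint union of the sets $I(\s)_y$, this yields $m\in\Gamma_x$ iff $\phi(\s)(m)\in N_x$; since $\phi(\s)$ is a bijection onto $\X(\s)$, we conclude $\phi(\s)(\Gamma_x)=N_x$, which closes the argument.

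I do not expect a genuine obstacle here: the corollary is essentially bookkeeping on top of Theorem~\ref{theorem:3}. If anything, the one step worth stating carefully is the identification $\B(\s)_x=r^{\B(\s)}_x(\Gamma_x)$ --- that every element of the costalk at $x$ is the value at $x$ of a global section vanishing at all other vertices --- since this is the bridge between the edge-theoretic definition of $\B(\s)_x$ and the ``support on $I(\s)_x$'' description of $\X(\s)_x$.
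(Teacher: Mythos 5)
Your argument is correct and is essentially the paper's own: both proofs extend an element of $\B(\s)_x$ by zero to a global section, transport it through $\phi(\s)$ using the commuting square of Theorem~\ref{theorem:3}, use injectivity of each $\phi_y(\s)$ to pass back from $\X(\s)$ to $\Gamma(\B(\s))$ in the converse direction, and finish by injectivity of $\phi_x(\s)$. Your only difference is bookkeeping --- you name the intermediate submodules $\Gamma_x$ and $N_x$ and prove the two set equalities $\B(\s)_x=r^{\B(\s)}_x(\Gamma_x)$ and $\phi(\s)(\Gamma_x)=N_x$ separately, whereas the paper folds these into a direct two-inclusion check.
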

\begin{proof}
Take any $u\in\B(\s)_x$. Set $m_x:=u$ and $m_y:=0$ for any $y\ne x$. Then $m\in\Gamma(\B(\s))$.
By the commutativity of the diagram in Theorem~\ref{theorem:3}, we get $\phi(\s)(m)|_{I(\s)_y}=0$ for any $y\ne x$.
Hence $\phi(\s)(m)|_{I(\s)_x}\in\X(\s)_x$.

On the other hand, take any $f\in\X(\s)_x$. Then $f=g|_{I(\s)_x}$ for some $g\in\X(\s)$
such that $g|_{I(\s)\setminus I(\s)_x}=0$. Consider the section $m:=\phi(\s)^{-1}(g)$.
By the commutativity of the diagram in Theorem~\ref{theorem:3}, we get
$\phi_y(\s)(m_y)=0$, whence $m_y=0$, as  $\phi_y(\s)$ is an isomorphism.
Thus we have proved that $m_x\in\B(\s)_x$. We have $\phi_x(\s)(m_x)=f$.

Therefore, $\phi_x(\s)$ restricts to the  surjective homomorphism $\B(\s)_x\to\X(\s)_x$.
It is an isomorphism as $\phi_x(\s)$ has zero kernel.
\end{proof}

\begin{corollary}\label{corollary:3}
Let $\s=(s_1,\ldots,s_l)$ be a sequence in $\widehat\S$ such that $\widehat\G_{J(\s)}$ satisfies the GKM-property.
Suppose that $\edgeright y\alpha x$ is an edge of $\widehat\G$
and $f\in\X(\s)$ is such that $f|_{I(\s)_y}=0$.
Then $f|_{I(\s)_x}=\alpha\,g|_{I(\s)_x}$ for some $g\in\X(\s)$.
\end{corollary}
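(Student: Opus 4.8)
The plan is to pull the statement back along the comparison isomorphism of Theorem~\ref{theorem:3} to the projective sheaf $\B(\s)$, where it becomes an immediate consequence of the defining properties of projective sheaves, and then push the answer forward again. Throughout, $\B(\s)$ is projective by Lemma~\ref{lemma:9}, since $\supp\B(\s)\subset J(\s)$ and $\widehat\G_{J(\s)}$ satisfies the GKM-property, so properties~\ref{proj:3} and~\ref{proj:4} are available.

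First I would set $m:=\phi(\s)^{-1}(f)\in\Gamma(\B(\s))$. The hypothesis $f|_{I(\s)_y}=0$ reads $r^{\X(\s)}_y(f)=0$, so the commutative diagram in Theorem~\ref{theorem:3} gives $\phi_y(\s)(m_y)=r^{\X(\s)}_y(\phi(\s)(m))=r^{\X(\s)}_y(f)=0$, and since $\phi_y(\s)$ is an isomorphism we get $m_y=0$. Next let $E:\edgeright y\alpha x$ be the given edge, so that $y<x$ and $x$ is its larger endpoint. Since $m$ is a section, $\rho_{x,E}(m_x)=\rho_{y,E}(m_y)=0$; by property~\ref{proj:3} the kernel of $\rho_{x,E}$ is $\alpha\,\B(\s)^x$, hence $m_x=\alpha a$ for some $a\in\B(\s)^x$. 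By property~\ref{proj:4}, $\B(\s)$ is generated by global sections, so I may choose $m'\in\Gamma(\B(\s))$ with $m'_x=a$, and put $g:=\phi(\s)(m')\in\X(\s)$.

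Finally I would verify the claim: using the commutative diagram once more and the $S$-linearity of $\phi_x(\s)$,
$$
f|_{I(\s)_x}=r^{\X(\s)}_x(\phi(\s)(m))=\phi_x(\s)(m_x)=\phi_x(\s)(\alpha a)=\alpha\,\phi_x(\s)(a)=\alpha\,\phi_x(\s)(m'_x)=\alpha\,g|_{I(\s)_x},
$$
which is exactly what is required; the degenerate case $x\notin J(\s)$ (equivalently $I(\s)_x=\emptyset$) is trivial since then $\X(\s)^x=0$. I do not expect a genuine obstacle here: essentially all the content has already been absorbed into the compatibility of $\phi(\s)$ with the restrictions $r_x$ in Theorem~\ref{theorem:3}. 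The only point that needs a moment's care is the bookkeeping of the orientation of $E$, so that property~\ref{proj:3} is applied to the restriction at the \emph{larger} vertex $x$, whose kernel is $\alpha\,\B(\s)^x$, and not at $y$.
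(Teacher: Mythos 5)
Your proof is correct and follows essentially the same route as the paper's: transfer $f$ to a section $m$ of the projective sheaf $\B(\s)$ via Theorem~\ref{theorem:3}, deduce $m_y=0$ and hence $m_x\in\alpha\,\B(\s)^x$ by property~\ref{proj:3}, extend the cofactor to a global section by property~\ref{proj:4}, and push the result back to $\X(\s)$. The only cosmetic difference is that the paper phrases the final verification as $(m-\alpha n)_x=0$ while you verify it directly from the commutative diagram; the content is identical.
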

\begin{proof}
Consider the section $m:=\phi(\s)^{-1}(f)\in\Gamma(\B(\s))$. By Theorem~\ref{theorem:3}, we obtain $\phi_y(\s)(m_y)=0$.
Since $\phi(\s)$ is an isomorphism, we have $m_y=0$. Therefore, $\rho_{x,E}(m_x)=\rho_{y,E}(m_y)=0$.
Condition~\ref{proj:3} of Definition~\ref{definition:5} implies $m_x=\alpha u$ for some $u\in\B(\s)^x$.
Let us extend $u$ to a global section $n\in\Gamma(\B(\s))$. Then we have $(m-\alpha n)_x=0$.
Thus by Theorem~\ref{theorem:3}, we get $r^{\X(\s)}_x\circ\phi(\s)(m-\alpha n)=0$.
Hence $\bigl(f-\alpha\phi(\s)(n)\bigr)|_{I_x(\s)}=0$ and we can take $g:=\phi(\s)(n)$.
\end{proof}

{\bf Remark.} In this corollary, it suffices to require only that $\widehat\G_{J(\s')}$ satisfy the GKM-property.
There is a direct (that is without using Theorem~\ref{theorem:3}) proof of this stronger version.

\subsection{Extended bases of $\X(\s)^x$}\label{extended_bases}
This notion is useful to compute an actual $S$-basis of $\X(\s)^x$ in Section~\ref{basis_of_X}.

\begin{definition}\label{definition:8}
Let $\s$ be a sequence in $\widehat\S$ and $\I\subset\widehat\W$ be a subset such that the graded $S$-module $\mathcal X(\s)^\I$ is free.
A list of homogeneous functions $f_1,\ldots,f_m\in\mathcal X(\s)$ is called an extended basis of $\mathcal X(\s)^\I$
if the restrictions $f_1|_{I(\s)_\I},\ldots,f_m|_{I(\s)_\I}$ form an $S$-basis of $\mathcal X(\s)^\I$.
\end{definition}

In the following lemma and in the rest of the paper, we shall often use together the $S$-module and
the $\mathcal Z$-module structures on $\X(\s)$.

\begin{lemma}\label{lemma:9.5}
Let $\s=(s_1,\ldots,s_l)$ be a sequence in $\widehat\S$ be such that $\widehat\G_{J(\s)}$ satisfies the GKM-property
Take $x\in\widehat\W$ such that $x<xs_l$ and choose $\alpha_l\in\widehat\Pi$ so that $s_l=s_{\alpha_l}$.
If $f_1,\ldots,f_m$ is an extended basis of $\mathcal X(\s')^x$ and
$g_1,\ldots,g_k$ is an extended basis of $\mathcal X(\s')^{xs_l}$, then
$$
f_1,\ldots,f_m,\frac{\overline{x(\alpha_l)}-c^{\alpha_l}}2\,g_1,\ldots,\frac{\overline{x(\alpha_l)}-c^{\alpha_l}}2\,g_k
$$
is an extended basis of $\mathcal X(\s')^{\{x,xs_l\}}$.
\end{lemma}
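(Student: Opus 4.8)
The plan is to verify directly that the listed functions restrict to an $S$-basis of $\mathcal X(\s')^{\{x,xs_l\}}$; the guiding picture is that restriction to $I(\s')_x$ exhibits $\mathcal X(\s')^{\{x,xs_l\}}$ as an extension of the free module $\mathcal X(\s')^x$ by $\overline{x(\alpha_l)}\,\mathcal X(\s')^{xs_l}$, and this extension splits. First I would record two facts about $z:=\frac{\overline{x(\alpha_l)}-c^{\alpha_l}}{2}$, which lies in $\mathcal Z$ because $c^{\alpha_l}\in\mathcal Z$, constant sections lie in $\mathcal Z$, and $2$ is invertible. Since $s_{\alpha_l}(\alpha_l)=-\alpha_l$ and $c^{\alpha_l}_y=\overline{y(\alpha_l)}$, one gets $z_x=0$ and $z_{xs_l}=\overline{x(\alpha_l)}=:\alpha$; hence $z$ is homogeneous of degree $2$ and, by the $\mathcal Z$-action~(\ref{eq:1.125}), every $h\in\mathcal X(\s')$ satisfies $(zh)|_{I(\s')_x}=0$ and $(zh)|_{I(\s')_{xs_l}}=\alpha\,h|_{I(\s')_{xs_l}}$. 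I would also note that, since $x<xs_l$, Proposition~\ref{proposition:0} gives $x(\alpha_l)>0$, so $\edgeright{x}{\alpha}{xs_l}$ is an edge of $\widehat\G$ and $\alpha$, being a nonzero element of the integral domain $S$, is a nonzerodivisor; and that $I(\s')_{\{x,xs_l\}}=I(\s')_x\dotcup I(\s')_{xs_l}$, so that restriction to $I(\s')_x$ and to $I(\s')_{xs_l}$ are well-defined $S$-linear maps on $\mathcal X(\s')^{\{x,xs_l\}}$ with images in $\mathcal X(\s')^x$ and $\mathcal X(\s')^{xs_l}$ respectively.

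For spanning: given $h\in\mathcal X(\s')$, I would first pick $a_1,\dots,a_m\in S$ with $h':=h-\sum_i a_if_i$ restricting to $0$ on $I(\s')_x$ (possible because the $f_i|_{I(\s')_x}$ form an $S$-basis of $\mathcal X(\s')^x$), then apply Corollary~\ref{corollary:3} to the edge $\edgeright{x}{\alpha}{xs_l}$ to produce $g'\in\mathcal X(\s')$ with $h'|_{I(\s')_{xs_l}}=\alpha\,g'|_{I(\s')_{xs_l}}$, expand $g'|_{I(\s')_{xs_l}}=\sum_j b_j\,g_j|_{I(\s')_{xs_l}}$ in the given basis, and read off from the two facts above that $h'|_{I(\s')_{\{x,xs_l\}}}=\sum_j b_j\,(zg_j)|_{I(\s')_{\{x,xs_l\}}}$, which gives the desired expression for $h|_{I(\s')_{\{x,xs_l\}}}$. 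For linear independence: starting from a relation $\sum_i a_if_i|_{I(\s')_{\{x,xs_l\}}}+\sum_j b_j\,(zg_j)|_{I(\s')_{\{x,xs_l\}}}=0$, I would restrict to $I(\s')_x$, where the $(zg_j)$-terms vanish, forcing all $a_i=0$ by independence of the $f_i|_{I(\s')_x}$; restricting the remaining relation to $I(\s')_{xs_l}$ and cancelling the nonzerodivisor $\alpha$ then forces all $b_j=0$ by independence of the $g_j|_{I(\s')_{xs_l}}$. Since the $f_i$ and $g_j$ are homogeneous and $z$ is, all the listed functions are homogeneous; and producing this basis in particular shows $\mathcal X(\s')^{\{x,xs_l\}}$ is free, so the conclusion is meaningful in the sense of Definition~\ref{definition:8}.

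The only non-formal ingredient is Corollary~\ref{corollary:3}: it is precisely what shows that an element of $\mathcal X(\s')^{\{x,xs_l\}}$ vanishing on $I(\s')_x$ lies, after restriction to $I(\s')_{xs_l}$, in $\alpha\,\mathcal X(\s')^{xs_l}$ (the reverse containment being the identities $(zg)|_{I(\s')_x}=0$ and $(zg)|_{I(\s')_{xs_l}}=\alpha\,g|_{I(\s')_{xs_l}}$), i.e.\ it identifies the kernel of restriction-to-$I(\s')_x$. This is where the GKM hypothesis enters, through $J(\s')\subset J(\s)$, which makes $\widehat\G_{J(\s')}$ a GKM graph as a full subgraph of $\widehat\G_{J(\s)}$. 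Everything else is the elementary fact that an extension of $S$-modules with free quotient splits, written out explicitly. I expect the only real nuisance to be bookkeeping — keeping the three modules $\mathcal X(\s')^x$, $\mathcal X(\s')^{xs_l}$, $\mathcal X(\s')^{\{x,xs_l\}}$ and the restriction maps among them straight — rather than any genuine difficulty.
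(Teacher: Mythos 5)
Your proposal is correct and follows essentially the same route as the paper's proof: reduce the spanning problem by subtracting an $S$-combination of the $f_i$ to kill the restriction to $I(\s')_x$, invoke Corollary~\ref{corollary:3} on the edge $\edgeright{x}{}{xs_l}$ to divide out $\overline{x(\alpha_l)}$ on $I(\s')_{xs_l}$, expand in the $g_j$, and verify; for independence, restrict first to $I(\s')_x$ and then to $I(\s')_{xs_l}$, cancelling $\overline{x(\alpha_l)}$. The only cosmetic difference is that you extract the two evaluations $z_x=0$ and $z_{xs_l}=\overline{x(\alpha_l)}$ for $z=\frac{\overline{x(\alpha_l)}-c^{\alpha_l}}2$ as a stand-alone observation (together with the remark that $z\in\mathcal Z$), whereas the paper performs these evaluations inline, pointwise over $\sigma\in I(\s')_x$ and $\sigma\in I(\s')_{xs_l}$; both are the same computation.
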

\begin{proof}
First, we prove that the above elements generate  $\mathcal X(\s)^{\{x,xs_l\}}$ as an $S$-mo\-dule.
Take an arbitrary function $f\in\mathcal X(\s')$. By Definition~\ref{definition:8}, there exist homogeneous
$a_1,\ldots,a_m\in S$ such that
\begin{equation}\label{eq:10.5}
f|_{I(\s')_x}=(a_1f_1+\cdots+a_mf_m)|_{I(\s')_x}.
\end{equation}
As we have the edge $\edgeright x{\pm\overline{x(\alpha_l)}}{xs_l}$, the Corollary~\ref{corollary:3}
applied to it and the function $f-a_1f_1-\cdots-a_mf_m$ implies the existence of $g\in\mathcal X(\s)$ such that
\begin{equation}\label{eq:11}
(f-a_1f_1-\cdots-a_mf_m)|_{I(\s')_{xs_l}}=\overline{x(\alpha_l)}\,g|_{I(\s')_{xs_l}}.
\end{equation}
Now by Definition~\ref{definition:8}, there exist homogeneous $b_1,\ldots,b_k\in S$ such that
\begin{equation}\label{eq:12}
g|_{I(\s')_{xs_l}}=(b_1g_1+\cdots+b_kg_k)|_{I(\s')_{xs_l}}.
\end{equation}
We claim
$$
f\Bigl|_{I(\s')_x\dotcup I(\s')_{xs_l}}
=\!\!\(a_1f_1+\cdots+a_mf_m+b_1\frac{\overline{x(\alpha_l)}-c^{\alpha_l}}2\,g_1+\cdots+b_k\frac{\overline{x(\alpha_l)}-c^{\alpha_l}}2\,g_k\)\!\!\Biggl|_{I(\s')_x\dotcup I(\s')_{xs_l}}.
$$
Indeed, take any $\sigma\in I(\s')_x$. The right-hand side evaluated at $\sigma$
equals $a_1(f_1)_\sigma+\cdots+a_m(f_m)_\sigma$. By~(\ref{eq:10.5}), this sum is exactly $f_\sigma$.
Now take any $\sigma\in I(\s')_{xs_l}$. Then the right-hand side evaluated at $\sigma$ equals
$$
a_1(f_1)_\sigma+\cdots+a_m(f_m)_\sigma+\overline{x(\alpha_l)}\,b_1g_1+\cdots+\overline{x(\alpha_l)}\,b_kg_k.
$$
By~(\ref{eq:11}) and~(\ref{eq:12}), we again get $f_\sigma$.

It remains to prove that the restrictions to $I(\s)_x\dotcup I(\s)_{xs_l}$ of our functions are independent over $S$.
Suppose that
$$
\Bigl(a_1f_1+\cdots+a_mf_m+b_1\frac{\overline{x(\alpha_l)}-c^{\alpha_l}}2\,g_1+\cdots+b_k\frac{\overline{x(\alpha_l)}-c^{\alpha_l}}2\,g_k\Bigr)\Bigl|_{I(\s')_x\dotcup I(\s')_{xs_l}}=0.
$$
Restricting to $I(\s')_x$, we get
$$
(a_1f_1+\cdots+a_mf_m)|_{I(\s')_x}=0,
$$
whence $a_1=\cdots=a_m=0$ by Definition~\ref{definition:8}.
Now restricting to $I(\s')_{xs_l}$, we get
$$
\overline{x(\alpha_l)}\,(b_1g_1+\cdots+b_kg_k)|_{I(\s')_{xs_l}}=0,
$$
whence $b_1=\cdots=b_k=0$ by the same definition.
\end{proof}

\begin{lemma}\label{lemma:10}
Let $\s=(s_1,\ldots,s_l)$ be a sequence in $\widehat\S$ such that
$\widehat\G_{J(\s)}$ satisfies the GKM-property. Take $x\in\widehat\W$ such that $x<xs_l$
and choose $\alpha_l\in\widehat\Pi$ so that $s_l=s_{\alpha_l}$.
Let $f_1,\ldots,f_m$ and $g_1,\ldots,g_k$
be extended bases of $\X(\s')^x$ and $\X(\s')^{xs_l}$, respectively.
Then
$$
\Delta(f_1),\,\ldots,\Delta(f_m),\,\Delta\!\(\frac{c^{\alpha_l}-\overline{x(\alpha_l)}}2\,g_1\)\!,\,\ldots,\,\Delta\!\(\frac{c^{\alpha_l}-\overline{x(\alpha_l)}}2\,g_k\),
$$
is an extended basis of both $\X(\s)^x$ and $\X(\s)^{xs_l}$.
\end{lemma}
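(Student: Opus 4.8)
The plan is to deduce the statement from Lemma~\ref{lemma:9.5} by transporting an extended basis of $\X(\s')^{\{x,xs_l\}}$ across the diagonal embedding $\Delta$. First I would record the relevant combinatorics of subsequences. Since $s_l$ is the last entry of $\s$, formula~(\ref{eq:20}) applied with $z=x$ and with $z=xs_l$ (using $(xs_l)s_l=x$) gives
$$
I(\s)_x=I(\s')_x\sp\,\dotcup\,I(\s')_{xs_l}s_l,\qquad I(\s)_{xs_l}=I(\s')_{xs_l}\sp\,\dotcup\,I(\s')_x s_l .
$$
Deleting the last letter therefore identifies each of $I(\s)_x$ and $I(\s)_{xs_l}$ with $I(\s')_{\{x,xs_l\}}=I(\s')_x\dotcup I(\s')_{xs_l}$. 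Relabelling coordinates along these two bijections yields graded $S$-module isomorphisms $\theta_x$ and $\theta_{xs_l}$ from $\bigoplus_{\sigma\in I(\s')_{\{x,xs_l\}}}Q$ onto $\bigoplus_{\sigma\in I(\s)_x}Q$ and onto $\bigoplus_{\sigma\in I(\s)_{xs_l}}Q$, respectively, and one reads off directly from the definition of $\Delta$ that $\Delta(f)|_{I(\s)_x}=\theta_x\bigl(f|_{I(\s')_{\{x,xs_l\}}}\bigr)$ and $\Delta(f)|_{I(\s)_{xs_l}}=\theta_{xs_l}\bigl(f|_{I(\s')_{\{x,xs_l\}}}\bigr)$ for every $f\in\bigoplus_{\sigma\in I(\s')}Q$.

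Next I would verify that $\theta_x$ and $\theta_{xs_l}$ carry the submodule $\X(\s')^{\{x,xs_l\}}$ onto $\X(\s)^x$ and onto $\X(\s)^{xs_l}$, respectively. Writing an arbitrary element of $\X(\s)=\Delta(\X(\s'))+c^{\alpha_l}\Delta(\X(\s'))$ in the form $\Delta(u)+c^{\alpha_l}\Delta(w)$ with $u,w\in\X(\s')$ and restricting to $I(\s)_x$, one uses $\ev(\sigma)=x$ for every $\sigma\in I(\s)_x$, so that by~(\ref{eq:21}) the restriction of $c^{\alpha_l}\Delta(w)$ to $I(\s)_x$ equals $\overline{x(\alpha_l)}\,\Delta(w)|_{I(\s)_x}=\overline{x(\alpha_l)}\,\theta_x\bigl(w|_{I(\s')_{\{x,xs_l\}}}\bigr)$; as $\overline{x(\alpha_l)}\in S$ and $\X(\s')^{\{x,xs_l\}}$ is an $S$-submodule, the $c^{\alpha_l}$-summand contributes nothing new after restriction, and hence $\X(\s)^x=\theta_x\bigl(\X(\s')^{\{x,xs_l\}}\bigr)$. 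The same computation over $I(\s)_{xs_l}$, where $c^{\alpha_l}$ now acts through the scalar $\overline{xs_l(\alpha_l)}=-\overline{x(\alpha_l)}\in S$, gives $\X(\s)^{xs_l}=\theta_{xs_l}\bigl(\X(\s')^{\{x,xs_l\}}\bigr)$. In particular both stalks are free, and $\theta_x$, $\theta_{xs_l}$ send $S$-bases to $S$-bases.

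Finally I would invoke Lemma~\ref{lemma:9.5}: the restrictions to $I(\s')_{\{x,xs_l\}}$ of $f_1,\ldots,f_m,\frac{\overline{x(\alpha_l)}-c^{\alpha_l}}2\,g_1,\ldots,\frac{\overline{x(\alpha_l)}-c^{\alpha_l}}2\,g_k$ form an $S$-basis of $\X(\s')^{\{x,xs_l\}}$, and, multiplying the last $k$ of these functions by the unit $-1\in S$, so do the restrictions of $f_1,\ldots,f_m,\frac{c^{\alpha_l}-\overline{x(\alpha_l)}}2\,g_1,\ldots,\frac{c^{\alpha_l}-\overline{x(\alpha_l)}}2\,g_k$. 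Applying $\theta_x$ and $\theta_{xs_l}$ to these restrictions and invoking the two relations of the first paragraph, I conclude that
$$
\Delta(f_1),\ldots,\Delta(f_m),\Delta\!\(\frac{c^{\alpha_l}-\overline{x(\alpha_l)}}2\,g_1\),\ldots,\Delta\!\(\frac{c^{\alpha_l}-\overline{x(\alpha_l)}}2\,g_k\)
$$
restricts to an $S$-basis of $\X(\s)^x$ and, simultaneously, to an $S$-basis of $\X(\s)^{xs_l}$; since all the functions above are homogeneous, $\Delta$ has degree $0$, and $\Delta(h)\in\Delta(\X(\s'))\subset\X(\s)$ for every $h\in\X(\s')$, this is exactly the asserted extended basis of both $\X(\s)^x$ and $\X(\s)^{xs_l}$. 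The one point that will need care is the identification $\X(\s)^x=\theta_x\bigl(\X(\s')^{\{x,xs_l\}}\bigr)$ and its analogue for $xs_l$, i.e.\ the observation that the enlargement by $c^{\alpha_l}$ in Definition~\ref{definition:7}\ref{definition:7:part:2} becomes invisible after restriction to a single stalk, where $c^{\alpha_l}$ acts through a single scalar lying in $S$; everything else follows from the index-set identities above together with a direct appeal to Lemma~\ref{lemma:9.5}.
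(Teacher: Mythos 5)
Your proof is correct and follows essentially the same route as the paper's: the paper's auxiliary function $f^+=a+\overline{x(\alpha_l)}b$ (resp.\ $f^-$) and the verification that $f|_{I(\s)_x}=\Delta(f^+)|_{I(\s)_x}$ is exactly what your isomorphism $\theta_x$ (resp.\ $\theta_{xs_l}$) packages, and both proofs then reduce to Lemma~\ref{lemma:9.5} after the same sign adjustment by $-1$. Your phrasing in terms of the relabelling isomorphisms $\theta_x,\theta_{xs_l}$ lets you dispatch generation and linear independence simultaneously, which the paper does in two separate computations, but the content is the same.
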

\begin{proof}We are going to prove the statement for $\mathcal X(\s)^x$ and indicate which alterations should be made
for $\X(\s)^{xs_l}$. We set for brevity
$$
f_{m+i}:=\frac{\overline{x(\alpha_l)}-c^{\alpha_l}}2\,g_i
$$
for $i=1,\ldots,k$ and shall prove that $\Delta(f_1),\ldots,\Delta(f_{m+k})$
is an extended basis of both $\X(\s)^x$ and $\X(\s)^{xs_l}$. This statement is equivalent
to the original one, as we only multiplied the last $k$ elements by $-1$.

First, we prove that the restrictions $f_1|_{I(\s)_x},\ldots,f_{m+k}|_{I(\s)_x}$ generate
$\X(\s)^x$ as an $S$-module. Take an arbitrary function $f\in\mathcal X(\s)$ and write it as
$f=\Delta(a)+c^{\alpha_l}\Delta(b)$ for suitable $a,b\in\X(\s')$.
Consider the function $f^+=a+\overline{x(\alpha_l)}\,b$.
We claim
\begin{equation}\label{eq:13}
f\bigl|_{I(\s)_x}=\Delta(f^{+})\bigl|_{I(\s)_x}.
\end{equation}
Indeed $I(\s)_x=I(\s')_x\sp\dotcup I(\s')_{xs_l}s_l$ by~(\ref{eq:20}).
First, take an arbitrary $\sigma\in I(\s')_x$. Then
$$
f_{\sigma\sp}=\Delta(a)_{\sigma\sp}+\bigl(c^{\alpha_l}\Delta(b)\bigr)_{\sigma\sp}
=\Delta(a)_{\sigma\sp}+\overline{\ev(\sigma\sp)(\alpha_l)}\,\Delta(b)_{\sigma\sp}
=a_\sigma+\overline{x(\alpha_l)}\,b_\sigma=f^+_\sigma=\Delta(f^+)_{\sigma\sp}.
$$
Now take an arbitrary $\sigma\in I(\s')_{xs_l}$.
Then
$$
f_{\sigma s_l}=\Delta(a)_{\sigma s_l}+\bigl(c^{\alpha_l}\Delta(b)\bigr)_{\sigma s_l}\!
=\Delta(a)_{\sigma s_l}+\overline{\ev(\sigma s_l)(\alpha_l)}\,\Delta(b)_{\sigma s_l}
=a_\sigma+\overline{x(\alpha_l)}\,b_\sigma=f^+_\sigma=\Delta(f^+)_{\sigma s_l}.
$$
Lemma~\ref{lemma:9.5} implies that there exist homogeneous $a_1,\ldots,a_{m+k}\in S$ such that
\begin{equation}\label{eq:14}
f^+\Bigl|_{I(\s')_x\dotcup I(\s')_{xs_l}}=\Bigl(a_1f_1+\cdots+a_{m+k}f_{m+k}\Bigr)\Bigl|_{I(\s')_x\dotcup I(\s')_{xs_l}}.
\end{equation}
Hence
\begin{equation}\label{eq:14.5}
\Delta(f^+)\Bigl|_{I(\s)_x}=\Bigl(a_1\Delta(f_1)+\cdots+a_{m+k}\Delta(f_{m+k})\Bigr)\Bigl|_{I(\s)_x}.
\end{equation}
Indeed, take any $\sigma\in I(\s')_x$. Then by~(\ref{eq:14}), we get
$$
\Delta(f^+)_{\sigma\sp}=f^+_\sigma=a_1(f_1)_\sigma+\cdots+a_{m+k}(f_{m+k})_\sigma
=\Bigl(a_1\Delta(f_1)+\cdots+a_{m+k}\Delta(f_{m+k})\Bigl)_{\sigma\sp}.
$$
Now take any $\sigma\in I(\s')_{xs_l}$. Then by~(\ref{eq:14}), we get
$$
\Delta(f^+)_{\sigma s_l}=f^+_\sigma=a_1(f_1)_\sigma+\cdots+a_{m+k}(f_{m+k})_\sigma
=\Bigl(a_1\Delta(f_1)+\cdots+a_{m+k}\Delta(f_{m+k})\Bigl)_{\sigma s_l}.
$$
It remains now to combine~(\ref{eq:13}) and~(\ref{eq:14.5}).

Now let us prove that $\Delta(f_1)|_{I(\s)_x},\ldots,\Delta(f_{m+k})|_{I(\s)_x}$ are linearly independent over $S$.
Suppose that
$$
a_1\Delta(f_1)|_{I(\s)_x}+\cdots a_{m+k}\Delta(f_{m+k})|_{I(\s)_x}=0
$$
for some $a_1,\ldots,a_{m+k}\in S$. Our computations in the proof of~(\ref{eq:14.5}) show that
$$
\Bigl(a_1f_1+\cdots+a_{m+k}f_{m+k}\Bigr)\Bigl|_{I(\s')_x\dotcup I(\s')_{xs_l}}=0.
$$
Hence by Lemma~\ref{lemma:9.5}, we get $a_1=\cdots=a_{m+k}=0$.

To prove the statement for $\X(\s)^{xs_l}$, we have to consider the function $f^-=a-\overline{x(\alpha_l)}\,b$
and prove the following analog of~(\ref{eq:13}):
$$
f\bigl|_{I(\s)_{xs_l}}=\Delta(f^-)\bigl|_{I(\s)_{xs_l}}.
$$
Then we apply Lemma~\ref{lemma:9.5} to obtain a decomposition of $f^-$ and proceed as for $\X(\s)^x$.
\end{proof}


Clearly, $1$ is the extended basis of $\X(\emptyset)^e\cong S$ and the empty list is a basis of
$\X(\emptyset)^x=0$ if $x\ne e$. Thus Lemma~\ref{lemma:10} allows us to construct inductively the extended
basis $\{\b(\s)^x_\sigma\}_{\sigma\in I(\s)_x}$ of each module $\X(\s)^x$.

The first thing obvious from this inductive construction is that the number of elements of
this extended basis equals the number of elements of $I(\s)_x$. This observation prompts us to parameterize elements of
our extended basis for $\X(\s)^x$ by elements of $I(\s)_x$. We set
$$
\b(\emptyset)^e_\emptyset:=1.
$$
Suppose now that
$x<xs_l$
and that we have already built elements $\{\b(\s')^x_\sigma\}_{\sigma\in I(\s')_x}$ of $\X(\s')$
and $\{\b(\s')^{xs_l}_\sigma\}_{\sigma\in I(\s')_{xs_l}}$
Then we set
\begin{align}
\b(\s)^x_{\sigma\sp}&:=\Delta(\b(\s')^x_\sigma)\text{ for }\sigma\in I(\s')_x;\label{eq:b:1}\\[3pt]
\b(\s)^x_{\sigma s_l}&:=\displaystyle\Delta\!\(\frac{c^{\alpha_l}-\overline{x(\alpha_l)}}2\;\b(\s')^{xs_l}_\sigma\!\)\text{ for }\sigma\in I(\s')_{xs_l};\label{eq:b:2}\\[3pt]
\b(\s)^{xs_l}_{\sigma s_l}&:=\Delta(\b(\s')^x_\sigma)\text{ for }\sigma\in I(\s')_x;\label{eq:b:3}\\[3pt]
\b(\s)^{xs_l}_{\sigma\sp}&:=\displaystyle\Delta\!\(\frac{c^{\alpha_l}-\overline{x(\alpha_l)}}2\;\b(\s')^{xs_l}_\sigma\!\)\text{ for }\sigma\in I(\s')_{xs_l}.\label{eq:b:4}
\end{align}


%
%
%

\begin{corollary}\label{corollary:4}
Let $\s$ be a sequence in $\widehat\S$ such that
$\widehat\G_{J(\s)}$ satisfies the GKM-property. Then $\{\b(\s)^x_\sigma\}_{\sigma\in I(\s)_x}$
is an extended basis of $\X(\s)^x$ for any $x\in\widehat\W$.
\end{corollary}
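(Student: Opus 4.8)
The plan is to prove the statement by induction on the length $l$ of $\s$, carried out simultaneously for all $x\in\widehat\W$. The base case $\s=\emptyset$ is immediate: by construction $\b(\emptyset)^e_\emptyset=1$, whose restriction to $I(\emptyset)_e=\{\emptyset\}$ is an $S$-basis of $\X(\emptyset)^e\cong S$, while for $x\ne e$ we have $\X(\emptyset)^x=0$ and $I(\emptyset)_x=\emptyset$ by~(\ref{eq:0.5}), so the empty list is the required (extended) basis.

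For the inductive step write $\s=\s's_l$ and fix $x\in\widehat\W$. Since $\ell(xs_l)=\ell(x)\pm1$, exactly one of $x<xs_l$, $xs_l<x$ holds; because the recursion~(\ref{eq:b:1})--(\ref{eq:b:4}) builds the families $\{\b(\s)^x_\sigma\}$ and $\{\b(\s)^{xs_l}_\sigma\}$ together out of the level-$\s'$ data, I may assume $x<xs_l$ (otherwise replace $x$ by $xs_l$; this changes neither the pair $\{x,xs_l\}$ nor the assertion). As every subsequence of $\s'$ is a subsequence of $\s$ and $\ev$ is unchanged by appending $\sp$, we have $J(\s')\subseteq J(\s)$, so $\widehat\G_{J(\s')}$ again satisfies the GKM-property and the inductive hypothesis applies: $\{\b(\s')^x_\sigma\}_{\sigma\in I(\s')_x}$ and $\{\b(\s')^{xs_l}_\sigma\}_{\sigma\in I(\s')_{xs_l}}$ are extended bases of $\X(\s')^x$ and $\X(\s')^{xs_l}$ respectively; in particular these modules are free, as required in Definition~\ref{definition:8}.

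Now I feed these two extended bases into Lemma~\ref{lemma:10}. It produces an extended basis of \emph{both} $\X(\s)^x$ and $\X(\s)^{xs_l}$ (so these modules are free) consisting of the functions $\Delta(\b(\s')^x_\sigma)$ for $\sigma\in I(\s')_x$ together with $\Delta(\tfrac{c^{\alpha_l}-\overline{x(\alpha_l)}}2\,\b(\s')^{xs_l}_\sigma)$ for $\sigma\in I(\s')_{xs_l}$. By~(\ref{eq:b:1}) and~(\ref{eq:b:2}) this list is exactly $\{\b(\s)^x_{\sigma\sp}\}_{\sigma\in I(\s')_x}$ together with $\{\b(\s)^x_{\sigma s_l}\}_{\sigma\in I(\s')_{xs_l}}$, and by~(\ref{eq:b:3}) and~(\ref{eq:b:4}) the very same list is $\{\b(\s)^{xs_l}_{\sigma s_l}\}_{\sigma\in I(\s')_x}$ together with $\{\b(\s)^{xs_l}_{\sigma\sp}\}_{\sigma\in I(\s')_{xs_l}}$. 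Finally,~(\ref{eq:20}) gives $I(\s)_x=I(\s')_x\sp\,\dotcup\,I(\s')_{xs_l}s_l$ and $I(\s)_{xs_l}=I(\s')_{xs_l}\sp\,\dotcup\,I(\s')_x s_l$, so the two lists are indexed precisely by $I(\s)_x$ and by $I(\s)_{xs_l}$, which is the claim for $x$ and $xs_l$. This closes the induction.

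The step that needs the most care is the bookkeeping in the last paragraph: one must match the list produced by Lemma~\ref{lemma:10} against the four-case recursion~(\ref{eq:b:1})--(\ref{eq:b:4}) and check, via~(\ref{eq:20}), that the two resulting index sets are exactly $I(\s)_x$ and $I(\s)_{xs_l}$ and not, say, overlapping or differently labelled. There is no analytic content beyond Lemma~\ref{lemma:10}; all that remains is keeping the $\sp$-versus-$s_l$ labelling straight. The degenerate cases ($I(\s)_x=\emptyset$, or one of $I(\s')_x$, $I(\s')_{xs_l}$ empty) need no separate treatment, since Lemma~\ref{lemma:10} and the recursion remain valid for empty lists.
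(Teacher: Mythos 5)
Your proof is correct and takes essentially the same approach as the paper: the paper introduces the recursion~(\ref{eq:b:1})--(\ref{eq:b:4}) with the base case $\b(\emptyset)^e_\emptyset=1$ and then states that Lemma~\ref{lemma:10} inductively yields the extended bases; you have simply spelled out the bookkeeping (the $x\leftrightarrow xs_l$ symmetry, the containment $J(\s')\subseteq J(\s)$ needed to propagate the GKM-hypothesis, and the matching of indices via~(\ref{eq:20})) that the paper leaves implicit.
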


\subsection{Trees}\label{trees} There is a simple graphical interpretation of the above definition of $\b(\s)^x_\sigma$,
which we describe here with the help of trees.

In the remaining part of Section~\ref{Bases_for_Bott-–Samelson_sheaves},
we usually assume that $\s=(s_1,\ldots,s_l)$ is a sequence in $\widehat\S$ and $s_i=s_{\alpha_i}$,
where $\alpha_i\in\widehat\Pi$. 

For each element $x\in\widehat\W$,
we define the oriented tree $T(\s,x)$ inductively on the length of~$\s$.
Every edge $\gamma$ of this tree will be labelled by its color $\c(\gamma)\in\{-1,0,1\}$
and root $\r(\gamma)\in\widehat\Pi$.

First, we define $T(\emptyset,x)$ to be the tree with only one vertex\,
\begin{picture}(0,0)
\put(0,3){\circle*{4}}
\end{picture}\,
(and no edges) if $x=e$ and
to be the empty tree otherwise.

Now suppose that $l>0$. We write $\{x,xs_l\}=\{y,z\}$ where $y<z$.
First, we consider the case where both sets $I(\s')_x$ and $I(\s')_{xs_l}$ are nonempty.
Then we define

\begin{center}
\begin{picture}(0,0)
\put(-104,-18){$T(\s,x)\!:=$}
\vertexwithtwoarrows[1\alpha_l][0\alpha_l]
\put(-47,-41){$T(\s',y)$}
\put(13,-42){$T(\s',z)$}
\end{picture}
\end{center}

\vspace{43pt}

\noindent
if $xs_l=y<z=x$ and

\begin{center}
\begin{picture}(0,0)
\put(-104,-18){$T(\s,x)\!:=$}
\vertexwithtwoarrows[0\alpha_l][{-1}\alpha_l]
\put(-47,-41){$T(\s',y)$}
\put(13,-42){$T(\s',z)$}
\end{picture}
\end{center}

\vspace{43pt}

\noindent
if $x=y<z=xs_l$.

Now consider the case where $I(\s')_x=\emptyset$ or $I(\s')_{xs_l}=\emptyset$.
By (\ref{eq:20}), it is clear that $I(\s)_x=\emptyset$ if both these sets are empty and
we define $T(\s,x)$ to be the empty tree in this case.

Suppose that exactly one of these sets is empty.
Then we claim that $I(\s')_y\ne\emptyset$ and $I(\s')_z=\emptyset$.
Indeed suppose that on the contrary $I(\s')_y=\emptyset$ and $I(\s')_z\ne\emptyset$.
Then $z=s_{i_1}\cdots s_{i_k}$ for some integers $i_1,\ldots,i_m$ such that $1\le i_1<\cdots<i_m<l$.
As $zs_l=y<z$, the exchange property implies $y=s_{i_1}\cdots \widehat s_{i_k}\cdots s_{i_m}$ for some $k=1,\ldots,m$.
Thus
$I(\s')_y$ contains the sequence obtained from $(\sp,s_{i_1},\sp,\ldots,\sp,s_{i_m},\sp)$ by replacing
its ${i_k}^{\!\text{th}}$ entry with $\sp$, contrary to our assumption $I(\s')_y=\emptyset$.
Then we define

\begin{center}
\begin{picture}(0,0)
\put(-74,-20){$T(\s,x)\!:=$}
\vertexwithonearrow[0\alpha_l]
\put(-16,-41){$T(\s',y)$}
\end{picture}
\end{center}

\vspace{43pt}

\noindent
if $x=y$ and

\begin{center}
\begin{picture}(0,0)
\put(-74,-20){$T(\s,x)\!:=$}
\vertexwithonearrow[1\alpha_l]
\put(-16,-41){$T(\s',y)$}
\end{picture}
\end{center}

\vspace{43pt}

\noindent
if $x=z$.

Our inductive construction allows us to divide edges into {\it levels}, by saying that the uppermost edge
in our pictures above is of level $l$.
%
%
Note that $\r(\gamma)=\alpha_i$ for any edge of level~$i$.



We have similar definitions for vertices. A vertex $a$ is called a vertex of {\it level $i$} if there is an edge
of level $i$ ending at $a$. Vertices having no edges ending at them are of level $0$ and called {\it leaves}.
Moreover, we can associate to vertex $a$ the element $\ev(a)\in\widehat\W$
inductively by saying that $\ev$ applied to the uppermost vertex in our pictures above yields~$x$.

We call the unique vertex of $T(\s,x)$ having no edges beginning at it the {\it root}.
In other words, the root is the unique vertex of maximal level.
Each leaf is connected to the root by
a unique (oriented) path, which we call a {\it maximal path}. To any path $\pi$,
we associate the sequence $\c(\pi)$ of $0$, $1$ and $-1$ just by reading the colors of edges along it.
We shall also need the sequence $[\pi]$ that is obtained from $\pi$ as follows:
$$
[\emptyset]=\emptyset,\qquad [\pi_1\cdots \pi_i]=\left\{
\arraycolsep=0pt
\begin{array}{ll}
[\pi_1\cdots \pi_{i-1}]\,s_i&\;\text{ if }\c(\pi_i)\ne0;\\[6pt]
[\pi_1\cdots \pi_{i-1}]\,\sp &\;\text{ if }\c(\pi_i)=0
\end{array}
\right.
\qquad\text{ for }i>0.
$$
In other words, to get $[\pi]$, we start with the empty sequence and then go along $\pi$, adding $s_{\r(\pi_i)}=s_i$
to our sequence on the right every time we meet an edge of level $i$ having nonzero color and adding $\sp$ otherwise.


To each path $\pi$ in $T(\s,x)$, we can also associate an element of $\widehat\W$ just by evaluating $[\pi]$.
In this connection, we will use the abbreviation $\ev(\pi):=\ev([\pi])$.
We shall also use the notation $(\pi_1\cdots\pi_i)':=\pi_1\cdots\pi_{i-1}$ to indicate truncation of paths
(the loss of the last edge).

The set $I(\s)_x$ together with the lengths of $\ev(\tau)$, where $\tau$ is a beginning of some $\sigma\in I(\s)_x$,
can be read off the tree $T(\s,x)$ as follows.
\begin{proposition}\label{proposition:6}
\begin{enumerate}


\item\label{proposition:6:part:1} $\pi\mapsto[\pi]$ sets a one-to-one correspondence between the set of maximal paths in $T(\s,x)$ and the set $I(\s)_x$.\\[-8pt]

\item\label{proposition:6:part:2} $\ev(\pi)$ has length $\sum\c(\pi)$ for any path $\pi$ in $T(\s,x)$ starting at a leaf.\\[-8pt]

\item\label{proposition:6:part:3} Let $a$ be a vertex of $T(\s,x)$ of level $i$.
         Then the full subtree $T$ of $T(\s,x)$ with root $a$ is equal to $T((s_1,\ldots,s_i),\ev(a))$.
         In particular, $\ev(\pi)=\ev(a)$ for all paths $\pi$ starting at a leaf and ending at $a$.
\end{enumerate}
\end{proposition}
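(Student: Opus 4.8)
I would prove Proposition~\ref{proposition:6} by a single induction on $l=|\s|$, treating the three statements simultaneously (since~\ref{proposition:6:part:3} is needed to make sense of "$\ev(a)$" in the reductions and~\ref{proposition:6:part:2} is most cleanly obtained along the way). The base case $\s=\emptyset$ is immediate: $T(\emptyset,x)$ is a single vertex when $x=e$ and the empty tree otherwise, its only possible maximal path is $\emptyset$ with $[\emptyset]=\emptyset$, and $I(\emptyset)_x$ equals $\{\emptyset\}$ or $\emptyset$ accordingly by~(\ref{eq:0.5}); parts~\ref{proposition:6:part:2} and~\ref{proposition:6:part:3} hold trivially since $\ell(e)=0=\sum\c(\emptyset)$ and the unique vertex is the root, of level $0$. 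For the inductive step I would first record one structural lemma and then derive each part from it together with the inductive hypothesis for $\s'=(s_1,\ldots,s_{l-1})$.

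The structural lemma, read off the four pictures defining $T(\s,x)$, is: the root $r$ of $T(\s,x)$ is its unique vertex of level $l$; the level-$l$ edges run from the roots of the constituent subtrees up to $r$; there is a level-$l$ edge of color $0$ landing on the copy of $T(\s',x)$ precisely when $I(\s')_x\ne\emptyset$, and a level-$l$ edge of color $\c(\gamma)$ landing on the copy of $T(\s',xs_l)$ precisely when $I(\s')_{xs_l}\ne\emptyset$, where $\c(\gamma)=1$ if $\ell(x)>\ell(xs_l)$ and $\c(\gamma)=-1$ if $\ell(x)<\ell(xs_l)$, so that in every case $\ell(x)=\ell(w)+\c(\gamma)$ for the target $w\in\{x,xs_l\}$. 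Moreover every edge of a constituent copy $T(\s',w)$ keeps its color, root and level inside $T(\s,x)$, the root of that copy retains $\ev$-value $w$, and $\ev$ is computed by the same downward rule; hence $\ev_{T(\s,x)}$ restricted to a copy of $T(\s',w)$ agrees with $\ev_{T(\s',w)}$, every leaf of a copy is a leaf of $T(\s,x)$, and for any vertex $a\ne r$ the full subtree of $T(\s,x)$ rooted at $a$ coincides with the corresponding full subtree of the copy containing $a$.

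Granting this, part~\ref{proposition:6:part:1}: a maximal path $\pi$ in $T(\s,x)$ ends at $r$, hence factors uniquely as $\pi=\pi'\gamma$ with $\gamma$ its unique level-$l$ edge and $\pi'$ a maximal path in the copy from which $\gamma$ emanates; if $\c(\gamma)=0$ then $\pi'$ ranges over maximal paths of $T(\s',x)$ and $[\pi]=[\pi']\sp$, while if $\c(\gamma)=\pm1$ then $\pi'$ ranges over maximal paths of $T(\s',xs_l)$ and $[\pi]=[\pi']s_l$; by the inductive hypothesis $\pi'\mapsto[\pi']$ is a bijection onto $I(\s')_x$, resp.\ $I(\s')_{xs_l}$, so $\pi\mapsto[\pi]$ is a bijection onto $I(\s')_x\sp\dotcup I(\s')_{xs_l}s_l=I(\s)_x$ by~(\ref{eq:20}), the disjointness being automatic from the differing last entries. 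Part~\ref{proposition:6:part:3}: if $a$ has level $l$ it is $r$ and the claim is a tautology; if $a$ has level $i<l$ then $a\ne r$ lies in a copy of some $T(\s',w)$ where it still has level $i$ and the same $\ev$-value, so the inductive hypothesis identifies the full subtree rooted at $a$ (in that copy, equivalently in $T(\s,x)$) with $T((s_1,\ldots,s_i),\ev(a))$, and the "in particular" follows by applying part~\ref{proposition:6:part:1} to this subtree. Part~\ref{proposition:6:part:2}: for a path $\pi$ from a leaf to a vertex $a$, if $a\ne r$ then $\pi$ lies in a copy of $T(\s',w)$ and $\ell(\ev(\pi))=\sum\c(\pi)$ is the inductive hypothesis there; if $a=r$ then $\pi$ is maximal, $\pi=\pi'\gamma$, and by parts~\ref{proposition:6:part:1}--\ref{proposition:6:part:2} for $\s'$ one has $\ev(\pi')=w$ and $\ell(w)=\sum\c(\pi')$, whence $\ev(\pi)=x$ (appending $s_l$ to $[\pi']$ when $\c(\gamma)\ne0$ sends $xs_l$ to $x$) and $\ell(x)=\ell(w)+\c(\gamma)=\sum\c(\pi')+\c(\gamma)=\sum\c(\pi)$.

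The main obstacle is not any estimate but the bookkeeping hidden in the structural lemma: one must verify, case by case against the two two-edge pictures (for $x<xs_l$ and $x>xs_l$) and the two one-edge pictures, that the color-$0$ edge always targets $T(\s',x)$, that the nonzero edge targets $T(\s',xs_l)$ with color $\ell(x)-\ell(xs_l)\in\{1,-1\}$, and that levels and $\ev$-values of the constituent subtrees are inherited unchanged (the last point relying on the fact that the fresh level-$l$ edge leaves, rather than enters, the root of each copy). Once this is pinned down, parts~\ref{proposition:6:part:1}--\ref{proposition:6:part:3} are formal consequences of~(\ref{eq:20}) and the inductive hypothesis.
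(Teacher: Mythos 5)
The paper states Proposition~\ref{proposition:6} without proof, treating it as an immediate consequence of the recursive construction of $T(\s,x)$. Your simultaneous induction on $|\s|$ correctly supplies the elided argument: the ``structural lemma'' you isolate (color-$0$ level-$l$ edge targets the copy of $T(\s',x)$, the nonzero-color edge targets $T(\s',xs_l)$ with color $\ell(x)-\ell(xs_l)$, and lower levels are inherited unchanged) is exactly what one reads off the four defining pictures, and from there parts~\ref{proposition:6:part:1}--\ref{proposition:6:part:3} follow formally via~(\ref{eq:20}) and the inductive hypothesis, as you show. This is the intended argument; the proof is correct.
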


The first statement of part~\ref{proposition:6:part:3} is of principal importance,
as it allows us to apply induction on the length of $\s$ for our trees.
The identification of part~\ref{proposition:6:part:1} allows us to apply functions $f\in\bigoplus_{\sigma\in I(\s)_x}Q$
to maximal paths $\pi$ by $f(\pi):=f([\pi])$. The same formula identifies $\bigoplus_{\sigma\in I(\s)_x}Q$
with the set of functions mapping maximal paths $\pi$ in $T(\s,x)$ to $Q$.

\begin{example}\label{example:2}\rm Let $\s=(s_1,s_2,s_1,s_2,s_1)$, where $s_1$ and $s_2$ are simple reflections in the Weyl group of type $A_2$,
and $x=s_2s_1$. The tree $T(\s,x)$ is as follows

\vspace{120pt}

\def\cl{green}

\begin{center}
\begin{picture}(200,0)
\put(0,0){\circle*{4}}
\put(0,0){\vector(0,1){24}}
\put(0,25){\circle*{4}}
\put(-5.5,9){$\scriptstyle0$}
\put(2,9){$\scriptstyle s_1$}
\put(-8,-2){$\scriptstyle\textcolor{\cl} e$}
\put(-8,23){$\scriptstyle\textcolor{\cl} e$}
\put(50,0){\circle*{4}}
\put(50,0){\vector(0,1){24}}
\put(50,25){\circle*{4}}
\put(44.5,9){$\scriptstyle1$}
\put(52,9){$\scriptstyle s_1$}
\put(42,-2){$\scriptstyle\textcolor{\cl} e$}
\put(39,23){$\scriptstyle\textcolor{\cl}{s_1}$}
\put(100,0){\circle*{4}}
\put(100,0){\vector(0,1){24}}
\put(100,25){\circle*{4}}
\put(94.5,9){$\scriptstyle0$}
\put(102,9){$\scriptstyle s_1$}
\put(92,-2){$\scriptstyle\textcolor{\cl} e$}
\put(92,23){$\scriptstyle\textcolor{\cl} e$}
\put(150,0){\circle*{4}}
\put(150,0){\vector(0,1){24}}
\put(150,25){\circle*{4}}
\put(144.5,9){$\scriptstyle0$}
\put(152,9){$\scriptstyle s_1$}
\put(142,-2){$\scriptstyle\textcolor{\cl} e$}
\put(142,23){$\scriptstyle\textcolor{\cl} e$}
\put(200,0){\circle*{4}}
\put(200,0){\vector(0,1){24}}
\put(200,25){\circle*{4}}
\put(194.5,9){$\scriptstyle1$}
\put(202,9){$\scriptstyle s_1$}
\put(192,-2){$\scriptstyle\textcolor{\cl} e$}
\put(188,23){$\scriptstyle\textcolor{\cl}{s_1}$}
\put(0,25){\vector(0,1){24}}
\put(0,50){\circle*{4}}
\put(-5.5,34){$\scriptstyle0$}
\put(2,34){$\scriptstyle s_2$}
\put(-8,48){$\scriptstyle\textcolor{\cl} e$}
\put(50,25){\vector(0,1){24}}
\put(50,50){\circle*{4}}
\put(44.5,34){$\scriptstyle0$}
\put(52,34){$\scriptstyle s_2$}
\put(39,48){$\scriptstyle\textcolor{\cl}{s_1}$}
\put(100,25){\vector(0,1){24}}
\put(100,50){\circle*{4}}
\put(94.5,34){$\scriptstyle1$}
\put(102,34){$\scriptstyle s_2$}
\put(89,48){$\scriptstyle\textcolor{\cl}{s_2}$}
\put(150,25){\vector(0,1){24}}
\put(150,50){\circle*{4}}
\put(144.5,34){$\scriptstyle1$}
\put(152,34){$\scriptstyle s_2$}
\put(139,48){$\scriptstyle\textcolor{\cl}{s_2}$}
\put(200,25){\vector(0,1){24}}
\put(200,50){\circle*{4}}
\put(194.5,34){$\scriptstyle1$}
\put(202,34){$\scriptstyle s_2$}
\put(180,48){$\scriptstyle\textcolor{\cl}{s_1s_2}$}
\put(25,75){\circle*{4}}
\put(19,77){$\scriptstyle\textcolor{\cl} e$}
\put(0,50){\vector(1,1){24.2}}
\put(50,50){\vector(-1,1){24.2}}
\put(7,63){$\scriptstyle0$}
\put(13,58){$\scriptstyle s_1$}
\put(28,57){$\scriptstyle\text{--}1$}
\put(40,63){$\scriptstyle s_1$}
\put(100,50){\vector(0,1){24}}
\put(100,75){\circle*{4}}
\put(94.5,59){$\scriptstyle0$}
\put(102,59){$\scriptstyle s_1$}
\put(89,73){$\scriptstyle\textcolor{\cl}{s_2}$}
\put(150,50){\vector(0,1){24}}
\put(150,75){\circle*{4}}
\put(144.5,59){$\scriptstyle1$}
\put(152,59){$\scriptstyle s_1$}
\put(131,73){$\scriptstyle\textcolor{\cl}{s_2s_1}$}
\put(200,50){\vector(0,1){24}}
\put(200,75){\circle*{4}}
\put(194.5,59){$\scriptstyle1$}
\put(202,59){$\scriptstyle s_1$}
\put(172,73){$\scriptstyle\textcolor{\cl}{s_1s_2s_1}$}
\put(62.5,100){\circle*{4}}
\put(25,75){\vector(3,2){37}}
\put(100,75){\vector(-3,2){37}}
\put(39,89){$\scriptstyle1$}
\put(44,82){$\scriptstyle s_2$}
\put(82,89){$\scriptstyle s_2$}
\put(75,82){$\scriptstyle 0$}
\put(52,102){$\scriptstyle\textcolor{\cl}{s_2}$}
\put(175,100){\circle*{4}}
\put(150,75){\vector(1,1){24.2}}
\put(200,75){\vector(-1,1){24.2}}
\put(157,88){$\scriptstyle0$}
\put(163,83){$\scriptstyle s_2$}
\put(178,82){$\scriptstyle\text{--}1$}
\put(190,88){$\scriptstyle s_2$}
\put(178,102){$\scriptstyle\textcolor{\cl}{s_2s_1}$}
\put(118.25,118.75){\circle*{4}}
\put(175,100){\vector(-3,1){55}}
\put(62.5,100){\vector(3,1){55}}
\put(111.5,123.5){$\scriptstyle\textcolor{\cl}{s_2s_1}$}
\put(89,112.5){$\scriptstyle1$}
\put(92.5,104){$\scriptstyle s_1$}
\put(144,113){$\scriptstyle s_1$}
\put(142,103){$\scriptstyle 0$}
\end{picture}
\end{center}

\smallskip
\noindent
In this picture, we have also attached $\ev(a)$ to every vertex $a$. The operation $[\cdot]$ applied
to the maximal paths read from left to right yields the following sequences:
$(\sp,\sp,\sp,s_2,s_1)$, $(s_1,\sp,s_1,s_2,s_1)$, $(\sp,s_2,\sp,\sp,s_1)$, $(\sp,s_2,s_1,\sp,\sp)$,
$(s_1,s_2,s_1,s_2,\sp)$, which as one can easily see comprise $I(\s)_x$, i.e. all ways to obtain
$x$ from subsequences of $\s$.

\end{example}

The tree $T(\s,x)$ allows us to compute the extended basis $\b(\s)^x_\sigma$ constructed in Section~\ref{extended_bases}.
To do it, we classify edges as follows:

\hspace{70pt}
\begin{picture}(0,0)
\put(0,-30){\vector(0,1){28}}
\put(-16,-43){\small vertical}
\put(140,-30){\vector(1,1){28.6}}
\put(117,-43){\small right tilted}
\put(322,-30){\vector(-1,1){28.6}}
\put(293,-43){\small left tilted}
\end{picture}

\vspace{50pt}

We define elements $\b_\pi\in\X((s_1,\ldots,s_i))$ for all paths $\pi$ in $T(\s,x)$ of length $i$
starting at a leaf as follows.

\begin{definition}\label{definition:9} We set $\b_\emptyset:=1$. Let $\pi=\pi_1\cdots\pi_i$ be a path in $T(\s,x)$
of length $i>0$ starting at a leaf.
We define
$$
\b_\pi:=\Delta\bigl(\b_{\pi'}\bigr)
$$
if $\pi_i$ is vertical or right tilted and
$$
\b_\pi:=\Delta\!\left(\frac{{c^{\,\r(\pi_i)}+\overline{\ev(\pi')(\r(\pi_i))}}}2\,\b_{\pi'}\!\right)
$$
if $\pi_i$ is left tilted.
\end{definition}
\noindent
According to our convention, $\r(\pi_i)=\alpha_i$. We shall use this notation latter in our proofs.

\begin{lemma}\label{lemma:4}
$\b(\s)^x_{[\pi]}=\b_\pi$ for all maximal paths $\pi$ in $T(\s,x)$.
\end{lemma}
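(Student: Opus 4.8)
The plan is to argue by induction on the length $l$ of $\s$, running the induction in parallel with the inductive definition of the trees $T(\s,x)$ (Section~\ref{trees}) and the inductive formulas~(\ref{eq:b:1})--(\ref{eq:b:4}) for the extended basis $\b(\s)^x_\sigma$. The base case $l=0$ is immediate: the unique tree $T(\emptyset,e)$ has one vertex and no edges, its only maximal path is $\emptyset$, and by Definition~\ref{definition:9} $\b_\emptyset=1=\b(\emptyset)^e_\emptyset$. For the inductive step, fix $x\in\widehat\W$ with $\s=(s_1,\ldots,s_l)$ nonempty and write $\{y,z\}=\{x,xs_l\}$ with $y<z$. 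By Proposition~\ref{proposition:6}\ref{proposition:6:part:1} the maximal paths in $T(\s,x)$ biject with $I(\s)_x$, and by~(\ref{eq:20}) we have $I(\s)_x=I(\s')_x\sp\dotcup I(\s')_{xs_l}s_l$; the tree picture reflects exactly this split, the left branch carrying $T(\s',y)$ and the right branch $T(\s',z)$. So any maximal path $\pi=\pi_1\cdots\pi_l$ in $T(\s,x)$ is of the form $\pi'\pi_l$ where $\pi'$ is a maximal path in $T(\s',y)$ or $T(\s',z)$ according to which branch $\pi_l$ points into, and by the inductive hypothesis $\b_{\pi'}=\b(\s')^{y\text{ or }z}_{[\pi']}$.

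The core of the argument is then a case check matching the four colour/tilt situations of the last edge $\pi_l$ against the four basis formulas~(\ref{eq:b:1})--(\ref{eq:b:4}). First I would record precisely, from the construction of $T(\s,x)$ in Section~\ref{trees}, which tilt and colour the edge $\pi_l$ carries in each case: when $x<xs_l$ (so $x=y$), the edge into the $T(\s',y)=T(\s',x)$ branch is vertical of colour $0$, and the edge into the $T(\s',z)=T(\s',xs_l)$ branch is left-tilted of colour $-1$; when $x>xs_l$ (so $x=z$), the edge into the $T(\s',y)=T(\s',xs_l)$ branch is right-tilted of colour $1$, and the edge into the $T(\s',z)=T(\s',x)$ branch is vertical of colour $0$. (The single-arrow degenerate cases are handled the same way, taking the empty branch to contribute nothing.) Now apply Definition~\ref{definition:9}: if $\pi_l$ is vertical or right-tilted, $\b_\pi=\Delta(\b_{\pi'})$; if $\pi_l$ is left-tilted, $\b_\pi=\Delta\!\left(\frac{c^{\alpha_l}+\overline{\ev(\pi')(\alpha_l)}}{2}\b_{\pi'}\right)$, using $\r(\pi_l)=\alpha_l$. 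In each of the four cases I would substitute $\b_{\pi'}=\b(\s')^{?}_{[\pi']}$ from the inductive hypothesis and compare with the corresponding line of~(\ref{eq:b:1})--(\ref{eq:b:4}); the one point needing care is the sign inside the left-tilted formula — one must observe that when $x<xs_l$ the left-tilted edge sits over the $T(\s',xs_l)$ branch so $\ev(\pi')=xs_l$, hence $\overline{\ev(\pi')(\alpha_l)}=\overline{xs_l(\alpha_l)}=-\overline{x(\alpha_l)}$, which turns $\frac{c^{\alpha_l}+\overline{\ev(\pi')(\alpha_l)}}{2}$ into $\frac{c^{\alpha_l}-\overline{x(\alpha_l)}}{2}$, matching~(\ref{eq:b:2}); and symmetrically for the $x>xs_l$ case with~(\ref{eq:b:4}). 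One also uses that $[\pi]=[\pi']\sp$ or $[\pi']s_l$ according to whether $\c(\pi_l)=0$ or not, so that $[\pi]$ lands in the correct half of $I(\s)_x$ to match the subscript in~(\ref{eq:b:1})--(\ref{eq:b:4}).

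The main obstacle I anticipate is purely bookkeeping: keeping straight, across the two cases $x<xs_l$ and $x>xs_l$ and the degenerate single-arrow subcases, exactly which of $y,z$ equals $x$, which branch a given edge feeds, and what $\ev(\pi')$ therefore is — because the sign of $\overline{\ev(\pi')(\alpha_l)}$ relative to $\overline{x(\alpha_l)}$ is what makes Definition~\ref{definition:9} agree with~(\ref{eq:b:1})--(\ref{eq:b:4}). There is no real analytic content; once the case bijection is set up correctly via Proposition~\ref{proposition:6}\ref{proposition:6:part:3} (which guarantees the full subtree at the root of each branch really is $T(\s',\ev(a))$, so the inductive hypothesis applies), each case collapses to a one-line identity comparing a $\Delta$-image on both sides.
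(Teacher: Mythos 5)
Your plan is essentially the paper's proof: induction on $|\s|$, base case $\b_\emptyset=1=\b(\emptyset)^e_\emptyset$, inductive step setting $x'=\ev(\pi')$ and invoking Proposition~\ref{proposition:6}\ref{proposition:6:part:3} to apply the hypothesis to $\pi'$ in $T(\s',x')$, then a case split on the tilt/color of $\pi_l$ matched against~(\ref{eq:b:1})--(\ref{eq:b:4}), with the key sign observation $\overline{xs_l(\alpha_l)}=-\overline{x(\alpha_l)}$ reconciling the left-tilted formula of Definition~\ref{definition:9} with the $\frac{c^{\alpha_l}-\overline{x(\alpha_l)}}{2}$ appearing in~(\ref{eq:b:2}) and~(\ref{eq:b:4}). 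No gaps; the bookkeeping you flag is exactly what the paper carries out.
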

\begin{proof}
Induction on the length of $\s$ (the length of $\pi$). First suppose that $\s=\emptyset$. It suffices to consider
the case $x=e$, since otherwise $T(\s,x)$ is empty and there are no paths to consider.
By our definitions here and in Section~\ref{extended_bases}, we have $\b(\emptyset)^e_\emptyset=\b_\emptyset=1$.

Now suppose that the length of $\s$ is nonzero.
Take a maximal path $\pi=\pi_1\cdots \pi_l$ in $T(\s,x)$. 
We set $x':=\ev(\pi')$. So $[\pi']\in I(\s')_{x'}$. By Proposition~\ref{proposition:6}\ref{proposition:6:part:3},
we obtain that $\pi'$ is the maximal path in $T(\s',x')$, whence by the inductive hypothesis
\begin{equation}\label{eq:15}
\b(\s')^{x'}_{[\pi']}=\b_{\pi'}.
\end{equation}

{\it Case~1: $\pi_l$ is vertical or right tilted.} In this case $x'<x's_l$.
Suppose first that $\c(\pi_l)=0$. Then $x=x'$ and $[\pi]=[\pi']\sp$.
Then by~(\ref{eq:b:1}),~(\ref{eq:15}) and Definition~\ref{definition:9}, we get
$$
\b(\s)^x_{[\pi]}=\b(\s)^{x'}_{[\pi']\sp}=\Delta\bigl(\b(\s')^{x'}_{[\pi']}\bigr)
=\Delta\bigl(\b_{\pi'}\bigr)=\b_\pi.
$$
Now suppose that $\c(\pi_l)=1$. Then $x=x's_l$ and $[\pi]=[\pi']\,s_l$.
Then by~(\ref{eq:b:3}),~(\ref{eq:15}) and Definition~\ref{definition:9}, we get
$$
\b(\s)^x_{[\pi]}=\b(\s)^{x's_l}_{[\pi']s_l}=\Delta\bigl(\b(\s')^{x'}_{[\pi']}\bigr)
=\Delta\bigl(\b_{\pi'}\bigr)=\b_\pi.
$$

{\it Case~2: $\pi_l$ is left tilted.} In this case $x's_l<x'$.
Suppose first that $\c(\pi_l)=0$. Then $x=x'$ and $[\pi]=[\pi']\sp$.
Then by~(\ref{eq:b:4}) (where $x=x's_l$),~(\ref{eq:15}) and Definition~\ref{definition:9}, we get
\begin{multline*}
\b(\s)^x_{[\pi]}=\b(\s)^{(x's_l)s_l}_{[\pi']\sp}=\Delta\!\left(\frac{c^{\alpha_l}-\overline{x's_l(\alpha_l)}}2\,\b(\s')^{x'}_{[\pi']}\right)\\[6pt]
=\Delta\!\left(\frac{c^{\alpha_l}+\overline{x'(\alpha_l)}}2\,\b_{\pi'}\right)
=\Delta\!\left(\frac{c^{\alpha_l}+\overline{\ev(\pi')(\alpha_l)}}2\,\b_{\pi'}\right)
=\b_\pi.
\end{multline*}
Now suppose that $\c(\pi_l)=-1$. Then $xs_l=x'$ and $[\pi]=[\pi']s_l$.
Then by~(\ref{eq:b:2}),~(\ref{eq:15}) and Definition~\ref{definition:9}, we get
\begin{multline*}
\b(\s)^x_{[\pi]}=\b(\s)^x_{[\pi']s_l}=\Delta\!\(\frac{c^{\alpha_l}-\overline{x(\alpha_l)}}2\;\b(\s')^{x'}_{[\pi']}\!\)\\[6pt]
=\Delta\!\left(\frac{c^{\alpha_l}+\overline{x'(\alpha_l)}}2\,\b_{\pi'}\right)
=\Delta\!\left(\frac{c^{\alpha_l}+\overline{\ev(\pi')(\alpha_l)}}2\,\b_{\pi'}\right)
=\b_\pi.
\end{multline*}
\end{proof}

\subsection{Basis of $\X(\s)^x$}\label{basis_of_X} To calculate this basis, we introduce the product of paths.

\begin{definition}\label{definition:10}
Let $\pi$ and $\rho$ be two paths in $T(\s,x)$ starting from leaves and having equal lengths.
We set $(\pi,\rho):=\bigl(\b_\pi\bigr)_{[\rho]}$.
\end{definition}
\noindent
Hence and from Definition~\ref{definition:9}, we obtain immediately $(\emptyset,\emptyset)=1$.

Let $a_1,\ldots,a_n$ be the leaves of $T(\s,x)$ labelled from left to right.
Denote by $\pi^{(1)},\ldots,\pi^{(n)}$ the maximal paths connecting these vertices to the root of $T(\s,x)$.
%
From Corollary~\ref{corollary:4} and Lemma~\ref{lemma:4}, we deduce the following fact.

\begin{corollary}\label{corollary:5}
Let $\s$ be a sequence in $\widehat\S$ such that $\widehat\G_{J(\s)}$ satisfies the GKM-property. Then there exists a homogeneous
$S$-basis $v_1,\ldots,v_m$ of $\X(\s)^x$ such that
$v_i(\pi^{(j)})=(\pi^{(i)},\pi^{(j)})$ for any $i,j=1,\ldots,n$.
\end{corollary}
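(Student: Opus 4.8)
The plan is to recognise the asserted basis as the extended basis $\{\b(\s)^x_\sigma\}_{\sigma\in I(\s)_x}$ of $\X(\s)^x$ constructed in Section~\ref{extended_bases}, re-indexed by the maximal paths of $T(\s,x)$. First I would invoke Corollary~\ref{corollary:4}: the restrictions $\b(\s)^x_\sigma\big|_{I(\s)_x}$, $\sigma\in I(\s)_x$, form an $S$-basis of $\X(\s)^x$, and these functions are homogeneous because each inductive step~(\ref{eq:b:1})--(\ref{eq:b:4}) only applies the degree-preserving map $\Delta$ or multiplies by the homogeneous element $\tfrac{c^{\alpha_l}-\overline{x(\alpha_l)}}2$ of degree $2$. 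Next I would use Proposition~\ref{proposition:6}\ref{proposition:6:part:1}: the assignment $\pi\mapsto[\pi]$ is a bijection from $\{\pi^{(1)},\ldots,\pi^{(n)}\}$ onto $I(\s)_x$, so in particular the rank $m$ of $\X(\s)^x$ equals $n$, and I may simply set $v_i:=\b(\s)^x_{[\pi^{(i)}]}\big|_{I(\s)_x}$ for $i=1,\ldots,n$; this is then a homogeneous $S$-basis of $\X(\s)^x$.

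It then remains only to evaluate these $v_i$ on the maximal paths. Recalling from Section~\ref{trees} that a function on $I(\s)_x$ is applied to a maximal path by $f(\pi):=f([\pi])$, I would compute
$$
v_i(\pi^{(j)})=v_i\bigl([\pi^{(j)}]\bigr)=\bigl(\b(\s)^x_{[\pi^{(i)}]}\bigr)_{[\pi^{(j)}]},
$$
then substitute $\b(\s)^x_{[\pi^{(i)}]}=\b_{\pi^{(i)}}$ from Lemma~\ref{lemma:4} to obtain $\bigl(\b_{\pi^{(i)}}\bigr)_{[\pi^{(j)}]}$, which is exactly $(\pi^{(i)},\pi^{(j)})$ by Definition~\ref{definition:10}. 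This yields the claimed identity.

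I do not expect a genuine obstacle here: the argument is purely an assembly of Corollary~\ref{corollary:4}, Proposition~\ref{proposition:6}\ref{proposition:6:part:1}, Lemma~\ref{lemma:4} and Definition~\ref{definition:10}. The only point needing a little care is the bookkeeping of identifications --- in particular one should note that the pairing $(\cdot,\cdot)$ of Definition~\ref{definition:10} is defined for paths of equal length starting at leaves, and that the $\pi^{(j)}$ are maximal paths, hence of the full length $l$ and starting at leaves, which makes $\b_{\pi^{(i)}}\in\X(\s)$ and its evaluation at $[\pi^{(j)}]\in I(\s)_x$ legitimate. If one prefers not to mention the rank $m$ at all, one can instead say that the basis of Corollary~\ref{corollary:4} consists of $|I(\s)_x|$ elements and that $|I(\s)_x|=n$ by Proposition~\ref{proposition:6}\ref{proposition:6:part:1}.
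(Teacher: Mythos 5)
Your proposal is correct and follows exactly the route the paper takes: the paper's entire proof is ``It suffices to set $v_i:=\b_{\pi^{(i)}}|_{I(\s)_x}$,'' which by Lemma~\ref{lemma:4} is the same as your $v_i:=\b(\s)^x_{[\pi^{(i)}]}|_{I(\s)_x}$. You have simply spelled out the bookkeeping (Corollary~\ref{corollary:4}, Proposition~\ref{proposition:6}\ref{proposition:6:part:1}, Lemma~\ref{lemma:4}, Definition~\ref{definition:10}) that the paper leaves implicit.
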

\begin{proof}
It suffices to set $v_i:=\b_{\pi^{(i)}}|_{I(\s)_x}$.
\end{proof}
\noindent
In this corollary, we apply elements $v_i\in\X(\s)^x$ directly to paths $\pi^{(j)}$
in view of the identification of Proposition~\ref{proposition:6}\ref{proposition:6:part:1}.

The following lemma shows how to calculate the product $(\pi,\rho)$ using a very simple rule.

\begin{lemma}\label{lemma:11}
Let $\pi$ and $\rho$ be paths in $T(\s,x)$ starting from leaves and having equal lengths.
Let $\lambda$ and $\mu$ be edges of $T(\s,x)$ such that $\pi\lm$ and $\rho\mu$ are again paths.
We~have
$$
(\pi\lm,\rho\mu)=\left\{
\arraycolsep=0pt
\begin{array}{ll}
(\pi,\rho)&\text{\; if }\lm\text{ is vertical or right tilted}\,;\\[8pt]
\displaystyle\frac{\overline{\ev(\pi)(\r(\lm))}+\overline{\ev(\rho)(\r(\mu))}}2\,(\pi,\rho)&\text{\; if }\lm\text{ is left tilted}.
\end{array}
\right.
$$
\end{lemma}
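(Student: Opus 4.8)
The plan is to unwind Definitions~\ref{definition:9} and~\ref{definition:10} directly, the only inputs being the defining formula for the diagonal embedding $\Delta$ and the description~(\ref{eq:21}) of the $\mathcal Z$-action. Write $i$ for the common length of $\pi$ and $\rho$ and put $\s^{(j)}:=(s_1,\ldots,s_j)$. Since $\pi\lm$ and $\rho\mu$ are paths starting at leaves of length $i+1$, the edges $\lm$ and $\mu$ are both of level $i+1$, so $\r(\lm)=\r(\mu)=\alpha_{i+1}$; moreover $\b_{\pi\lm}\in\X(\s^{(i+1)})$ while $\b_\pi\in\X(\s^{(i)})$, and likewise for $\rho$.

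First I would record two elementary facts. (i) Because $\mu$ is of level $i+1$, the definition of $[\cdot]$ gives $[\rho\mu]=[\rho]\,s_{i+1}$ when $\c(\mu)\ne0$ and $[\rho\mu]=[\rho]\,\sp$ when $\c(\mu)=0$; in either case the defining formula of $\Delta\colon\bigoplus_{\sigma\in I(\s^{(i)})}Q\to\bigoplus_{\sigma\in I(\s^{(i+1)})}Q$ yields $\Delta(f)_{[\rho\mu]}=f_{[\rho]}$ for every $f$. In particular this value does not depend on the last edge $\mu$, only on $[\rho]$. (ii) By construction $[\rho]\in I(\s^{(i)})$ and $\ev(\rho)=\ev([\rho])$ (Proposition~\ref{proposition:6}), so $[\rho]\in I(\s^{(i)})_{\ev(\rho)}$ and hence by~(\ref{eq:21}) we have $(c^{\alpha_{i+1}}f)_{[\rho]}=\overline{\ev(\rho)(\alpha_{i+1})}\,f_{[\rho]}$ for every $f\in\X(\s^{(i)})$.

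Now split on the type of $\lm$. If $\lm$ is vertical or right tilted, then $\b_{\pi\lm}=\Delta(\b_\pi)$ by Definition~\ref{definition:9}, so using (i),
$$(\pi\lm,\rho\mu)=\bigl(\b_{\pi\lm}\bigr)_{[\rho\mu]}=\Delta(\b_\pi)_{[\rho\mu]}=(\b_\pi)_{[\rho]}=(\pi,\rho).$$
If $\lm$ is left tilted, then $\b_{\pi\lm}=\Delta\bigl(\frac{c^{\alpha_{i+1}}+\overline{\ev(\pi)(\alpha_{i+1})}}{2}\,\b_\pi\bigr)$ by Definition~\ref{definition:9}; applying (i) and then (ii),
$$(\pi\lm,\rho\mu)=\Bigl(\frac{c^{\alpha_{i+1}}+\overline{\ev(\pi)(\alpha_{i+1})}}{2}\,\b_\pi\Bigr)_{[\rho]}=\frac{\overline{\ev(\rho)(\alpha_{i+1})}+\overline{\ev(\pi)(\alpha_{i+1})}}{2}\,(\b_\pi)_{[\rho]},$$
which is $\frac{\overline{\ev(\pi)(\r(\lm))}+\overline{\ev(\rho)(\r(\mu))}}{2}\,(\pi,\rho)$ since $\r(\lm)=\r(\mu)=\alpha_{i+1}$. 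This is exactly the asserted formula.

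I do not expect any genuine obstacle here; the argument is substitution into Definition~\ref{definition:9} and~(\ref{eq:21}). The only point deserving a line of care is fact (i) --- that a section in the image of $\Delta$ takes at $[\rho\mu]$ the same value that the underlying section takes at $[\rho]$, independently of the last edge $\mu$ --- together with the observation $\r(\lm)=\r(\mu)=\alpha_{i+1}$ arising from $\lm,\mu$ both lying at level $i+1$, which is what allows the two roots $\r(\lm)$ and $\r(\mu)$ appearing in the statement to be identified with the single element $\alpha_{i+1}$.
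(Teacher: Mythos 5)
Your proof is correct and follows the same route as the paper's: observe that $\lambda$ and $\mu$ lie at the same level so $\r(\lambda)=\r(\mu)$, note that $[\rho\mu]'=[\rho]$ so $\Delta(f)_{[\rho\mu]}=f_{[\rho]}$, and then unwind Definition~\ref{definition:9} and formula~(\ref{eq:21}) in the two cases. Your facts (i) and (ii) are just an explicit articulation of the two observations the paper uses implicitly.
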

\begin{proof} Clearly, $\lm$ and $\mu$ are edges of the same level, say $i$.
We have $\r(\lm)=\r(\mu)=\alpha_i$ and also $[\rho\mu]'=[\rho]$.
Suppose first that $\lm$ is vertical or right tilted.
By Definitions~\ref{definition:10} and~\ref{definition:9}, we get
$$
(\pi\lm,\rho\mu)=\bigl(\b_{\pi\lm}\bigr)_{[\rho\mu]}=\Delta\bigl(\b_\pi\bigr)_{[\rho\mu]}
=\bigl(\b_\pi\bigr)_{[\rho]}=(\pi,\rho).
$$
Now suppose that $\lm$ is left tilted. By Definitions~\ref{definition:10} and~\ref{definition:9} and
formula~(\ref{eq:21}), we get
\begin{multline*}
(\pi\lm,\rho\mu)=(\b_{\pi\lm})_{[\rho\mu]}=\Delta\!\biggl(\frac{{c^{\alpha_i}+\overline{\ev(\pi)(\alpha_i)}}}2\,\b_\pi\!\biggr)_{\![\rho\mu]}
=\biggl(\frac{{c^{\alpha_i}+\overline{\ev(\pi)(\alpha_i)}}}2\,\b_\pi\!\biggr)_{\![\rho]}\\[8pt]
=\frac{\overline{\ev(\rho)(\alpha_i)}+\overline{\ev(\pi)(\alpha_i)}}2\,\bigl(\b_\pi\bigr)_{\![\rho]}
=\frac{\overline{\ev(\pi)(\r(\lm))}+\overline{\ev(\rho)(\r(\mu))}}2\,(\pi,\rho).
\end{multline*}
\end{proof}

Using our above notation, we denote by $E(\s,x)$ the $n\times n$ matrix whose $ij^{\text{th}}$ entry is $(\pi^{(i)},\pi^{(j)})$.
Under the hypothesis and in the notation of Corollary~\ref{corollary:5}, we have
\begin{equation}\label{eq:21.5}
\left(\arraycolsep=0pt\begin{array}{c} v_1\\\vdots\\v_n\end{array}\right)=E(\s,x)\!\left(\arraycolsep=0pt\begin{array}{c} e_1\\\vdots\\ e_n\end{array}\right),
\end{equation}
where $e_i:\{\pi^{(1)},\ldots,\pi^{(n)}\}\to Q$ is the function such that $e_i(\pi^{(j)})=\delta_{ij}$.

\begin{lemma}\label{lemma:5.5}
The matrix $E(\s,x)$ is upper triangular. Its diagonal entries are products of images $\bar\alpha$ of roots
$\alpha\in\widehat R$ and its first row is an array of 1's.
\end{lemma}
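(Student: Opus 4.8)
The plan is to induct on the length $l$ of $\s$, using the recursive construction of $T(\s,x)$ together with the multiplicativity rule of Lemma~\ref{lemma:11}. For $l=0$ there is nothing to prove unless $x=e$, in which case $T(\emptyset,e)$ is a single leaf and $E(\emptyset,e)=(1)$ by Definition~\ref{definition:10}, so all three assertions hold (the unique diagonal entry being the empty product). For $l>0$ I would distinguish the shapes of the root of $T(\s,x)$ in the definition: the root has either a single descending edge, or two. In the first case the edge $\nu$ is vertical, $T(\s,x)$ equals $T(\s',y)$ with $\nu$ attached on top (where $\{y,z\}=\{x,xs_l\}$, $y<z$), the leaves match those of $T(\s',y)$ left to right, and $\pi^{(i)}=\tau^{(i)}\nu$ for the maximal paths $\tau^{(i)}$ of $T(\s',y)$; since $\nu$ is vertical, Lemma~\ref{lemma:11} gives $(\pi^{(i)},\pi^{(j)})=(\tau^{(i)},\tau^{(j)})$, so $E(\s,x)=E(\s',y)$ and the three properties pass from $\s'$ to $\s$.

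In the two-edge case the left edge $\lambda$ is right tilted and the right edge $\mu$ is left tilted, both with root $\alpha_l$; I would label the leaves so the first $m$ come from the left subtree $T(\s',y)$ and the last $k$ from the right subtree $T(\s',z)$, and use Proposition~\ref{proposition:6}(\ref{proposition:6:part:1}),(\ref{proposition:6:part:3}) to record that $\pi^{(i)}=\tau^{(i)}\lambda$ with $\ev(\tau^{(i)})=y$ for $i\le m$, and $\pi^{(i)}=\rho^{(i-m)}\mu$ with $\ev(\rho^{(t)})=z$ for $i>m$, where $\tau^{(1)},\dots$ and $\rho^{(1)},\dots$ are the maximal paths of the two subtrees in left-to-right order. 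Applying Lemma~\ref{lemma:11} to $E(\s,x)_{ij}=(\pi^{(i)},\pi^{(j)})$ then splits the matrix into four blocks: the top-left block ($i,j\le m$) is $E(\s',y)$ because $\lambda$ is right tilted; the top-right block ($i\le m<j$) lies strictly above the diagonal, so its entries are irrelevant; and because $\mu$ is left tilted the two bottom blocks acquire scalar factors, namely $\tfrac12\bigl(\overline{z(\alpha_l)}+\overline{y(\alpha_l)}\bigr)$ on the bottom-left and $\tfrac12\bigl(\overline{z(\alpha_l)}+\overline{z(\alpha_l)}\bigr)=\overline{z(\alpha_l)}$ on the bottom-right, the latter multiplying $E(\s',z)$.

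The key computation is that $z=ys_l$, whence $z(\alpha_l)=y\,s_{\alpha_l}(\alpha_l)=-\,y(\alpha_l)$, so $\overline{z(\alpha_l)}+\overline{y(\alpha_l)}=0$ (here the hypothesis that the characteristic of $\F$ is not $2$ is used). Hence the bottom-left block vanishes entirely, and $E(\s,x)$ is block-upper-triangular with diagonal blocks $E(\s',y)$ and $\overline{z(\alpha_l)}\,E(\s',z)$; upper-triangularity and the statement about diagonal entries follow from the induction hypothesis applied to $(\s',y)$ and $(\s',z)$, the extra factor $\overline{z(\alpha_l)}$ still being the image of a root since $z(\alpha_l)\in\widehat R$. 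For the ``first row'' assertion I would argue separately that $\b_{\pi^{(1)}}$ is the constant function $1$: the leftmost leaf $a_1$ is leftmost inside every subtree of $T(\s,x)$ containing it, so by the construction every edge of $\pi^{(1)}$ is the left (or unique) descending edge of the root of the corresponding subtree, hence vertical or right tilted; Definition~\ref{definition:9} then produces $\b_{\pi^{(1)}}$ from $\b_\emptyset=1$ by iterated application of $\Delta$, which fixes the constant function $1$. Consequently $(\pi^{(1)},\pi^{(j)})=(\b_{\pi^{(1)}})_{[\pi^{(j)}]}=1$ for every $j$.

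I expect the only fiddly part to be the bookkeeping relating the leaves and maximal paths of $T(\s,x)$ to those of $T(\s',y)$ and $T(\s',z)$ — which tilt class each top edge carries, how the left-to-right leaf order restricts, and the values of $\ev$ on the subpaths — all of which is routine given Proposition~\ref{proposition:6}. The one genuinely substantive step is the cancellation $\overline{z(\alpha_l)}+\overline{y(\alpha_l)}=0$: it is precisely the ``left tilted'' branch of Lemma~\ref{lemma:11} combined with $s_{\alpha_l}(\alpha_l)=-\alpha_l$ that kills the below-diagonal block and upgrades block-triangularity to genuine upper-triangularity.
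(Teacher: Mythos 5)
Your proof is correct, and it reorganizes the paper's argument without introducing a genuinely different idea. The paper does not induct on $|\s|$: it fixes a pair $j<i$, locates the vertex $a$ of level $k$ where $\pi^{(i)}$ and $\pi^{(j)}$ merge, and then applies Lemma~\ref{lemma:11} level by level to write $(\pi^{(i)},\pi^{(j)})$ as a scalar multiple of the single factor $\tfrac12\bigl(\overline{z(\alpha_k)}+\overline{y(\alpha_k)}\bigr)$ coming from the level-$k$ left-tilted edge, which vanishes because $y=zs_{\alpha_k}$; the other two claims are then dispatched as ``immediate.'' Your block induction packages the same cancellation differently: the vanishing bottom-left block records exactly the case where the merge happens at the root, while the two diagonal blocks hand off to the inductive hypothesis the case where both paths descend into the same subtree. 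Where you go beyond the paper is the first-row assertion: your observation that $\pi^{(1)}$ consists solely of vertical and right-tilted edges, hence $\b_{\pi^{(1)}}$ is the constant function $1$, makes precise what the paper waves off as ``follows immediately from Lemma~\ref{lemma:11}.'' One cosmetic remark: you invoke $\mathop{\rm char}\F\ne2$ for the cancellation $\overline{z(\alpha_l)}+\overline{y(\alpha_l)}=0$, but that identity holds in any characteristic since $z(\alpha_l)=-y(\alpha_l)$ already holds in $\widehat X$; the hypothesis $\mathop{\rm char}\F\ne2$ is only needed so that the factor $\tfrac12$ appearing in Lemma~\ref{lemma:11} makes sense in the first place.
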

\begin{proof} Take integers $i,j$ such that $1\le j<i\le n$.
The maximal paths $\pi^{(i)}=\pi^{(i)}_1\cdots\pi^{(i)}_l$ and $\pi^{(j)}=\pi^{(j)}_1\cdots\pi^{(j)}_l$
merge at some vertex $a$. Denote by $k$ the level of $a$.
As $j\ne i$, we have $k>0$. By Lemma~\ref{lemma:11}, the product $(\pi^{(i)},\pi^{(j)})$
is proportional to the product $(\tilde\pi^{(i)}\pi^{(i)}_k,\tilde\pi^{(j)}\pi^{(j)}_k)$, where
$\tilde\pi^{(i)}=\pi^{(i)}_1\cdots\pi^{(i)}_{k-1}$ and $\tilde\pi^{(j)}=\pi^{(j)}_1\cdots\pi^{(j)}_{k-1}$.
Set $z:=\ev(\tilde\pi^{(i)})$ and $y:=\ev(\tilde\pi^{(j)})$.
We have $\alpha_k=\r(\pi^{(i)}_k)=\r(\pi^{(j)}_k)$. By construction of $T(\s,x)$, we have $y=zs_{\alpha_k}$.

Since $j<i$, the edge $\pi^{(i)}_k$ is left tilted.
Hence by Lemma~\ref{lemma:11}, we get
$$
(\tilde\pi^{(i)}\pi^{(i)}_k,\tilde\pi^{(j)}\pi^{(j)}_k)=\frac{\overline{z(\alpha_k)}+\overline{y(\alpha_k)}}2=0.
$$
The second and third statements follow immediately from Lemma~\ref{lemma:11}.
\end{proof}

Now that we have matrix $E(\s,x)$ together with the rule how to calculate it
(Corollary~\ref{corollary:5} and Lemma~\ref{lemma:11}), we can recover $\rk'\B(\s)^x$ from the tree $T(\s,x)$.
The {\it degree} of a path $\pi$ in $T(\s,x)$ is the number of left tilted edges of this path multiplied by $2$.
We denote this number by $\deg\pi$. We have
$$
\rk\nolimits'\B(\s)^x=\sum\{v^{-\deg\pi}\suchthat\pi\text{ is a maximal path in }T(\s,x)\}.
$$
In Example~\ref{example:2}, we the degrees of paths read from left to right are $0,2,2,2,4$ respectively.
So $\rk'\B((s_1,s_2,s_1,s_2,s_1))^{s_2s_1}=1+3v^{-2}+v^{-4}$.

\subsection{Automorphism $P(\s)$}\label{automorphism_P} We start with recollection of some constructions from Fiebig's paper~\cite{Fiebig_An_upper_bound}.
In~\cite[Definition~6.12]{Fiebig_An_upper_bound}), Fiebig defined the endomorphism $P(\s)$ of
$\bigoplus_{\sigma\in I(\s)}Q$ by the following rules:
\begin{enumerate}
\item $P(\emptyset)$ is the identity map.
\item If $\s\ne\emptyset$ and $P(\s')$ is already defined, then we set $P(\s):=c^{\alpha_l}\circ \Delta^{P(\s')}$.
\end{enumerate}
As is noted in that paper, the endomorphism $P(\s)$ is diagonal, whence a $\mathcal Z$-endomorphism.
This means that there exists a function $\P(\s)\in\bigoplus_{\sigma\in I(\s)}Q$ such that
$$
\bigl(P(\s)(f)\bigr)_\sigma=\P(\s)_\sigma f_\sigma
$$
for any $\sigma\in I(\s)$ and $f\in\bigoplus_{\sigma\in I(\s)}Q$.

\begin{lemma}\label{lemma:12}
$\P(\emptyset)_\emptyset=1$.
If $\s=(s_1,\ldots,s_l)\ne\emptyset$ then $\P(\s)_\sigma=\overline{\ev(\sigma)(\alpha_l)}\,\P(\s')_{\sigma'}$ for any $\sigma\in I(\s)$.
\end{lemma}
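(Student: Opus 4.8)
The plan is to prove both assertions by a short induction on the length of $\s$, essentially just unwinding the recursive definition of $P(\s)$. The base case $\s=\emptyset$ is immediate: $P(\emptyset)$ is the identity map on $\bigoplus_{\sigma\in I(\emptyset)}Q$ and $I(\emptyset)=\{\emptyset\}$, so $\P(\emptyset)_\emptyset=1$.

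For the inductive step, write $\s=(s_1,\ldots,s_l)\ne\emptyset$ and assume $P(\s')$ is diagonal with diagonal function $\P(\s')$. The key intermediate step is to show that the translated endomorphism $\Delta^{P(\s')}$ is again diagonal, with $\bigl(\Delta^{P(\s')}(f)\bigr)_\sigma=\P(\s')_{\sigma'}\,f_\sigma$ for all $\sigma\in I(\s)$. To see this, I would use the decomposition~(\ref{eq:19}) (valid by invertibility of $2$ in $\F$) to write an arbitrary $f\in\bigoplus_{\sigma\in I(\s)}Q$ as $f=\Delta(g)+\Delta^-(h)$, where $g,h\in\bigoplus_{\sigma\in I(\s')}Q$ are recovered by $g_\sigma=\tfrac12(f_{\sigma\sp}+f_{\sigma s_l})$ and $h_\sigma=\tfrac12(f_{\sigma\sp}-f_{\sigma s_l})$ for $\sigma\in I(\s')$. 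Then~(\ref{eq:18}) gives $\Delta^{P(\s')}(f)=\Delta\bigl(P(\s')(g)\bigr)+\Delta^-\bigl(P(\s')(h)\bigr)$, and evaluating the $\sigma\sp$ and $\sigma s_l$ components with the diagonality of $P(\s')$ yields $\bigl(\Delta^{P(\s')}(f)\bigr)_{\sigma\sp}=\P(\s')_\sigma(g_\sigma+h_\sigma)=\P(\s')_\sigma f_{\sigma\sp}$ and likewise $\bigl(\Delta^{P(\s')}(f)\bigr)_{\sigma s_l}=\P(\s')_\sigma(g_\sigma-h_\sigma)=\P(\s')_\sigma f_{\sigma s_l}$. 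Since $(\sigma\sp)'=(\sigma s_l)'=\sigma$ and every element of $I(\s)$ is of one of these two forms by~(\ref{eq:1.25}), this establishes the claim about $\Delta^{P(\s')}$.

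Finally, since $P(\s)=c^{\alpha_l}\circ\Delta^{P(\s')}$ and the action of $c^{\alpha_l}$ on $\bigoplus_{\sigma\in I(\s)}Q$ is diagonal with entry $\overline{\ev(\sigma)(\alpha_l)}$ at $\sigma$ by~(\ref{eq:21}), the composite is again diagonal and its diagonal entries multiply: $\P(\s)_\sigma=\overline{\ev(\sigma)(\alpha_l)}\,\P(\s')_{\sigma'}$, as required; this also re-confirms that $P(\s)$ is diagonal. I expect no genuine obstacle here — the argument is a direct verification. The only points needing a little care are the invertibility of $2$ in $\F$ (guaranteed since $\F$ has characteristic $\ne2$), which makes~(\ref{eq:19}) available, and bookkeeping the fact that $\sigma\sp$ and $\sigma s_l$ share the truncation $\sigma$.
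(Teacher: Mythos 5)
Your proof is correct and follows essentially the same approach as the paper: both reduce to formula~(\ref{eq:18}) and the diagonality of $c^{\alpha_l}$ from~(\ref{eq:21}), verifying the formula on the $\sigma\sp$ and $\sigma s_l$ components. The only cosmetic difference is that you first decompose a general $f=\Delta(g)+\Delta^-(h)$ via~(\ref{eq:19}) and then compute once, whereas the paper computes separately on the generators $\Delta(f)$ and $\Delta^-(f)$; these are the same verification in two packagings.
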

\begin{proof}
The first claim is clear as $\bigl(P(\emptyset)(f)\bigr)_\emptyset=f_\emptyset$.

Now take any $f\in\bigoplus_{\sigma\in I(\s')}Q$ and $\sigma\in I(\s')$.
By~(\ref{eq:1.25}),~(\ref{eq:21}) and~(\ref{eq:18}), we get
\begin{multline*}
\Bigl(P(\s)\bigl(\Delta(f)\bigr)\Bigr)_{\!\sigma\sp}
=\Bigl(c^{\alpha_l}\Delta^{P(\s')}\bigl(\Delta(f)\bigr)\Bigr)_{\!\sigma\sp}
=\overline{\ev(\sigma\sp)(\alpha_l)}\,\Delta\bigl(P(\s')(f)\bigr)_{\sigma\sp}\\[3pt]
=\overline{\ev(\sigma\sp)(\alpha_l)}\,\bigl(P(\s')(f)\bigr)_\sigma
=\overline{\ev(\sigma\sp)(\alpha_l)}\,\P(\s')_\sigma f_\sigma
=\overline{\ev(\sigma\sp)(\alpha_l)}\,\P(\s')_{(\sigma\sp)'}\,\Delta(f)_{\sigma\sp},
\end{multline*}
\begin{multline*}
\Bigl(P(\s)\bigl(\Delta(f)\bigr)\Bigr)_{\!\sigma s_l}
=\Bigl(c^{\alpha_l}\Delta^{P(\s')}\bigl(\Delta(f)\bigr)\Bigr)_{\!\sigma s_l}
=\overline{\ev(\sigma s_l)(\alpha_l)}\,\Delta\bigl(P(\s')(f)\bigr)_{\sigma s_l}\\[3pt]
=\overline{\ev(\sigma s_l)(\alpha_l)}\,\bigl(P(\s')(f)\bigr)_\sigma
=\overline{\ev(\sigma s_l)(\alpha_l)}\,\P(\s')_\sigma f_\sigma
=\overline{\ev(\sigma s_l)(\alpha_l)}\,\P(\s')_{(\sigma s_l)'}\,\Delta(f)_{\sigma s_l},
\end{multline*}
\begin{multline*}
\Bigl(P(\s)\bigl(\Delta^-(f)\bigr)\Bigr)_{\!\sigma\sp}=\Bigl(c^{\alpha_l}\Delta^{P(\s')}\bigl(\Delta^-(f)\bigr)\Bigr)_{\!\sigma\sp}
=\overline{\ev(\sigma\sp)(\alpha_l)}\,\Delta^-\bigl(P(\s')(f)\bigr)_{\sigma\sp}\\
=\overline{\ev(\sigma\sp)(\alpha_l)}\,\bigl(P(\s')(f)\bigr)_\sigma
=\overline{\ev(\sigma\sp)(\alpha_l)}\,\P(\s')_\sigma f_\sigma
=\overline{\ev(\sigma\sp)(\alpha_l)}\,\P(\s')_{(\sigma\sp)'}\,\Delta^-(f)_{\sigma\sp},
\end{multline*}
\begin{multline*}
\Bigl(P(\s)\bigl(\Delta^-(f)\bigr)\Bigr)_{\!\sigma s_l}
=\Bigl(c^{\alpha_l}\Delta^{P(\s')}\bigl(\Delta^-(f)\bigr)\Bigr)_{\!\sigma s_l}
=\overline{\ev(\sigma s_l)(\alpha_l)}\,\Delta^-\bigl(P(\s')(f)\bigr)_{\sigma s_l}\\
=-\,\overline{\ev(\sigma s_l)(\alpha_l)}\,\bigl(P(\s')(f)\bigr)_\sigma
=-\,\overline{\ev(\sigma s_l)(\alpha_l)}\,\P(\s')_\sigma f_\sigma
=\overline{\ev(\sigma s_l)(\alpha_l)}\,\P(\s')_{(\sigma s_l)'}\,\Delta^-(f)_{\sigma s_l}.
\end{multline*}
\end{proof}

This lemma shows that each $\P(\s)_\sigma$ is a product of images $\bar\alpha$ of $|\s|$ roots $\alpha\in\widehat R$,
whence an invertible element of $Q$. Therefore $P(\s)$ is a (homogeneous) automorphism of $\bigoplus_{\sigma\in I(\s)}Q$
with the inverse given by $\bigl(P(\s)^{-1}(f)\bigr)_\sigma=(\P(\s)_\sigma)^{-1}f_\sigma$.

Now we define $\P_\pi\in S$ for all paths $\pi$ in $T(\s,x)$ starting at a leaf as follows (cf.~Definition~\ref{definition:9}).

\begin{definition}\label{definition:11}
We set $\P_\emptyset:=1$. Let $\pi=\pi_1\cdots\pi_i$ be a path in $T(\s,x)$
of length $i>0$ starting at a leaf. 
We define
$$
\P_\pi:=\overline{\ev(\pi)(\r(\pi_i))}\,\P_{\pi'}.
$$
\end{definition}

\begin{lemma}\label{lemma:7}
$\P(\s)_{[\pi]}=\P_\pi$ for all maximal paths $\pi$ in $T(\s,x)$.
\end{lemma}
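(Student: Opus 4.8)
The plan is to prove this by induction on the length $l$ of $\s$ (equivalently, on the length of $\pi$), mirroring the proof of Lemma~\ref{lemma:4}: on one side I will use the recursion for $\P(\s)$ supplied by Lemma~\ref{lemma:12}, and on the other side the recursive Definition~\ref{definition:11} of $\P_\pi$, tying the two together via Proposition~\ref{proposition:6}. The base case $\s=\emptyset$ is immediate: $T(\emptyset,x)$ is empty unless $x=e$, in which case it is the single vertex that is at once its unique leaf and its root; the only maximal path is the empty path $\emptyset$ with $[\emptyset]=\emptyset$, and then $\P(\emptyset)_\emptyset=1$ by Lemma~\ref{lemma:12} while $\P_\emptyset=1$ by Definition~\ref{definition:11}.

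For the inductive step, let $\s=(s_1,\ldots,s_l)\neq\emptyset$ and let $\pi=\pi_1\cdots\pi_l$ be a maximal path in $T(\s,x)$. Put $x':=\ev(\pi')$. By Proposition~\ref{proposition:6}\ref{proposition:6:part:3} the truncation $\pi'$ is a maximal path in $T(\s',x')$, so the inductive hypothesis gives $\P(\s')_{[\pi']}=\P_{\pi'}$. I then record the two purely bookkeeping facts from Section~\ref{trees}: first, $([\pi])'=[\pi']$, since truncating $[\pi]$ just drops its last entry, which is $\sp$ or $s_l$ according as $\c(\pi_l)=0$ or $\c(\pi_l)\neq0$; and second, $\ev([\pi])=x$ because $[\pi]\in I(\s)_x$ by Proposition~\ref{proposition:6}\ref{proposition:6:part:1}, together with $\r(\pi_l)=\alpha_l$ (edge of level $l$) and $\ev(\pi)=\ev([\pi])=x$ by the abbreviation $\ev(\pi):=\ev([\pi])$. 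Feeding $\sigma=[\pi]$ into Lemma~\ref{lemma:12} yields $\P(\s)_{[\pi]}=\overline{x(\alpha_l)}\,\P(\s')_{[\pi']}$, and Definition~\ref{definition:11} gives $\P_\pi=\overline{x(\alpha_l)}\,\P_{\pi'}$; by the inductive hypothesis these two expressions coincide, completing the induction.

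Unlike Lemma~\ref{lemma:4}, this argument needs no case split on whether $\pi_l$ is vertical, right tilted, or left tilted, because the recursion for $\P$ is uniform and the scalar factor $\overline{x(\alpha_l)}$ is independent of the color. Accordingly I do not anticipate a real obstacle here: the only thing requiring care is the identification of $([\pi])'$ with $[\pi']$ and of $\ev(\pi)$ with $x$, and this is immediate from the definitions of $[\cdot]$ and $\ev$ on trees in Section~\ref{trees} and from Proposition~\ref{proposition:6}.
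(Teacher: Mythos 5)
Your proof is correct and follows essentially the same route as the paper: induction on the length of $\s$, with the base case handled via Lemma~\ref{lemma:12} and Definition~\ref{definition:11}, and the inductive step passing through Proposition~\ref{proposition:6}\ref{proposition:6:part:3} to identify $\pi'$ as a maximal path in $T(\s',x')$, then applying the recursion in Lemma~\ref{lemma:12} on one side and Definition~\ref{definition:11} on the other. Your added remark that no case split on the type of $\pi_l$ is needed (in contrast to Lemma~\ref{lemma:4}) is accurate and is implicit in the paper's one-line chain of equalities.
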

\begin{proof}
Induction on the length of $\s$. First suppose that $\s=\emptyset$. It suffices to consider
the case $x=e$, since otherwise $T(\s,x)$ is empty and there are no paths to consider.
By the previous definition and Lemma~\ref{lemma:12}, we have $\P(\emptyset)_\emptyset=\P_\emptyset=1$.

Now suppose that the length of $\s$ is nonzero.
Take a maximal path $\pi=\pi_1\cdots \pi_l$ in $T(\s,x)$ and consider its beginning $\pi'$.
We denote $x':=\ev(\pi')$. By Proposition~\ref{proposition:6}\ref{proposition:6:part:3},
we obtain that $\pi'$ is a maximal path in $T(\s',x')$. Therefore by the inductive hypothesis
$$
\P(\s')_{[\pi']}=\P_{\pi'}.
$$
From this equality, Lemma~\ref{lemma:12} and Definition~\ref{definition:11}, we get
$$
\P(\s)_{[\pi]}=\overline{\ev([\pi])(\alpha_l)\vphantom{A^A}}\,\P(\s')_{[\pi]'}
=\overline{\ev(\pi)(\alpha_l)}\,\P(\s')_{[\pi']}
=\overline{\ev(\pi)(\alpha_l)}\,\P_{\pi'}=\P_\pi.
$$
\end{proof}

The elements $\P_\pi$ we introduced are however very big. Our following aim is to divide all
$\P_\pi$ for paths $\pi$ ending at a fixed point by their common divisor as predicted by Proposition~\ref{proposition:7}.
To this end, we define elements $\D_\pi\in Q$ for any path $\pi$ in $T(\s,x)$ starting at a leaf as follows.

\begin{definition}\label{definition:12}
We set $\D_\emptyset:=1$. Let $\pi=\pi_1\cdots\pi_i$ be a path in $T(\s,x)$ of length $i>0$ starting at a leaf.
We define
$$
\D_\pi:=\left\{\arraycolsep=2pt
        \begin{array}{ll}
             \overline{\ev(\pi)(\r(\pi_i))}\,\D_{\pi'}&\text{ if }\c(\pi_i)=1;\\[6pt]
             \D_{\pi'}&\text{ if }\c(\pi_i)=0;\\[2pt]
             -\,\overline{\ev(\pi)(\r(\pi_i))}{\;\vphantom{A^{A^{A^A}}}}^{-1}\D_{\pi'}&\text{ if }\c(\pi_i)=-1.
        \end{array}\right.
$$
\end{definition}

For any element $x\in\widehat\W$, we define
$\D(x)$ to be the product of all labels of edges of $\widehat\G$ ending at $x$ each multiplied by $-1$ .
We make the following simple observation.

\begin{lemma}\label{lemma:13} Let $x\in\widehat\W$ and $\alpha\in\widehat\Pi$ such that $xs_\alpha<x$. Then
$\D(x)=\bar\gamma\,\D(xs_\alpha)$, where $\gamma$ is the unique negative root in the set $\{x(\alpha),-x(\alpha)\}$.
\end{lemma}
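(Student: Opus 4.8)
The plan is to unwind the definitions of $\D(x)$ and use the description of the edges ending at a fixed vertex of $\widehat\G$ in terms of the Bruhat order. Recall from Section~\ref{associated_moment_graph} that an edge of $\widehat\G$ ending at $x$ is of the form $\edgeright{s_\beta x}{\bar\beta}{x}$ for some $\beta\in\widehat R^+$ with $s_\beta x<x$, and conversely each such $\beta$ gives exactly one such edge. Thus $\D(x)$ is the product of the labels $-\bar\beta$ over all $\beta\in\widehat R^+$ with $s_\beta x<x$, and similarly $\D(xs_\alpha)$ is the product of $-\bar\beta$ over all $\beta\in\widehat R^+$ with $s_\beta(xs_\alpha)<xs_\alpha$.

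First I would set $u:=xs_\alpha$, so $u<x=us_\alpha$, and describe how the set $T_x:=\{\beta\in\widehat R^+\mid s_\beta x<x\}$ of ``descent reflections'' of $x$ compares with $T_u$. The standard fact (coming from the Exchange/Lifting properties, Propositions~\ref{proposition:exchange_property} and~\ref{proposition:lifting_property}) is that conjugation by $s_\alpha$, i.e. $\beta\mapsto s_\alpha(\beta)$ (using $s_{s_\alpha(\beta)}=s_\alpha s_\beta s_\alpha$ from~(\ref{eq:0})), together with a sign correction to stay in $\widehat R^+$, gives a bijection between $T_x$ and $T_u$ that fixes all elements except one: the reflection $s_{\bar\alpha}$ associated to the simple root. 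Concretely, $s_\beta u<u$ iff $s_\beta x<x$ for $\beta\ne$ (the root $\pm x(\alpha)$ lying in $\widehat R^+$), because $\ell(us_\alpha)=\ell(u)+1$ forces the discrepancy between the two inversion sets to be exactly the affine root $\gamma$ with $s_\gamma=s_{x(\alpha)}$, and $\gamma$ is the unique negative root in $\{x(\alpha),-x(\alpha)\}$ (equivalently, $\gamma=u(\alpha)\in\widehat R^+$ up to sign, but since $us_\alpha=x>u$ Proposition~\ref{proposition:0} makes $u(\alpha)>0$, so $\gamma=x(\alpha)<0$). Here I should be slightly careful: the labelling in $\widehat\G$ is $\bar\beta=\beta\otimes 1$, and $\bar\beta=\overline{-\beta}$ up to the sign which is absorbed into the ``$\times(-1)$'' in the definition of $\D$; so what matters is the multiset of labels $\{\bar\beta\}$, which is insensitive to replacing $\beta$ by $-\beta$, and is preserved by the action of $w\in\widehat\W$ only up to this sign — but again $\D$ already folds in all the signs, so the relation $\<w(\alpha),w(\beta)\>'=\<\alpha,\beta\>'$ from~(\ref{eq:0.25}) is not even needed, only the bijection of descent-reflection sets.

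So the key steps, in order, are: (1) rewrite $\D(x)=\prod_{\beta\in T_x}(-\bar\beta)$ and $\D(u)=\prod_{\beta\in T_u}(-\bar\beta)$; (2) establish the bijection $T_x\setminus\{\gamma\}\leftrightarrow T_u$ sending $\beta$ to whichever of $\pm s_\alpha(\beta)$ lies in $\widehat R^+$, and check that the associated labels match up to sign so that the corresponding factors $-\bar\beta$ are equal; (3) identify the ``extra'' element of $T_x$ as the root $\gamma$ characterized as the unique negative root among $\{x(\alpha),-x(\alpha)\}$ — this uses $u(\alpha)>0$ (from $u<us_\alpha$ and Proposition~\ref{proposition:0}) and $\gamma=-u(\alpha)=x(\alpha)\cdot(\pm1)$ together with $s_\gamma x = s_{u(\alpha)}u\cdot(\text{something})$; (4) conclude $\D(x)=(-\bar\gamma)\cdot\D(u)=\bar\gamma\,\D(u)$ — wait, one must watch the sign: $-\bar\gamma$ versus $\bar\gamma$. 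Since $\gamma<0$, we have $\bar\gamma = \gamma\otimes 1$, and $-\bar\gamma=(-\gamma)\otimes 1$ with $-\gamma>0$; the statement asserts $\D(x)=\bar\gamma\,\D(xs_\alpha)$, so I will double-check that the factor contributed by the extra edge $\edgeright{u}{?}{x}$ is exactly $\bar\gamma$ and not $-\bar\gamma$, i.e. that the edge label is $\overline{-\gamma}=-\bar\gamma$ and then the ``$\times(-1)$'' in $\D$ turns it into $\bar\gamma$. That bookkeeping is the one genuinely fiddly point.

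The main obstacle I anticipate is precisely this sign/normalization bookkeeping in step (2)–(4): keeping straight the three different sign conventions in play — the choice of $\widehat R^+$, the passage $\beta\mapsto\bar\beta$ which kills one sign, and the extra ``$\times(-1)$'' built into the definition of $\D(x)$ — and making sure the bijection of descent sets respects labels \emph{as unsigned elements of $\VF$}. The underlying combinatorial input (that $T_x$ and $T_{xs_\alpha}$ differ by exactly the reflection $s_{x(\alpha)}$ when $xs_\alpha>x$, modulo a conjugation bijection) is completely standard and follows from the Lifting Property; I would cite~\cite[Chapter~2]{Bjorner_Brenti} or simply Proposition~\ref{proposition:lifting_property} for it rather than reprove it. Everything else is a short computation, and the lemma will drop out by matching products factor-by-factor.
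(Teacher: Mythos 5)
Your high-level plan — compare the "descent-reflection" sets of $x$ and $u:=xs_\alpha$ and show they differ by exactly the root attached to $\gamma$ — is the right idea, and it is the same strategy the paper uses. But the specific mechanism you describe contains a genuine error, not mere sign bookkeeping. You propose the bijection $T_u\to T_x\setminus\{\cdot\}$ given by $\beta\mapsto \pm s_\alpha(\beta)$, and you assert that "the associated labels match up to sign so that the corresponding factors $-\bar\beta$ are equal." That is false: $\overline{s_\alpha(\beta)}$ and $\bar\beta$ are in general distinct, non-proportional elements of $\VF$, so a conjugation bijection would not let the two products telescope factor-by-factor. Concretely, take $\widehat\W$ of type $A_2$, $x=s_1s_2$, $\alpha=\alpha_2$, $u=s_1$. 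Then $T_u=\{\alpha_1\}$ and $T_x=\{\alpha_1,\ \alpha_1+\alpha_2\}$, while your map sends $\alpha_1\mapsto s_2(\alpha_1)=\alpha_1+\alpha_2$; it matches $\alpha_1\in T_u$ with $\alpha_1+\alpha_2\in T_x$ (whose labels differ), and it would leave $\alpha_1$ as the "extra" element of $T_x$, whereas the correct extra element is $\alpha_1+\alpha_2=-\gamma$. So the obstruction you flagged (sign bookkeeping) is not the real one; the real issue is that conjugation by $s_\alpha$ is the wrong map — it is the relation for \emph{left} multiplication by a simple reflection, not for passing from $x$ to $xs_\alpha$. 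Your own phrase "a bijection that fixes all elements except one" is also internally inconsistent with a conjugation: $s_\alpha(\beta)=\beta$ only in degenerate cases.

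The fix, which is also what the paper does, is simpler: the inclusion $T_u\subset T_x$ holds with the \emph{same} roots, no conjugation. If $\tau\in T_u$ then $s_\tau u<u<x$ and $xs_\alpha<x$, so Corollary~\ref{corollary:0}\ref{corollary:0:part:1} (applied with $w=x$, $s=s_\alpha$) gives $s_\tau x\le x$, which is strict since $s_\tau\ne e$; hence $\tau\in T_x$. On the other hand $s_\gamma x=s_{x(\alpha)}x=xs_\alpha<x$ by~(\ref{eq:0}), while $s_\gamma u=s_\gamma xs_\alpha=x>u$, so $-\gamma$ lies in $T_x$ but not in $T_u$; the count $|T_x|=\ell(x)=\ell(u)+1$ closes the argument. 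The paper phrases this with the set $Y(x)$ of \emph{negative} roots $\tau$ with $s_\tau x<x$, so that the $\times(-1)$ in the definition of $\D$ is absorbed into the signs and $\D(x)=\overline{\prod Y(x)}$; then $Y(x)=Y(xs_\alpha)\dotcup\{\gamma\}$ gives $\D(x)=\bar\gamma\,\D(xs_\alpha)$ at once.
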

\begin{proof} Denote by $Y(x)$ the set of all $\tau\in \widehat R^-$ such that $s_\tau x<x$.
So $\D(x)=\overline{\prod Y(x)\vphantom{\prod^a}}$.
By the exchange property, $|Y(x)|=\ell(x)$. It is enough to prove that
$$
Y(x)=Y(xs_\alpha)\dotcup\{\gamma\}.
$$
First, by~(\ref{eq:0}), we have
$$
s_\gamma x=s_{x(\alpha)}x=xs_\alpha x^{-1}x=xs_\alpha<x.
$$
Hence and from $\gamma<0$, we get $\gamma\in Y(x)$. This formula also implies $s_\gamma xs_\alpha=x>xs_\alpha$,
whence $\gamma\notin Y(xs_\alpha)$.

Now take $\tau\in Y(xs_\alpha)$. By definition, we have $s_\tau xs_\alpha<xs_\alpha<x$.
Corollary~\ref{corollary:0}\ref{corollary:0:part:1} implies $s_\tau x\le x$.
The equality is impossible as $s_\tau\ne1$. Hence $s_\tau x<x$ and $\tau\in Y(x)$.

We have proved that $Y(x)\supset Y(xs_\alpha)\dotcup\{\gamma\}$. The inverse inclusion follows from the fact
that the cardinalities of both sets equal $\ell(x)=\ell(xs_\alpha)+1$.
\end{proof}

\begin{lemma}\label{lemma:14}
$\D_\pi=\D(\ev(\pi))$.
\end{lemma}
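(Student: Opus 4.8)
The plan is to argue by induction on the length $i$ of the path $\pi$, stripping off the last edge $\pi_i$ at each step and using Lemma~\ref{lemma:13} to account for the extra root label that appears, with Proposition~\ref{proposition:0} deciding its sign. Throughout I would write $w:=\ev(\pi)$, and for $i>0$ set $\pi':=\pi_1\cdots\pi_{i-1}$ and $w':=\ev(\pi')$; recall $\r(\pi_i)=\alpha_i$ and $s_i=s_{\alpha_i}$. The base case $\pi=\emptyset$ is immediate: $\D_\emptyset=1$ by Definition~\ref{definition:12}, while $\ev(\emptyset)=e$ has no incoming edge in $\widehat\G$, so $\D(e)$ is the empty product $1$.

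For the inductive step I would assume $\D_{\pi'}=\D(w')$ and split on $\c(\pi_i)$. If $\c(\pi_i)=0$, then $[\pi]=[\pi']\sp$, hence $w=w'$, and Definition~\ref{definition:12} gives $\D_\pi=\D_{\pi'}=\D(w')=\D(w)$. If $\c(\pi_i)=1$, then by the construction of $T(\s,x)$ the edge $\pi_i$ is traversed upwards from a vertex with $\ev$-value $w'=ws_{\alpha_i}$, and this $w'$ is the shorter of $\{w,ws_i\}$, so $\ell(ws_{\alpha_i})<\ell(w)$ and $w(\alpha_i)<0$ by Proposition~\ref{proposition:0}; thus $w(\alpha_i)$ is itself the negative root of the pair $\{w(\alpha_i),-w(\alpha_i)\}$. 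Lemma~\ref{lemma:13} then gives $\D(w)=\overline{w(\alpha_i)}\,\D(w')$, and comparing with Definition~\ref{definition:12} and the inductive hypothesis, $\D_\pi=\overline{w(\alpha_i)}\,\D_{\pi'}=\overline{w(\alpha_i)}\,\D(w')=\D(w)$.

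The case $\c(\pi_i)=-1$ carries the only genuine sign computation. Here the construction of $T(\s,x)$ shows $\pi_i$ is traversed upwards from a vertex with $\ev$-value $w'=ws_{\alpha_i}$, but now $w'$ is the longer of $\{w,ws_i\}$, so $w's_{\alpha_i}=w<w'$ has smaller length and $w(\alpha_i)>0$ by Proposition~\ref{proposition:0}. Using $s_{\alpha_i}(\alpha_i)=-\alpha_i$ (a special case of~(\ref{eq:1})) and linearity of the action of $\widehat\W$ on $\widehat V$, one gets $w'(\alpha_i)=ws_{\alpha_i}(\alpha_i)=-w(\alpha_i)<0$, so the negative root of $\{w'(\alpha_i),-w'(\alpha_i)\}$ is $w'(\alpha_i)=-w(\alpha_i)$. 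Applying Lemma~\ref{lemma:13} to $w'$ and $\alpha_i$ (valid since $w's_{\alpha_i}=w<w'$) yields $\D(w')=\overline{-w(\alpha_i)}\,\D(w)=-\overline{w(\alpha_i)}\,\D(w)$, whence $\D(w)=-(\overline{w(\alpha_i)})^{-1}\D(w')$. Definition~\ref{definition:12} and the inductive hypothesis then give $\D_\pi=-(\overline{w(\alpha_i)})^{-1}\D_{\pi'}=-(\overline{w(\alpha_i)})^{-1}\D(w')=\D(w)$, closing the induction.

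The main obstacle I anticipate is keeping the signs straight: one must read off carefully from the definition of $T(\s,x)$ that a colour-$1$ edge of a path increases the $\ev$-value along it whereas a colour-$(-1)$ edge decreases it, since this is exactly what lets Proposition~\ref{proposition:0} tell us which of $w(\alpha_i)$ and $-w(\alpha_i)$ is the negative root entering Lemma~\ref{lemma:13}. Once that correspondence is pinned down, the three cases reduce to direct substitutions.
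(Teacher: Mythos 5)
Your proof is correct and follows the same route as the paper's: induction on the length of $\pi$, base case $\D(e)=1$, then a three-way case split on $\c(\pi_i)$ in which Proposition~\ref{proposition:0} determines the sign of $\ev(\pi)(\alpha_i)$ from the Bruhat comparison encoded in the tree, and Lemma~\ref{lemma:13} supplies the factor relating $\D(\ev(\pi))$ to $\D(\ev(\pi'))$. The only cosmetic difference is that you spell out the computation $w'(\alpha_i)=-w(\alpha_i)$ in the colour-$(-1)$ case, which the paper leaves implicit when it asserts $\D(xs_i)=-\overline{x(\alpha_i)}\,\D(x)$.
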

\begin{proof}
Induction on the length of $\pi$. We obviously have $\D_\emptyset=\D(\ev(\emptyset))=\D(e)=1$,
since no edge ends at $e$.

Now suppose that $\pi=\pi_1\cdots\pi_i$ is a path of length $i>0$. We set for brevity $x:=\ev(\pi)$.
We have $\alpha_i:=\r(\pi_i)$ and $s_i=s_{\alpha_i}$.

{\it Case 1: $\c(\pi_i)=1$.} In this case $x(\alpha_i)<0$. Indeed, if we had $x(\alpha_i)>0$
then $\ell(xs_i)>\ell(x)$ by Proposition~\ref{proposition:0}, which is a contradiction.
Hence by Definition~\ref{definition:12}, Lemma~\ref{lemma:13} and the inductive hypothesis,
we get
$$
\D_\pi=\overline{x(\alpha_i)}\,\D_{\pi'}=\overline{x(\alpha_i)}\,\D(xs_i)=\D(x).
$$

{\it Case 2: $\c(\pi_i)=0$.}  We have by Definition~\ref{definition:12} and the inductive hypothesis
$\D_\pi=\D_{\pi'}=\D(\ev(\pi'))=\D(x)$.

{\it Case 3: $\c(\pi_i)=-1$.} In this case $x(\alpha_i)>0$. Indeed, if we had $x(\alpha_i)<0$
then \linebreak$\ell(xs_i)<\ell(x)$ by Proposition~\ref{proposition:0}, which is a contradiction.
Lemma~\ref{lemma:13} implies $\D(xs_i)=-\overline{x(\alpha_i)}\D(x)$.
Hence by Definition~\ref{definition:12},  and the inductive hypothesis,
we get
$$
\D_\pi=-\,\overline{x(\alpha_i)}{\;\vphantom{A^{A^{A^A}}}}^{-1}\D_{\pi'}=-\,\overline{x(\alpha_i)}{\;\vphantom{A^{A^{A^A}}}}^{-1}\D(xs_i)=\D(x).
$$
\end{proof}
\noindent
This lemma shows that $\D_\pi\in S$ (although this is not obvious from the definition).

We will be especially interested in the quotient $\Q_\pi=\P_\pi/\D_\pi$.

\begin{lemma}\label{lemma:10.5}
$\Q_\emptyset=1$. Let $\pi=\pi_1\cdots\pi_i$ be a path in $T(\s,x)$ of length $i>0$ starting at a leaf.
Then
$$
\Q_\pi:=\left\{\arraycolsep=2pt
        \begin{array}{ll}
             \Q_{\pi'}&\text{ if }\c(\pi_i)=1;\\[6pt]
             \overline{\ev(\pi)(\r(\pi_i))}\,\Q_{\pi'}&\text{ if }\c(\pi_i)=0;\\[2pt]
             -\,\overline{\ev(\pi)(\r(\pi_i))}{\;\vphantom{A^{A^{A^A}}}}^2\Q_{\pi'}&\text{ if }\c(\pi_i)=-1.
        \end{array}\right.
$$
In particular, $\Q_\pi\in S$.
\end{lemma}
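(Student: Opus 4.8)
The plan is a routine induction on the length of the path $\pi$, obtained by unwinding the definition $\Q_\pi=\P_\pi/\D_\pi$ in the localization $Q$ with the help of Definitions~\ref{definition:11} and~\ref{definition:12}. First I would dispose of the base case: since $\P_\emptyset=\D_\emptyset=1$ we get $\Q_\emptyset=1$.

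For the inductive step, fix $\pi=\pi_1\cdots\pi_i$ with $i>0$ starting at a leaf and set $\beta:=\overline{\ev(\pi)(\r(\pi_i))}$. Note that $\beta$ is the image $\bar\gamma$ of the affine root $\gamma=\ev(\pi)(\r(\pi_i))\in\widehat R$, hence (as recalled in Section~\ref{associated_moment_graph}) a nonzero element of $S$, in particular invertible in $Q$. By Definition~\ref{definition:11} one has $\P_\pi=\beta\,\P_{\pi'}$ whatever the colour of $\pi_i$ is, so it only remains to combine this with the three cases of Definition~\ref{definition:12} for $\D_\pi$ and cancel:
\[
\Q_\pi=\frac{\P_\pi}{\D_\pi}=\frac{\beta\,\P_{\pi'}}{\D_\pi}=
\begin{cases}
\beta\,\P_{\pi'}/(\beta\,\D_{\pi'})=\Q_{\pi'}, & \c(\pi_i)=1,\\
\beta\,\P_{\pi'}/\D_{\pi'}=\beta\,\Q_{\pi'}, & \c(\pi_i)=0,\\
\beta\,\P_{\pi'}/(-\beta^{-1}\D_{\pi'})=-\beta^2\,\Q_{\pi'}, & \c(\pi_i)=-1,
\end{cases}
\]
which is exactly the asserted recursion (recall $\r(\pi_i)=\alpha_i$, so $\beta=\overline{\ev(\pi)(\alpha_i)}$).

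For the final assertion $\Q_\pi\in S$ I would simply run the same induction again, now using the recursion just established: $\Q_\emptyset=1\in S$, and each of the three rules turns $\Q_{\pi'}\in S$ into an honest $S$-multiple of it --- by $1$, by $\beta\in S$, or by $-\beta^2\in S$ --- never introducing a denominator. An alternative worth recording is to invoke Lemmas~\ref{lemma:7}, \ref{lemma:12} and~\ref{lemma:14}: $\P_\pi=\P(\s)_{[\pi]}$ is a product of $|\s|$ root images, while $\D_\pi=\D(\ev(\pi))$ is the product of the negated labels of the edges of $\widehat\G$ ending at $\ev(\pi)$, and one checks that the latter divides the former in $S$.

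Since everything here is a bookkeeping of the three colours, there is no genuine obstacle; the only points requiring a moment's attention are the sign-and-inverse juggling in the $\c(\pi_i)=-1$ case of Definition~\ref{definition:12}, which produces the factor $-\beta^2$, and the observation that $\beta\neq0$, so that the cancellations above are legitimate in $Q$.
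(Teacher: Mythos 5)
Your proof is correct and takes essentially the same approach as the paper, which states only that the result "follows from Definitions~\ref{definition:11} and~\ref{definition:12}"; you have simply carried out the straightforward division that the paper leaves implicit, and your treatment of the three colour cases, the nonvanishing of $\beta$, and the induction giving $\Q_\pi\in S$ are all as intended.
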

\begin{proof}
The result follows from Definitions~\ref{definition:11} and~\ref{definition:12}.
\end{proof}

{\bf Remark}. We can consider $\P_\pi$ for all maximal paths $\pi$ in a fixed tree $T(\s,x)$.
Lemmas~\ref{lemma:14} and~\ref{lemma:10.5} show that we can divide all these elements simultaneously by $\D(x)$ in~$S$
and calculate the quotient.

\subsection{Inclusion $\B(\s)_{[x]}\subset\B(\s)^x$} We briefly recall Fiebig's construction of the module $\Y(\s)$
dual to $\X(\s)$.

\begin{definition}[\text{\cite[Definition 6.6]{Fiebig_An_upper_bound}}]\label{definition:13}
We define for all sequences $\s$ in $\widehat\S$ the $S$-submodule $\Y(\s)\subset\bigoplus_{\sigma\in I(\s)}Q$
by the following inductive rule:
\begin{enumerate}
\itemsep=2pt
\item\label{definition:13:part:1} $\Y(\emptyset):=\bigoplus_{\sigma\in I(\emptyset)}S\cong S$;
\item\label{definition:13:part:2} if $\s=(s_1,\ldots,s_l)$ is not empty, then
$$
\Y(\s):=\Delta(\Y(\s'))+(c^{\alpha_l})^{-1}\Delta(\Y(\s')),
$$
where $\alpha_l\in\widehat\Pi$ is such that $s_l=s_{\alpha_l}$.
\end{enumerate}
\end{definition}

For any $S$-submodule $M\subset\bigoplus_{\sigma\in I(\s)}Q$, we define, following Fiebig, its dual by
$$
{\mathsf D}M:=\Biggl\{z\in\bigoplus_{\sigma\in I(\s)}Q\;\biggl|\;\sum\nolimits_{\sigma\in I(\s)}z_\sigma m_\sigma\in S\text{ for any }m\in M\Biggr\}.
$$
Similarly, for any $S$-submodule $N\subset\bigoplus_{\sigma\in I(\s)_x}Q$, we define its dual by
$$
{\mathsf D}N:=\Biggl\{z\in\bigoplus_{\sigma\in I(\s)_x}Q\;\biggl|\;\sum\nolimits_{\sigma\in I(\s)_x}z_\sigma n_\sigma\in S\text{ for any }n\in N\Biggr\}.
$$


\begin{proposition}[\text{\cite[Lemmas~6.8, 6.9 and 6.13]{Fiebig_An_upper_bound}}]\label{proposition:9}
${\mathsf D}\X(\s)=\Y(\s)$, ${\mathsf D}(\X(\s)^x)=\Y(\s)_x$ and $\X(\s)=P(\s)(\Y(\s))$.
\end{proposition}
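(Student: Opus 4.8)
The plan is to prove the three assertions by induction on $|\s|$, essentially independently of one another: first ${\mathsf D}\X(\s)=\Y(\s)$, then ${\mathsf D}(\X(\s)^x)=\Y(\s)_x$ as a formal consequence, and finally $\X(\s)=P(\s)(\Y(\s))$ by a separate induction. The structural point that makes all of this transparent is that multiplication by $c^{\alpha_l}$ on $\bigoplus_{\sigma\in I(\s)}Q$ \emph{anticommutes} with the involution $\iota$ of~(\ref{eq:19}): since $\alpha_l\in\widehat\Pi$ is simple we have $s_l(\alpha_l)=-\alpha_l$, hence $(c^{\alpha_l})_{\sigma s_l}=\overline{\ev(\sigma)s_l(\alpha_l)}=-\overline{\ev(\sigma)(\alpha_l)}=-(c^{\alpha_l})_{\sigma\sp}$. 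Writing $\mathfrak d$ for the action of $c^{\alpha_l}$ on the shorter module $\bigoplus_{\sigma\in I(\s')}Q$, this gives the identities $c^{\alpha_l}\Delta(g)=\Delta^-(\mathfrak d g)$, $c^{\alpha_l}\Delta^-(g)=\Delta(\mathfrak d g)$ and $(c^{\alpha_l})^{-1}\Delta(g)=\Delta^-(\mathfrak d^{-1}g)$, so that the inductive clauses for $\X$ and $\Y$ become the internal, $\iota$-graded direct sum decompositions
\[
\X(\s)=\Delta(\X(\s'))\oplus\Delta^-(\mathfrak d\,\X(\s')),\qquad \Y(\s)=\Delta(\Y(\s'))\oplus\Delta^-(\mathfrak d^{-1}\Y(\s')).
\]

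For ${\mathsf D}\X(\s)=\Y(\s)$ I would compute the defining pairing in these coordinates. If $z=\Delta(a)+\Delta^-(b)$ and $m=\Delta(c)+\Delta^-(d)$ with $a,b,c,d\in\bigoplus_{\sigma\in I(\s')}Q$, then splitting $I(\s)$ via~(\ref{eq:1.25}) and expanding, the cross terms cancel and $\sum_{\sigma\in I(\s)}z_\sigma m_\sigma=2\sum_{\sigma\in I(\s')}a_\sigma c_\sigma+2\sum_{\sigma\in I(\s')}b_\sigma d_\sigma$. Since $2$ is a unit of $S$ and the two $\iota$-homogeneous summands of $\X(\s)$ can be chosen independently, $z\in{\mathsf D}\X(\s)$ is equivalent to the two conditions $a\in{\mathsf D}\X(\s')$ and $\mathfrak d\,b\in{\mathsf D}\X(\s')$ (for the latter one uses $\sum b_\sigma(\mathfrak d d)_\sigma=\sum(\mathfrak d b)_\sigma d_\sigma$). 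By the inductive hypothesis ${\mathsf D}\X(\s')=\Y(\s')$ these read $a\in\Y(\s')$ and $b\in\mathfrak d^{-1}\Y(\s')$, i.e. $z\in\Y(\s)$; the base case is the obvious ${\mathsf D}S=S$.

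The identity ${\mathsf D}(\X(\s)^x)=\Y(\s)_x$ is then purely formal: for any $S$-submodule $M\subset\bigoplus_{\sigma\in I(\s)}Q$ and any $J\subset\widehat\W$ one has ${\mathsf D}(M^J)=({\mathsf D}M)_J$, because extending a function on $I(\s)_J$ by zero identifies $\bigoplus_{\sigma\in I(\s)_J}Q$ with the subspace of $\bigoplus_{\sigma\in I(\s)}Q$ of functions supported on $I(\s)_J$, and under this identification pairing a $z$ against $f|_{I(\s)_J}$ coincides with pairing its extension against $f$. Taking $M=\X(\s)$ and $J=\{x\}$ and invoking the first assertion gives the claim. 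Finally, for $\X(\s)=P(\s)(\Y(\s))$ I would induct once more from $P(\s)=c^{\alpha_l}\circ\Delta^{P(\s')}$, the compatibility~(\ref{eq:18}), and the fact (recorded just after the definition of $P(\s)$) that $P(\s')$ is a diagonal, hence $\mathcal Z$-linear, map, so it commutes with multiplication by $c^{\alpha_l}$ and by $\mathfrak d^{-1}$. Applied to $\Delta(\Y(\s'))$, the map $P(\s)$ yields $c^{\alpha_l}\Delta(P(\s')(\Y(\s')))=c^{\alpha_l}\Delta(\X(\s'))$; applied to $(c^{\alpha_l})^{-1}\Delta(\Y(\s'))=\Delta^-(\mathfrak d^{-1}\Y(\s'))$ it yields $c^{\alpha_l}\Delta^-(\mathfrak d^{-1}P(\s')(\Y(\s')))=c^{\alpha_l}\Delta^-(\mathfrak d^{-1}\X(\s'))=\Delta(\X(\s'))$, by the $\mathfrak d$-identities above. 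Adding the two, $P(\s)(\Y(\s))=\Delta(\X(\s'))+c^{\alpha_l}\Delta(\X(\s'))=\X(\s)$. Alternatively, since $P(\s)$ is diagonal with entry $\overline{\ev(\sigma)(\alpha_l)}\,\P(\s')_{\sigma'}$ at $\sigma$ by Lemma~\ref{lemma:12}, one can check coordinatewise that the element of $\Y(\s)$ with $\iota$-halves $g,g'\in\Y(\s')$ is sent to the element of $\X(\s)$ with halves $g',g$.

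I do not expect a genuine obstacle: this is Fiebig's result, and the work is confined to the disciplined bookkeeping of the three $\Delta/\Delta^-$ identities, the sign coming from $s_l(\alpha_l)=-\alpha_l$, and the harmless factor $2$ (which is precisely why one always assumes $\mathop{{\rm char}}\F\ne2$). The only mildly delicate point is keeping the localization straight --- $\Y(\s)$ genuinely involves denominators whereas $\X(\s)\subset\bigoplus_{\sigma}S$ --- but the diagonal automorphism $P(\s)$, whose entries are products of images $\bar\alpha$ of roots by Lemma~\ref{lemma:12}, is exactly what clears them.
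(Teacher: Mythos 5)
The paper gives no proof of this proposition---it is cited directly from Fiebig's \emph{An upper bound on exceptional characteristics} (Lemmas 6.8, 6.9, 6.13)---so there is no in-paper argument to compare against. Your blind proof is correct and self-contained: the anticommutation $c^{\alpha_l}\circ\iota=-\iota\circ c^{\alpha_l}$ (coming from $s_l(\alpha_l)=-\alpha_l$) is exactly the structural fact that converts the defining clauses of $\X(\s)$ and $\Y(\s)$ into the internal $\iota$-graded decompositions $\X(\s)=\Delta(\X(\s'))\oplus\Delta^-(\mathfrak d\,\X(\s'))$ and $\Y(\s)=\Delta(\Y(\s'))\oplus\Delta^-(\mathfrak d^{-1}\Y(\s'))$, the cross-term cancellation in the pairing makes ${\mathsf D}$ split across these summands (with the factor $2$ absorbed since $\mathop{\rm char}\F\ne2$), the formal identity ${\mathsf D}(M^J)=({\mathsf D}M)_J$ for duals of restrictions is a clean reduction of the second assertion to the first, and the computation for $P(\s)$ correctly exploits that $\Delta^{P(\s')}$ respects the $\iota$-grading (equation~(\ref{eq:18})) and that $P(\s')$, being diagonal, commutes with $\mathfrak d^{\pm1}$. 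This is in substance the same inductive strategy Fiebig uses; you have simply organized it more transparently around the two $\iota$-isotypic pieces.
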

\noindent
Here $P(\s)$ is the automorphism defined in Section~\ref{automorphism_P}. As we noted this automorphism
is diagonal. So it restricts to the automorphism $P(\s)_x$ of $\bigoplus_{\sigma\in I(\s)_x}Q$ defined by
$\bigl(P(\s)_x(f)\bigr)_\sigma:=\P(\s)_\sigma f_\sigma$ for any $f\in\bigoplus_{\sigma\in I(\s)_x}Q$
and $\sigma\in I(\s)_x$.
From Proposition~\ref{proposition:9}, we obviously get
\begin{equation}\label{eq:22}
P(\s)_x(\Y(\s)_x)=\X(\s)_x.
\end{equation}
Hence and from Proposition~\ref{proposition:9}, we can calculate the costalk $\X(\s)_x$ once we know the stalk $\X(\s)^x$.
This remarkable argument allowed Fiebig in~\cite{Fiebig_An_upper_bound} to obtain an upper bound for
the primes for which Lusztig's character formula does not hold.

Recall the basis $v_1,\ldots,v_n$ from Corollary~\ref{corollary:5}. Then there exist elements
$v'_1,\ldots,v'_n$ of ${\mathsf D}(\X(\s)^x)=\Y(\s)_x$ such that
$$
\sum_{\sigma\in I(\s)_x}(v_i)_\sigma(v'_j)_\sigma=\delta_{i,j}
$$
for any $i,j=1,\ldots,n$. A simple calculation show that $v'_1,\ldots,v'_n$ is a basis of $\Y(\s)_x$ and
$$
\left(\arraycolsep=0pt\begin{array}{c} v'_1\\\vdots\\v'_n\end{array}\right)=\Bigl(E(\s,x)^{-1}\Bigr)^{T}\!\left(\arraycolsep=0pt\begin{array}{c} e_1\\\vdots\\ e_n\end{array}\right).
$$
Consider the diagonal matrix $P(\s,x)$ whose $ii^{\text{th}}$-entry is $\P_{\pi^{(i)}}$ and elements
$v''_i:=P(\s)_x(v'_i)$. By~(\ref{eq:22}), we get that $v''_1,\ldots,v''_n$ is a basis of $\X(\s)_x$ and, using~(\ref{eq:21.5}), we get
$$
\left(\arraycolsep=0pt\begin{array}{c} v''_1\\\vdots\\v''_n\end{array}\right)=\Bigl(E(\s,x)^{-1}\Bigr)^TP(\s,x)\!\left(\arraycolsep=0pt\begin{array}{c} e_1\\\vdots\\ e_n\end{array}\right)
=\Bigl(E(\s,x)^{-1}\Bigr)^TP(\s,x)\;E(\s,x)^{-1}\left(\arraycolsep=0pt\begin{array}{c} v_1\\\vdots\\ v_n\end{array}\right).
$$

By Corollary~\ref{corollary:2}, we get that the matrix in the right-hand side is the transition matrix
from a basis of $\B(\s)^x$ to a basis of $\B(\s)_x$ if $\widehat\G_{J(\s)}$ satisfies the GKM-property.
Proposition~\ref{proposition:7} shows that to obtain the transition matrix from the same
basis of $\B(\s)^x$ to a basis of $\B(\s)_{[x]}$, we have to divide the above matrix by $\D(x)$.
Fortunately, we can divide $P(\s,x)$ by $\D(x)$
and calculate the quotient with the help of the function $\Q$ inductively defined in Lemma~\ref{lemma:10.5}.

Denote by $Q(\s,x)$ the diagonal matrix whose $ii^{\rm th}$-entry is $\Q_{\pi^{(i)}}$.

\begin{theorem}\label{theorem:4}
Let $\s$ be a sequence in $\widehat\S$ such that $\widehat\G_{J(\s)}$ satisfies the GKM-property.
Let $\pi^{(1)},\ldots,\pi^{(n)}$ be maximal paths in $T(\s,x)$ counted from left to right.
Then there exist a homogeneous basis of $\B(\s)^x$ with elements of degrees
$$
\deg\pi^{(1)},\ldots,\deg\pi^{(n)}
$$
and a homogeneous basis of $\B(\s)_{[x]}$
with elements of degrees
$$
2(|s|-\ell(x))-\deg\pi^{(1)},\ldots,2(|s|-\ell(x))-\deg\pi^{(n)}
$$
such that the transition matrix from the first basis to the second one is
$$
\Phi(\s,x):=(E(\s,x)^{-1})^TQ(\s,x)E(\s,x)^{-1}.
$$
\end{theorem}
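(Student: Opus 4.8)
The plan is to package together the constructions of the preceding subsections: everything needed has already been proved, and the proof consists of choosing the two bases explicitly and bookkeeping degrees. For the first basis I would take the one supplied by Corollary~\ref{corollary:5}, namely $v_i:=\b_{\pi^{(i)}}|_{I(\s)_x}$, which is a homogeneous $S$-basis of $\X(\s)^x$; by Lemma~\ref{lemma:4} and Definition~\ref{definition:9} each left tilted edge along $\pi^{(i)}$ contributes a factor of degree $2$ and the other edges contribute nothing, so $v_i$ is homogeneous of degree $\deg\pi^{(i)}$. Transporting through the degree-zero isomorphism of graded $S$-modules $\phi_x(\s)^{-1}\colon\X(\s)^x\to\B(\s)^x$ of Theorem~\ref{theorem:3} gives a homogeneous basis $w_i:=\phi_x(\s)^{-1}(v_i)$ of $\B(\s)^x$ with $\deg w_i=\deg\pi^{(i)}$, which is the first basis with the asserted degrees.

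For the second basis I would follow the discussion immediately preceding the theorem. Let $v'_1,\ldots,v'_n$ be the basis of $\Y(\s)_x={\mathsf D}(\X(\s)^x)$ (Proposition~\ref{proposition:9}) dual to $v_1,\ldots,v_n$ under the pairing; since that pairing is graded and non-degenerate, each $v'_i$ is homogeneous with $\deg v'_i=-\deg\pi^{(i)}$. Setting $v''_i:=P(\s)_x(v'_i)$, formula~(\ref{eq:22}) shows $v''_1,\ldots,v''_n$ is a basis of $\X(\s)_x$ with transition matrix $(E(\s,x)^{-1})^TP(\s,x)E(\s,x)^{-1}$ from $v_1,\ldots,v_n$; and since $P(\s)_x$ multiplies the $\sigma$-coordinate by $\P(\s)_\sigma$, a product of $|\s|$ images of affine roots by Lemma~\ref{lemma:12}, it raises degree uniformly by $2|\s|$, so $\deg v''_i=2|\s|-\deg\pi^{(i)}$. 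By Corollary~\ref{corollary:2} the map $\phi_x(\s)^{-1}$ restricts to an isomorphism $\X(\s)_x\to\B(\s)_x$, so $w''_i:=\phi_x(\s)^{-1}(v''_i)$ is a basis of $\B(\s)_x$ with the same transition matrix from $w_1,\ldots,w_n$. Since $\B(\s)$ is projective with support contained in $J(\s)$ (Lemma~\ref{lemma:9}), Proposition~\ref{proposition:7} applies: the edges of $\widehat\G$ ending at $x$ number $\ell(x)$ (the count used in the proof of Lemma~\ref{lemma:13}) and their labels multiply, up to the unit $(-1)^{\ell(x)}$, to $\D(x)$; hence $\B(\s)_x=\D(x)\,\B(\s)_{[x]}$, and multiplication by $\D(x)^{-1}$, legitimate since $\D(x)$ is a non-zero-divisor of $S$, is an isomorphism $\B(\s)_x\to\B(\s)_{[x]}$ of degree $-2\ell(x)$. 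Thus $w'''_i:=\D(x)^{-1}w''_i$ is a basis of $\B(\s)_{[x]}$ with $\deg w'''_i=2(|\s|-\ell(x))-\deg\pi^{(i)}$.

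It then remains to identify the transition matrix from $w_1,\ldots,w_n$ to $w'''_1,\ldots,w'''_n$, which is $\D(x)^{-1}(E(\s,x)^{-1})^TP(\s,x)E(\s,x)^{-1}$. Since $\D(x)^{-1}$ is a scalar it absorbs into the diagonal factor, and $\D(x)^{-1}P(\s,x)$ has $ii$-entry $\D(x)^{-1}\P_{\pi^{(i)}}$; as $\ev(\pi^{(i)})=x$ by Proposition~\ref{proposition:6}\ref{proposition:6:part:1}, Lemma~\ref{lemma:14} gives $\D_{\pi^{(i)}}=\D(\ev(\pi^{(i)}))=\D(x)$, so that entry equals $\P_{\pi^{(i)}}/\D_{\pi^{(i)}}=\Q_{\pi^{(i)}}$; hence $\D(x)^{-1}P(\s,x)=Q(\s,x)$ and the transition matrix is exactly $\Phi(\s,x)=(E(\s,x)^{-1})^TQ(\s,x)E(\s,x)^{-1}$. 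The genuinely delicate points, beyond this assembly of results already in hand, are the degree bookkeeping — in particular that $v'_i$ is homogeneous of degree $-\deg\pi^{(i)}$ and that $P(\s)_x$ shifts every degree by the same $2|\s|$ — and the sign check via Proposition~\ref{proposition:7} that $\B(\s)_x=\D(x)\B(\s)_{[x]}$ exactly.
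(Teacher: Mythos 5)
Your proof is correct and follows essentially the same route as the paper, which presents the argument as the discussion immediately preceding the theorem (constructing $v_i$, the dual basis $v'_i$, $v''_i=P(\s)_x(v'_i)$, invoking Corollary~\ref{corollary:2} and Proposition~\ref{proposition:7}, and replacing $P(\s,x)$ by $Q(\s,x)$ via Lemma~\ref{lemma:14}) rather than as a labeled proof. Your added degree bookkeeping — homogeneity of the $v'_i$ from non-degeneracy of the graded pairing, the uniform shift by $2|\s|$ under $P(\s)_x$, and the unit $(-1)^{\ell(x)}$ relating $\D(x)$ to the product of labels in Proposition~\ref{proposition:7} — is exactly what was left implicit in the paper and is handled correctly.
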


\section{Low rank cases}\label{low_rank_cases}

\subsection{Exchange and comparison of roots} For the calculations in this section, we shall use the following simple arguments.

\begin{lemma}[Exchange of roots]\label{lemma:15} Let $\s=(s_1,\ldots,s_l)$ be a sequence in $\widehat\S$.
Let $\pi=\pi_1\cdots\pi_m$ be a path in $T(\s,x)$ starting at a leaf such that $\pi_m$ is left tilted.
Then there exists a path $\rho=\rho_1\cdots\rho_m$ in $T(\s,x)$ such that
{\leftmargini=22pt
\begin{enumerate}
\itemsep=4pt
\item\label{lemma:15:property:1} $\rho_m$ is right tilted, starts at a leaf and ends at the same vertex as $\pi$.
\item\label{lemma:15:property:2} $[\rho']$ is obtained from $[\pi']$ by replacing the simple reflection at some position $i$ with~$\sp$.
\end{enumerate}
If $\rho$ is a path satisfying these conditions, then $\rho$ is called a descendant of $\pi$ and
the following equality is satisfied:
\begin{enumerate}
\setcounter{enumi}{2}
\item\label{lemma:15:property:3} $\ev(\pi_1\cdots\pi_{m-1})\bigl(\r(\pi_m)\bigr)=-\c(\pi_i)\ev(\rho_1\cdots\rho_{i-1})\bigl(\r(\rho_i)\bigr)$.
\end{enumerate}}
\end{lemma}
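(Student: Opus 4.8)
The plan is to strip off the last edge $\pi_m$ and work inside the full subtree rooted at its endpoint, which by Proposition~\ref{proposition:6}\ref{proposition:6:part:3} is $T((s_1,\ldots,s_m),\ev(a))$, where $a$ denotes that endpoint. Since $\pi_m$ is left tilted, the construction at level $m$ is necessarily in the two-arrows case, so $a$ also has exactly one right tilted edge ending at it; I would take this edge as $\rho_m$ and call its source $b$. Reading off the two pictures in Section~\ref{trees} one checks that in both admissible situations --- $\c(\pi_m)=-1$, which forces $\ev(a)<\ev(a)s_m$ and $\ev(\pi')=\ev(a)s_m$; and $\c(\pi_m)=0$, which forces $\ev(a)s_m<\ev(a)$ and $\ev(\pi')=\ev(a)$ --- one has $\ev(b)=\ev(\pi')s_m$ and $\ell(\ev(\pi')s_m)=\ell(\ev(\pi'))-1$; in particular $\ev(\pi')\ne e$.

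For the existence of a descendant I would write the non-blank entries of $[\pi']$ as $s_{p_1}\cdots s_{p_k}$ with $1\le p_1<\cdots<p_k\le m-1$, so that $\ev(\pi')=s_{p_1}\cdots s_{p_k}$. Because $\ell(\ev(\pi')s_m)<\ell(\ev(\pi'))$, the Exchange Property (Proposition~\ref{proposition:exchange_property}) yields an index $j_0$ with $\ev(\pi')s_m=s_{p_1}\cdots\widehat{s_{p_{j_0}}}\cdots s_{p_k}$. Put $i:=p_{j_0}$ and let $\tau$ be the subsequence of $(s_1,\ldots,s_{m-1})$ obtained from $[\pi']$ by replacing the reflection in position $i$ by $\sp$; then $\ev(\tau)=\ev(\pi')s_m=\ev(b)$, so by Proposition~\ref{proposition:6}\ref{proposition:6:part:1} there is a maximal path $\rho'$ in $T((s_1,\ldots,s_{m-1}),\ev(b))$ with $[\rho']=\tau$. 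The concatenation $\rho:=\rho'\rho_m$ is then a path in $T(\s,x)$ satisfying \ref{lemma:15:property:1}--\ref{lemma:15:property:2}.

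Now let $\rho$ be an arbitrary descendant of $\pi$, with its position $i$ as in \ref{lemma:15:property:2}; then $i$ is a non-blank position of $[\pi']$, say $i=p_{j_0}$. The paths $\pi$ and $\rho$ agree before level $i$, so $v:=\ev(\pi_1\cdots\pi_{i-1})=\ev(\rho_1\cdots\rho_{i-1})=s_{p_1}\cdots s_{p_{j_0-1}}$; and \ref{lemma:15:property:1} forces $\rho'$ to end at $b$, hence $s_{p_1}\cdots\widehat{s_{p_{j_0}}}\cdots s_{p_k}=\ev(\rho')=\ev(b)=(s_{p_1}\cdots s_{p_k})s_m$. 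Solving for $s_m$ gives $s_m=(s_{p_{j_0+1}}\cdots s_{p_k})^{-1}s_{p_{j_0}}(s_{p_{j_0+1}}\cdots s_{p_k})$, so by~\eqref{eq:0} and $s_{p_{j_0}}=s_{\alpha_i}$ we get $(s_{p_{j_0+1}}\cdots s_{p_k})(\alpha_m)=\pm\alpha_i$; applying $s_{p_1}\cdots s_{p_{j_0}}$ and using $s_{\alpha_i}(\alpha_i)=-\alpha_i$ (immediate from~\eqref{eq:1}) this becomes $\ev(\pi_1\cdots\pi_{m-1})(\r(\pi_m))=\ev(\pi')(\alpha_m)=\pm v(\alpha_i)$. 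Thus both sides of \ref{lemma:15:property:3} equal $\pm v(\alpha_i)$, i.e. $\pm$ the same affine root.

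It remains to match the sign, and here I would avoid carrying $\pm$ through the computation and instead argue by positivity. From the level-$i$ step of the tree construction one reads that $\c(\pi_i)=1$ iff $\ell(v\,s_{\alpha_i})=\ell(v)+1$ and $\c(\pi_i)=-1$ otherwise; by Proposition~\ref{proposition:0} this means $\c(\pi_i)=1$ iff $v(\alpha_i)>0$, whence $\c(\pi_i)\,v(\alpha_i)>0$ and $-\c(\pi_i)\,v(\alpha_i)\in\widehat R^-$. On the other side, if $\c(\pi_m)=-1$ then $\ev(\pi')=\ev(a)s_m$ with $\ev(a)<\ev(a)s_m$, so $\ev(a)(\alpha_m)>0$ by Proposition~\ref{proposition:0} and $\ev(\pi')(\alpha_m)=\ev(a)(s_m(\alpha_m))=-\ev(a)(\alpha_m)<0$; if $\c(\pi_m)=0$ then $\ev(\pi')=\ev(a)$ with $\ev(a)s_m<\ev(a)$, so again $\ev(a)(\alpha_m)<0$ by Proposition~\ref{proposition:0}. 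Hence $\ev(\pi_1\cdots\pi_{m-1})(\r(\pi_m))$ also lies in $\widehat R^-$. Two negative roots each equal to $\pm v(\alpha_i)$ must coincide, since $\widehat R$ is reduced, and this gives \ref{lemma:15:property:3}. The main obstacle I foresee is purely bookkeeping: correctly reading the two tree pictures to pin down $\ev(b)=\ev(\pi')s_m$, $\ev(\pi')\in\{\ev(a),\ev(a)s_m\}$, and the rules governing $\c(\pi_i),\c(\pi_m)$; once those are in place, the Weyl-group manipulations are routine.
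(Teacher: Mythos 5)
Your proposal is correct and follows essentially the same approach as the paper's proof: identify the right tilted edge meeting $\pi_m$, use the Exchange Property to produce the missing subsequence and hence (via Proposition~\ref{proposition:6}) the descendant $\rho$, conjugate with~\eqref{eq:0} to get $\ev(\pi')(\alpha_m)=\pm\ev(\rho_1\cdots\rho_{i-1})(\alpha_i)$, and pin the sign using Proposition~\ref{proposition:0}. The only cosmetic differences are that the paper reads $\ev(\pi')s_m<\ev(\pi')$ directly from the two-arrows picture (rather than by case analysis on $\c(\pi_m)$) and records the ambiguous sign as $\epsilon$ before matching it to $\c(\pi_i)$; also, your invocation of ``$\widehat R$ is reduced'' is unnecessary---you only need $v(\alpha_i)\ne 0$, which is automatic.
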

\begin{proof}
As usual, we assume that $s_i=s_{\alpha_i}$, where $\alpha_i\in\widehat\Pi$.
Denote by $\rho_m$ the unique right tilted edge ending at the same vertex as $\pi_m$.
Let $a$ and $b$ be the beginnings of $\rho_m$ and $\pi_m$ respectively.
We set $y:=\ev(a)=$ and $z:=\ev(b)=\ev(\pi')$. By our construction $y<z$ and $zs_m=y$.
So we get
\begin{equation}\label{eq:20.5}
\ev(\pi')s_m<\ev(\pi').
\end{equation}
By the exchange property,
$\ev(\pi')s_m=\ev(u)$, where $u$ is the sequence obtained from $[\pi']$
by replacing its $i^{\text{th}}$ entry with $\sp$ for some $i$. Hence $u\in I((s_1,\ldots,s_{m-1}))_y$.
By Proposition~\ref{proposition:6}\ref{proposition:6:part:1}, there exists a maximal path $\rho'$ in
$T((s_1,\ldots,s_{m-1}),y)$ such that $[\rho']=u$.

By Proposition~\ref{proposition:6}\ref{proposition:6:part:3} the full subtree of $T(\s,x)$ with root $a$
is $T((s_1,\ldots,s_{m-1}),y)$. So we can consider the path $\rho:=\rho'\rho_m$ in $T(\s,x)$.
Properties~\ref{lemma:15:property:1} and~\ref{lemma:15:property:2} are automatically satisfied
for this choice of $\rho$.

Let us write $u=v\sp w$, where $v$ is the beginning of $u$ of length $i-1$. Then $[\pi']=vs_iw$.
We get
$$
\ev(v)s_i\ev(w)s_m=\ev(\pi')s_m=zs_m=y=\ev(u)=\ev(v)\ev(w).
$$
Hence $\ev(w)s_m\ev(w)^{-1}=s_i$. By~(\ref{eq:0}), we get
$s_{\ev(w)(\alpha_m)}=s_{\alpha_i}$. As we noted in Section~\ref{Affine_root_system}, we get from this equality
that $\ev(w)(\alpha_m)=\epsilon\alpha_i$ for some $\epsilon\in\{1,-1\}$.
Multiplying this equality first by $s_i$ and then by $\ev(v)$ on the left,
we get
\begin{equation}\label{eq:21.75}
\ev(\pi')(\alpha_m)=\ev(v)s_i\ev(w)(\alpha_m)=-\epsilon\ev(v)(\alpha_i).
\end{equation}
By~(\ref{eq:20.5}) and Proposition~\ref{proposition:0}, $\ev(\pi_1\cdots\pi_{m-1})(\alpha_m)<0$.
On the other hand, $\ev(v)=\ev(\pi_1\cdots\pi_{i-1})$. Hence $\ev(v)(\alpha_i)>0$ if $\c(\pi_i)=1$
and $\ev(v)(\alpha_i)<0$ if $\c(\pi_i)=-1$, the case $\c(\pi_i)=0$ being impossible.
In the first case $\epsilon=1$ and in the second case $\epsilon=-1$.
It remains to apply~(\ref{eq:21.75}) and recall that $\ev(v)=\ev(\rho_1\cdots\rho_{i-1})$.
\end{proof}

We shall also use the following method to compare roots. Let $x,y\in\widehat\W$ and $\alpha,\beta\in\widehat R$.
Then by~(\ref{eq:1}), we get
$$
xs_\alpha y(\beta)-xy(\beta)=x\Bigl(s_\alpha\bigl(y(\beta)\bigr)-y(\beta)\Bigr)=-\<y(\beta),\alpha\>'x(\alpha).
$$
We apply~(\ref{eq:0.25}) and get
\begin{equation}\label{eq:23}
xs_\alpha y(\beta)-xy(\beta)=-\<xy(\beta),x(\alpha)\>'x(\alpha)=\<xs_\alpha y(\beta),x(\alpha)\>'x(\alpha).
\end{equation}


\subsection{$\mathbf{2\times2}$-matrices}\label{2times2} Let $\s=(s_1,\ldots,s_l)$ be a sequence in $\widehat\S$ such that
the expression $w:=s_1\cdots s_l$ is reduced and $\widehat\G_{\le w}$ satisfies the GKM-property.
As usual, we assume that $s_i=s_{\alpha_i}$, where $\alpha_i\in\widehat\Pi$.

We want to calculate the defect of the projection $\rho_{x,\delta x}:\B(\s)^x\to\B(\s)^{\delta x}$
in the case when the ungraded rank of $\B(\s)^x$ is 2.
In this case, $|I(\s)_x|=2$ and the tree $T(\s,x)$ has two maximal paths
$\pi^{(1)}=\pi^{(1)}_1\cdots\pi^{(1)}_l$ and $\pi^{(2)}=\pi^{(2)}_1\cdots\pi^{(2)}_l$ labelled from left to right.
Denote by $k$ be the level of the vertex where $\pi^{(1)}$ and $\pi^{(2)}$ merge.
By our construction $\ell\bigl(\ev(\pi^{(2)}_1\cdots\pi^{(2)}_{k-1})s_k\bigr)<\ell\bigl(\ev(\pi^{(2)}_1\cdots\pi^{(2)}_{k-1})\bigr)$.
This inequality and the fact that the expression $w=s_1\cdots s_l$ is reduced imply that
there is some $i=1,\ldots,k-1$ with $\c(\pi^{(2)}_i)=0$.

By Corollary~\ref{corollary:1} and Theorem~\ref{theorem:4} the defect $\d(\rho_{x,\delta x})$ can be read off the zero degree entries
of the matrix $\Phi(\s,x)=(E(\s,x)^{-1})^TQ(\s,x)E(\s,x)^{-1}$.
The multiplication rule for paths given in Lemma~\ref{lemma:11} implies that
$$
E(\s,x)=\(
\arraycolsep=2pt
\begin{array}{cc}
1&1\\[3pt]
0&\;\overline{z(\alpha_k)}
\end{array}\),
$$
where $z=\ev(\pi^{(2)}_1\cdots\pi^{(2)}_{k-1})$. We shall consider only the case when $\Phi(\s,x)$
has entries of degree $0$, since otherwise $\d(\rho_{x,\delta x})=0$. Then $\deg\Q_{\pi^{(1)}}=\deg\Q_{\pi^{(2)}}\le 4$.
Since $\c(\pi^{(2)}_i)=0$, we get $\c(\pi^{(2)}_k)=0$ and the colors of all other edges of $\pi^{(2)}$ equal $1$.
Lemma~\ref{lemma:15} implies that $\pi^{(1)}_1\cdots\pi^{(1)}_k$ is a descendant of
$\pi^{(2)}_1\cdots\pi^{(2)}_k$. Hence $\c(\pi^{(1)}_i)=0$ and $\c(\pi^{(1)}_j)=0$ for some
$j\in\{1,\ldots,k-1\}\setminus\{i\}$.

We claim that $j<i$. Indeed suppose that on the contrary $i<j$.
Then $s_1\cdots \hat s_i\cdots s_{k-1}=z=\ev(\pi^{(1)}_1\cdots\pi^{(1)}_{k-1})s_k=s_1\cdots \hat s_i\cdots \hat s_j\cdots s_k$.
Hence $s_j\cdots s_{k-1}=s_{j+1}\cdots s_k$, which contradicts the fact that $w=s_1\cdots s_l$ is a reduced expression.
Graphically, our situation is as follows:

\def\emptyshort{
\put(0,0){\line(0,1){6}}
\put(0,11){\circle*{1}}\put(0,16){\circle*{1}}\put(0,21){\circle*{1}}
\put(0,26){\vector(0,1){7}}
}

\def\emptylong{
\put(0,-170){\line(0,1){25}}
\put(0,-135){\circle*{1}}
\put(0,-124){\circle*{1}}
\put(0,-113){\circle*{1}}
\put(0,-102){\vector(0,1){23}}
}

\def\emptylongg{
\put(0,-168){\line(0,1){23}}
\put(0,-135){\circle*{1}}
\put(0,-124){\circle*{1}}
\put(0,-113){\circle*{1}}
\put(0,-102){\vector(0,1){23}}
}

\def\casetwobytwo{
\put(0,34){\circle*{4}}
\put(4,32){$\scriptstyle x$}
\put(0,0){\circle*{4}}
\put(0,0)\emptyshort
\put(-20,-20){\vector(1,1){19}}
\put(-20,-20){\circle*{4}}
\put(-28,-21.5){$\scriptstyle y$}
\put(20,-20){\vector(-1,1){19}}
\put(20,-20){\circle*{4}}
\put(24.5,-21.5){$\scriptstyle z$}
\put(-17,-10){$\scriptstyle1$}
\put(-12,-17){$\scriptstyle\alpha_k$}
\put(14,-10){$\scriptstyle0$}
\put(4,-17){$\scriptstyle\alpha_k$}
%
\put(20,-54)\emptyshort
\put(20,-54){\circle*{4}}
\put(20,-78){\vector(0,1){23}}
\put(22,-70){$\scriptstyle0$}
\put(10,-69){$\scriptstyle\alpha_i$}
\put(20,-78){\circle*{4}}
\put(24.5,-80){$\scriptstyle z_1$}
%
\put(-20,-54)\emptyshort
\put(-20,-54){\circle*{4}}
\put(-20,-78){\vector(0,1){23}}
\put(-26,-70){$\scriptstyle0$}
\put(-18,-69){$\scriptstyle\alpha_i$}
\put(-20,-78){\circle*{4}}
\put(-31.5,-80){$\scriptstyle y_1$}
\put(-20,-112)\emptyshort
%
%
\put(-20,-136){\circle*{4}}
\put(-20,-136){\vector(0,1){23}}
\put(-26,-128){$\scriptstyle0$}
\put(-18,-127){$\scriptstyle\alpha_j$}
\put(-20,-112){\circle*{4}}
\put(-31.5,-138){$\scriptstyle y_2$}
%
%
\put(-20,-170)\emptyshort
\put(-20,-170){\circle*{4}}
\put(-28,-171.5){$\scriptstyle e$}
\put(20,0)\emptylong
\put(20,-170){\circle*{4}}
\put(25,-171.5){$\scriptstyle e$}
%
}

\def\casetwobytwoprime{
\put(0,0){\circle*{4}}
\put(0,0)\emptyshort
\put(-20,-20){\vector(1,1){19}}
\put(-20,-20){\circle*{4}}
\put(-28,-21.5){$\scriptstyle y'$}
\put(20,-20){\vector(-1,1){19}}
\put(20,-20){\circle*{4}}
\put(24.5,-21.5){$\scriptstyle z'$}
\put(-17,-10){$\scriptstyle1$}
\put(-12,-17){$\scriptstyle\alpha_k$}
\put(14,-10){$\scriptstyle0$}
\put(4,-17){$\scriptstyle\alpha_k$}
%
\put(20,-54)\emptyshort
\put(20,-54){\circle*{4}}
\put(20,-78){\vector(0,1){23}}
\put(22,-70){$\scriptstyle0$}
\put(10,-69){$\scriptstyle\alpha_i$}
\put(20,-78){\circle*{4}}
\put(24.5,-80){$\scriptstyle z'_1$}
%
\put(-20,-54)\emptyshort
\put(-20,-54){\circle*{4}}
\put(-20,-78){\vector(0,1){23}}
\put(-26,-70){$\scriptstyle0$}
\put(-18,-69){$\scriptstyle\alpha_i$}
\put(-20,-78){\circle*{4}}
\put(-31.5,-80){$\scriptstyle y'_1$}
\put(-20,-112)\emptyshort
%
%
\put(-20,-136){\circle*{4}}
\put(-20,-136){\vector(0,1){23}}
\put(-26,-128){$\scriptstyle0$}
\put(-18,-127){$\scriptstyle\alpha_j$}
\put(-20,-112){\circle*{4}}
\put(-31.5,-138){$\scriptstyle y'_2$}
%
%
\put(-20,-170)\emptyshort
\put(-20,-170){\circle*{4}}
\put(-28,-171.5){$\scriptstyle e$}
\put(20,0)\emptylong
\put(20,-170){\circle*{4}}
\put(24,-171.5){$\scriptstyle e$}
%
}

\vspace{32pt}
\begin{center}
\begin{picture}(0,0)
\put(0,0)\casetwobytwo
\end{picture}
\end{center}

\vspace{180pt}

\noindent
Here $y,y_1,y_2,z_1,e$ are the elements of $\widehat\W$ corresponding to the vertices closest to them:
$$
\arraycolsep=20pt
\begin{array}{lll}
y=s_1\ldots\hat s_j\cdots\hat s_i\cdots s_{k-1},& y_1=s_1\cdots \hat s_j\cdots s_{i-1},& y_2=s_1\cdots s_{j-1}, \\[12pt]
z_1=s_1\cdots s_{i-1},                          & z=s_1\cdots\hat s_i\cdots s_{k-1}.   &
\end{array}
$$
All unmarked edges in the picture above have color $1$.
By Lemma~\ref{lemma:10.5}, we get
$$
Q(\s,x)=\(
\arraycolsep=2pt
\begin{array}{cc}
\overline{y_1(\alpha_i)}\,\overline{y_2(\alpha_j)}\;&0\\[3pt]
0&\;\overline{z(\alpha_k)}\,\overline{z_1(\alpha_i)}
\end{array}\).
$$
Hence
$$
\Phi(\s,x)=\(
\arraycolsep=2pt
\begin{array}{cc}
\overline{y_1(\alpha_i)}\,\overline{y_2(\alpha_j)}\;&-\frac{\overline{y_1(\alpha_i)}\,\overline{y_2(\alpha_j)}}{\overline{z(\alpha_k)}^{\vphantom{2}}}\\[10pt]
-\frac{\overline{y_1(\alpha_i)}\,\overline{y_2(\alpha_j)}}{\overline{z(\alpha_k)}^{\vphantom{2}}}&\;\frac{\overline{y_1(\alpha_i)}\,\overline{y_2(\alpha_j)}+\overline{z(\alpha_k)}\,\overline{z_1(\alpha_i)}}{\overline{z(\alpha_k)}^2\vphantom{A^{A^{A^a}}}}
\end{array}\).
$$
By part~\ref{lemma:15:property:3} of Lemma~\ref{lemma:15}, we get $z(\alpha_k)=-y_2(\alpha_j)$. Therefore,
we can simplify the above matrix as follows
$$
\Phi(\s,x)=\(
\arraycolsep=2pt
\begin{array}{cc}
\overline{y_1(\alpha_i)}\,\overline{y_2(\alpha_j)}\;&\overline{y_1(\alpha_i)}\\[10pt]
\overline{y_1(\alpha_i)}&\;\frac{-\overline{y_1(\alpha_i)}+\overline{z_1(\alpha_i)}}{\overline{z(\alpha_k)}^{\vphantom{2}}}
\end{array}\).
$$
Now~(\ref{eq:23}) implies that
\begin{equation}\label{eq:25}
z_1(\alpha_i)-y_1(\alpha_i)=-\<y_1(\alpha_i),y_2(\alpha_j)\>'y_2(\alpha_j)=\<y_1(\alpha_i),y_2(\alpha_j)\>'z(\alpha_k).
\end{equation}
Hence the only entry of $\Phi(\s,x)$ having degree $0$ is
$$
\frac{-\overline{y_1(\alpha_i)}+\overline{z_1(\alpha_i)}}{\overline{z(\alpha_k)}^{\vphantom{2}}}=\overline{\<y_1(\alpha_i),y_2(\alpha_j)\>'}.
$$
\noindent
Consider the following elements of $\widehat\W_{<w}$:
$$
u=s_1\cdots\hat s_j\cdots s_l,\qquad\quad v=s_1\cdots\hat s_k\cdots s_l.
$$
We have $u\ne v$, since otherwise $s_{j+1}\cdots s_k=s_j\cdots s_{k-1}$, which contradicts
the fact that the expression $w=s_1\cdots s_l$ is reduced.
So we have two different edges $\edgeright x{\pm\overline{z_1(\alpha_i)}}v$
and $\edgeright x{\pm\overline{y_1(\alpha_i)}}u$ of $\widehat\G_{\le w}$.
By~(\ref{eq:25}), the labels of this edges are equal unless $\overline{\<y_1(\alpha_i),y_2(\alpha_j)\>'}\ne0$.
Hence and from Corollary~\ref{corollary:1}, we get the following result.

\begin{lemma}\label{lemma:2by2} Let $\s=(s_1,\ldots,s_l)$ be a sequence in $\widehat\S$ such that
the expression $w:=s_1\cdots s_l$ is reduced and $\widehat\G_{\le w}$ satisfies the GKM-property.
If the ungraded rank of $\B(\s)^x$ is $2$ {\rm(}i.e. $|I(\s)_x|=2${\rm)},
then the defect of the projection $\rho_{x,\delta x}:\B(\s)^x\to\B(\s)^{\delta x}$ is $v^{-2}$
if $\ell(x)=l-2$ and $0$ otherwise.
\end{lemma}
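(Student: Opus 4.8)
The plan is to feed the matrix $\Phi(\s,x)$ computed just above into Corollary~\ref{corollary:1}. First I would record that $\ker\rho_{x,\delta x}=\B(\s)_{[x]}$, that $\rho_{x,\delta x}:\B(\s)^x\to\B(\s)^{\delta x}$ is surjective (Lemma~\ref{lemma:3}, as $\B(\s)$ is projective), and that by Theorem~\ref{theorem:4} both $\B(\s)^x$ and $\B(\s)_{[x]}$ are free with homogeneous bases $v_1,v_2$ in degrees $\deg\pi^{(1)},\deg\pi^{(2)}$ and $v''_1,v''_2$ in degrees $2(l-\ell(x))-\deg\pi^{(1)},\,2(l-\ell(x))-\deg\pi^{(2)}$, the transition matrix between them being $\Phi(\s,x)$. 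Corollary~\ref{corollary:1} then gives $\d(\rho_{x,\delta x})=\sum_n\rk_\F A^{(n)}\,v^{-n}$, where $A^{(n)}$ collects the constant terms of the (necessarily degree-$0$) entries of $\Phi(\s,x)$ that express a degree-$n$ vector $v''_i$ through a degree-$n$ vector $v_j$. So the whole problem reduces to locating the degree-$0$ entries of $\Phi(\s,x)$ and evaluating their constant terms.

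Next I would dispose of the case $\ell(x)\ne l-2$. Since $|I(\s)_x|=2$ and $w=s_1\cdots s_l$ is reduced one has $\ell(x)\le l-2$. By Lemmas~\ref{lemma:14} and~\ref{lemma:10.5} each $\Q_{\pi^{(i)}}$ is homogeneous of degree $\deg\P_{\pi^{(i)}}-\deg\D(x)=2l-2\ell(x)$, so $Q(\s,x)$ has both diagonal entries of degree $2(l-\ell(x))$; moreover $\pi^{(2)}$ carries a colour-$0$ edge below the merging level $k$ (forced by $w$ reduced) besides its left tilted edge at level $k$, so this common degree is $\ge4$. As explained in the discussion above, a degree-$0$ entry of $\Phi(\s,x)$ can only occur when this degree equals $4$; hence if $\ell(x)<l-2$ there is no such entry, every $A^{(n)}$ vanishes, and $\d(\rho_{x,\delta x})=0$. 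This is the ``$0$ otherwise'' half.

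When $\ell(x)=l-2$ I would use the explicit matrix
\[
\Phi(\s,x)=\begin{pmatrix}\overline{y_1(\alpha_i)}\,\overline{y_2(\alpha_j)}&\overline{y_1(\alpha_i)}\\ \overline{y_1(\alpha_i)}&\overline{\<y_1(\alpha_i),y_2(\alpha_j)\>'}\end{pmatrix}
\]
obtained above, whose derivation already uses~(\ref{eq:25}) and part~\ref{lemma:15:property:3} of Lemma~\ref{lemma:15}. With $\deg\pi^{(1)}=0$ and $\deg\pi^{(2)}=2$ (read off the colour pattern of $T(\s,x)$ established above; the leftmost maximal path uses only vertical and right tilted edges, and $\pi^{(2)}$ has exactly one left tilted edge), the $(2,2)$-entry is of degree $2(l-\ell(x))-2\deg\pi^{(2)}=0$ and is the only entry of degree $0$; it expresses $v''_2$ through $v_2$, both of degree $2$. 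The key point is that the scalar $\overline{\<y_1(\alpha_i),y_2(\alpha_j)\>'}$ is nonzero: if it vanished then~(\ref{eq:25}) would give $\overline{z_1(\alpha_i)}=\overline{y_1(\alpha_i)}$, so the two edges $\edgeright x{\pm\overline{z_1(\alpha_i)}}v$ and $\edgeright x{\pm\overline{y_1(\alpha_i)}}u$ of $\widehat\G_{\le w}$ --- distinct because $u=s_1\cdots\hat s_j\cdots s_l\ne s_1\cdots\hat s_k\cdots s_l=v$ by reducedness of $w$ --- would carry proportional labels, contradicting the GKM-property of $\widehat\G_{\le w}$. Therefore $A^{(2)}=\bigl(\overline{\<y_1(\alpha_i),y_2(\alpha_j)\>'}\bigr)$ has rank $1$ while $A^{(n)}=0$ for $n\ne2$, and Corollary~\ref{corollary:1} yields $\d(\rho_{x,\delta x})=v^{-2}$.

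The step I expect to be the main obstacle is the degree bookkeeping used in the second paragraph: one must be sure, arguing only from the combinatorics of the tree $T(\s,x)$ and the colour pattern of its two maximal paths, that a degree-$0$ entry of $\Phi(\s,x)=(E(\s,x)^{-1})^TQ(\s,x)E(\s,x)^{-1}$ can arise only when $2(l-\ell(x))=4$, i.e.\ $\ell(x)=l-2$, and in that case that $\pi^{(2)}$ has exactly two colour-$0$ edges with precisely one of them left tilted (so $\deg\pi^{(2)}=2$ and $E(\s,x)=\left(\begin{smallmatrix}1&1\\0&\overline{z(\alpha_k)}\end{smallmatrix}\right)$ has the shape used above); one must also confirm that $\edgeright x{}u$ and $\edgeright x{}v$ really are edges of $\widehat\G$ with the indicated labels. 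All the remaining ingredients --- the explicit shape of $\Phi(\s,x)$ and its simplification --- are already in place in the discussion preceding the lemma.
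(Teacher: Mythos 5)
Your proof is correct and takes essentially the same route as the paper: the paper treats the entire preceding discussion of Section~\ref{2times2} (the tree picture, the explicit computation of $E(\s,x)$, $Q(\s,x)$ and $\Phi(\s,x)$, the simplification via Lemma~\ref{lemma:15}\ref{lemma:15:property:3} and~(\ref{eq:25}), and the GKM argument for the nonvanishing of $\overline{\<y_1(\alpha_i),y_2(\alpha_j)\>'}$) as the proof, closing with a one-line appeal to Corollary~\ref{corollary:1}; your write-up simply makes explicit the scaffolding (surjectivity of $\rho_{x,\delta x}$ via Lemma~\ref{lemma:3}, $\ker\rho_{x,\delta x}=\B(\s)_{[x]}$, the degree bookkeeping feeding Corollary~\ref{corollary:1}), which is all as intended. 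The one small stylistic difference is your justification of $\ell(x)\le l-2$: you state it directly as a consequence of $|I(\s)_x|=2$ and reducedness of $s_1\cdots s_l$ (which is true: two subwords of a reduced word giving the same length-$(l-1)$ element would contradict reducedness), whereas the paper obtains it from the colour pattern of $\pi^{(2)}$, namely that $\pi^{(2)}$ necessarily has a colour-$0$ edge below level $k$ together with the left tilted edge at level $k$; both arguments are equivalent, and you in fact also cite the colour-pattern argument. One thing worth keeping in mind: your remark that ``a degree-$0$ entry of $\Phi(\s,x)$ can only occur when this degree equals $4$'' quietly uses that, in the rank-$2$ case, $\deg\pi^{(1)}=0$ and $\deg\pi^{(2)}=2$ always, because $T(\s,x)$ with two leaves has exactly one left tilted edge; you do note this but it deserves to be flagged since it is what fixes $\Phi_{22}$ as the entry of minimal degree.
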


If the product $s_1\cdots s_l$ is not reduced, then there are two additional cases. In both of them
$\d(\rho_{x,\delta x})=0$.

\subsection{$\mathbf{3\times3}$-matrices} Consider the same situation as in Section~\ref{2times2} with the only difference that
the ungraded rank of $\B(\s)^x$ is $3$. A priori, the following cases are possible:

\vspace{33pt}
\begin{center}
\begin{picture}(0,0)
\put(0,34){\circle*{4}}
\put(0,0)\emptyshort
%
\put(0,0){\circle*{4}}
\put(-20,-20){\vector(1,1){19}}
\put(-20,-20){\circle*{4}}
\put(-28,-21.5){$\scriptstyle y$}
\put(20,-20){\vector(-1,1){19}}
\put(20,-20){\circle*{4}}
\put(24.5,-21.5){$\scriptstyle z$}
\put(-11,-17){$\scriptstyle\alpha_t$}
\put(4,-17){$\scriptstyle\alpha_t$}
%
\put(20,-54)\emptyshort
\put(20,-55){\circle*{4}}
\put(40,-75){\vector(-1,1){19}}
\put(24,-72){$\scriptstyle\alpha_k$}
\put(0, -75){\vector(1,1){19}}
\put(9, -72){$\scriptstyle\alpha_k$}
\put(40, -75){\circle*{4}}
\put(0, -75){\circle*{4}}
%
\put(40,-109)\emptyshort
\put(40,-109){\circle*{4}}
%
%
\put(0,-109)\emptyshort
\put(0,-109){\circle*{4}}
\put(-20,-109){\circle*{4}}
\put(-20,58)\emptylongg
\put(-5,-132){\small Case~1}
\end{picture}
\hspace{180pt}
\begin{picture}(0,0)
\put(0,34){\circle*{4}}
\put(0,0)\emptyshort
\put(0,0){\circle*{4}}
\put(-20,-20){\vector(1,1){19}}
\put(-20,-20){\circle*{4}}
\put(-28,-21.5){$\scriptstyle a$}
\put(20,-20){\vector(-1,1){19}}
\put(20,-20){\circle*{4}}
\put(-11,-17){$\scriptstyle\alpha_t$}
\put(4,-17){$\scriptstyle\alpha_t$}
\put(-20,-54)\emptyshort
\put(-20,-55){\circle*{4}}
  \put(0,-75){\vector(-1,1){19}}
\put(-40,-75){\vector(1,1){19}}
\put(-40,-75){\circle*{4}}
  \put(0,-75){\circle*{4}}
\put(-16,-72){$\scriptstyle\alpha_k$}
\put(-31,-72){$\scriptstyle\alpha_k$}
%
%
\put(-40,-109)\emptyshort
\put(-40,-109){\circle*{4}}
%
%
%
\put(0,-109)\emptyshort
\put(0,-109){\circle*{4}}
%
\put(-23,-132){\small Case~2}

\put(20,-109){\circle*{4}}
\put(20,58)\emptylongg
\end{picture}
\end{center}
\vspace{140pt}

We claim that the first case is impossible. Indeed, we have two elements $\sigma$ and $\tau$ of $I((s_1,\ldots,s_{t-1}))_z$
such that $\sigma=\tilde\sigma s_k\rho$ and $\tau=\tilde\tau\sp\rho$, where $|\tilde\sigma|=|\tilde\tau|=k-1$.
Since $y=z s_t<z$, we can apply the exchange property to both representations $z=\ev(\sigma)$ and $z=\ev(\tau)$.
We get that there are numbers $a,b=1,\ldots,t-1$ such that $y=\ev(\sigma_a)$ and $y=\ev(\tau_b)$,
where $\sigma_a$ is obtained from $\sigma$ by replacing its $a^{\text{th}}$ entry $s_a$ by $\sp$ and
$\tau_b$ is obtained from $\tau$ by replacing its $b^{\text{th}}$ entry $s_b$ by $\sp$.

First notice that if $a>k$ or $b>k$ then $\ev(\rho)s_t=\ev(\hat\rho)$, where $\hat\rho$ is obtained from $\rho$
by replacing one simple reflection with $\sp$. Then we get two different representations $y=\ev(\tilde\sigma s_k\hat\rho)$
and $y=\ev(\tilde\tau\sp\hat\rho)$ contrary to our picture. So $a\le k$ and $b\le k$.

The case where $a<k$ and $b<k$ is impossible, as we would get $\sigma_a\ne\tau_b$, whence $y=\ev(\sigma_a)$ and
$y=\ev(\tau_b)$ are different representations. On the other hand, $b<k$ in any case,
as the $k^{\text{th}}$ entry of $\tau$ is already $\sp$. Hence we get $a=k$.
This equality implies that $\ev(\rho)s_t=s_k\ev(\rho)$. We have $y=\ev(\tilde\sigma\sp\rho)$ and
$$
y=zs_t=\ev(\tilde\tau\sp\rho)s_t=\ev(\tilde\tau)\ev(\rho)s_t=\ev(\tilde\tau)s_k\ev(\rho)=\ev(\tilde\tau s_k\rho),
$$
which are again different representations of $y$. This contradicts the picture of case~1.

So only case~2 is possible. Consider the projection $\rho_{x,\delta x}:\B(\s)^x\to\B(\s)^{\delta x}$.
Clearly $d(\rho_{x,\delta x})=0$ if $\Phi(\s,x)$ does not contain entries of degree $0$.
Hence we consider only the case where $\deg\Q_\pi\le4$ for any maximal path $\pi$ in $T(\s,x)$.

Denote by $\pi^{(1)},\pi^{(2)},\pi^{(3)}$ the maximal paths in $T(\s,x)$ labelled
from left to right. Let us apply the arguments of Section~\ref{2times2} to the subtree with root $a$
(see the right picture above). Hence we get $\c(\pi^{(1)}_j)=0$, $\c(\pi^{(1)}_i)=0$, $\c(\pi^{(2)}_i)=0$,
$\c(\pi^{(2)}_k)=0$ for some $j<i<k$. Taking into account $\deg\Q_\pi\le4$, we get that the color of any other
edge of $\pi^{(1)}$ and $\pi^{(2)}$ has color $1$. In particular,
$\deg\Q_{\pi^{(1)}}=\deg\Q_{\pi^{(2)}}=\deg\Q_{\pi^{(3)}}=4$ and $\c(\pi^{(3)}_t)=0$.

Now the only possibility to get $\deg\Q_{\pi^{(3)}}=4$ is that $\c(\pi^{(3)}_q)=0$ for some $q<t$ and the
colors of all edges of $\pi^{(3)}$ except $\pi^{(3)}_q$ and $\pi^{(3)}_t$ is $1$.
As $\pi^{(1)}_1\cdots\pi^{(1)}_{t-1}$ or $\pi^{(2)}_1\cdots\pi^{(2)}_{t-1}$
is a descendant of $\pi^{(3)}_1\cdots\pi^{(3)}_{t-1}$, we have $q=k$ or $q=i$ or $q=j$.
The last two cases are impossible, as the expression $w=s_1\cdots s_l$ is reduced.
So $q=k$ and we get the only possible picture

\vspace{33pt}
\begin{center}
\begin{picture}(0,0)
\put(0,0)\emptyshort
\put(0,0){\circle*{4}}
\put(4,32){$\scriptstyle x$}
\put(0,34){\circle*{4}}
\put(-27,-22){$\scriptstyle y$}
\put(-20,-20){\vector(1,1){19}}
\put(-20,-20){\circle*{4}}
\put(29.5,-20){\vector(-3,2){28.5}}
\put(29.5,-20){\circle*{4}}
\put(33,-22){$\scriptstyle z$}
\put(-17,-10){$\scriptstyle1$}
\put(-12,-17){$\scriptstyle\alpha_t$}
\put(18,-10){$\scriptstyle0$}
\put(11,-17){$\scriptstyle\alpha_t$}
\put(29.5,-54)\emptyshort
\put(29.5,-54){\circle*{4}}
\put(29.5,-74){\circle*{4}}
\put(29.5,-74){\vector(0,1){19}}
\put(32,-67){$\scriptstyle0$}
\put(19,-66){$\scriptstyle\alpha_k$}
\put(33.5,-76){$\scriptstyle z_1$}
\put(29.5,-224){\circle*{4}}
\put(33.5,-225.5){$\scriptstyle e$}
\put(29.5,-224){\line(0,1){45}}
\put(29.5,-163){\circle*{1}}\put(29.5,-147){\circle*{1}}\put(29.5,-131){\circle*{1}}
\put(29.5,-115){\vector(0,1){40}}
\put(-20,-54)\casetwobytwoprime
\end{picture}
\end{center}

\vspace{230pt}

\noindent
Here again all unmarked edges have color $1$.
The multiplication rule for paths given in Lemma~\ref{lemma:11} implies that
$$
E(\s,x)=\(
\arraycolsep=2pt
\begin{array}{ccc}
1&1&1\\[6pt]
0&\;\overline{z'(\alpha_k)}&\;\frac{\overline{z'(\alpha_k)}+\overline{z_1(\alpha_k)}}2\\[6pt]
0&0\;&\overline{z(\alpha_t)}
\end{array}\).
$$
By Lemma~\ref{lemma:10.5}, we get
$$
Q(\s,x)=\(
\arraycolsep=2pt
\begin{array}{ccc}
\overline{y'_1(\alpha_i)}\,\overline{y'_2(\alpha_j)}\;&0\;&0\\[6pt]
0&\;\overline{z'(\alpha_k)}\,\overline{z'_1(\alpha_i)}\;&0\\[6pt]
0&\;0\;&\overline{z(\alpha_t)}\,\overline{z_1(\alpha_k)}
\end{array}\).
$$
\noindent
Lemma~\ref{lemma:15} implies the following relations:
$$
z(\alpha_t)=-z'_1(\alpha_i),\qquad z'(\alpha_k)=-y'_2(\alpha_j)
$$
and formula~(\ref{eq:23}) the following:
$$
z_1(\alpha_k)-z'(\alpha_k)=-\<z'(\alpha_k),z'_1(\alpha_i)\>'z'_1(\alpha_i),\quad
z'_1(\alpha_i)-y'_1(\alpha_i)=\<z'_1(\alpha_i),y'_2(\alpha_j)\>'y'_2(\alpha_j).
$$
We can express all the roots occurring in $E(\s,x)$ and $Q(\s,x)$ via $y'_1(\alpha_i)$ and $y'_2(\alpha_j)$
as follows:
\begin{equation}\label{eq:26}
\begin{array}{l}
z'(\alpha_k)=-y'_2(\alpha_j),\quad z'_1(\alpha_i)=y'_1(\alpha_i)+by'_2(\alpha_j),\\[8pt]
\hspace{90pt}z(\alpha_t)=-y'_1(\alpha_i)-by'_2(\alpha_j),\quad z_1(\alpha_k)=ay'_1(\alpha_i)+(ab-1)y'_2(\alpha_j),
\end{array}
\end{equation}{
where $a=\<y'_2(\alpha_j),z'_1(\alpha_i)\>'$ and $b=\<z'_1(\alpha_i),y'_2(\alpha_j)\>'$.
Plugging this into the formulas for $E(\s,x)$ and $Q(\s,x)$, we get
$$
\Phi(\s,x)=\left(
\begin{array}{ccc}
\overline{y'_1(\alpha_i)}\,\overline{y'_2(\alpha_j)}&\overline{y'_1(\alpha_i)}&\frac{a\overline{y'_1(\alpha_i)}}2\\[14pt]
\overline{y'_2(\alpha_j)}&-\bar b&1-\frac{\bar a\bar b}2\\[14pt]
\frac{a\overline{y'_1(\alpha_i)}}2&1-\frac{\bar a\bar b}2&-\frac{\bar a^2\bar b}4
\end{array}
\right)
$$
The determinant of the zero degree part is $\bar a\bar b-1$. 
Consider the following elements of $\widehat\W_{<w}$:
$$
u=s_1\cdots\hat s_j\cdots s_l,\qquad\quad v=s_1\cdots\hat s_t\cdots s_l.
$$
As the expression $w=s_1\cdots s_l$ is reduced, we get $u\ne v$.
So we have two different edges $\edgeright x{\pm\overline{z_1(\alpha_k)}}v$
and $\edgeright x{\pm\overline{y'_1(\alpha_i)}}u$ of $\widehat\G_{\le w}$.
By the last formula of~(\ref{eq:26}), the labels of these edges are proportional unless $\bar a\bar b-1\ne0$.
Hence and from Corollary~\ref{corollary:1}, we get the following result.

\begin{lemma}\label{lemma:3by3} Let $\s=(s_1,\ldots,s_l)$ be a sequence in $\widehat\S$ such that
the expression $w:=s_1\cdots s_l$ is reduced and $\widehat\G_{\le w}$ satisfies the GKM-property.
If the ungraded rank of $\B(\s)^x$ is $3$ {\rm(}i.e. $|I(\s)_x|=3${\rm)},
then the defect of the projection $\rho_{x,\delta x}:\B(\s)^x\to\B(\s)^{\delta x}$ is $2v^{-2}$
if $\ell(x)=l-2$ and $0$ otherwise.
\end{lemma}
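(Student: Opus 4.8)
The plan is to run the machinery of Section~\ref{low_rank_cases} exactly as in the $2\times2$ case (Lemma~\ref{lemma:2by2}), the genuinely new content being the combinatorics of $T(\s,x)$. By Corollary~\ref{corollary:1} together with Theorem~\ref{theorem:4}, $\d(\rho_{x,\delta x})$ is read off the homogeneous degree-$0$ entries of $\Phi(\s,x)=(E(\s,x)^{-1})^TQ(\s,x)E(\s,x)^{-1}$: writing $\pi^{(1)},\dots,\pi^{(3)}$ for the maximal paths of $T(\s,x)$ from left to right, the entry $\Phi(\s,x)_{ij}$ is homogeneous of degree $2(l-\ell(x))-\deg\pi^{(i)}-\deg\pi^{(j)}$ and lies in $S$, so this degree is $\ge0$, and $\d(\rho_{x,\delta x})=\sum_n\rk_\F A^{(n)}\,v^{-n}$, where $A^{(n)}$ is the matrix of the scalars $\overline{\Phi(\s,x)_{ij}}$ over the pairs $i,j$ with $\deg\pi^{(i)}=\deg\pi^{(j)}=n$. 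Using that $E(\s,x)$ is upper triangular with first row $(1,\dots,1)$ (Lemma~\ref{lemma:5.5}), so that $\deg\pi^{(1)}=0$, and that $\deg\Q_\pi=2(l-\ell(x))$ for every maximal path $\pi$ (since $\P_\pi$ has degree $2l$ and $\D_\pi=\D(\ev(\pi))=\D(x)$ has degree $2\ell(x)$), a degree-$0$ entry can occur only among the "large-degree" paths and forces those paths to have degree $l-\ell(x)$; if no such entry exists, then $\d(\rho_{x,\delta x})=0$.

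Next I would determine the shape of $T(\s,x)$. As $\rk\nolimits'\B(\s)^x=\sum_\pi v^{-\deg\pi}$ has exactly three terms, $T(\s,x)$ has three leaves: two of the maximal paths split off at a vertex of some level $t$ and the third splits off from their common initial part at a vertex of level $k<t$; a priori this level-$t$ vertex lies above or below the level-$k$ vertex along the common part, which are the two cases pictured. I would first rule out ``Case~1'' by applying the Exchange Property (Proposition~\ref{proposition:exchange_property}) to the two subsequences of $(s_1,\dots,s_{t-1})$ evaluating to the level-$t$ branch vertex and using that $w=s_1\cdots s_l$ is reduced: this produces a second distinct representation of that vertex, contradicting the picture. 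In ``Case~2'' I would apply the $2\times2$ analysis of Section~\ref{2times2} to the full subtree rooted at the level-$t$ branch vertex $a$, legitimate by Proposition~\ref{proposition:6}\ref{proposition:6:part:3}, obtaining colour-$0$ edges at levels $j<i$ on the two lower maximal paths; then, imposing the existence of a degree-$0$ entry of $\Phi(\s,x)$ (equivalently, $\deg\Q_\pi\le4$ for each maximal path) together with reducedness of $w$, I would pin down the unique admissible tree, which is the last picture above. In particular this forces $\ell(x)=l-2$ and $\deg\pi^{(1)}=0$, $\deg\pi^{(2)}=\deg\pi^{(3)}=2$; hence when $\ell(x)\ne l-2$ there is no degree-$0$ entry and $\d(\rho_{x,\delta x})=0$, which is the ``$0$ otherwise'' clause.

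With the tree fixed, the computation proceeds as displayed: $E(\s,x)$ comes from the path-multiplication rule (Lemma~\ref{lemma:11}), $Q(\s,x)$ from Lemma~\ref{lemma:10.5}, the identities $z(\alpha_t)=-z'_1(\alpha_i)$ and $z'(\alpha_k)=-y'_2(\alpha_j)$ from Lemma~\ref{lemma:15}\ref{lemma:15:property:3}, and the differences $z_1(\alpha_k)-z'(\alpha_k)$, $z'_1(\alpha_i)-y'_1(\alpha_i)$ are expanded via~(\ref{eq:23}); plugging in yields the explicit $\Phi(\s,x)$ whose scalar entries form the $2\times2$ block in rows and columns $2,3$, with determinant $\bar a\bar b-1$ where $a=\<y'_2(\alpha_j),z'_1(\alpha_i)\>'$ and $b=\<z'_1(\alpha_i),y'_2(\alpha_j)\>'$. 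I would then invoke the GKM-property of $\widehat\G_{\le w}$ twice: first, $\overline{y'_1(\alpha_i)}$ and $\overline{y'_2(\alpha_j)}$ label two edges of $\widehat\G_{\le w}$ sharing the vertex $y'_1$, hence are non-proportional, so that $\overline{z_1(\alpha_k)}=\bar a\,\overline{y'_1(\alpha_i)}+(\bar a\bar b-1)\overline{y'_2(\alpha_j)}$ is proportional to $\overline{y'_1(\alpha_i)}$ precisely when $\bar a\bar b-1=0$; second, the distinct elements $u=s_1\cdots\hat s_j\cdots s_l$ and $v=s_1\cdots\hat s_t\cdots s_l$ of $\widehat\W_{<w}$ give distinct edges $\edgeright x{\pm\overline{y'_1(\alpha_i)}}u$ and $\edgeright x{\pm\overline{z_1(\alpha_k)}}v$ sharing the vertex $x$, so their labels are non-proportional, forcing $\bar a\bar b-1\ne0$. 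Therefore the $2\times2$ scalar block of $\Phi(\s,x)$ is invertible, $\rk_\F A^{(2)}=2$, and $\d(\rho_{x,\delta x})=2v^{-2}$.

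The main obstacle is the purely combinatorial step of the second paragraph: showing that ``Case~1'' cannot occur and that ``Case~2'' together with the existence of a degree-$0$ entry of $\Phi(\s,x)$ leaves exactly one tree (in particular $\ell(x)=l-2$). All the interplay between the Exchange/Deletion properties, the reducedness of $w$, and the colour bookkeeping of $T(\s,x)$ is concentrated there; once the tree is known, reading off $E(\s,x)$, $Q(\s,x)$, $\Phi(\s,x)$, extracting its scalar block and its determinant, and performing the two non-proportionality checks are routine, exactly parallel to Lemma~\ref{lemma:2by2}.
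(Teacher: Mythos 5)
Your proposal follows the paper's proof essentially step for step: the same case analysis on the shape of $T(\s,x)$ (ruling out the branching on the ``short'' side via the exchange property and reducedness of $w$, then pinning down the unique admissible tree under $\deg\Q_\pi\le4$), the same identification of $E(\s,x)$ and $Q(\s,x)$, the same simplifications via Lemma~\ref{lemma:15}\ref{lemma:15:property:3} and~(\ref{eq:23}) to reduce everything to $y'_1(\alpha_i)$, $y'_2(\alpha_j)$, the same determinant $\bar a\bar b-1$, and the same GKM argument at $x$ with $u=s_1\cdots\hat s_j\cdots s_l$ and $v=s_1\cdots\hat s_t\cdots s_l$ to force $\bar a\bar b-1\ne0$. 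Your extra first GKM invocation (non-proportionality of $\overline{y'_1(\alpha_i)}$ and $\overline{y'_2(\alpha_j)}$ at $y'_1$) is sound but not actually needed: if $\bar a\bar b-1=0$ then $\overline{z_1(\alpha_k)}=\bar a\,\overline{y'_1(\alpha_i)}$ is proportional to $\overline{y'_1(\alpha_i)}$ unconditionally, and that one direction already gives the contradiction.
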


\subsection{Decomposition of $\B(\s)$} In view of~(\ref{eq:def}), Lemmas~\ref{lemma:2by2} and~\ref{lemma:3by3}
imply the following result.

\begin{corollary}\label{corollary:6}
Let $\s=(s_1,\ldots,s_l)$ be a sequence in $\widehat\S$ such that
the expression $w:=s_1\cdots s_l$ is reduced and $\widehat\G_{\le w}$ satisfies the GKM-property.
Then
$$
\B(\s)\cong\B(w)\,\oplus\!\sum_{l(x)=l(w)-2\atop|I(\s)_x|=2\vphantom{A^{A^a}}}\!\B(x)\<-2\>\,\oplus\!\sum_{l(x)=l(w)-2\atop|I(\s)_x|=3\vphantom{A^{A^a}}}\!2\,\B(x)\<-2\>\oplus\B(x_1)\<r_1\>\oplus\cdots\oplus\B(x_n)\<r_n\>,
$$
where $|I(\s)_{x_i}|>3$ for any $i=1,\ldots,n$.
\end{corollary}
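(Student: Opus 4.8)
The plan is to reduce the statement to a computation of the defect $\d(\B(\s))$ and then apply the uniqueness part of Proposition~\ref{proposition:3}. By Lemma~\ref{lemma:9} (whose hypothesis holds since $J(\s)\subset\widehat\W_{\le w}$), the sheaf $\B(\s)$ is well-defined and projective with support contained in $J(\s)$; hence by Proposition~\ref{proposition:3} it is isomorphic to $\B(z_1)\<r_1\>\oplus\cdots\oplus\B(z_s)\<r_s\>$, and by~(\ref{eq:def}) the multiset of pairs $(z_i,r_i)$ is determined by $\d(\B(\s))=\sum_{x\in\widehat\W}\d(\rho_{x,\delta x})\,x$ (Definition~\ref{definition:6}). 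It therefore suffices to evaluate $\d(\rho_{x,\delta x})$ for each $x$, and since the ungraded rank of $\B(\s)^x$ is $|I(\s)_x|$ (Theorem~\ref{theorem:3} and Corollary~\ref{corollary:5}), I would split the computation according to the value of $|I(\s)_x|$.

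The cases $|I(\s)_x|\in\{2,3\}$ are precisely Lemmas~\ref{lemma:2by2} and~\ref{lemma:3by3}: there $\d(\rho_{x,\delta x})$ equals $v^{-2}$, respectively $2v^{-2}$, when $\ell(x)=l-2$, and vanishes otherwise. If $|I(\s)_x|=0$ then $\B(\s)^x=0$ and $\d(\rho_{x,\delta x})=0$. If $|I(\s)_x|=1$, the tree $T(\s,x)$ has a single maximal path $\pi^{(1)}$; a two-arrow vertex would produce two nonempty subtrees and hence at least two maximal paths, so $\pi^{(1)}$ passes only through one-arrow vertices and in particular has no left tilted edge, whence $\deg\pi^{(1)}=0$. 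By Theorem~\ref{theorem:4} this means $\B(\s)^x\cong S$ is generated in degree $0$ while $\B(\s)_{[x]}$ is generated in degree $2(l-\ell(x))$. For $x\ne w$ one has $\ell(x)<l$, so $\B(\s)_{[x]}\subset\m\,\B(\s)^x$ and $\d(\rho_{x,\delta x})=0$; for $x=w$ the set $\E^{\delta w}$ contains no edge of $\widehat\G_{\supp\B(\s)}$ (as $\supp\B(\s)\subset\widehat\W_{\le w}$), so $\B(\s)^{\delta w}=0$ and $\rho_{w,\delta w}\colon S\to0$ is the zero map, giving $\d(\rho_{w,\delta w})=1$. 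For $|I(\s)_x|\ge4$ I would not compute anything, merely noting that $\d(\rho_{x,\delta x})$ is some element of $\Z[v,v^{-1}]$.

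Collecting the contributions gives
$$\d(\B(\s))=w+\!\!\!\sum_{\ell(x)=l-2,\;|I(\s)_x|=2}\!\!\!v^{-2}x\;+\!\!\!\sum_{\ell(x)=l-2,\;|I(\s)_x|=3}\!\!\!2v^{-2}x\;+\!\!\!\sum_{|I(\s)_x|\ge4}\!\!\!\d(\rho_{x,\delta x})\,x,$$
and matching this against $v^{r_1}z_1+\cdots+v^{r_s}z_s$, using $\d(\B(x)\<r\>)=v^rx$ and the uniqueness in Proposition~\ref{proposition:3}, yields the asserted isomorphism; the final block $\B(x_1)\<r_1\>\oplus\cdots\oplus\B(x_n)\<r_n\>$ is simply the relabelling of the summands coming from the vertices with $|I(\s)_x|\ge4$, so that automatically $|I(\s)_{x_i}|>3$ for every $i$ (a single such vertex may contribute several of these summands, and some vertices may contribute none). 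The genuinely substantive input is the rank~$2$ and rank~$3$ analysis already encapsulated in Lemmas~\ref{lemma:2by2} and~\ref{lemma:3by3} — in particular the identification, via the exchange-of-roots Lemma~\ref{lemma:15} and formula~(\ref{eq:23}), of the degree-zero entries of $\Phi(\s,x)$ — so within the present corollary the only point really needing care is the bookkeeping in the rank-one case, namely confirming that $w$ contributes exactly the single summand $\B(w)$ and that no other rank-one vertex produces a summand.
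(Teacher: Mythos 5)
Your proof is correct and follows essentially the same route as the paper's (which simply invokes~(\ref{eq:def}) together with Lemmas~\ref{lemma:2by2} and~\ref{lemma:3by3}); your additional bookkeeping for the $|I(\s)_x|\in\{0,1\}$ cases — in particular checking via Theorem~\ref{theorem:4} and Corollary~\ref{corollary:1} that only $x=w$ contributes among the rank-one vertices — is exactly the detail the paper leaves implicit.
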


Note that our constructions do not depend on the characteristic of $\F$ as long as it is not~$2$ and we have
the corresponding GKM-property. Therefore, we can convert our knowledge of the characters of Bott-Samelson
modules for $\mathop{\rm char}\F=0$ into the statement on these characters for positive characteristic.

\begin{definition}\label{definition:14}
Fix some $n\in\mathbb N$. The unit $e\in\widehat\W$ is always $n$-reachable.

An element $w\in\widehat\W$
is $n$-reachable if and only if there exists some reduced expression $w=s_1\cdots s_l$ with $s_i\in\widehat\S$
such that $(H_{s_1}+v)\cdots(H_{s_l}+v)=H_w+\sum_{x<w}f_{x,w}H_x$ and one of the following conditions holds for any $x<w$:
\begin{enumerate}
\itemsep=3pt
\item $f_{x,w}(1)\le n$ and $x$ is $n$-reachable.
\item $f_{x,w}\in v\Z[v]$.
\end{enumerate}
\end{definition}
\noindent
Obviously, any element of $\widehat\W$ is $n$-reachable for suitable $n$.

\begin{corollary}\label{corollary:7}
If $w\in\widehat\W$ is $3$-reachable and such that
$\widehat\G_{\le w}$ satisfies the GKM-property, then $v^{\ell(w)}h(\B(w))=\underline H_w$.
\end{corollary}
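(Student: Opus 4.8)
The plan is to argue by induction on $\ell(w)$. If $w=e$, then $\B(e)$ has $\B(e)^e=S$ and all other stalks zero, so $h(\B(e))=H_e=\underline H_e$ and $v^{\ell(e)}h(\B(e))=\underline H_e$. For the inductive step, fix a $3$-reachable $w\ne e$ with $\widehat\G_{\le w}$ GKM and choose a reduced expression $w=s_1\cdots s_l$ realizing $3$-reachability as in Definition~\ref{definition:14}; put $\s=(s_1,\ldots,s_l)$. By the subword property (Proposition~\ref{proposition:subword_property}) one has $J(\s)\subseteq\widehat\W_{\le w}$, so $\widehat\G_{J(\s)}$ inherits the GKM-property, and Lemma~\ref{lemma:9} applies: $\B(\s)$ is well defined and projective with
$$
v^l h(\B(\s))=(H_{s_1}+v)\cdots(H_{s_l}+v)=\underline H_{s_1}\cdots\underline H_{s_l}=H_w+\sum_{x<w}f_{x,w}H_x ,
$$
where the $f_{x,w}$ are exactly those of Definition~\ref{definition:14}. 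Note that $\underline H_{s_1}\cdots\underline H_{s_l}$ is self-dual, since the bar involution is a ring homomorphism fixing every $\underline H_{s_i}$; and setting $v=1$ in the $H_x$-coefficient and using Theorem~\ref{theorem:3}/Corollary~\ref{corollary:5} gives $f_{x,w}(1)=\dim_\F\B(\s)^x/\m\B(\s)^x=|I(\s)_x|$.

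Apply Corollary~\ref{corollary:6}:
$$
\B(\s)\cong\B(w)\ \oplus\!\!\sum_{\substack{\ell(x)=l-2\\ |I(\s)_x|=2}}\!\!\B(x)\<-2\>\ \oplus\!\!\sum_{\substack{\ell(x)=l-2\\ |I(\s)_x|=3}}\!\!2\,\B(x)\<-2\>\ \oplus\ \bigoplus_i\B(x_i)\<r_i\> ,
$$
with $|I(\s)_{x_i}|>3$ for all $i$. Here $\B(w)$ occurs with multiplicity $1$ and every other summand $\B(z)\<r\>$ has $z<w$, because $|I(\s)_w|=1$ and the explicit summands have $\ell(x)=l-2$. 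Consider a summand $\B(x)\<-2\>$ of one of the first two types. Then $x<w$ and $f_{x,w}(1)=|I(\s)_x|\in\{2,3\}$. Moreover $f_{x,w}\notin v\Z[v]$: as $\B(x)\<-2\>$ is a direct summand of $\B(\s)$, its stalk $S\<-2\>=\B(x)^x\<-2\>$ is a direct summand of $\B(\s)^x$, so $\grk\B(\s)^x$ has a monomial $v^{-2}$ with positive coefficient; since $\ell(x)=l-2$, the $H_x$-coefficient equals $f_{x,w}=v^{l-\ell(x)}\grk\B(\s)^x=v^2\grk\B(\s)^x$, which therefore has a nonzero constant term. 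Hence condition~(2) of Definition~\ref{definition:14} fails at $x$, so condition~(1) holds and $x$ is $3$-reachable; as $\ell(x)<\ell(w)$ and $\widehat\G_{\le x}\subseteq\widehat\G_{\le w}$ is GKM, the inductive hypothesis yields $v^{\ell(x)}h(\B(x))=\underline H_x$, i.e. $v^{l-2}h(\B(x))=\underline H_x$.

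Now apply $v^l h(-)$ to the decomposition — it is additive and $v^l h(\M\<r\>)=v^r\,v^l h(\M)$ — and substitute the characters just obtained:
$$
\underline H_{s_1}\cdots\underline H_{s_l}=v^l h(\B(w))+\Big(\!\!\sum_{\substack{\ell(x)=l-2\\ |I(\s)_x|=2}}\!\!\underline H_x+\!\!\sum_{\substack{\ell(x)=l-2\\ |I(\s)_x|=3}}\!\!2\,\underline H_x\Big)+\sum_i v^{l+r_i}h(\B(x_i)).
$$
Running the identical computation over a field of characteristic $0$ — where Corollary~\ref{corollary:6} produces the same explicit summands (they depend only on $\s$) and Theorem~4.7 of \cite{Fiebig_An_upper_bound} gives $v^l h(\B(w))=\underline H_w$ — and subtracting, one gets $v^l h(\B(w))=\underline H_w+D$, where $D$ is the difference of the characteristic-$0$ and characteristic-$p$ contributions of the summands $\B(x_i)\<r_i\>$; in particular $D$ is supported on $\widehat\W_{<w}$. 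It remains to show $D=0$. Since $f_{x_i,w}(1)=|I(\s)_{x_i}|>3$, condition~(1) of Definition~\ref{definition:14} cannot hold at $x_i$, so condition~(2) must: $f_{x_i,w}\in v\Z[v]$ for every $i$. Combined with the standard degree bound $\grk\B(w)^y\in v^{-1}\Z[v^{-1}]$ for $y<w$ (a formal consequence of $\rho_{y,\delta y}$ being a projective cover, cf.\ Proposition~\ref{proposition:2}), this forces $v^l h(\B(w))\in H_w+\sum_{y<w}v\Z[v]H_y$; the same input, used symmetrically, gives that $\sum_i v^{l+r_i}h(\B(x_i))$ is self-dual, so $v^l h(\B(w))$ is self-dual as well. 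A nonzero self-dual element supported on $\widehat\W_{<w}$ with all coefficients in $v\Z[v]$ cannot exist (take a maximal element of its support and compare $H_y$-coefficients with its dual), whence $D=0$ and $v^{\ell(w)}h(\B(w))=\underline H_w$.

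\textbf{Main obstacle.} The hard part is the last paragraph. The summands $\B(x_i)\<r_i\>$ with $|I(\s)_{x_i}|>3$ cannot be computed explicitly and their multiplicities may depend on $\mathop{\rm char}\F$; Definition~\ref{definition:14} is built precisely so that these uncontrolled summands satisfy $f_{x_i,w}\in v\Z[v]$. The real work is to convert that single piece of positivity — together with the degree/positivity properties of Braden--MacPherson sheaves and the known characteristic-$0$ case — into self-duality of $v^{\ell(w)}h(\B(w))$, and then to invoke uniqueness of the Kazhdan--Lusztig basis element. Making the ``applied symmetrically'' step genuinely non-circular (for instance by a simultaneous induction, or by tracking the lowest $v$-power in each $H_z$-coefficient of $v^l h(\B(\s))$) is the delicate point.
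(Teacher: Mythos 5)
Your setup (induction on $\ell(w)$, invoking Lemma~\ref{lemma:9} and Corollary~\ref{corollary:6}, and showing that the low-rank summands have $3$-reachable base points) matches the paper, but the proof breaks at the point you yourself flag. The crucial idea you are missing is that the sum $\bigoplus_i\B(x_i)\<r_i\>$ with $|I(\s)_{x_i}|>3$ is actually \emph{empty}, which the paper deduces from the degree symmetry in Theorem~\ref{theorem:4}. Concretely: since $|I(\s)_{x_i}|>3$, condition~(1) of Definition~\ref{definition:14} fails at $x_i$, so condition~(2) holds, i.e.\ $f_{x_i,w}=v^{l-\ell(x_i)}\grk\B(\s)^{x_i}\in v\Z[v]$; equivalently $\deg\pi^{(j)}<l-\ell(x_i)$ for every maximal path $\pi^{(j)}$ in $T(\s,x_i)$. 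But by Theorem~\ref{theorem:4} the generators of $\B(\s)_{[x_i]}$ sit in degrees $2(l-\ell(x_i))-\deg\pi^{(j)}>l-\ell(x_i)>\deg\pi^{(k)}$, so the transition matrix $\Phi(\s,x_i)$ has no homogeneous entry of degree zero and $\d(\rho_{x_i,\delta x_i})=0$. By Corollary~\ref{corollary:1} and equation~(\ref{eq:def}) this means $\B(x_i)$ is \emph{not} a direct summand of $\B(\s)$ at all, for any shift. Once this is in place, the decomposition of $\B(\s)$ is completely explicit and the proof closes by applying $v^lh(-)$ and subtracting the inductively known $\underline H_x$, with no remainder $D$ to worry about.

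Your replacement argument --- deriving self-duality of $v^{\ell(w)}h(\B(w))$ from positivity of the $f_{x_i,w}$ and a putative degree bound on $\grk\B(w)^y$ --- is not a valid proof as written. Self-duality of $v^{\ell(w)}h(\B(w))$ is essentially equivalent to the conclusion (it would identify $v^{\ell(w)}h(\B(w))$ with $\underline H_w$ by uniqueness of the Kazhdan--Lusztig basis), so using it as an intermediate step is circular unless you establish it independently, and the ``applied symmetrically'' passage does not do that; you acknowledge this yourself. There is also no need for the unverified claim that $\grk\B(w)^y\in v^{-1}\Z[v^{-1}]$ follows formally from the projective-cover property. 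All of this machinery evaporates once you observe that the uncontrolled summands vanish by the degree comparison above.
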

\begin{proof} We apply induction on $\ell(w)$, the case $w=e$ being obvious. Suppose that $w\in\widehat\W$
is \linebreak$3$-reachable and $w\ne e$. Consider a reduced representation $w=s_1\cdots s_l$ as
in Definition~\ref{definition:14}. Our aim is to prove that $\B(\s)$ decomposes into a sum of
Braden-MacPherson sheaves as in the case $\mathop{\rm char}\F=0$ and that each summand except $\B(w)$ has
the form $\B(x)$ for some $3$-reachable $x<w$.
To this end, we calculate the defect $\d(\B(\s))$ and apply~(\ref{eq:def}).

Under the notation of Definition~\ref{definition:14}, we have $f_{x,w}=v^{l-\ell(x)}\rk'\B(\s)^x$ for any $x<w$
by Lemma~\ref{lemma:9}.
So the first possible case is that the ungraded rank of $\B(\s)^x$ is not greater than $3$ and $x$
is $3$-reachable. In this case, the defect $\d(\rho_{x,\delta x})$ is given by
Lemmas~\ref{lemma:2by2} and~\ref{lemma:3by3} and we can apply induction to obtain
the character of the possible direct summand $\B(x)$. Note that $\widehat\G_{\le x}$
satisfies the GKM-property, as $\widehat\G_{\le w}$ does so.

In the second possible case, we have $f_{x,w}\in v\Z[v]$. It follows that all generators of $\B(\s)^x$ have degrees
less than $l-\ell(x)$. By Theorem~\ref{theorem:4}, the generators of $\B(\s)_{[x]}$ have degrees more than
$l-\ell(x)$. Hence the matrix $\Phi(\s,x)$ does not have entries of degree $0$ and the defect $\d(\rho_{x,\delta x})$
is zero. This means that $\B(x)$ can not be a direct summand of $\B(s)$.
\end{proof}

This corollary is trivially true for $1$-reachable elements, since they are just those $w\in\widehat\W$
that have a reduced representation $w=s_1\cdots s_l$ such that
$(H_{s_1}+v)\cdots(H_{s_l}+v)=\underline H_w$.
On the other hand, there are a lot of 3-reachable elements that are not 1-reachable,
see, for example, the table below.
\bigskip
\begin{center}
\begin{tabular}{|c||c|c|c|c|c|}
\hline
Type of the root system  \vphantom{$A^{A^A}$}        &$A_1$&$A_2$&$A_3$&$A_4$&$A_5$\\
\hline
number of 1-reachable elements \vphantom{$A^{A^A}$} & 2   &  5  &  14 &  42 &  132\\
\hline
number of 3-reachable elements \vphantom{$A^{A^A}$} & 2   &  6  &  22 &  83 &  310\\
\hline
\end{tabular}
\end{center}

\end{document}